\theoremstyle{plain}
\newtheorem{lemma}{Lemma}[section]
\newtheorem*{theorem*}{Theorem}
\newtheorem*{lemma*}{Lemma}
\newtheorem*{proposition*}{Proposition}
\newtheorem*{conjecture*}{Conjecture}
\newtheorem*{corollary*}{Corollary}
\newtheorem*{problem*}{Problem}
\newtheorem{theorem}[lemma]{Theorem}
\newtheorem{corollary}[lemma]{Corollary}
\newtheorem{proposition}[lemma]{Proposition}
\newtheorem{problem}[lemma]{Problem}
\theoremstyle{definition}
\newtheorem{definition}[lemma]{Definition}
\newtheorem{example}[lemma]{Example}
\newtheorem{remark}[lemma]{Remark}
\newtheorem{warning}[lemma]{Warning}
\newcommand{\fto}[1]{\stackrel{#1}{\to}}
\newcommand{\Z}{\mathbb{Z}}
\newcommand{\CC}{\mathbb{C}}
\newcommand{\QQ}{\mathbb{Q}}
\newcommand{\RR}{\mathbb{R}}
\newcommand{\OO}{\mathcal{O}}
\newcommand{\te}{\otimes}
\newcommand{\id}{\mathrm{id}}
\newcommand{\bv}{{\bf v}}
\newcommand{\bu}{{\bf u}}
\newcommand{\bk}{{\bf k}}
\newcommand{\bw}{{\bf w}}
\newcommand{\cE}{\mathcal{E}}
\newcommand{\ZZ}{\mathbb{Z}}
\newcommand{\leqpar}{\underset{{\scriptscriptstyle (}-{\scriptscriptstyle )}}{<}}
\renewcommand{\P}{\mathbb{P}}
\newcommand{\PP}{\mathbb{P}}
\DeclareMathOperator{\ch}{ch}
\DeclareMathOperator{\Hom}{Hom}
\DeclareMathOperator{\Mat}{Mat}
\DeclareMathOperator{\rk}{rk}
\DeclareMathOperator{\ord}{ord}
\DeclareMathOperator{\Ext}{Ext}
\DeclareMathOperator{\ext}{ext}
\DeclareMathOperator{\sHom}{\mathcal{H}\kern -.5pt\mathit{om}}
\DeclareMathOperator{\sTor}{\mathcal{T}\kern -1.5pt\mathit{or}}
\DeclareMathOperator{\GL}{GL}
\DeclareMathOperator{\SL}{SL}
\DeclareMathOperator{\udim}{\underline{\dim}}
\DeclareMathOperator{\edim}{edim}
\newcommand{\leqor}{\underset{{\scriptscriptstyle (}-{\scriptscriptstyle )}}{<}}
\begin{document}

\setcounter{topnumber}{1}

\date{\today}
\author[I. Coskun]{Izzet Coskun}
\address{Department of Mathematics, Statistics and CS \\University of Illinois at Chicago, Chicago, IL 60607}
\email{coskun@math.uic.edu}

\author[J. Huizenga]{Jack Huizenga}
\address{Department of Mathematics, The Pennsylvania State University, University Park, PA 16802}
\email{huizenga@psu.edu}

\author[J. Kopper]{John Kopper}
\address{Department of Mathematics, The Pennsylvania State University, University Park, PA 16802}
\email{kopper@psu.edu}

\subjclass[2010]{Primary: 14J60, 14J26. Secondary: 14D20, 14F05}
\keywords{Moduli spaces of sheaves, sheaf cohomology, Brill--Noether divisors}
\thanks{During the preparation of this article the first author was partially supported by the NSF FRG grant  DMS 1664296 and  the second author was partially supported  by NSF FRG grant DMS 1664303}

\title[Cohomology of tensor products of vector bundles on $\P^2$]{The cohomology of general tensor products of vector bundles on $\P^2$}

\begin{abstract}
Computing the cohomology of the tensor product of two vector bundles is central in the study of their moduli spaces and in applications to representation theory, combinatorics and physics. These computations play a fundamental role in the construction of Brill--Noether loci, birational geometry and $S$-duality. Using recent advances in the Minimal Model Program for moduli spaces of sheaves on $\PP^2$, we compute the cohomology of the tensor product of general semistable bundles on $\PP^2$.  This solves a natural higher rank generalization of  the polynomial interpolation problem. 
More precisely, let $\bv$ and $\bw$ be two Chern characters of stable bundles on $\PP^2$ and assume that $\bw$ is sufficiently divisible depending on $\bv$. Let $V \in M(\bv)$ and $W \in M(\bw)$ be two general stable bundles. We fully compute the cohomology of $V \otimes W$.  In particular, we show that if $W$ is exceptional, then $V \otimes W$ has at most one nonzero cohomology group determined by the slope and the Euler characteristic, generalizing foundational results of Dr\'{e}zet, G\"{o}ttsche and Hirschowitz. We characterize the invariants of effective Brill--Noether divisors on $M(\bv)$. We also characterize when $V\te W$ is globally generated. Our computation is canonical given the birational geometry of the moduli space, suggesting a roadmap for tackling analogous problems on other surfaces.
\end{abstract}

\maketitle

\setcounter{tocdepth}{1}
\tableofcontents

\section{Introduction}

Computing the cohomology of the tensor product of two vector bundles is central in the study of their moduli spaces and in applications to representation theory, combinatorics and physics. These computations play a fundamental role in the construction of Brill--Noether loci, birational geometry and $S$-duality. In the last decade, thanks to advances in Bridgeland stability and the Minimal Model Program, there has been rapid development in our understanding of the birational geometry of moduli spaces of sheaves. In this paper we use these advances to compute the cohomology of the tensor product of general semistable bundles on $\PP^2$.  Our computation is canonical given the birational geometry of the moduli space, suggesting a roadmap for tackling analogous problems on other surfaces. The Fano fibration structure on the moduli space yields a canonical resolution of the general sheaf that makes our computation possible.

The higher rank interpolation problem underlies the construction and geometric properties of Brill--Noether divisors. It is fundamental for computing the ample and effective cones of moduli spaces of sheaves,  the study of the birational geometry of moduli spaces of sheaves and the $S$-duality conjecture (see \cite{ABCH, CoskunHuizengaGokova, CHW}). Let  $M(\bv)$ denote the moduli space of Gieseker semistable sheaves on $\PP^2$ with Chern character $\bv$. 

\begin{problem}[Generic Higher Rank  Interpolation]\label{prob-HRI}
Let $\bv$ and $\bw$ be the Chern characters of semistable sheaves on $\PP^2$. Let $V \in M(\bv)$ and $W \in M(\bw)$ be  general semistable sheaves. Compute the cohomology of $V \otimes W$.
\end{problem}

When $\bv$ has rank 1, the generic interpolation problem reduces to the classical question of when  general points impose  independent conditions on sections of a vector bundle---a problem that has been at the heart of algebraic geometry since the inception of the field (see \cite{CoskunHuizengaMonomial}). Problem \ref{prob-HRI} generalizes the classical interpolation problem to higher ranks and is fundamental to the study of higher rank moduli spaces.

Let $\bv$ and $\bw$ be two Chern characters of semistable sheaves on $\PP^2$ and assume that $\bw$ is sufficiently divisible, depending on $\bv$. Let $V \in M(\bv)$ and $W \in M(\bw)$ be two general semistable sheaves. In this paper, we compute the cohomology of $V \otimes W$ and solve the generic higher rank interpolation problem.  In particular, we show that if $W$ is exceptional and the rank of $V$ is at least 2, then $V \otimes W$ has at most one nonzero cohomology group, determined by the slope and the Euler characteristic. This generalizes foundational results of Dr\'{e}zet \cite{DrezetBeilinson, Drezet} and G\"{o}ttsche and Hirschowitz \cite{GottscheHirschowitz}. We also characterize the invariants of effective Brill--Noether divisors on $M(\bv)$ and determine when $\sHom(W,V)$ is globally generated. 

Recent developments in our understanding of the birational geometry of $M(\bv)$ provide the key inputs to our calculation of the cohomology groups of $V \otimes W$. The final model in the Minimal Model Program for $M(\bv)$ is given by a Fano fibration to a Kronecker moduli space. This fibration yields a canonical resolution of the general sheaf in $M(\bv)$. This resolution enables us to carry out the computation. We will now explain our results and methods in greater detail.

\subsection{The Kronecker fibration and the main theorem} The \emph{Kronecker fibration} of a positive dimensional moduli space $M(\bv)$ is a dominant rational map $M(\bv)\dashrightarrow Kr(\bv)$ to a moduli space $Kr(\bv)$ of semistable representations of a Kronecker quiver constructed from $\bv$.  To describe this map, we need the concept of the \emph{(primary) corresponding exceptional bundle $E_+$ to $\bv$}.  The bundle $E_+$ is the exceptional bundle of smallest slope with the property that if $G$ is any exceptional bundle satisfying $\mu(G) > \mu(E_+)$, then $\chi(V\te G) >0$.  However, the number $\chi(V\te E_+)$ can be positive, zero, or negative.

For simplicity in the introduction, let us focus on the case where $\chi(V\te E_+) > 0$.  If $V\in M(\bv)$ is a general sheaf, then $\Hom(E_+^*,V)$ has the expected dimension $\chi(V\te E_+)$ and we can consider the mapping cone $K$ of the canonical evaluation $$E_+^* \te \Hom(E_+^*,V) \to V \to K\to \cdot.$$ A Beilinson spectral sequence shows that there is a pair of exceptional bundles $F,G$ and exponents $m_1,m_2$ such that $K$ is isomorphic in the derived category to a two-term complex $$K: F^{m_1} \to  G^{m_2}$$ sitting in degrees $-1$ and $0$.  The linear-algebraic data of a map $F^{m_1}\to G^{m_2}$ can be encoded as a representation of the Kronecker quiver with two vertices and $N = \dim \Hom(F,G)$ arrows.  The rational map $M(\bv) \dashrightarrow Kr(\bv)$ is defined by mapping $V\mapsto K$.  The Chern character $$\bk:= \ch K = \bv - \chi(V\te E_+)\ch E_+^*$$ depends only on $\bv$.  We now state our main theorem.

\begin{theorem}\label{thm-main1}
Let $\bv,\bw$ be Chern characters of stable bundles on $\P^2$.  Suppose the moduli space $M(\bv)$ is positive dimensional and that $\bw$ is sufficiently divisible (depending on $\bv$).  Let $V\in M(\bv)$ and $W\in M(\bw)$ be general bundles.
\begin{enumerate}
\item If $\chi(\bv \te E_+)\leq 0$, then either $H^0(V\te W)=0$ or $H^1(V\te W) =0$.
\item If $\chi(\bv \te E_+) > 0$ and $\rk(\bk) \leq 0$, then either $H^0(V\te W) = 0$ or $H^1(V\te W) = 0$.
\item Suppose $\chi(\bv\te E_+) > 0$ and $\rk(\bk) > 0$. Then $H^0(V\te W)$ and $H^1(V\te W)$ can be computed as follows.
\begin{enumerate}
\item If $\chi(\bk \te \bw) \geq 0$ or $\chi(\bw \te E_+^*) \leq 0$, then either $H^0(V\te W)=0$ or $H^1(V\te W)=0$.

\item Otherwise, $V\te W$ is special and 
\begin{align*}h^0(V\te W) &= \chi(\bv\te E_+)\chi(\bw\te E_+^*)\\
h^1(V\te W) &= -\chi(\bk \te \bw).
\end{align*}
\end{enumerate}
\end{enumerate}
\end{theorem}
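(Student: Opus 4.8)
The plan is to tensor the canonical resolution of a general $V\in M(\bv)$ coming from the Kronecker fibration with the general bundle $W$ and then run the resulting hypercohomology spectral sequence. When $\chi(\bv\te E_+)>0$, the triangle $E_+^*\te\Hom(E_+^*,V)\to V\to K$ together with the Beilinson description $K\cong[F^{m_1}\to G^{m_2}]$ exhibits $V$ as a three-term complex built from the exceptional bundles $E_+^*,F,G$; when $\chi(\bv\te E_+)\le 0$ one uses the analogous complex with $V$ itself in the role of $K$ (so only $F,G$ appear), and when $\rk\bk\le 0$ the cone $K$ is a shifted sheaf and the complex has a dual shape, but the same method applies. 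The first ingredient is that, because $\bw$ is sufficiently divisible and $W$ is general, each of $E_+^*\te W$, $F\te W$, $G\te W$ has natural cohomology, that is, at most one nonzero cohomology group, concentrated in the degree dictated by its slope; this is the higher-divisibility form of the theorems of G\"ottsche--Hirschowitz and Dr\'ezet and I would isolate it as a preliminary lemma. The second, and harder, ingredient is a genericity statement: since a general $V$ maps to a general point of $Kr(\bv)$ and the relevant $\Hom$ and composition pairings among $E_+^*,F,G,W$ behave generically, all the differentials in the spectral sequence—the induced maps $H^\bullet(F\te W)^{m_1}\to H^\bullet(G\te W)^{m_2}$ as well as the connecting maps involving $H^\bullet(E_+^*\te W)$—have maximal rank.

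Granting these two inputs, the proof is a bookkeeping argument governed by the four sign conditions appearing in the statement. First I would reduce, via Serre duality applied to the dual Chern characters, to the case $H^2(V\te W)=0$, so that $E_+^*\te W$, $F\te W$, $G\te W$ and hence $V\te W$ have no $H^2$. The sign of $\chi(\bv\te E_+)$ and of $\rk\bk$ fix the shape of the resolving complex; the sign of $\chi(\bw\te E_+^*)$ determines whether $E_+^*\te W$ contributes in degree $0$ or in degree $1$; and the sign of $\chi(\bk\te\bw)$ determines whether $K\te W$ contributes in degree $0$ or degree $1$. In cases (1), (2) and (3a) these contributions are aligned—either every exceptional-bundle tensor sits in degree $0$, or $K\te W$ already sits in degree $1$ while $E_+^*\te W$ no longer forces anything in degree $0$—and the maximal-rank differentials collapse the spectral sequence so that $H^0(V\te W)$ and $H^1(V\te W)$ cannot both be nonzero. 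In case (3b) the hypotheses $\chi(\bk\te\bw)<0$ and $\chi(\bw\te E_+^*)>0$ put $E_+^*\te W$ in degree $0$ and $K\te W$ in degree $1$, and these cannot cancel: the spectral sequence produces a short exact sequence $0\to H^0(E_+^*\te W)^{\oplus\chi(\bv\te E_+)}\to H^0(V\te W)\to H^0(K\te W)\to 0$ with $H^0(K\te W)=0$, together with $H^1(V\te W)\cong H^1(K\te W)$. The first gives $h^0(V\te W)=\chi(\bv\te E_+)\,\chi(\bw\te E_+^*)$, using that $\Hom(E_+^*,V)$ has dimension $\chi(\bv\te E_+)$ and that $E_+^*\te W$ has all its cohomology in degree $0$; then $h^1(V\te W)=h^0(V\te W)-\chi(\bv\te\bw)=-\chi(\bk\te\bw)$ after substituting $\bk=\bv-\chi(\bv\te E_+)\,\ch E_+^*$.

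The main obstacle is the genericity/maximal-rank input. Proving that a general $V$ yields a general Kronecker module, and, more importantly, that the resulting sheaf maps induce maximal-rank maps on cohomology after tensoring with a general divisible $W$, requires controlling composition pairings of the form $\Hom(E_+^*,V)\te\Hom(F,G)\te H^\bullet(\,\cdot\,\te W)\to H^\bullet(\,\cdot\,\te W)$; I expect to deduce this from the dominance of the Kronecker fibration together with a dimension count showing the locus of bad pairs $(V,W)$ has positive codimension in $M(\bv)\times M(\bw)$, then use semicontinuity to pass from one well-chosen pair to the general one. A secondary but genuine technical point is checking that the slopes of $E_+^*$, $F$ and $G$, which are pinned down by the combinatorics of the corresponding exceptional bundle and of the Kronecker complex, really do place $E_+^*\te W$, $F\te W$ and $G\te W$ in the degrees asserted above under the stated sign hypotheses; this is a finite but delicate verification that links the four $\chi$-conditions in the theorem to the actual cohomological positions.
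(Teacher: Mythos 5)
Your proposal correctly identifies the resolution of $V$ coming from the Kronecker fibration as the starting point, and your bookkeeping in case (3b) --- including the derivation of $h^0(V\te W)=\chi(\bv\te E_+)\chi(\bw\te E_+^*)$ and $h^1(V\te W)=-\chi(\bk\te\bw)$ --- matches what the paper ultimately proves. But there is a genuine gap at the center of the argument: the ``maximal rank of all differentials'' claim is asserted, not proved, and the method you propose for it (a dimension count showing the bad locus in $M(\bv)\times M(\bw)$ has positive codimension, then semicontinuity) is essentially circular. Semicontinuity only transports the desired rank statement from one explicitly exhibited pair $(V,W)$ to the general pair; exhibiting that one pair is the entire difficulty, and genericity of the map $F^{m_1}\to G^{m_2}$ does not by itself force the induced maps $H^\bullet(F\te W)^{m_1}\to H^\bullet(G\te W)^{m_2}$ to have maximal rank. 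The paper's resolution of this is two-fold and both halves are absent from your sketch. First, when the cohomology of $F\te W$ and $G\te W$ land in misaligned degrees (region (III) of the paper, which contains case (3b)), one must \emph{also} resolve $W$ by its own Kronecker complex $K'$ and reduce the computation of $H^\bullet(K\otimes K')$ to $\Ext^\bullet$ between two Kronecker modules; the maximal-rank statement then follows from a theorem on Kronecker quivers (Theorem \ref{thm-kronecker}) whose proof is a delicate incidence-correspondence dimension estimate and which crucially requires one of the two modules to be \emph{semistable} --- the paper gives an explicit example where the conclusion fails when neither module is semistable, so this hypothesis cannot be waved away.

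Second, for characters $\bw$ with $\chi(\bv\te\bw)\le 0$ (regions (IV)/(V)), the paper abandons the resolution of $V$ entirely: no spectral-sequence argument from the fixed resolution of $V$ will produce the vanishing of $H^0(V\te W)$ there. Instead $W$ itself is constructed, via the orthogonal characters $\bu^{\pm}$, as the cokernel of a general map $U^-\to U^+$ (using the global generation criterion of Theorem \ref{thm-gghom} and a Bertini-type theorem) and then modified by elementary modifications and direct sums to reach the desired invariants. This is also where the divisibility hypothesis on $\bw$ actually enters --- one needs $m\bw=m_+\bu^+ + m_-\bu^-$ with integral coefficients --- whereas you invoke divisibility only to get natural cohomology of $E_+^*\te W$, $F\te W$, $G\te W$, which in fact holds for any general stable $W$ with no divisibility at all (Theorem \ref{thm-twistExceptional}). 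Without these two inputs your cases (1), (2), (3a) remain unproven for a large range of $\bw$, and even your case (3b) silently assumes that $H^\bullet(K\te W)$ is computed by its Euler characteristic, which is exactly the Kronecker-module statement.
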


By stability, we can only have $H^0(V\te W) \neq 0$ if $\mu(V\te W)\geq 0$.  Similarly, by Serre duality and stability, we can only have $H^2(V\te W) \neq 0$ if $\mu(V\te W) \leq -3$.  Therefore if we apply  Theorem \ref{thm-main1} to the Serre dual character $\bv^D$, we can determine all the cohomology of $V\te W$ in every case.  

\begin{remark}
For applications of Theorem \ref{thm-main1} to birational geometry that depend on asymptotic linear series, the assumption that $\bw$ is sufficiently divisible does not matter.   Moreover, the proof of Theorem \ref{thm-main1} is sufficiently explicit that the precise multiple of $\bw$ that is needed is easily determined.  In particular, if $\chi(\bk \te \bw) \geq 0$ and $\chi(\bw \te E_+^*) \geq 0$ then no multiple is needed at all; this applies to most characters $\bw$ where $\chi(\bv \te \bw) > 0$.  Additionally, no multiple of $\bw$ is needed in case (3b) of the theorem, where there is special cohomology.  In cases where $\chi(\bv\te \bw) \leq 0$, a multiple is often needed, but the appropriate multiple is easy to compute.  For more details, see Section \ref{sec-overview}.
\end{remark}

\begin{figure}[t] 
\begin{center}
\setlength{\unitlength}{1in}
\begin{picture}(4.584,2.31)
\put(0,0){\includegraphics[scale=.6,bb=0 0 7.64in 3.85in]{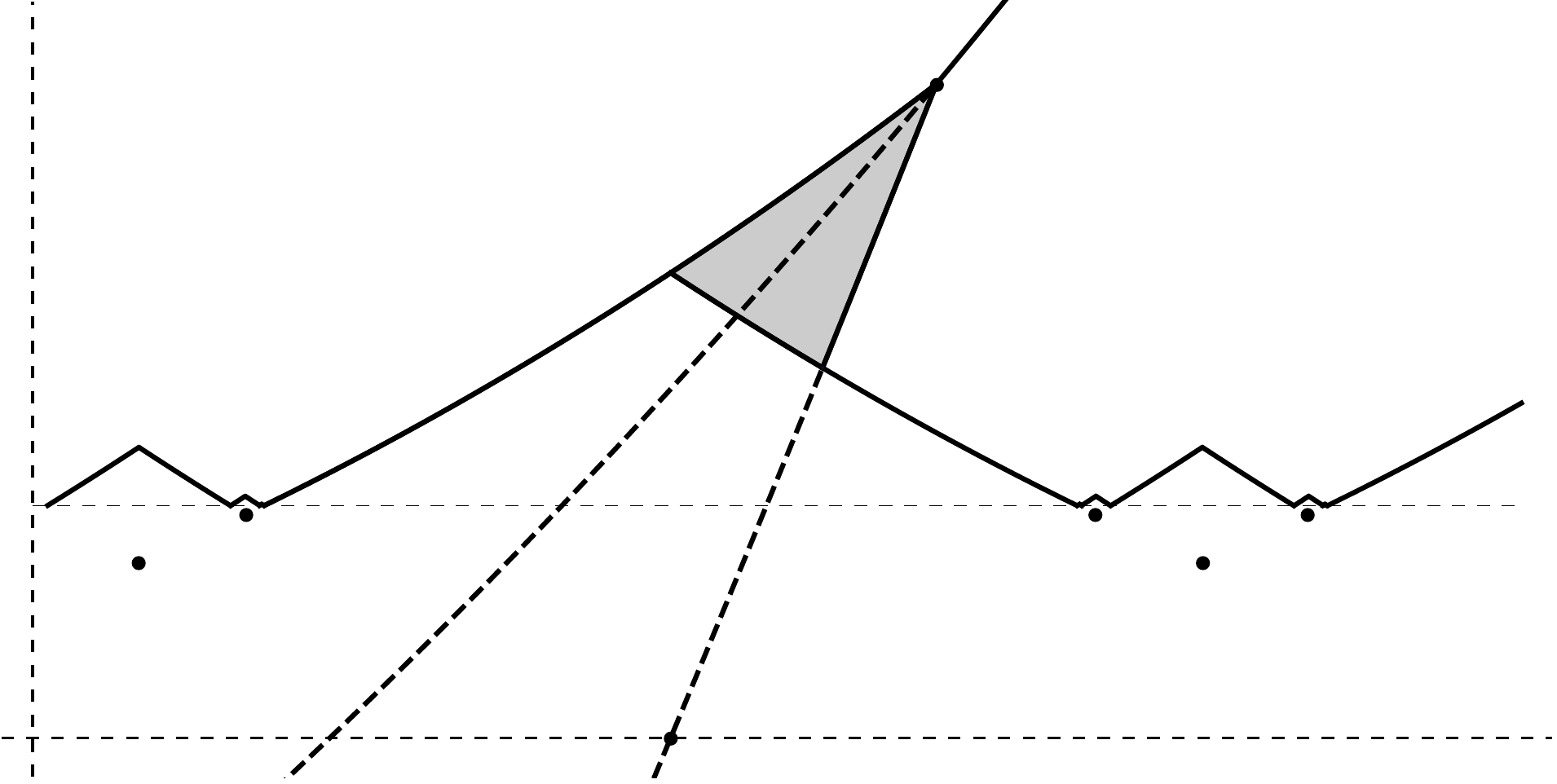}}
\put(1,1.15){$(E_{+}^*)^\perp$}
\put(1.75,.2){$E_+$}
\put(2.7,2.2){$\bv^\perp$}
\put(2.8,2){$\bu^+$}
\put(2.2,.5){$\bk^\perp$}
\put(3.1,1.5){$H^1(V\te W) = 0$}
\put(.4,1.5){$H^0(V\te W) = 0$}
\put(-.05,.78){$\frac{1}{2}$}
\put(3.75,1.1){$\Delta = \delta(\mu)$}
\put(.13,2.1){$\Delta$}
\put(4.4,.2){$\mu$}
\end{picture}
\end{center}
\caption{Illustration of Theorem \ref{thm-main1} in case (3).  The character $\bv$ is fixed and $\bw = (r,\mu,\Delta)$ is variable. In the interior of the shaded region, both $H^0(V\te W)$ and $H^1(V\te W)$ are nonzero.  See Example \ref{ex-intro}. The picture is to scale for the character $\bv =(4,1,9/4)$ with $E_+ = \OO_{\P^2}$.  }\label{fig-intro}
\end{figure}

\begin{example}\label{ex-intro}
We illustrate Theorem \ref{thm-main1} in case (3) in Figure \ref{fig-intro}. We view the Chern character $\bv$ as fixed and $\bw = (r,\mu,\Delta)$ as variable, where $\mu$ and $\Delta$ are the slope and discriminant (see \S\ref{ss-basic}).  The coordinates on the plane are $(\mu,\Delta)$.  By abuse of notation we plot a Chern character of slope $\mu$ and discriminant $\Delta$ at the point $(\mu,\Delta)$.  The nondegenerate symmetric pairing $\chi(-\te-)$ on $K(\P^2)$ defines orthogonal complements of Chern characters such as $\bv^\perp \subset K(\P^2)$.  In slope and discriminant coordinates, these subspaces become parabolas (see \S\ref{sss-orthogonalParabolas}).   We have, for example, $\chi(\bv \te \bw) \leq 0$ when $(\mu,\Delta)$ lies above the parabola labeled by $\bv^\perp$.

There is a fractal-like curve $\Delta = \delta(\mu)$ in the $(\mu,\Delta)$-plane, the \emph{Dr\'ezet--Le Potier curve}, such that $\bw$ is stable with positive dimensional moduli space if and only if $(\mu,\Delta)$ lies on or above the curve (see \S\ref{sss-DLP}).  Since we are assuming $\chi(\bv \te E_+)>0$, the parabola $\bv^\perp$ lies above the invariants of $E_+$.  The parabola $\bk^\perp$ passes through $E_+$, and the assumption that $\rk(\bk) >0$ implies this parabola is increasing near $E_+$.  We shade the region where $\bw$ is stable, $\chi(\bk\te \bw) < 0$, and $\chi(\bw\te E_+^\ast) > 0$.  In this region, both $H^0(V\te W)$ and $H^1(V\te W)$ are nonzero.  Outside of this region, consideration of $\chi(\bv\te \bw)$ allows us to determine whether $H^0(V\te W) = 0$ or $H^1(V\te W) = 0$.
\end{example}

In exchange for some precision, a cleaner version of Theorem \ref{thm-main1} can be stated when the discriminants of $\bv$ and $\bw$ are sufficiently large.  

\begin{corollary}
Let $\bv,\bw$ be Chern characters of stable bundles on $\P^2$ with $\Delta(\bv) \geq 3$ and $\Delta(\bw) \geq 3$.  Suppose that $\bw$ is sufficiently divisible (depending on $\bv$).  If $V\in M(\bv)$ and $W\in M(\bw)$ are general bundles then $V\te W$ has at most one nonzero cohomology group.
\end{corollary}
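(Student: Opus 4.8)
The plan is to deduce the corollary from Theorem~\ref{thm-main1}: its only case producing two nonzero cohomology groups is~(3b), so it suffices to show that case~(3b) cannot occur when $\Delta(\bv)\ge 3$ and $\Delta(\bw)\ge 3$. First I would reduce. As noted after Theorem~\ref{thm-main1}, $H^0(V\te W)$ and $H^2(V\te W)$ are never both nonzero ($\mu(\bv)+\mu(\bw)\ge 0$ versus $\le -3$), so $V\te W$ has two nonzero cohomology groups exactly when either $H^0,H^1$ or $H^1,H^2$ are both nonzero; Serre duality turns the second case into the first for the Serre-dual pair $(\bv^D,\bw^{*})$, again general bundles in positive-dimensional moduli spaces with the same discriminants, so by Theorem~\ref{thm-main1}(1),(2),(3a) it is enough to rule out case~(3b).

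So assume case~(3b), and compute in slope--discriminant coordinates, using $\chi(\bu\te\bu')=\rk(\bu)\rk(\bu')\big(P(\mu(\bu)+\mu(\bu'))-\Delta(\bu)-\Delta(\bu')\big)$ with $P(m)=\tfrac12(m+1)(m+2)$ and $\Delta(E_+)=\tfrac12\big(1-\rk(E_+)^{-2}\big)$. The conditions $\chi(\bv\te E_+)>0$ and $\rk\bk=\rk\bv-\chi(\bv\te E_+)\rk E_+>0$ translate into $\tfrac12\big(1-\rk(E_+)^{-2}\big)<P(\mu(\bv)+\mu(E_+))-\Delta(\bv)<\tfrac12\big(1+\rk(E_+)^{-2}\big)$, whence $P(\mu(\bv)+\mu(E_+))>\Delta(\bv)\ge 3$ and so $\mu(\bv)+\mu(E_+)>1$ or $<-4$. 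Since $\mu(\bk)+\mu(E_+)=\rk(\bv)\big(\mu(\bv)+\mu(E_+)\big)/\rk(\bk)$ has the same sign as $\mu(\bv)+\mu(E_+)$ but strictly larger absolute value, the value $<-4$ contradicts the fact (valid throughout case (3)) that $\bk^\perp$ is increasing at $\mu(E_+)$, i.e.\ $\mu(\bk)+\mu(E_+)>-\tfrac32$. Hence $\mu(\bv)+\mu(E_+)>1$ and $s:=\mu(\bk)+\mu(E_+)>1$. Because $\chi(\bk\te E_+)=0$ puts $\bk$ on the parabola $E_+^\perp$, substituting this relation into the inequalities $\chi(\bk\te\bw)<0$ and $\chi(\bw\te E_+^{*})>0$ gives $s\big(\mu(\bw)-\mu(E_+)\big)<1-2\Delta(E_+)=\rk(E_+)^{-2}\le 1$, so $\mu(\bw)-\mu(E_+)<1$; but $\chi(\bw\te E_+^{*})>0$ together with $\Delta(\bw)\ge 3$ forces $P(\mu(\bw)-\mu(E_+))>3$, i.e.\ $\mu(\bw)-\mu(E_+)>1$ or $<-4$. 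Therefore $\mu(\bw)-\mu(E_+)<-4$.

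Ruling out this last inequality is the crux, and is where the machinery behind Theorem~\ref{thm-main1} must be used. Since $\mu(E_+^{*}\te W)=\mu(\bw)-\mu(E_+)<-4$, the general $W$ satisfies $H^0(E_+^{*}\te W)=H^1(E_+^{*}\te W)=0$. For general $V$ the evaluation $E_+^{*\oplus n}\to V$ (with $n=\chi(\bv\te E_+)$) is an injection of sheaves whose cokernel is the semistable sheaf $K$ with $\ch K=\bk$ underlying the Kronecker fibration; tensoring $0\to E_+^{*\oplus n}\to V\to K\to 0$ by the bundle $W$ and taking cohomology gives $H^0(V\te W)\cong H^0(K\te W)$, which is nonzero in case~(3b), so $H^0(K\te W)\ne 0$. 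On the other hand $\mu(K\te W)=s+(\mu(\bw)-\mu(E_+))>0$, since $\mu(\bv)+\mu(\bw)\ge 0$ gives $\mu(\bw)-\mu(E_+)\ge-(\mu(\bv)+\mu(E_+))>-s$; hence $H^2(K\te W)=0$, and because $\chi(\bk\te\bw)<0$, the non-vanishing of $H^0(K\te W)$ for general $W$ means $K\te W$ is special, which by Theorem~\ref{thm-main1} applied to $(\bk,\bw)$ puts $(\bk,\bw)$ into case~(3b) as well. Rerunning the analysis with $\bk$ in place of $\bv$ — one has $\mu(\bv)+\mu(E_+)>4$, hence $s>4$ and $\Delta(\bk)=P(s)-\Delta(E_+)>14$, so the required hypotheses persist — produces an infinite chain of Chern characters all in case~(3b) with strictly decreasing ranks; but case~(3) requires rank $\ge 2$ (from $\rk\bk\ge 1$ and $\chi(-\te E_+)\rk E_+\ge 1$), so no such descent exists. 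This contradiction proves the corollary. I expect the genuinely delicate points to be confirming that $K$ is a semistable sheaf with the stated invariants and cohomologically generic in $M(\bk)$, and that the general $W$ imposes the expected conditions on $K\te W$ so that Theorem~\ref{thm-main1} legitimately applies to $(\bk,\bw)$; everything preceding the descent is the elementary computation with the parabolas $\bv^\perp$, $\bk^\perp$ and $(E_+^{*})^\perp$.
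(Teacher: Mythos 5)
Your Serre-duality reduction and the computation in your second paragraph are correct and are essentially the ``straightforward computation'' the paper has in mind: with $s=\mu(\bk)+\mu(E_+)$ and $t=\mu(\bw)-\mu(E_+)$, the identity $\chi(\bk\te E_+)=0$ turns the two inequalities of case (3b) into $st<1-2\Delta(E_+)\le 1$, while $\Delta(\bv)\ge 3$ forces $s>1$, hence $t<1$; and $\chi(\bw\te E_+^*)>0$ with $\Delta(\bw)\ge 3$ forces $t>1$ or $t<-4$. (The paper supplies no written proof beyond this remark, so there is nothing to compare line by line.) The gap is in your disposal of the branch $t<-4$. Your descent never takes its first step: since $\chi(\bk\te E_+)=0$ and $E_+$ is still the \emph{primary} corresponding exceptional bundle to $\bk$ (the paper records exactly this in the proof of Proposition \ref{prop-regionII}), the pair $(\bk,\bw)$ falls under case (1) of Theorem \ref{thm-main1}, not case (3). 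The ``required hypotheses'' you assert to persist --- in particular $\chi(\bk\te E_+')>0$ for the corresponding exceptional bundle of $\bk$ --- fail immediately, so there is no chain of characters in case (3b) and no infinite descent to contradict.

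More seriously, in the regime $t<-4$ your reductio leans on the displayed formula of (3b) exactly where it cannot be trusted, because the configuration you are trying to rule out genuinely occurs. Take $\bv=(r,\mu,\Delta)=(4,\tfrac54,\tfrac{101}{32})$, so that $E_+=\OO_{\P^2}$, $\chi(\bv\te E_+)=2$ and $\rk\bk=2$, and take $\bw=(2,-5,4)$: then $\chi(\bw\te E_+^*)=4>0$ and $\chi(\bk\te\bw)=-46<0$, both discriminants are at least $3$, yet $H^0(V\te W)=\Hom(V^*,W)=0$ by stability since $\mu(W)=-5<\mu(V^*)=-\tfrac54$. So the hypotheses of (3b) with $\Delta\ge 3$ cannot be shown to be contradictory --- they hold here --- and what actually fails is the literal reading of (3b): its formula is established in the paper only for $\bw$ in region (IIIb), which (via the Dr\'ezet--Le Potier curve over $(\nu^+-3,\nu^+)$ and the condition $\omega^+\le-\nu^+$) forces $\mu(\bw)\ge\mu(E_+)$, i.e.\ $t\ge 0$. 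With that constraint your own inequality $t<1$ finishes the job at once, since $t\in[0,1)$ gives $P(t)<P(1)=3\le\Delta(\bw)+\Delta(E_+)$, contradicting $\chi(\bw\te E_+^*)>0$; and when $t<-4$ the character $\bw$ lies in region (I) or (IV), where Propositions \ref{prop-regionI} and \ref{prop-restReg} give the vanishing of $H^0$ or $H^1$ directly. So keep your first two paragraphs, but replace the descent by the observation that special cohomology only arises for $\mu(\bw)\ge\mu(E_+)$.
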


The proof is a straightforward computation using Theorem \ref{thm-main1}.  The result is easily seen to be false if the constant $3$ is replaced by any smaller number: for $\bv = (r,\mu,\Delta) = (2,0,3)$, the character $\bu^+$ in Figure \ref{fig-intro} has discriminant $3$ and the shaded region contains characters with discriminant arbitrarily close to $3$.

We now discuss some of the most interesting special cases of Theorem \ref{thm-main1}.  Each of these cases will give important steps in the full proof of the theorem.  A more detailed roadmap to the proof is provided in \S\ref{sec-overview}.

\subsection{Exceptional bundles} Theorem \ref{thm-main1} in particular applies when $W=E$ is an exceptional bundle.  A tensor product $V\te E$ is never special.

\begin{theorem}\label{thm-introExc}
Let $\bv$ be the Chern character of a stable bundle on $\P^2$, and let $E$ be an exceptional bundle.  If $V\in M(\bv)$ is general, then $V\te E$ has at most one nonzero cohomology group.
\end{theorem}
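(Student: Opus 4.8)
The plan is to reduce by stability and Serre duality to an assertion about $\Ext$-groups, and then to feed the canonical resolution of a general $V$ provided by the Kronecker fibration into $R\Hom(E^\ast,-)$, so that the statement follows from the elementary fact that the cohomology of one exceptional bundle against another is concentrated in a single degree. For the reduction: since $V$ is general (hence $\mu$-stable) and $E$ is exceptional (hence $\mu$-stable), the product $V\te E$ is $\mu$-semistable, so $H^0(V\te E)=0$ once $\mu(V\te E)<0$, while by Serre duality $H^2(V\te E)\cong H^0(V^\ast\te E^\ast(-3))^\ast$ vanishes once $\mu(V\te E)>-3$; if $-3<\mu(V\te E)<0$ there is nothing to prove. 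Dualization identifies $M(\bv)$ with $M(\bv^D)$ and carries general bundles to general bundles, $E^\ast(-3)$ is again exceptional, and $\mu(V^\ast\te E^\ast(-3))=-\mu(V\te E)-3$, so after replacing $(V,E)$ by $(V^\ast,E^\ast(-3))$ if necessary we may assume $\mu(V\te E)\ge 0$. Then $H^2(V\te E)=0$, and writing $A:=E^\ast$ (exceptional, with $\mu(A)\le\mu(V)$) and using $\Ext^i(A,V)\cong H^i(V\te E)$, it remains to show that $\Ext^\bullet(A,V)$ is concentrated in a single degree, necessarily $0$ or $1$.

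If $M(\bv)$ is a point then $\bv$ is exceptional, $V$ is the corresponding exceptional bundle, and the claim is the classical fact (Dr\'ezet) that $\Ext^\bullet$ between two exceptional bundles on $\P^2$ lies in one degree. Otherwise $M(\bv)$ is positive dimensional, and I would use the canonical resolution of the general $V$ built from the corresponding exceptional bundle $E_+$: when $\chi(\bv\te E_+)>0$ this is the distinguished triangle
\[ E_+^\ast\te\Hom(E_+^\ast,V)\to V\to K\to E_+^\ast\te\Hom(E_+^\ast,V)[1], \]
with $K$ quasi-isomorphic to a two-term complex $[F^{m_1}\to G^{m_2}]$ in degrees $-1$ and $0$ for exceptional bundles $F,G$ forming a strong exceptional pair; the cases $\chi(\bv\te E_+)\le 0$ are handled by the analogous dual resolution. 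The essential extra input is that, because the Kronecker fibration $M(\bv)\dashrightarrow Kr(\bv)$ is dominant, for a general $V$ the classifying Kronecker module $\phi\colon F^{m_1}\to G^{m_2}$ is a general representation of its quiver.

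Now apply $R\Hom(A,-)$ to $K$ and to the triangle. Since $A,E_+^\ast,F,G$ are all exceptional, each of $\Ext^\bullet(A,E_+^\ast)$, $\Ext^\bullet(A,F)$, $\Ext^\bullet(A,G)$ is concentrated in a single cohomological degree. Because $\phi$ is general, the map it induces on $\Ext^\bullet(A,-)$ has maximal rank, so it is injective or surjective, which together with the vanishing just noted collapses $\Ext^\bullet(A,K)$ into a single degree. Feeding this into the long exact sequence of the triangle and using $\Ext^2(A,V)=0$, one concludes that $\Ext^\bullet(A,V)$ is concentrated in a single degree $0$ or $1$, as desired. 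In the most favorable cases all the relevant degrees coincide and this is immediate; in general one has to check that the connecting maps in the triangle's long exact sequence are injective or surjective precisely where required.

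The step I expect to be the main obstacle is exactly this last point: the numerical control needed to show that, for an arbitrary exceptional bundle $A$ with $\mu(A)\le\mu(V)$, the three single degrees occupied by $\Ext^\bullet(A,E_+^\ast)$, $\Ext^\bullet(A,F)$, $\Ext^\bullet(A,G)$, together with the generic Kronecker map, always assemble to a single nonzero cohomology group for $V\te E$ --- uniformly across the cases ($\chi(\bv\te E_+)$ positive, zero, or negative, and $\rk\bk$ positive or negative). Carrying this out requires the detailed classification of exceptional bundles on $\P^2$ (the structure of exceptional slopes and the associated $\Ext$-vanishings) together with the inequalities defining the corresponding exceptional bundle $E_+$, arranged so that $A$ meets the collection $(E_+^\ast,F,G)$ in consecutive phases. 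A secondary technical point is upgrading the dominance of the Kronecker fibration to the concrete maximal-rank statement used above, and dealing with the degenerate cases (for instance when $M(\bv)$ is small, or when the complex $K$ is not a pure sheaf).
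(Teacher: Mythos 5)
Your reduction via stability and Serre duality, and your plan to feed the canonical Kronecker resolution of a general $V$ into $R\Hom(E^*,-)$, both match the paper's strategy. But the step you yourself flag as the main obstacle is a genuine gap, and the way you propose to close it does not work. The assertion that ``because $\phi$ is general, the map it induces on $\Ext^\bullet(A,-)$ has maximal rank'' is unjustified: the induced map $\Hom(A,F)\te\CC^{m_1}\to \Hom(A,G)\te\CC^{m_2}$ is not a general linear map but is constrained to factor through the composition pairing $\Hom(A,F)\te\Hom(F,G)\to\Hom(A,G)$, and for a general $\phi$ it can fail to have maximal rank. The paper makes exactly this point about the Dr\'ezet--Le Potier resolution $0\to \OO_{\P^2}(-1)^{m_1}\to T_{\P^2}(-2)^{m_2}\oplus\OO_{\P^2}^{m_3}\to V(a)\to 0$ (``there will usually be a map whose rank is hard to calculate''), and the example in \S\ref{sec-kronecker} with dimension vectors $(1,4)$ and $(4,11)$ shows that general Kronecker modules can have $\Hom$ and $\Ext^1$ simultaneously nonzero. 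Maximal rank is recovered only when one of the two Kronecker modules involved is \emph{semistable} (Theorem \ref{thm-kronecker}), and your $A=E^*$ is an arbitrary exceptional bundle, not a priori presentable so that this applies.

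What actually closes the gap in the paper (proof of Theorem \ref{thm-twistExceptional}) is a two-layer argument you do not have. First, for $E=E_\beta$ with $\beta\geq\delta$, $\beta\leq\gamma$, or $\beta=\nu^+$ (where $\nu^+=\gamma.\delta$), the degrees in which $\Ext^\bullet(E_{-\beta},-)$ of the terms of the resolution live simply never collide in a harmful way, so no rank statement is needed (Lemmas \ref{lem-betaEexceptional}--\ref{lem-betaMiddle}). Second, for the hard range $\gamma<\beta<\delta$, one inducts on the \emph{order} of $E_\beta$ using Dr\'ezet's mutation sequences $0\to E_{\zeta_i}\to E_\alpha\te\Hom(E_\alpha,E_\beta)\to E_\beta\to 0$; the induction bottoms out at the slopes $\epsilon_1=\nu^+.\delta$, $\epsilon_{j+1}=\nu^+.\epsilon_j$, and for these base cases $E_{-\epsilon_j}$ (up to Serre duality and shift) is itself quasi-isomorphic to a general two-term complex $K':E_{-\gamma-3}^{p_1}\to E_{-\delta}^{p_2}$ in the \emph{same} exceptional pair as $K$. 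Only then does $\Ext^\bullet(K',K)$ become a computation of homomorphisms of Kronecker modules, where Theorem \ref{thm-kronecker} applies because the module attached to $K$ is stable. Without this reduction, your appeal to genericity of $\phi$ is essentially circular: proving that the induced map on $\Hom(A,-)$ has maximal rank is equivalent in strength to the statement you are trying to prove.
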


We prove this theorem in \S\ref{sec-exceptional}; the proof depends on Dr\'ezet's inductive description of exceptional bundles and the Kronecker fibration.  Since we often use exceptional bundles as building blocks for other stable bundles, Theorem \ref{thm-introExc} is the starting point for many of our later computations.

Special cases of Theorem \ref{thm-introExc} have played a foundational role in the study of the moduli spaces $M(\bv)$. For example, when $E= \OO_{\PP^2}$, we recover a theorem of G\"{o}ttsche and Hirschowitz asserting that the general semistable bundle of rank at least 2 has at most one nonzero cohomology group \cite{GottscheHirschowitz, CoskunHuizengaBN}. When $V$ is an exceptional bundle, we recover a theorem of Dr\'{e}zet that plays a crucial role in computations with Beilinson spectral sequences \cite{DrezetBeilinson, Drezet}.

\subsection{Bundles with dual corresponding exceptional bundle}

One of the most interesting cases of Theorem \ref{thm-main1} occurs when $\bw$ is a character such that $M(\bw)$ has positive dimension and the primary corresponding exceptional bundles to $\bv$ and $\bw$ are dual.  In particular, these hypotheses are satisfied in case (3b) of the theorem.  In this case, using Serre duality, we associate a two-term complex $K':F^{n_1}\to G^{n_2}$ to the general sheaf $W\in M(\bw)$.  The computation of the cohomology of $V\te W$ reduces to the computation of the groups $\Ext^i(K',K)$. 

A key feature of the complex $K$ is that it corresponds to a \emph{stable} representation of the Kronecker quiver.  We then prove the following result regarding Kronecker quivers. This refines a result of Schofield \cite{Schofield} that holds for arbitrary quivers.

\begin{theorem}
Let $K$ and $K'$ be general representations of the $N$-arrowed Kronecker quiver, and assume one of them is semistable.  Then at most one of the groups $\Ext^i(K',K)$ is nonzero.
\end{theorem}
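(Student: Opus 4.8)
The plan is to reduce the claim to a one‑line numerical inequality: one combines the hereditary property of the path algebra of the Kronecker quiver $Q$ with Schofield's computation of the cohomology of general representations, using the semistability hypothesis only to supply that inequality. Since the path algebra of $Q$ has global dimension $1$, one automatically has $\Ext^i(K',K)=0$ for $i\ge 2$, so the content is that $\Hom(K',K)$ and $\Ext^1(K',K)$ are not both nonzero. Writing $\bk,\bk'$ for the dimension vectors of $K,K'$ and $\langle-,-\rangle$ for the Euler form of $Q$, one has $\dim\Hom(K',K)-\dim\Ext^1(K',K)=\langle\bk',\bk\rangle$, so it is enough to prove, for general $K,K'$ one of which is semistable, that $\langle\bk',\bk\rangle\le 0$ forces $\Hom(K',K)=0$ and $\langle\bk',\bk\rangle\ge 0$ forces $\Ext^1(K',K)=0$. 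Up to the arrow‑transposing self‑duality of $Q$‑representations---which preserves semistability and genericity, matches Euler pairings, and replaces $\Hom(K',K)$ or $\Ext^1(K',K)$ by the same group for the dualized pair in the opposite order---one may assume it is $K'$ that is semistable, and the two sign cases are then proved identically; so I would treat the case $\langle\bk',\bk\rangle\le 0$.

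Next I would invoke Schofield \cite{Schofield}: for general representations of dimension vectors $\bk',\bk$ one has $\dim\Ext^1=\max\{-\langle\alpha,\bk\rangle\}$, where $\alpha$ runs over the dimension vectors of subrepresentations of a general representation of dimension $\bk'$, and dually (via $Q^{\mathrm{op}}$) it equals $\max\{-\langle\bk',\beta\rangle\}$, where $\beta$ runs over the dimension vectors of quotients of a general representation of dimension $\bk$. Since $\bk'$ and $\bk$ occur among these subs and quotients, feeding either expression into $\dim\Hom=\langle\bk',\bk\rangle+\dim\Ext^1$ shows that $\Hom(K',K)=0$ if and only if $\langle\delta,\bk\rangle\le 0$ for every general quotient dimension vector $\delta$ of $\bk'$ (equivalently $\langle\bk',\gamma\rangle\le 0$ for every general subrepresentation dimension vector $\gamma$ of $\bk$). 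Thus the problem becomes a sign statement for the Euler form on a constrained family of dimension vectors.

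Semistability of $K'$ supplies the constraint. The moduli space $Kr(\bk')$ is defined by the King stability $\theta'$ with $\theta'\cdot\bk'=0$; writing $\theta'=(b_0',-a_0')$ where $(a_0',b_0')$ is the primitive vector along $\bk'$ (with $a_0',b_0'\ge 0$), semistability of $K'$ forces every quotient dimension vector $\delta=(\delta_1,\delta_2)$ of a general representation of dimension $\bk'$ to satisfy $a_0'\delta_2\le b_0'\delta_1$. One then checks directly, using $\langle\delta,\bk\rangle=\delta_1(\bk_1-N\bk_2)+\delta_2\bk_2$ together with $\bk_2\ge 0$ and $\delta_1\ge 0$, that $a_0'\langle\delta,\bk\rangle\le\delta_1\langle(a_0',b_0'),\bk\rangle\le 0$, since $\langle(a_0',b_0'),\bk\rangle$ is a positive multiple of $\langle\bk',\bk\rangle\le 0$; for $a_0'>0$ this gives $\langle\delta,\bk\rangle\le 0$, and the one remaining case $a_0'=0$ means $\bk'$ is supported at the sink vertex, where $\langle\bk',\bk\rangle\le 0$ already forces $\bk_2=0$ and the inequality is trivial. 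Hence $\Hom(K',K)=0$, as required.

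The step I expect to demand the most care is pinning down Schofield's theorem and its $Q^{\mathrm{op}}$‑dual in exactly the form used above---in particular the assertion that the general sub/quotient dimension vectors of a semistable representation really do obey the $\theta'$‑slope bound---together with the bookkeeping between the ``subrepresentations of $\bk$'' and ``quotients of $\bk'$'' descriptions and between the four combinations of $\{\Hom,\Ext^1\}$ and ``which one is semistable.'' Once that is in place, no separate hard case remains: everything collapses to the single elementary inequality above.
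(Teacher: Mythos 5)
Your argument is correct, but it takes a genuinely different route from the paper's. The paper proves the statement from scratch: after reducing to the case where both modules are stable (via Jordan--H\"older filtrations and the decomposition of a general non-semistable module into adjacent exceptional modules), it bounds the dimension of the incidence correspondence $\Sigma=\{([\beta:\alpha],(f,e)):e\beta=\alpha f\}$ by stratifying according to the ranks $(r,s)$ of $(\beta,\alpha)$ and maximizing an explicit quadratic function $Q(s,r)$ over the polygon cut out by the stability constraint $\tfrac{b'}{a'}\leq\tfrac{r}{s}\leq\tfrac{b}{a}$. You instead black-box Schofield's theorem that for general representations $\ext(\bk',\bk)=\max\{-\langle\gamma,\bk\rangle\}$ over generic subrepresentation vectors $\gamma$ of $\bk'$, hence $\hom(\bk',\bk)=\max\{\langle\delta,\bk\rangle\}$ over generic quotient vectors $\delta$, and let semistability supply exactly the slope constraint on $\delta$ needed for the sign check; I verified the chain $a_0'\langle\delta,\bk\rangle\leq\delta_1\langle(a_0',b_0'),\bk\rangle\leq 0$ and the duality bookkeeping, and they are sound. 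In effect the two proofs use the same stability input (the constraint on dimension vectors of generic subs/quotients), but the paper re-derives the relevant piece of Schofield's machinery by hand for the two-vertex quiver, which keeps the argument self-contained, while your version is shorter, makes the role of semistability transparent, and generalizes verbatim to any quiver equipped with a linear stability. Two small points to tidy: the theorem you invoke is from Schofield's \emph{General representations of quivers} (Proc.\ London Math.\ Soc.\ 1992) rather than the 1991 semi-invariants paper that appears in the bibliography as \cite{Schofield} (the introduction explicitly positions this theorem as refining Schofield, so the authors' self-contained proof is a deliberate choice); and in the degenerate case $a_0'=0$ you should note that a quotient of a sink-supported representation has $\delta_1=0$, since with only $\bk_2=0$ in hand the quantity $\langle\delta,\bk\rangle=\delta_1\bk_1$ is a priori nonnegative rather than nonpositive.
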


Historically, many different types of resolutions have been used to study properties of a general sheaf $V$.  For example, in their classification of semistable sheaves on $\P^2$, Dr\'ezet and Le Potier made use of general resolutions of the form $$0\to \OO_{\P^2}(-1)^{m_1}\to T_{\P^2}(-2)^{m_2} \oplus \OO_{\P^2}^{m_3} \to V(a)\to 0,$$ where $m_3 = \chi(V(a))$  and the map $\OO_{\P^2}^{m_3}\to V(a)$ is the canonical evaluation.  The mapping cone of this evaluation can be identified with a complex $\OO_{\P^2}(-1)^{m_1}\to T_{\P^2}(-2)^{m_2}$; however, this complex often corresponds to an unstable representation of the Kronecker quiver. In long exact sequences for cohomology arising from this resolution, there will usually be a map whose rank is hard to calculate.  This resolution therefore is not a good tool for cohomology computations.

\subsection{Bundles with orthogonal cohomology}

The other main interesting case of Theorem \ref{thm-main1} occurs when $\chi(\bv\te \bw) = 0$.
In this case, if the general tensor product $V\te W$ has no cohomology, then there are induced Brill--Noether divisors on the moduli spaces $M(\bv)$ and $M(\bw)$.  Furthermore, these moduli spaces are candidate pairs for Le Potier's strange duality.  

Example \ref{ex-intro} shows that there exist stable characters $\bv$ and $\bw$ satisfying $\chi(\bv\te \bw) =0$, yet the general tensor product $V\te W$ has nonzero cohomology. Indeed, $\bv^\perp$ passes through the interior of the shaded region in Figure \ref{fig-intro}.  On the other hand, in \cite{CHW} two characters $\bu^\pm$ were constructed with the property that if $U^{\pm} \in M(\bu^{\pm})$ are general, then $V\te U^{\pm}$ has no nonzero cohomology groups.  The character $\bu^+$ is depicted in Figure 1 (see Definition \ref{def-corrOrth} for the general definition).  These bundles define Brill--Noether divisors  $$\{V\in M(\bv):h^0(V\te U^+)>0\}\subset M(\bv)$$ and $$\{V\in M(\bv):h^2(V\te U^-)>0\}\subset M(\bv).$$  When $\rk(\bv)\geq 3$ and $M(\bv)$ has Picard rank 2, these divisors were found to be the extremal edges of the movable cone of divisors for the moduli space $M(\bv)$ (see \cite{LiZhao}).

Here we use the bundles $U^{\pm}$ as a starting point to construct new stable bundles $W$ such that $V\te W$ has no cohomology.  We prove the following theorem.

\begin{theorem}\label{thm-orthogonalIntro}
Let $\bv,\bw$ be Chern characters of stable bundles on $\P^2$.  Suppose $M(\bv)$ is positive dimensional and $\bw$ is sufficiently divisible (depending on $\bv$).    Assume  $\chi(\bv\te \bw) = 0$ and either 
\begin{enumerate}
\item $\mu(\bw)\geq \mu(\bu^+)$ or
\item $\mu(\bw)\leq \mu(\bu^-)$.
\end{enumerate}
Then if $V\in M(\bv)$ and $W\in M(\bw)$ are general, $V\te W$ has no cohomology.
\end{theorem}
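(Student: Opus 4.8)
The plan is to reduce the statement to the single vanishing $H^0(V\te W)=0$ and then to obtain that vanishing from the inductive machinery behind Theorem~\ref{thm-main1}, fed by the vanishing $H^*(V\te U^+)=0$ of \cite{CHW} and by Theorem~\ref{thm-introExc}. First, Serre duality interchanges the two cases of the theorem --- it replaces $\bv,\bw$ by their Serre duals and, by the symmetry built into Definition~\ref{def-corrOrth}, exchanges $\bu^+$ with $\bu^-$ --- so it suffices to treat case~(1), $\mu(\bw)\ge\mu(\bu^+)$. Since $\chi(\bv\te\bw)=0$, the invariants of $\bw$ lie on the orthogonal parabola $\bv^\perp$, and the construction of $\bu^+$ puts it on the increasing branch of that parabola; hence $\mu(\bw)\ge\mu(\bu^+)$ forces $\mu(V\te W)=\mu(\bv)+\mu(\bw)>-3$. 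As the tensor product of the general semistable bundles $V$ and $W$ is again semistable, Serre duality gives $H^2(V\te W)\cong\Hom(V\te W,\OO_{\P^2}(-3))^*=0$; combined with $\chi(V\te W)=\chi(\bv\te\bw)=0$, it remains only to show $H^0(V\te W)=0$.

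For this I would prove the stronger one-sided statement: for every stable character $\bw'$ in the region $\mu(\bw')\ge\mu(\bu^+)$, $\chi(\bv\te\bw')\ge0$ (with $\rk(\bw')$ above the explicit lower bound furnished by the sufficient-divisibility hypothesis), the cohomology of the general $V\te W'$ is concentrated in degree~$0$; when $\chi(\bv\te\bw')=0$ this is exactly what we need. I would prove it by induction, the inductive step resolving the general $W'$ by a mutation through a well-chosen exceptional bundle $E$ with $\mu(E_+)<\mu(E)<\mu(\bw')$, giving a short exact sequence $0\to Q\to E^{\oplus c}\to W'\to 0$ with $c$ large (this uses that $\bw'$ is sufficiently divisible) and $Q$ a general stable bundle whose character is strictly simpler and still lies in the region. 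Tensoring with $V$: $V\te E$ has cohomology only in degree $0$ by Theorem~\ref{thm-introExc} (its Euler characteristic is positive since $\mu(E)>\mu(E_+)$), $V\te Q$ likewise by induction, and $H^2(V\te Q)=0$ by the slope argument; chasing the long exact sequence gives $H^1(V\te W')=0$. The base cases are the exceptional characters (Theorem~\ref{thm-introExc}) and the characters built as nonnegative combinations of $\bu^+$ and of exceptional bundles, for which one degenerates $W'$ to the corresponding direct sum of a general $U^+\in M(\bu^+)$ and exceptional bundles and appeals to $H^*(V\te U^+)=0$ from \cite{CHW} together with semicontinuity.

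The hard part will be the inductive step: one must exhibit $Q$ as the general member of a moduli space whose character is genuinely simpler and still in the region, and one must arrange a well-founded complexity measure terminating at the stated base cases. These reduce to dimension estimates and to maximal-rank statements for the maps in the long exact sequences; for small invariants such maps can fail to have maximal rank because of rigidity coming from exceptional summands, and this is exactly where sufficient divisibility is used --- it scales all the relevant dimensions in fixed proportions and makes maximality of rank generic. I would prove those maximal-rank statements by degeneration and semicontinuity, specializing $W'$ to a prioritary direct sum of exceptional bundles and copies of $U^+$ with Chern character $\bw'$, where the maps are computed from Theorem~\ref{thm-introExc} and the $\bu^+$ vanishing. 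One must also handle separately the case $\chi(\bv\te E_+)\le0$, where the canonical resolution from the Kronecker fibration has a slightly different shape but the argument is the same. Finally, this region is precisely the complement of the shaded region in Figure~\ref{fig-intro}, so the statement is consistent with --- and, once Theorem~\ref{thm-main1} has been proved, formally implied by --- the main theorem.
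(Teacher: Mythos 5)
Your reduction to $H^0(V\te W)=0$ via Serre duality and the slope bound is fine, but the core of your argument---the induction that is supposed to produce the vanishing---has a genuine gap, and you have in effect flagged it yourself by calling the inductive step ``the hard part.'' The difficulty is structural. Since $\chi(\bv\te\bw)=0$, the character $\bw$ lies in the rank-two lattice $\bv^\perp$, which is spanned over $\QQ$ by $\bu^+$ and $\bu^-$; when $\mu(\bw)>\mu(\bu^+)$ one computes (as the paper does) that $m\bw=m_+\bu^++m_-\bu^-$ with $m_+>0$ and $m_-<0$. The negative coefficient means there is \emph{no} degeneration of $W$ to a direct sum of copies of $U^+$ and exceptional bundles of the kind you propose as base cases: any nonnegative combination of $\bu^+$ and exceptional characters orthogonal to $\bv$ would force those exceptional summands to be orthogonal to $\bv$ as well, which essentially never happens. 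Your mutation step $0\to Q\to E^{\oplus c}\to W'\to 0$ does not resolve this either: you give no construction showing such a surjection exists with $W'$ general stable and $Q$ general stable, no complexity measure under which $\ch Q$ is ``simpler,'' and no argument that the recursion stays in the region $\mu\geq\mu(\bu^+)$ and terminates at your base cases rather than drifting in slope indefinitely. As stated, the induction is a plan, not a proof.

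The paper's proof replaces all of this with one construction that directly realizes the subtraction $\bw=\bu^+-\bu^-$ (after rescaling): it takes a \emph{general} map $\phi\colon U^-\to U^+$, uses the global generation of $\sHom(U^-,U^+)$ (Theorem \ref{thm-gghom}, applicable because $\nu^--\nu^++3\leq 0$ by Remark \ref{rem-corrExcGap}) together with a Bertini-type theorem to conclude that $\phi$ is injective with locally free cokernel $W$, and then reads off $H^*(V\te W)=0$ from the long exact sequence and Theorem \ref{thm-extremalVanish}. The remaining work is to check that this $W$ is prioritary (reducing to $\Ext^1(U^-,U^+(-1))=0$, handled by Proposition \ref{prop-regionI}) so that the irreducibility of the prioritary stack transports the vanishing to the general semistable sheaf. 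If you want to salvage your approach, the minimal fix is to import exactly this cokernel construction as your base case for slopes above $\mu(\bu^+)$; without it, the induction has nothing to bottom out on.
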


A key step in the proof is to develop a criterion for a general tensor product $V\te W$ to be globally generated. This is a quite powerful tool in its own right.  

\begin{theorem}\label{thm-ggIntro}
Let $\bv,\bw$ be Chern characters of stable bundles on $\P^2$ such that $M(\bv)$ and $M(\bw)$ are positive dimensional.  Let $E_+$ be the primary corresponding exceptional bundle to $\bv$ and let $E_+'$ be the primary corresponding exceptional bundle to $\bw$.   If $$\mu(E_+)+ \mu(E_+') \leq -1,$$ then $V\te W$ is globally generated for general $V\in M(\bv)$ and $W\in M(\bw)$.
\end{theorem}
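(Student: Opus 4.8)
The plan is to control the global generation of $V\te W$ through the canonical resolution of $V$ coming from the Kronecker fibration, together with the cohomology of tensor products with exceptional bundles. In the main case, $\chi(\bv\te E_+)>0$ and $\rk(\bk)>0$, this canonical resolution is a short exact sequence
\[
0 \to F^{m_1} \to G^{m_2}\oplus (E_+^*)^{\chi(\bv\te E_+)} \to V \to 0,
\]
where $K=[F^{m_1}\to G^{m_2}]$ is the stable Kronecker complex and $F,G$ are the exceptional bundles appearing in it; in the remaining cases the resolution has the same shape but with $E_+^*$ replaced by an appropriate twist (a negative twist when $\chi(\bv\te E_+)<0$). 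Tensoring this resolution with $W$, and then tensoring with the Koszul resolution $0\to\OO(-2)\to\OO(-1)^{\oplus 2}\to\mathcal{I}_p\to 0$ of a point $p$, one expresses the obstruction to global generation of $V\te W$ at $p$ --- the cokernel of $H^0(V\te W)\to (V\te W)_p$, equivalently the kernel of $H^1(\mathcal{I}_p\te V\te W)\to H^1(V\te W)$ --- as a subquotient of the cohomology groups $H^i\!\left(W\te E(d)\right)$ with $E\in\{F,G,E_+^*\}$ and $d\in\{0,-1,-2\}$. Each of these groups is computed by Theorem \ref{thm-introExc}, applied to the exceptional bundle $E(d)$, so the problem becomes a finite bookkeeping calculation with those groups.

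In carrying this out, the $E_+^*$-part behaves especially well. The defining property of the corresponding exceptional bundle gives a lower bound for $\mu(\bw)+\mu(E_+')$, so the hypothesis $\mu(E_+)+\mu(E_+')\le -1$ yields $\mu(\bw)-\mu(E_+)\ge 1$; it follows that $\chi(\bw\te E_+^*(d))>0$ for $d=0,-1$, and since the relevant $H^2$'s vanish for slope reasons, $E_+^*(d)\te W$ has only $H^0$ for those $d$. The crux is then to show that the remaining contributions --- from the $F$- and $G$-terms, and from $E_+^*(-2)\te W$, which may well have nonzero $H^1$ --- cancel in the obstruction. This is where one must use the sharp form of the lower bound on $\mu(\bv)+\mu(E_+)$ in terms of $\Delta(\bv)$ (coming from the width of the region in which $\chi(\bv\te-)\le 0$), since $\mu(F)$ and $\mu(G)$ exceed $\mu(E_+)$ only slightly. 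The degenerate cases $\chi(\bv\te E_+)\le 0$ and $\rk(\bk)\le 0$ call for the same analysis run with the twist $E_+^*(-3)$ and the corresponding form of the resolution; it may be cleanest to treat the near-exceptional range --- where $\Delta(\bv)$ and $\Delta(\bw)$ are close to $\tfrac12$, so that the canonical resolution has very few terms --- by a direct argument.

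The main obstacle is precisely this cancellation: one has to check that the slack in $\mu(E_+)+\mu(E_+')\le -1$ is enough for each of the finitely many maps among the groups $H^i(W\te E(d))$ to have the rank required for the obstruction to vanish. Two structural points both make this believable and guide the bookkeeping: no divisibility hypothesis on $\bw$ is needed, since the only external input is the divisibility-free Theorem \ref{thm-introExc}; and the inequality on $\mu(E_+)+\mu(E_+')$ is exactly the condition under which the Euler characteristics of the exceptional building blocks that control the calculation are forced to be nonnegative. Once established, this global generation criterion feeds into the proof of Theorem \ref{thm-orthogonalIntro} as indicated above.
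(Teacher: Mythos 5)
Your reduction stops exactly where the proof has to begin. After tensoring the resolution of $V$ with $W$ and with the Koszul resolution of a point, you identify the obstruction to global generation at $p$ as a subquotient of the groups $H^i(W\te E(d))$, and then you state that ``the crux is to show that the remaining contributions\dots cancel in the obstruction'' and that ``the main obstacle is precisely this cancellation.'' That cancellation is the entire content of the theorem, and you offer only plausibility for it. The difficulty is not cosmetic: the hypothesis $\mu(E_+)+\mu(E_+')\leq -1$ provides roughly one unit of slope slack beyond the boundary case, while the Koszul term $\OO(-2)$ costs two; as you yourself note, $H^1(W\te E_+^*(-2))$ can be nonzero, so the obstruction does not vanish term by term and you would need to control the ranks of specific connecting maps in the long exact sequences. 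Nothing in Theorem \ref{thm-introExc} (which only gives dimensions of cohomology groups of the individual building blocks, not ranks of maps between them) supplies that control, and no argument is given for why it should hold.

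The paper avoids this entirely by a formal mechanism your proposal never invokes: a quotient of a globally generated sheaf is globally generated. One resolves \emph{both} $V$ and $W^*$ by exceptional bundles (via the Kronecker fibration applied to $\bv$ and to the Serre dual of $\bw$), so that $V\te W=\sHom(W^*,V)$ is exhibited as a quotient of a direct sum of bundles $\sHom(E,F)$ with $E,F$ exceptional; the kernel terms of the resolutions are irrelevant. Global generation of $\sHom(E,F)$ for an exceptional pair is then settled by an inductive criterion (Theorem \ref{thm-excgg}): it holds if and only if there is an integer $a$ with $\mu(E)\leq a\leq\mu(F)$. The hypothesis $\mu(E_+)+\mu(E_+')\leq -1$, rewritten as $\nu^+-(-\omega^+-3)\leq 2$ where $E_{-\omega^+-3}$ is the secondary corresponding exceptional bundle to $W^*$, is exactly what guarantees such an integer exists for the four relevant pairs. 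Your proposal uses neither the resolution of $W^*$ nor the secondary corresponding exceptional bundle of $\bw$, which is where the sharp form of the inequality enters; without these, and without the quotient trick, the approach does not close.
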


Theorem \ref{thm-ggIntro} can be equivalently stated in terms of the global generation of $\sHom(W,V)$ (see Theorem \ref{thm-gghom}). This allows the use of Bertini-type theorems to construct bundles with interesting cohomology with applications to higher codimension Brill--Noether loci. 

\subsection{Hilbert schemes of points}  While all the previous results were stated for characters $\bv$ of stable \emph{bundles}, most of the proof of Theorem \ref{thm-main1} holds verbatim if instead $\bv = (r,\mu,\Delta) = (1,a,n)$ is a rank 1 character and $M(\bv)$ is the moduli space parameterizing twisted ideal sheaves $I_Z(a)$. The only step that does not immediately generalize is Theorem \ref{thm-orthogonalIntro}, but a suitable analog of Theorem \ref{thm-orthogonalIntro} was proved in \cite[Theorem 3.8]{CoskunHuizengaMonomial} for rank 1 sheaves.  Also, when $\chi(V\te E_{\nu^+})>0$ the complex $K$ always has nonpositive rank.  Thus in this case the following cleaner statement holds.

\begin{theorem}
Let $\P^{2[n]}$ be the Hilbert scheme parameterizing ideal sheaves $I_Z$ of zero-dimensional schemes $Z\subset \P^2$ of length $n$.  Let $\bw$ be a stable character with $\mu(\bw) \geq 0$ and suppose $\bw$ is sufficiently divisible (depending on $n$).  If $I_Z\in \P^{2[n]}$ and $W\in M(\bw)$ are general, then $I_Z\te W$ has at most one nonzero cohomology group.
\end{theorem}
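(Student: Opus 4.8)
The plan is to reduce the statement to the rank $1$ version of Theorem \ref{thm-main1}, after which it follows from a one-line rank computation and a slope argument that forces $H^2$ to vanish. Throughout, set $\bv = (r,\mu,\Delta) = (1,0,n)$ with $n\geq 1$, the Chern character of the ideal sheaf of a length $n$ subscheme, so that $M(\bv) = \P^{2[n]}$ is positive dimensional. As explained in the paragraph preceding the statement, every step of the proof of Theorem \ref{thm-main1} applies verbatim to such a $\bv$ except for the single appeal to Theorem \ref{thm-orthogonalIntro}, which is replaced by its rank $1$ analog \cite[Theorem 3.8]{CoskunHuizengaMonomial}; I would take this reduction for granted and assume Theorem \ref{thm-main1} holds for $\bv = (1,0,n)$.

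The key observation is that case (3) of Theorem \ref{thm-main1} cannot occur when $\bv$ has rank $1$. Indeed, $\bk = \bv - \chi(\bv\te E_+)\,\ch E_+^*$ has
\[
\rk(\bk) = \rk(\bv) - \chi(\bv\te E_+)\,\rk(E_+) = 1 - \chi(\bv\te E_+)\,\rk(E_+),
\]
so if $\chi(\bv\te E_+) > 0$, then since this is a positive integer and $\rk(E_+)\geq 1$ we get $\rk(\bk)\leq 1 - \rk(E_+)\leq 0$. Hence we are always in case (1) (when $\chi(\bv\te E_+)\leq 0$) or in case (2) (when $\chi(\bv\te E_+) > 0$, so $\rk(\bk)\leq 0$), and in either case Theorem \ref{thm-main1} gives $H^0(I_Z\te W) = 0$ or $H^1(I_Z\te W) = 0$.

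It remains to rule out $H^2$. Since $\mu(I_Z\te W) = \mu(I_Z) + \mu(\bw) = \mu(\bw)\geq 0$, the slope argument from the remark following Theorem \ref{thm-main1} applies with $V = I_Z$: by Serre duality $H^2(I_Z\te W)$ is dual to $\Hom(I_Z\te W,\OO_{\P^2}(-3))$, and since a homomorphism out of $I_Z$ extends uniquely over $\OO_{\P^2}$ (as $W^\vee(-3)$ is reflexive) this equals $H^0(W^\vee(-3))$, which vanishes because $W^\vee(-3)$ is stable of negative slope. Combined with the previous paragraph, at most one of $H^0(I_Z\te W)$, $H^1(I_Z\te W)$, $H^2(I_Z\te W)$ is nonzero, as claimed.

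The only genuine obstacle is the reduction in the first paragraph: one must check that every tool used in the proof of Theorem \ref{thm-main1}—the construction of the Kronecker fibration and of the two-term complex $K$ via a Beilinson-type spectral sequence, together with the various cohomology vanishing lemmas feeding cases (1)--(3)—remains valid when $V$ is a general (twisted) ideal sheaf rather than a general stable bundle, and that the lone bundle-specific input, Theorem \ref{thm-orthogonalIntro}, really is subsumed by \cite[Theorem 3.8]{CoskunHuizengaMonomial}. Granting that, the argument is complete; note in particular that the hypothesis $\mu(\bw)\geq 0$ enters only in the last paragraph, where it is precisely what is needed to kill $H^2$ and thereby upgrade ``$H^0 = 0$ or $H^1 = 0$'' to ``at most one nonzero cohomology group.''
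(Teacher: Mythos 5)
Your proposal is correct and takes essentially the same route as the paper: the paper likewise reduces to Theorem \ref{thm-main1} for the rank-one character (substituting \cite[Theorem 3.8]{CoskunHuizengaMonomial} for Theorem \ref{thm-orthogonalIntro}), asserts that $K$ has nonpositive rank when $\chi(\bv\te E_{\nu^+})>0$ --- which your computation $\rk(\bk)=1-\chi(\bv\te E_+)\rk(E_+)\leq 0$ justifies --- and disposes of $H^2(I_Z\te W)\cong H^2(W)$ by Serre duality and stability using $\mu(\bw)\geq 0$.
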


On the other hand, $H^2(I_Z\te W)\cong H^2(W)$ is easily computed by Serre duality, but is often larger than the Euler characteristic would predict.

\subsection{Torsion sheaves}
Our main result can also be refined to allow for moduli spaces of one-dimensional semistable sheaves.  Here the cohomology is always as expected.  See \S\ref{ssec-torsion} for details.

\begin{theorem}
Let $\bv$ be the Chern character of a one-dimensional semistable sheaf, and let $\bw$ be the Chern character of a stable bundle.  Suppose that $\bw$ is sufficiently divisible.  If $V\in M(\bv)$ and $W\in M(\bw)$ are general sheaves, then $V\te W$ has at most one nonzero cohomology group.
\end{theorem}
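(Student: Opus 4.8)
The plan is to run the argument that proves Theorem~\ref{thm-main1} with $\bv$ a one-dimensional character, so I will indicate the points that need attention and explain why the one-dimensional case in fact simplifies. First, two reductions. Since $V$ is a pure one-dimensional sheaf, $V\te W$ is supported in dimension one, so $H^i(V\te W)=0$ for $i\geq 2$, and it suffices to show that $H^0(V\te W)$ and $H^1(V\te W)$ are not both nonzero; in particular no ``special'' cohomology of the kind appearing in case (3b) of Theorem~\ref{thm-main1} can occur. Second, for any integer $k$ one has $H^i(V\te W)\cong H^i(V(-k)\te W(k))$, and $\ch(V(-k))$ (resp.\ $\ch(W(k))$) is again a one-dimensional (resp.\ stable bundle) character, so we may assume $\mu(\bw)\in(-1,0]$; this forces $H^0(W)=0$ and makes $H^2$ vanish for all the relevant exceptional twists.

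The key structural point is that for a one-dimensional character the Euler pairing $\chi(\bv\te -)$ depends only on slope. A direct Riemann--Roch computation gives
\[
\chi(\bv\te E)=\rk(E)\cdot\ch_1(\bv)\cdot\Bigl(\mu(E)+\tfrac{\ch_2(\bv)}{\ch_1(\bv)}+\tfrac{3}{2}\Bigr)
\]
for every bundle $E$, so $\bv^\perp$ is the vertical line $\mu=\lambda$ with $\lambda=-\tfrac{3}{2}-\ch_2(\bv)/\ch_1(\bv)$, rather than a parabola. Hence the primary corresponding exceptional bundle $E_+$ to $\bv$, when it is defined, has $\mu(E_+)\leq\lambda$ and therefore $\chi(\bv\te E_+)\leq 0$; one is always in the regime governed by case (1) of Theorem~\ref{thm-main1}, where in particular $V\te W$ is never special. (When $\lambda$ is not an exceptional slope, $E_+$ is not defined and one instead works with the Beilinson resolution attached to the exceptional triple bracketing $\lambda$; the same conclusion holds.)

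With this in hand the computation proceeds as in the body of the paper: the general $V\in M(\bv)$ admits the canonical resolution provided by the Kronecker fibration of $M(\bv)$, whose associated Kronecker module is semistable, and likewise $W$ is governed by a complex $K'$. Tensoring with $W$ and using the resolution of $W$ reduces $H^\bullet(V\te W)$ to the groups $\Ext^i(K',K)$ together with auxiliary groups $H^i(E\te W)$, $H^i(E\te E')$, and $H^i(V\te E')$ for the exceptional bundles $E,E'$ that occur. These have at most one nonzero cohomology group by, respectively, Theorem~\ref{thm-introExc}, Dr\'ezet's theorem, the one-dimensional analog of Theorem~\ref{thm-introExc}, and the quoted theorem on representations of Kronecker quivers (one of $K,K'$ corresponding to a semistable module). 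Assembling these through the long exact sequences attached to the two resolutions shows that $H^0(V\te W)$ and $H^1(V\te W)$ are not both nonzero, and in fact that the cohomology is exactly what $\chi$ predicts.

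The main obstacle is verifying that the genericity hypotheses used throughout the proof of Theorem~\ref{thm-main1} remain valid when $\rk(\bv)=0$: that $V$ has natural cohomology, that $\Hom$ and $\Ext$ of $V$ against the building-block exceptional bundles have their expected dimensions (so the maps in the long exact sequences have maximal rank), and that the base case of a one-dimensional sheaf tensored with an exceptional bundle has at most one nonzero cohomology group. All of these reduce to classical statements once one recalls that the general one-dimensional semistable sheaf of degree $d$ is $\iota_*L$ for $C\subset\P^2$ a general smooth plane curve of degree $d$ and $L$ a general line bundle on $C$ --- the locus of such (automatically stable) sheaves having the full dimension $d^2+1=\dim M(\bv)$ and hence being dense --- whereupon natural cohomology follows because on any smooth curve the Brill--Noether locus $W^0_e\subseteq\Pic^e$ is a proper subvariety. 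No analog of Theorem~\ref{thm-orthogonalIntro} is needed, since $\chi(\bv\te E_+)\leq 0$ keeps the argument out of case (3b); this is also why the refinement ``the cohomology is always as expected'' holds.
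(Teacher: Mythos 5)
Your overall strategy---rerun the proof of Theorem \ref{thm-main1} with $\bv$ one-dimensional, using that $\bv^\perp$ becomes the vertical line $\mu=\lambda$ and that $H^i(V\te W)=0$ for $i\geq 2$---is exactly the paper's, but your key structural claim is false. You assert that $\mu(E_+)\leq\lambda$, hence $\chi(\bv\te E_+)\leq 0$, so that one is always in case (1). In fact the primary corresponding exceptional bundle is determined by the condition $\lambda\in I_{\nu^+}=(\nu^+-x_{\nu^+},\nu^++x_{\nu^+})$ (it is always defined, since the intervals $I_\alpha$ cover $\QQ$ and $\lambda=-\chi(\bv)/d$ is rational---your parenthetical about $E_+$ being undefined when $\lambda$ is not exceptional is a misreading of the definition), and $\lambda$ can lie on either side of $\nu^+$. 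Since $\chi(\bv\te E_{\nu^+})=r_{\nu^+}d\,(\nu^+-\lambda)$, the case $\chi(\bv\te E_{\nu^+})>0$ genuinely occurs, namely whenever $\lambda\in(\nu^+-x_{\nu^+},\nu^+)$; in that case $V$ has the resolution of \S\ref{sss-res1} (with $E_{-\nu^+}^{m_3}\to V\to K$), not the one from \S\ref{sss-res2} that your argument would apply, and the region decomposition is that of case (2) of Figure \ref{fig-4Rrefined}, not case (1). What \emph{is} true, and what actually rules out special cohomology, is that case (3) cannot occur: by Remark \ref{rem-rankslope}, $\rk(K)\geq 0$ is equivalent to $\mu(\bu^+)\geq\nu^+$, and for torsion $\bv$ one has $\mu(\bu^+)=\lambda$, which contradicts $\nu^+>\lambda$. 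Your separate remark that the vanishing of $H^2$ already precludes case (3b) is also a non sequitur: the special cohomology there consists of $H^0$ and $H^1$ both nonzero.

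Two further points. First, your claim that no analog of Theorem \ref{thm-orthogonalIntro} is needed ``since the argument stays out of case (3b)'' conflates two different roles of that theorem: the cohomological orthogonality results are used in \S\ref{sec-interpolation} to seed the interpolation tricks in regions (IV)/(V) regardless of whether special cohomology arises. They do become unnecessary here, but for a different reason: for torsion $V$, an elementary modification of $W$ at a point off $\Supp V$ does not change $H^\bullet(V\te W)$, so elementary modifications of $U^{+}$ itself supply the required orthogonal sheaves. Second, your proposed base case---reducing $H^i(V\te E)$ for exceptional $E$ to properness of $W^0_e\subset\Pic^e(C)$---only handles $E$ a line bundle; for higher-rank exceptional $E$ the statement is a higher-rank Brill--Noether assertion on $C$ that does not follow from the line-bundle case (it fails for arbitrary fixed bundles on $C$). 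The paper instead establishes the exceptional resolution of the general $V$ directly, replacing the prioritary-sheaf argument (unavailable in rank $0$) by Ottaviani's result that the determinantal curve of a general $\phi$ is smooth, so that $V$ is a line bundle on a smooth curve and hence semistable; the induction of \S\ref{sec-exceptional} then goes through.
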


\subsection*{Organization of the paper}
In \S\ref{sec-prelim} we recall basic facts on moduli of sheaves, exceptional bundles, the Dr\'ezet-Le Potier classification of stable bundles, and the Kronecker fibration.  We study homomorphisms between representations of Kronecker quivers in \S\ref{sec-kronecker}.  Next we turn to studying the cohomology of a tensor product $V\te E$ with $E$ an exceptional bundle in \S\ref{sec-exceptional}.

We give an overview of the proof of Theorem \ref{thm-main1} in \S\ref{sec-overview}.  We begin the proof of the main theorem in \S\ref{sec-regions}, and in particular focus on the case where the corresponding exceptional bundles to $V$ and $W$ are dual.  In \S\ref{sec-orthogonal} we study the case where $\chi(\bv\te \bw)=0$.  Finally, in \S\ref{sec-interpolation} we compute the cohomology of $V\te W$ in the remaining cases.  

\subsection*{Acknowledgements} We would like to thank Arend Bayer, Aaron Bertram, Lawrence Ein, Joe Harris, Emanuele Macr\`{i}, Sam Shideler, and Jonathan Wolf for valuable conversations related to the subject matter of the paper.  We would also like to thank the referee who provided several helpful comments on the paper.

\section{Preliminaries}\label{sec-prelim}

In this section, we recall basic facts concerning semistable sheaves on $\PP^2$. We refer the reader to  \cite{CoskunHuizengaGokova, CHW} and \cite{LePotier} for more details.

\subsection{Basic definitions}\label{ss-basic} Every sheaf in this paper will be a torsion-free  coherent sheaf on $\PP^2$ unless explicitly specified otherwise.  Let $K(\P^2) \cong \Z^3$ be the $K$-group.  We will write Chern characters $\bv\in K(\P^2)$ of positive rank as triples $\bv = (r,\mu,\Delta)$, where we define the slope $\mu$ and the discriminant $\Delta$ by 
$$\mu = \frac{\ch_1}{r} \quad\textrm{and}\quad \Delta = \frac{\mu^2}{2} - \frac{\ch_2}{r}.$$ The slope and the discriminant of a sheaf is defined as the slope and the discriminant of its Chern character, respectively. The advantage of the slope and the discriminant is that they are additive on tensor products: 
$$\mu(V \otimes W) = \mu(V) + \mu(W) \quad\textrm{and} \quad \Delta(V \otimes W) = \Delta(V) + \Delta(W).$$ In particular, taking $W = \OO_{\P^2}^{\oplus k}$, we have $\mu(V^{\oplus k}) = \mu(V)$ and $\Delta(V^{\oplus k}) = \Delta(V)$.   In terms of these invariants, the Riemann-Roch formula reads
$$\chi(V) = r(V) ( P(\mu(V)) - \Delta(V)), $$ where $$P(x) = \frac{1}{2} x^2 + \frac{3}{2} x + 1$$ is the Hilbert polynomial of $\OO_{\PP^2}$. 
In particular, 
$$\chi(V, W) = \sum_{i=0}^2 (-1)^i \ext^i(V, W) = r(V) r(W) \left(P\left(\mu(W) - \mu(V)\right) - \Delta(V) - \Delta(W)\right),$$ where $\ext^i(V,W)$ denotes the dimension of $\Ext^i(V,W)$. 

The Hilbert polynomial $P_V$ and the reduced Hilbert polynomial $p_V$ of a torsion-free sheaf are defined by
$$P_V(m) = \chi(V(m))= r(V) \frac{m^2}{2} + \mbox{l.o.t.}  \quad\textrm{and} \quad p_V = \frac{P_V}{r(V)}.$$
A sheaf $V$ is {\em  (semi)-stable} if $p_W(m) \leqpar p_V(m)$ for every proper subsheaf $W \subset V$ and $m\gg 0$. 

Every sheaf admits a unique {\em Harder--Narasimhan filtration} such that the successive quotients are semistable. Furthermore, every semistable sheaf admits a {\em Jordan--H\"{o}lder filtration} into stable sheaves. Two semistable sheaves are called {\em $S$-equivalent} if they have the same associated graded object with respect to the Jordan--H\"{o}lder filtration. There exists a projective moduli space $M(\bv)$ parameterizing $S$-equivalence classes of  semistable sheaves on $\PP^2$ with Chern character $\bv$ \cite{Gieseker, Maruyama}.  

\subsection{The classification of stable bundles on $\PP^2$} We now recall Dr\'{e}zet and Le Potier's classification of Chern characters of stable bundles on $\PP^2$. We refer the reader to \cite{DLP, LePotier, CoskunHuizengaGokova} for further details.

\subsubsection{Exceptional bundles} An {\em exceptional bundle} $E$ on $\PP^2$ is a stable bundle such that $\Ext^1(E,E) = 0$.  An {\em exceptional slope } $\alpha$ is the slope of an exceptional bundle. We denote the set of exceptional slopes by $\mathcal{E}$. Given an exceptional slope $\alpha \in \mathcal{E}$, there is a unique exceptional bundle $E_{\alpha}$ with that slope. For an exceptional bundle $E$, we have $\chi(E,E)=1$, hence the rank of $E$ and $\ch_1(E)$ are relatively prime.  Consequently, the rank of the exceptional bundle with slope $\alpha$ is the smallest positive integer $r_{\alpha}$ such that $r_{\alpha} \alpha$ is an integer. By Riemann--Roch, the discriminant of $E_\alpha$ is given by 
$$\Delta_{\alpha} = \frac{1}{2} \left(1 - \frac{1}{r_{\alpha}^2} \right).$$
The exceptional bundles are the stable bundles $E$ on $\PP^2$ with $\Delta(E) < \frac{1}{2}$. They are rigid and their moduli spaces consist of a single reduced point.

Dr\'{e}zet has given a complete classification of exceptional bundles on $\PP^2$ \cite{Drezet}. Line bundles are exceptional. Every other exceptional bundle on $\PP^2$ can be obtained from line bundles by a sequence of mutations. This description also yields an explicit one-to-one correspondence $\varepsilon: \ZZ[\frac{1}{2}] \to \mathcal{E}$ between dyadic integers and the exceptional slopes, defined inductively by 
$\varepsilon(n) = n$ for an integer $n$ and 
$$\varepsilon \left( \frac{2p+1}{2^{q+1}} \right) = \varepsilon \left( \frac{p}{2^q} \right) .  \varepsilon \left( \frac{p+1}{2^q} \right),$$ where $$\alpha . \beta = \frac{\alpha+\beta}{2} + \frac{\Delta_{\beta} - \Delta_{\alpha}}{3+\alpha-\beta}.$$
Thus every exceptional slope $\nu\in \cE$ can be written uniquely as $\nu = \alpha.\beta$ where $\alpha$ and $\beta$ are of the form $\alpha = \varepsilon(p/2^q)$ and $\beta = \varepsilon((p+1)/2^q)$.  The {\em order} of an exceptional bundle of slope $\alpha$ is the smallest integer $q\geq 0$ such that $\alpha = \varepsilon\left(p/2^q \right)$. In many arguments, we will induct on the order of an exceptional bundle.

\subsubsection{Higher dimensional moduli spaces}\label{sss-DLP} Each exceptional bundle provides an obstruction for the existence of semistable sheaves. These obstructions can be efficiently described by a fractal-like curve $\delta$ in the $(\mu, \Delta)$-plane called the {\em Dr\'{e}zet--Le Potier curve}. Explicitly, define
$$\delta(\mu)= \sup_{\alpha \in \mathcal{E}: | \mu- \alpha| < 3 } ( P(- |\mu - \alpha|) - \Delta_{\alpha}).$$ The following theorem of Dr\'{e}zet and Le Potier gives the classification of Chern characters of semistable bundles on $\PP^2$. 

\begin{theorem}\cite{DLP}\label{thm-DLP}
Let $\bv = (r, \mu, \Delta)$ be an integral Chern character of positive rank. Then the moduli space $M(\bv)$ is positive dimensional if and only if $\Delta \geq \delta(\mu)$.  If $r\geq 2$, then the general sheaf $V\in M(\bv)$ is a vector bundle.
\end{theorem}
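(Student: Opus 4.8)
The plan is to recall, at the level of a sketch, the two halves of this theorem of Dr\'ezet and Le Potier \cite{DLP} (see also \cite{LePotier}), which we use here only as an input. The inequality $\Delta\ge\delta(\mu)$ is a numerical obstruction coming from stability, so I would first prove that $M(\bv)$ positive dimensional forces $\Delta\ge\delta(\mu)$. Fix $V\in M(\bv)$ and an exceptional bundle $E_\alpha$ with $|\mu-\alpha|<3$. Stability forces the vanishing of two of the three groups $\Ext^i$ between $V$ and $E_\alpha$: when $\alpha<\mu$ one has $\Hom(V,E_\alpha)=0$ and $\Ext^2(V,E_\alpha)\cong\Hom(E_\alpha,V(-3))^\vee=0$ by comparison of slopes together with $|\mu-\alpha|<3$, and when $\alpha\ge\mu$ one symmetrically gets $\Hom(E_\alpha,V)=\Ext^2(E_\alpha,V)=0$ (in the borderline case $\alpha=\mu$, when $\mu$ is an exceptional slope, the slope comparison is replaced by a reduced Hilbert polynomial comparison, which is strict since a positive dimensional $M(\bv)$ is not the one-point moduli space of a direct sum of copies of $E_\alpha$, so $\Delta(\bv)>\Delta_\alpha$). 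Hence $\chi(V,E_\alpha)=-\ext^1(V,E_\alpha)\le 0$ (respectively $\chi(E_\alpha,V)\le 0$), which by Riemann--Roch reads exactly $\Delta\ge P(-|\mu-\alpha|)-\Delta_\alpha$; taking the supremum over the finitely many exceptional slopes relevant near $\mu$ gives $\Delta\ge\delta(\mu)$.

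For the converse I would construct, for every integral $(r,\mu,\Delta)$ with $\Delta\ge\delta(\mu)$, at least one stable sheaf with these invariants, and then invoke irreducibility of $M(\bv)$ so that a general member inherits open properties. The construction proceeds by induction on the order of the exceptional bundles that compute $\delta$ near $\mu$: one first disposes of the exceptional bundles themselves via Dr\'ezet's realization of them as iterated mutations of line bundles, and then, for a general pair $(\mu,\Delta)$ lying just above the Dr\'ezet--Le Potier curve, one realizes the general sheaf with those invariants as a general element of an $\Ext^1$ between suitable twists of the finitely many exceptional bundles computing $\delta(\mu)$, or as the general cohomology sheaf of a monad assembled from them. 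A parameter count then shows that this family is irreducible of the expected dimension $r^2(2\Delta-1)+1$ with stable general member; since $\Delta\ge\delta(\mu)>\tfrac12$ this dimension is at least $2$, so $M(\bv)$ is indeed positive dimensional. Finally one reaches every larger discriminant by elementary modifications: a general elementary modification of a general stable sheaf at a general point of $\P^2$ is stable, of the same rank and slope, with discriminant increased by $1/r$, and irreducibility of the parameter spaces involved is preserved throughout.

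The assertion that a general $V\in M(\bv)$ is locally free when $r\ge 2$ then follows formally, since the locally free locus of $M(\bv)$ is open and nonempty --- the general cohomology sheaf of the monads above is a vector bundle, and a general elementary modification of a bundle at a general point is again a bundle --- hence dense in the irreducible space $M(\bv)$. The genuinely hard step is the constructive existence statement: producing an actual stable bundle for every character on or above the Dr\'ezet--Le Potier curve rests on the full combinatorial theory of exceptional bundles and mutations on $\P^2$, and the delicate point is controlling stability of the monad cohomology uniformly as the order of the governing exceptional bundles grows, together with the boundary case $\Delta=\delta(\mu)$ where the stability and dimension margins are tightest. As this is precisely the content of \cite{DLP}, I would invoke that theorem and use only the resulting classification in the sequel.
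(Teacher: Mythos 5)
The paper offers no proof of this statement: it is quoted directly from \cite{DLP} (and \cite{LePotier}) as an external input, which is exactly what you do for the hard existence half. Your outline of the necessity direction --- stability plus Serre duality force $\chi(V,E_\alpha)\le 0$ or $\chi(E_\alpha,V)\le 0$ for each exceptional slope $\alpha$ with $|\mu-\alpha|<3$, which by Riemann--Roch reads $\Delta\ge P(-|\mu-\alpha|)-\Delta_\alpha$ --- is the standard argument and is sound, so your proposal is consistent with the paper's (non-)treatment and nothing further needs to be compared.
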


We depict the graph of the Dr\'ezet--Le Potier function and shade the region of Chern characters of stable bundles with positive dimensional moduli spaces in Figure \ref{fig-DLP}.

\begin{figure}[t]

\begin{center}
\setlength{\unitlength}{1in}
\begin{picture}(5.34,2.15)
 \put(0,0){\includegraphics[scale=.5,bb=0 0 10.69in 4.31in]{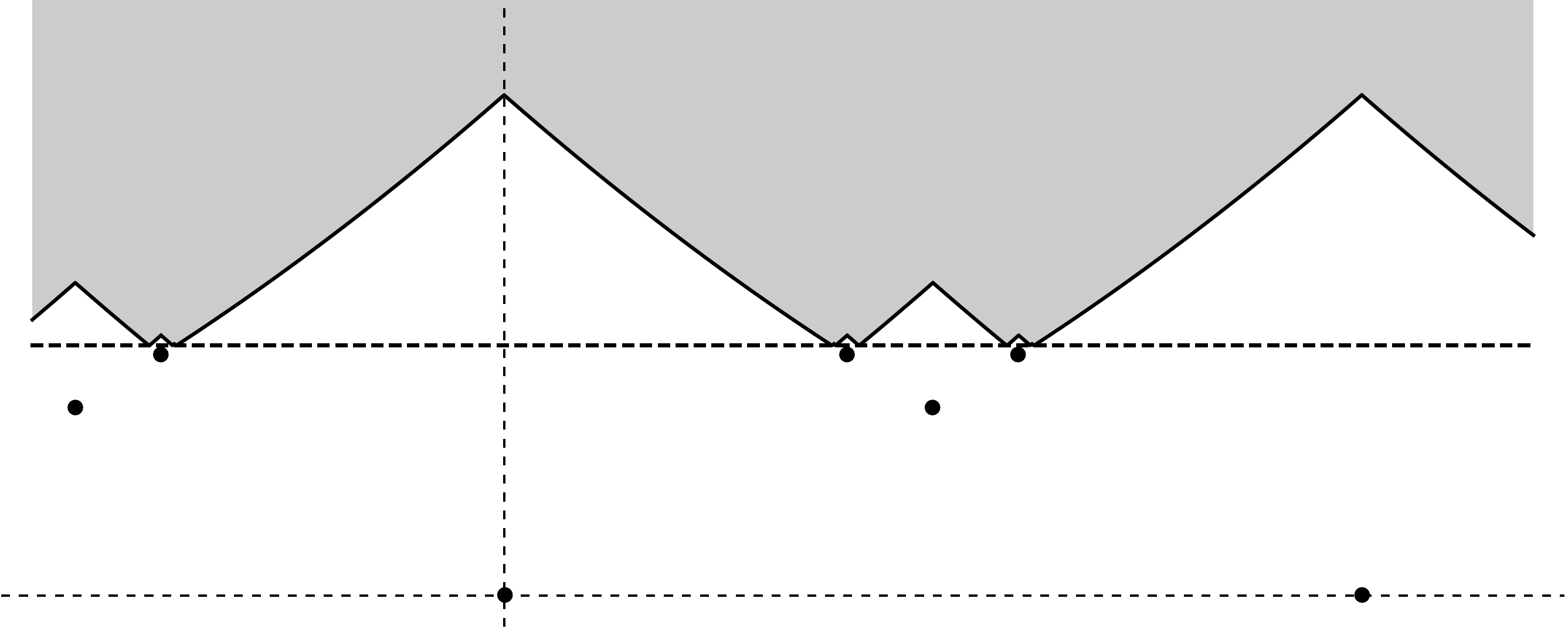}}
\put(5,.2){$\mu$}
\put(1.75,.2){$\OO_{\P^2}$}
\put(4.2,.2){$\OO_{\P^2}(1)$}
\put(2.9,.59){$T_{\P^2}(-1)$}
\put(1.75,2){$\Delta$}
\put(1.75,1.8){$1$}
\put(1.75,1.05){$1/2$}
\put(1.75,.0){$0$}
\put(4.65,.0){$1$}
\put(3.5,1.45){$\Delta=\delta(\mu)$}
\end{picture}
\end{center}
\caption{The Dr\'ezet--Le Potier curve.  Chern characters of stable bundles with positive dimensional moduli spaces lie in the shaded region above the curve.  The first several exceptional bundles are also displayed.  See Theorem \ref{thm-DLP}.}\label{fig-DLP}
\end{figure}

For each exceptional slope $\alpha \in \mathcal{E}$, there exists an interval 
$I_{\alpha} = ( \alpha - x_{\alpha}, \alpha+x_{\alpha})$  with $$x_{\alpha} =\frac{3- \sqrt{5+8\Delta_{\alpha}}}{2}$$ such that if $\mu\in I_\alpha$, then the supremum defining $\delta(\mu)$ is achieved by $\alpha$. In other words,
$$\delta(\mu) = P(- |\mu - \alpha|) - \Delta_{\alpha} \quad \mbox{if} \  \mu \in I_{\alpha}.$$
The function $\delta(\mu)$ is equal to $1/2$ at the end points of the interval $I_{\alpha}$.
The union of the intervals $\bigcup_{\alpha\in \cE} I_\alpha$ covers all rational numbers, but the complement $$C := \mathbb{R} - \bigcup_{\alpha \in \mathcal{E}} I_{\alpha}$$ is a Cantor set.  The graph of $\delta(\mu)$ intersects the line $\Delta = \frac{1}{2}$ precisely along $C$.  An important fact is that any point of the Cantor set is either an endpoint of $I_{\alpha}$ for some $\alpha$ or is a transcendental number \cite[Theorem 4.1]{CHW}. 

\subsection{Orthogonal parabolas and corresponding exceptional bundles} Here we recall the construction of the corresponding exceptional bundles to a Chern character $\bv$ with a positive dimensional moduli space $M(\bv)$.  

\subsubsection{Orthogonal parabolas}\label{sss-orthogonalParabolas} A Chern character $\bv$ of nonzero rank determines a $2$-plane $\bv^\perp \subset K(\P^2)\te \RR$ of characters $\bw$ with $\chi(\bv \te \bw) = 0$.  The projectivization of $\bv^\perp$ can be viewed as the \emph{orthogonal parabola to $\bv$} in the $(\mu,\Delta)$-plane, given explicitly by the Riemann--Roch formula by  $$\bv^\perp: \Delta = P(\mu+\mu(\bv)) - \Delta(\bv).$$ Orthogonal parabolas open upwards, and if $\bv$ has positive rank then $\chi(\bv \te \bw) > 0$ for characters $\bw$ \emph{below} the orthogonal parabola to $\bv$.  The orthogonal parabolas to two characters $\bv_1,\bv_2$ with $\mu(\bv_1)\neq \mu(\bv_2)$ intersect at a unique point, corresponding to the intersection $\bv_1^\perp \cap \bv_2^\perp$ in $K(\P^2) \te \RR$. Conversely, given two points $(\mu_1,\Delta_1)$ and $(\mu_2,\Delta_2)$ with $\mu_1\neq \mu_2$, there is, up to scale, a unique character $\bv$ such that the orthogonal parabola to $\bv$ contains them both.

\begin{figure}[t] 
\begin{center}
\setlength{\unitlength}{1in}
\begin{picture}(2.29,1.61)
\put(0,0){\includegraphics[scale=.5,bb=0 0 4.58in 3.22in]{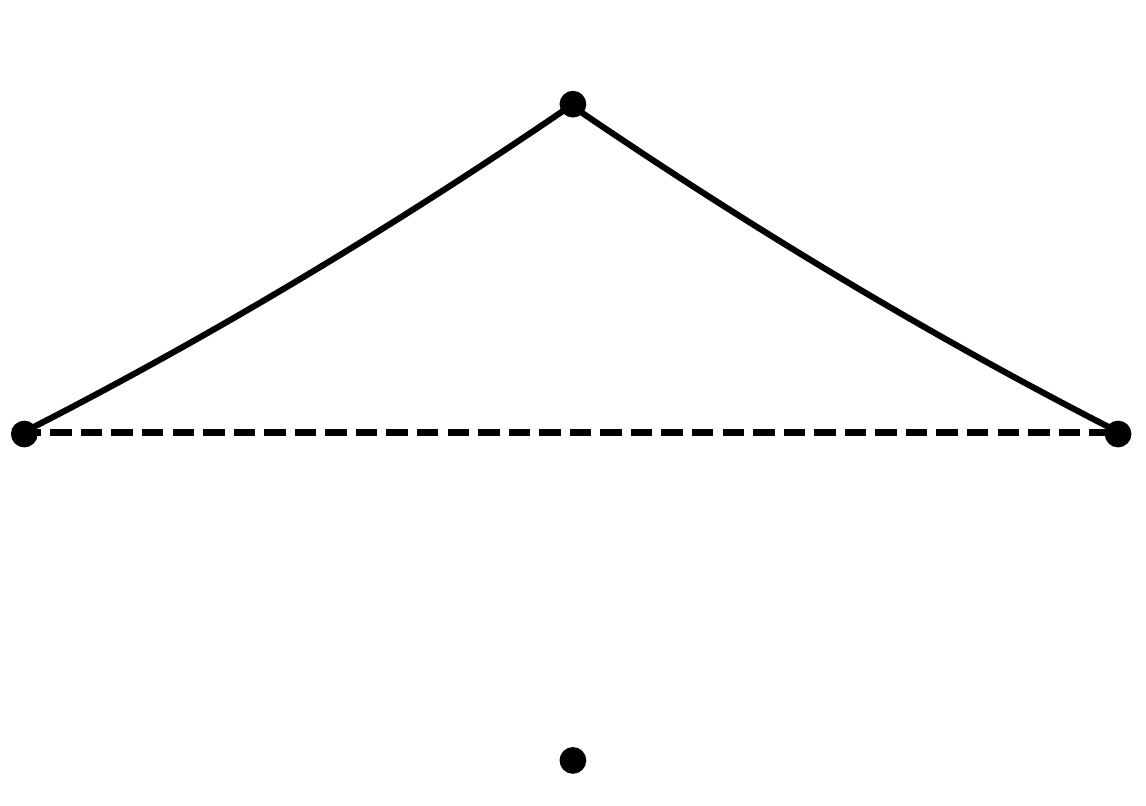}}
\put(.4,1.2){$E_{-\alpha}^\perp$}
\put(1.55,1.2){$E_{-\alpha-3}^\perp$}
\put(1.2,.15){$E_\alpha = (\alpha,\frac{1}{2}-\frac{1}{2r_\alpha^2})$}
\put(.9,.58){$I_\alpha \times \{\frac{1}{2}\}$}
\put(-.72,.7){$(\alpha-x_\alpha,\frac{1}{2})$}
\put(2.3,.7){$(\alpha+x_\alpha,\frac{1}{2})$}
\put(.77,1.48){$(\alpha,\frac{1}{2}+\frac{1}{2r_\alpha^2})$}
\end{picture}
\end{center}
\caption{The graph of the Dr\'ezet--Le Potier curve $\Delta = \delta(\mu)$ over the interval $I_\alpha$.}\label{fig-deltaInterval}
\end{figure}

\begin{example}
We graph the Dr\'ezet--Le Potier curve $\Delta = \delta(\mu)$ over the interval $I_\alpha$ and note its key features in Figure \ref{fig-deltaInterval}.  On the interval $(\alpha-x_\alpha,\alpha]$, the graph consists of the orthogonal parabola to $E_{-\alpha}$.  On the interval $[\alpha,\alpha+x_\alpha)$, the graph consists of the orthogonal parabola to $E_{-\alpha-3}$.
\end{example}

\subsubsection{Corresponding exceptional bundles} Let $\bv$ be a Chern character such that $M(\bv)$ is positive-dimensional.  By \cite[Theorem 3.1]{CHW}, the orthogonal parabola to $\bv$ intersects the line $\Delta=\frac{1}{2}$ in two points lying in segments $I_{\nu^-} \times \{\frac{1}{2}\}$ and $I_{\nu^+} \times \{\frac{1}{2}\}$ with $\nu^- < \nu^+$.  The bundles $E_{\nu^+}$ and $E_{\nu^-}$ are the \emph{primary and secondary corresponding exceptional bundles to $\bv$}, respectively.

\begin{remark}\label{rem-corrExcGap}
Since $\Delta(\bv) > \frac{1}{2}$, the orthogonal parabola to $\bv$ intersects the line $\Delta = \frac{1}{2}$ in two points that are at least $3$ units apart.  From this observation it easily follows that $\nu^+-\nu^- \geq 3$.
\end{remark}

\begin{remark}
The exceptional bundle $E_{\nu^-}$ can also be defined as the dual of the primary corresponding exceptional bundle to the Serre dual character $\bv^D$.  
\end{remark}

In the introduction a different definition of the primary corresponding exceptional bundle was given; the next result says that it is equivalent to the definition given above.  The proof is a straightforward application of the Intermediate Value Theorem and diagrams like Figure \ref{fig-deltaInterval}.

\begin{proposition}\label{prop-EulerExceptional}
Let $E_\beta$ be an exceptional bundle, and let $\bv$ be a Chern character such that $M(\bv)$ is positive dimensional.  Let $E_{\nu^{\pm}}$ be the corresponding exceptional bundles to $\bv$.
\begin{enumerate}
\item If $\beta < \nu^{-}$ or $\beta > \nu^+$, then $\chi(\bv \te E_\beta) >0$.
\item If $\nu^- < \beta < \nu^+$, then $\chi(\bv \te E_\beta) <0$.
\end{enumerate}
Thus $E_{\nu^{\pm}}$ are uniquely characterized as the exceptional bundles $E_\beta$ where the sign of $\chi(\bv\te E_\beta)$ changes.
\end{proposition}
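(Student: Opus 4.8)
The plan is to reduce everything to the behavior of the quadratic function $\mu \mapsto \chi(\bv \te E_\mu)$ along the line $\Delta = \tfrac12$, and then invoke the Intermediate Value Theorem. First I would recall from Riemann--Roch the explicit formula
$$\chi(\bv \te E_\beta) = r(\bv)\, r_\beta \bigl(P(\mu(\bv) + \beta) - \Delta(\bv) - \Delta_\beta\bigr),$$
so that, up to the positive factor $r(\bv) r_\beta$, the sign of $\chi(\bv \te E_\beta)$ is governed by whether the point $(-\beta, \Delta_\beta)$ — equivalently, after reflecting, the point with invariants of $E_\beta^*$ — lies below, on, or above the orthogonal parabola $\bv^\perp$. (One must be slightly careful with the sign conventions relating $E_\beta$, $E_{-\beta}$, and the defining equation $\bv^\perp: \Delta = P(\mu + \mu(\bv)) - \Delta(\bv)$ from \S\ref{sss-orthogonalParabolas}; the key point is that $\chi(\bv\te\bw) > 0$ exactly when $\bw$ lies strictly below $\bv^\perp$.)

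The second step is to compare the position of $E_\beta$ to the position of the two points where $\bv^\perp$ crosses the line $\Delta = \tfrac12$. By the definition of the corresponding exceptional bundles, these crossing points lie in the segments $I_{\nu^-}\times\{\tfrac12\}$ and $I_{\nu^+}\times\{\tfrac12\}$, and by Remark \ref{rem-corrExcGap} they are at least $3$ apart, so the parabola $\bv^\perp$ lies strictly above the line $\Delta=\tfrac12$ for slopes between the two crossing points and strictly below it outside. Now $E_\beta$ itself sits at height $\Delta_\beta = \tfrac12(1 - 1/r_\beta^2) < \tfrac12$, strictly below the line $\Delta = \tfrac12$. So if $\beta$ (in the appropriate reflected coordinate) lies outside the interval between the two crossings, then $E_\beta$ lies below $\bv^\perp$ and $\chi(\bv\te E_\beta) > 0$; this handles case (1). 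For case (2), when $\nu^- < \beta < \nu^+$, I would use that between $\nu^-$ and $\nu^+$ the parabola $\bv^\perp$ stays above $\Delta = \tfrac12$, hence above the height $\Delta_\beta$ of $E_\beta$ — but here one needs to check that the relevant reflected point of $E_\beta$ has slope coordinate landing in the open interval cut out by the crossings, which follows because the crossings lie in $I_{\nu^-}$ and $I_{\nu^+}$ and the $I_\alpha$'s are pairwise disjoint (a picture like Figure \ref{fig-deltaInterval} makes this transparent). This gives $\chi(\bv \te E_\beta) < 0$.

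The final step is the uniqueness/sign-change statement: since $\chi(\bv \te E_\beta) > 0$ for all exceptional $\beta$ outside $[\nu^-,\nu^+]$ and $< 0$ for all exceptional $\beta$ strictly inside, the sign changes exactly as $\beta$ crosses $\nu^\pm$, and $\nu^+$ is characterized as the smallest exceptional slope above which $\chi$ is always positive, matching the introduction's definition of $E_+$. The main obstacle I anticipate is purely bookkeeping: getting the reflection/duality conventions between $E_\beta$ and $E_{-\beta}$ consistent with the stated equation for $\bv^\perp$ and with the definition of $\nu^\pm$ via the segments $I_{\nu^\pm}$, and making sure the "$<$" versus "$\le$" at the boundary points $\beta = \nu^\pm$ is not claimed (the proposition deliberately only asserts strict inequalities for $\beta \ne \nu^\pm$, since at $\beta = \nu^\pm$ one could have $\chi(\bv\te E_{\nu^\pm})$ of either sign or zero). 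Once the conventions are pinned down, the argument is exactly the Intermediate Value Theorem applied to a concave-up parabola versus a horizontal line, as the authors indicate.
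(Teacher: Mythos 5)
Your reduction of the sign of $\chi(\bv\te E_\beta)$ to the position of the point $(\beta,\Delta_\beta)$ relative to the parabola $\bv^\perp$ is the right starting point (no reflection is needed: since $\chi(\bv\te\bw)>0$ exactly when $\bw$ lies below $\bv^\perp$, the relevant point is the invariants of $E_\beta$ itself, not of $E_\beta^*$), and your argument for case (1) is essentially correct: outside the two crossing points the upward-opening parabola $\bv^\perp$ lies above $\Delta=\tfrac12$ while $E_\beta$ lies below it, and disjointness of the intervals $I_\alpha$ guarantees that an exceptional $\beta$ with $\beta>\nu^+$ or $\beta<\nu^-$ really is beyond the crossings.

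Case (2), however, has a genuine gap. You assert that between the two crossings the parabola $\bv^\perp$ ``stays above $\Delta=\tfrac12$''; this is backwards, since an upward-opening parabola lies \emph{below} a horizontal line between its two intersections with that line. (Note also that even granting your premise, ``$\bv^\perp$ above $E_\beta$'' would mean $E_\beta$ lies below $\bv^\perp$, which by your own criterion gives $\chi(\bv\te E_\beta)>0$, the opposite of what (2) asserts.) With the corrected statement the argument does not close: for $\nu^-<\beta<\nu^+$ both the arc of $\bv^\perp$ and the point $E_\beta$ sit strictly below $\Delta=\tfrac12$, so no comparison follows from positions relative to that line alone. The missing ingredient is the hypothesis $\Delta(\bv)\geq\delta(\mu(\bv))$, which your sketch never uses but which is essential here. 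One way to close the gap: when $-3\leq\mu(\bv)+\beta\leq 0$, the term indexed by $-\beta$ (or $-\beta-3$) in the supremum defining $\delta$ gives $\delta(\mu(\bv))\geq P(\mu(\bv)+\beta)-\Delta_\beta$, i.e.\ $\bv$ lies on or above $E_\beta^\perp$ because arcs of $E_\beta^\perp$ are arcs of the Dr\'ezet--Le Potier curve (Figure \ref{fig-deltaInterval}); when $\mu(\bv)+\beta>0$ one instead uses $\chi(\bv\te(1,\nu^+-x_{\nu^+},\tfrac12))<0$ from Proposition \ref{prop-corrExc} together with $\beta+x_\beta\leq\nu^+-x_{\nu^+}$ and the estimate $P(\mu(\bv)+\beta+x_\beta)-P(\mu(\bv)+\beta)\geq\tfrac32 x_\beta\geq\tfrac{1}{2r_\beta^2}=\tfrac12-\Delta_\beta$ (and symmetrically when $\mu(\bv)+\beta<-3$). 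A convexity argument using only the endpoints of $I_\beta$ does not suffice, since $x_\beta^2<1/r_\beta^2$; so some version of this finer analysis is unavoidable.
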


We can also describe the characters $\bv$ with a given corresponding exceptional bundle.  Again the proof uses just the Intermediate Value Theorem.  See Figure \ref{fig-corrExc}.

\begin{proposition}\label{prop-corrExc}
Let $E_\beta$ be an exceptional bundle, and let $\bv$ be a non-exceptional stable Chern character.  
\begin{enumerate}
\item The bundle $E_\beta$ is the primary corresponding exceptional bundle to $\bv$ if and only if $$\chi(\bv \te (1,\beta-x_\beta,1/2)) < 0 \quad \textrm{and}\quad \chi(\bv \te (1,\beta+x_\beta,1/2)) >0.$$
\item The bundle $E_\beta$ is the secondary corresponding exceptional bundle to $\bv$ if and only if $$\chi(\bv \te (1,\beta-x_\beta,1/2)) > 0 \quad \textrm{and}\quad \chi(\bv \te (1,\beta+x_\beta,1/2)) < 0.$$
\end{enumerate}
\end{proposition}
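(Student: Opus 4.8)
The plan is to translate both equivalences into the geometry of the orthogonal parabola $\bv^\perp$ near the line $\Delta=\tfrac12$. Let $s^-<s^+$ be the two slopes at which $\bv^\perp$ meets $\Delta=\tfrac12$. By the construction of the corresponding exceptional bundles via \cite[Theorem~3.1]{CHW}, $\nu^+$ is the unique exceptional slope with $s^+\in I_{\nu^+}$ and $\nu^-$ the unique one with $s^-\in I_{\nu^-}$. So it is enough to show that the two inequalities in part~(1) are equivalent to $s^+\in I_\beta$, and the two in part~(2) to $s^-\in I_\beta$.

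First I would record the sign of $\chi\bigl(\bv\te(1,\mu,\tfrac12)\bigr)$ as a function of $\mu$. Since $\bv$ has positive rank, $\chi(\bv\te\bw)>0$ exactly when $(\mu(\bw),\Delta(\bw))$ lies strictly below the upward-opening parabola $\bv^\perp$, and $\bv^\perp$ lies strictly below the line $\Delta=\tfrac12$ precisely over the open interval $(s^-,s^+)$. Hence $\chi\bigl(\bv\te(1,\mu,\tfrac12)\bigr)$ is positive for $\mu<s^-$ and for $\mu>s^+$, zero for $\mu\in\{s^-,s^+\}$, and negative for $s^-<\mu<s^+$. Evaluating at the endpoints $\beta-x_\beta$ and $\beta+x_\beta$ of $I_\beta$: the two inequalities of part~(1) say exactly that $\beta-x_\beta\in(s^-,s^+)$ while $\beta+x_\beta\notin[s^-,s^+]$. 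Because $\beta-x_\beta<\beta+x_\beta$ and $\beta-x_\beta>s^-$, the alternative $\beta+x_\beta<s^-$ is impossible, so part~(1) is equivalent to the chain $s^-<\beta-x_\beta<s^+<\beta+x_\beta$; in particular it forces $s^+\in I_\beta$. Conversely, if $s^+\in I_\beta$ then $s^+<\beta+x_\beta$, and combining Remark~\ref{rem-corrExcGap} (which notes that $s^+-s^-\geq 3$) with the trivial bound $2x_\beta<3$ coming from $\Delta_\beta\geq 0$, we get $s^-\leq s^+-3<\beta+x_\beta-3<\beta-x_\beta$; the whole chain holds, so the inequalities of part~(1) follow. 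Part~(2) is symmetric: its inequalities say $\beta+x_\beta\in(s^-,s^+)$ and $\beta-x_\beta\notin[s^-,s^+]$, which (as $\beta-x_\beta<\beta+x_\beta<s^+$) forces $\beta-x_\beta<s^-<\beta+x_\beta<s^+$, equivalent to $s^-\in I_\beta$; the converse again uses $s^+-s^-\geq 3>2x_\beta$.

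The whole thing is essentially bookkeeping, carried out with diagrams like Figure~\ref{fig-deltaInterval}, and the only real input is in the two converse directions, where one needs the separation $s^+-s^-$ of the intersection points of $\bv^\perp$ with $\Delta=\tfrac12$ to exceed the length $2x_\beta$ of $I_\beta$ --- exactly Remark~\ref{rem-corrExcGap} plus the elementary estimate on $x_\beta$. The one step where care is genuinely needed is the sign bookkeeping: fixing which side of $\bv^\perp$ carries $\chi>0$, and observing that a point of the form $(\mu,\tfrac12)$ lies below $\bv^\perp$ if and only if $\mu\notin[s^-,s^+]$ (this is where the Intermediate Value Theorem, or really the convexity of $\bv^\perp$, enters). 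Once these conventions are pinned down the proposition drops out.
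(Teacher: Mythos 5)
Your argument is correct and is exactly the argument the paper has in mind: the paper's "proof" of Proposition \ref{prop-corrExc} is the single remark that it follows from the Intermediate Value Theorem and diagrams like Figure \ref{fig-corrExc}, and your write-up simply fills in that sketch (sign of $\chi(\bv\te(1,\mu,\tfrac12))$ governed by the position of $(\mu,\tfrac12)$ relative to the upward-opening parabola $\bv^\perp$, plus the separation $s^+-s^-\geq 3 > 2x_\beta$ from Remark \ref{rem-corrExcGap} for the converse directions). No issues.
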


\begin{figure}[t] 
\begin{center}

\setlength{\unitlength}{1in}
\begin{picture}(6.5,2.3)
\put(0,0){\includegraphics[scale=.39,bb=0 0 16.49in 5.9in]{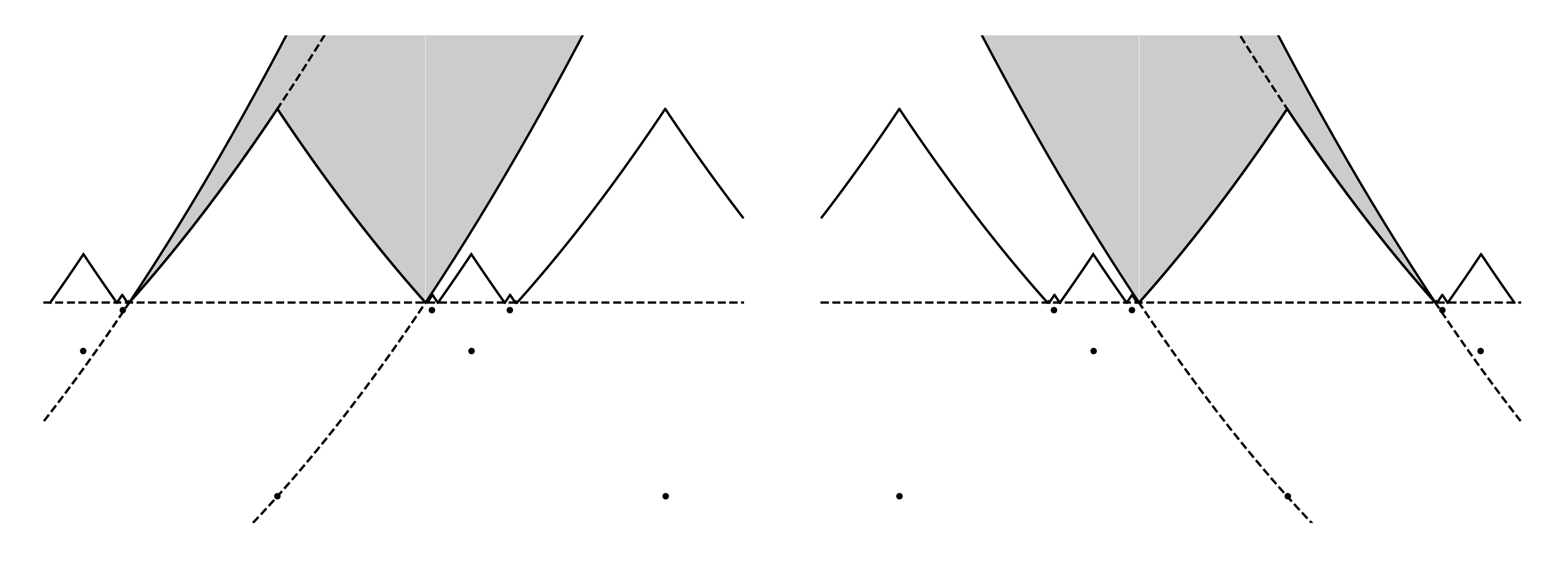}}
\put(1.3,2){$\scriptstyle E_{\nu^+}^\perp$}
\put(1.15,.15){$\scriptstyle E_{-\nu^+}$}
\put(1.5,.6){${\scriptstyle(1,\nu^+-x_{\nu^+},1/2)^\perp}$}
\put(.15,2){${\scriptstyle(1,\nu^++x_{\nu^+},1/2)^\perp}$}
\put(4.9,2){$\scriptstyle E_{\nu^-}^\perp$}
\put(5.35,.25){$\scriptstyle E_{-\nu^--3}$}

\put(5.35,2){${\scriptstyle(1,\nu^--x_{\nu^-},1/2)^\perp}$}
\put(4,.6){${\scriptstyle(1,\nu^-+x_{\nu^-},1/2)^\perp}$}
\end{picture}
\end{center}
\caption{In the diagram on the left, the shaded region indicates the stable characters $\bv$ with primary corresponding exceptional bundle $E_{\nu^+}$.  Characters below the dotted parabola given by $E_{\nu^+}^\perp$ have $\chi(\bv \te E_{\nu^+}) > 0$, and the opposite inequality holds on the other side.  On the right, the shaded region indicates stable characters with secondary corresponding exceptional bundle $E_{\nu^-}$.  See Proposition \ref{prop-corrExc}.}\label{fig-corrExc}
\end{figure}

\subsection{Beilinson spectral sequences and the Kronecker fibration} Here we discuss how to use exceptional collections as building blocks for arbitrary sheaves.  We then discuss the Kronecker fibration $M(\bv) \dashrightarrow Kr(\bv)$ which associates a two-term complex $K$ to a general sheaf $V\in M(\bv)$.

\subsubsection{Beilinson spectral sequences}
Following \cite{DrezetBeilinson}, we define a \emph{triad} of exceptional bundles on $\P^2$ to be a collection $(E,G,F)$ of exceptional bundles whose slopes are of one of the form $$(\beta-3,\alpha,\alpha.\beta), \quad (\alpha,\alpha.\beta,\beta),\quad \textrm{or}\quad(\alpha.\beta,\beta,\alpha+3)$$ for some exceptional slopes $\alpha,\beta$ with $\alpha = \varepsilon(p/2^q)$ and $\beta = \varepsilon((p+1)/2^q)$.  Any triad is a full strong exceptional collection for the derived category $D^b(\P^2)$.   Corresponding to the triad $(E,G,F)$ is a fourth exceptional bundle $M$ defined as the cokernel of a canonical coevaluation mapping $$0\to G\to F\te \Hom(G,F)^* \to M\to 0.$$ The collection $(E^*(-3),M^*,F^*)$ is again a triad called the \emph{dual triad} to $(E,G,F)$.  The Beilinson spectral sequence allows us to decompose any sheaf $V$ on $\P^2$ in terms of the triad $(E,G,F)$.

\begin{theorem}[\cite{DrezetBeilinson}]
Let $V$ be a coherent sheaf on $\P^2$, and let $(E,G,F)$ be a triad.  Write
$$\begin{aligned}
G_{-2} &= E \\
G_{-1} &= G \\
G_0 &= F 
\end{aligned}\qquad\qquad
\begin{aligned} 
F_{-2} &= E^*(-3)\\
F_{-1} &= M^*\\
F_{0} &= F^*,
\end{aligned}$$ and put $G_i=F_i=0$ if $i\notin \{-2,-1,0\}$.
There is a spectral sequence with $E_1^{p,q}$-page $$E_{1}^{p,q} = G_p\te H^q(V\te F_p)$$ converging to $V$ in degree $0$ and to $0$ in all other degrees.
\end{theorem}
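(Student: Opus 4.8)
The plan is to run the standard Beilinson-type argument through a resolution of the diagonal, adapted to the exceptional collection $(E,G,F)$. Write $X=\P^2$, let $\Delta\subset X\times X$ be the diagonal, and let $\pi_1,\pi_2\colon X\times X\to X$ be the two projections. For any coherent sheaf $V$ on $X$ there is the tautological identification $V\cong R\pi_{1*}(\pi_2^*V\otimes^L\OO_\Delta)$, since $\OO_\Delta$ is the Fourier--Mukai kernel of the identity functor on $D^b(X)$. Thus the theorem will follow once we produce a locally free resolution of $\OO_\Delta$ on $X\times X$ of the shape
$$0\to G_{-2}\boxtimes F_{-2}\to G_{-1}\boxtimes F_{-1}\to G_0\boxtimes F_0\to\OO_\Delta\to 0,$$
viewed as a complex $\cK^\bullet$ in cohomological degrees $-2,-1,0$. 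Indeed, substituting $\cK^\bullet$ for $\OO_\Delta$, applying $\pi_2^*V\otimes(-)$ followed by $R\pi_{1*}$, and invoking the projection formula together with flat base change along $\pi_1$ yields $R\pi_{1*}\bigl(G_p\boxtimes(V\otimes F_p)\bigr)\cong G_p\otimes R\Gamma(X,V\otimes F_p)$; the hypercohomology spectral sequence associated to $\cK^\bullet$ then has $E_1^{p,q}=G_p\otimes H^q(V\otimes F_p)$ and abuts to $\mathcal H^{p+q}\bigl(R\pi_{1*}(\pi_2^*V\otimes^L\OO_\Delta)\bigr)$, which is $V$ in degree $0$ and vanishes otherwise. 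That the collection is strong guarantees the $E_1$-page has exactly this form, with no contributions outside the displayed bidegrees.

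So the real content is the construction of the diagonal resolution $\cK^\bullet$, and this is where the specific structure of the triad enters. Since any triad $(E,G,F)$ is a full strong exceptional collection on $X$ (as recorded above), it admits a left-dual exceptional collection, computed by mutations; the point I would verify is that this dual collection is precisely $(F_{-2},F_{-1},F_0)=(E^*(-3),M^*,F^*)$, with $M$ the mutation bundle cut out by the coevaluation sequence $0\to G\to F\otimes\Hom(G,F)^*\to M\to 0$. Granting the appropriate orthogonality between $\{G_p\}$ and $\{F_{p'}\}$, the standard Bondal--Kapranov construction of the resolution of the diagonal attached to an exceptional collection and its dual produces exactly the complex $\cK^\bullet$ above, with differentials assembled from the canonical evaluation and coevaluation maps; fullness is what guarantees that this boxproduct complex is genuinely a resolution of $\OO_\Delta$. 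Alternatively, one may take Beilinson's original resolution of $\OO_\Delta$ on $\P^2$ (attached to the triad $(\OO_{\P^2},\OO_{\P^2}(1),\OO_{\P^2}(2))$ and its dual, suitably twisted) as a base case and transport it along the chain of mutations producing $(E,G,F)$, inducting on the order of the triad and checking at each mutation that the diagonal resolution transforms in the expected way.

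The step I expect to be the main obstacle is exactly this construction of $\cK^\bullet$: identifying the dual collection $(E^*(-3),M^*,F^*)$ and proving exactness of the resulting boxproduct complex. It rests on the combinatorics of mutations of exceptional bundles on $\P^2$ (Dr\'ezet--Le Potier, Rudakov) and on careful bookkeeping of the twists so that determinants and slopes match up; this is the part genuinely carried out in \cite{DrezetBeilinson}. Once $\cK^\bullet$ is available, the remaining steps --- the projection-formula computation of the $E_1$-page, the convergence statement, and the vanishing of the abutment off degree $0$ --- are formal consequences of the standard machinery of hypercohomology spectral sequences.
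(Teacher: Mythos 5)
The paper states this result as an imported theorem of Dr\'ezet \cite{DrezetBeilinson} and supplies no proof of its own, so there is no internal argument to compare against; what you have written is the standard (and intended) route. Your formal skeleton is correct: realizing the identity functor by the kernel $\OO_\Delta$, resolving $\OO_\Delta$ by the box-product complex $G_p\boxtimes F_p$ built from the triad and its dual triad, and extracting the $E_1$-page via the projection formula and flat base change does produce $E_1^{p,q}=G_p\te H^q(V\te F_p)$ abutting to $V$, and you have the roles of the two factors the right way around. The caveat is that your write-up is a proof outline rather than a proof: the identification of the dual collection as $(E^*(-3),M^*,F^*)$, the orthogonality $H^q(G_p\te F_{p'})=0$ for $p\neq p'$ together with $H^{-p}(G_p\te F_p)\cong\CC$ (note the cohomological degree is $-p$, not $0$, which is why the terms sit in degrees $-2,-1,0$), and above all the exactness of the box-product complex as a resolution of $\OO_\Delta$ are asserted and deferred to \cite{DrezetBeilinson}. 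Those deferred items are precisely the nontrivial content of the theorem; since the paper itself cites them away in exactly the same manner, your proposal is consistent with the paper's treatment but does not constitute an independent proof of the hard step.
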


\subsubsection{Moduli of two-term complexes}\label{sssec-complexes} Let $(E,F)$ be an exceptional pair, i.e., a pair of exceptional bundles with $\Ext^i(E,F)=0$ for $i>0$ and $\Ext^i(F,E) =0$ for all $i$.  Consider two complexes of the form $K:E^{b}\to F^{a}$ and $K':E^{b'}\to F^{a'}$, each supported in degrees $-1$ and $0$.  A spectral sequence calculation (see \cite[Lemma 5.4]{CHW}) shows that the homomorphisms $K'\to K$ in $D^b(\P^2)$ are exactly the homomorphisms of complexes, and so are given by commuting diagrams  
$$\xymatrix{
E^{b'}\ar[d] \ar[r] & F^{a'}  \ar[d]\\
E^b \ar[r] & F^a
}$$
where the vertical maps are scalar matrices.  In particular, two complexes of the form  $E^b\to F^a$ are isomorphic if and only if they are conjugate under the natural action of $\GL(b)\times \GL(a)$.  By associating a complex $E^b \to F^a$ to the linear map $\CC^b\to \CC^a \te \Hom(E,F)^*$ we see that the full subcategory of $D^b(\P^2)$ consisting of complexes of this form is an abelian category equivalent to the category of Kronecker $\Hom(E,F)^*$-modules (see \cite[Section 6]{CHW} or \S\ref{sec-kronecker} for definitions). In particular, we can compute $\Ext^i(K',K)$ by first passing to the associated Kronecker modules.

\subsubsection{The Kronecker fibration when $\chi(V\te E_{\nu^+})>0$}\label{sss-res1} Following \cite[Section 5]{CHW}, let $\bv$ be a character such that $M(\bv)$ is positive dimensional, and let $E_{\nu^+}$ be the primary corresponding exceptional bundle.  Assume $\chi(V\te E_{\nu^+})>0$.  We write $\nu^+ = \alpha.\beta$ for some exceptional slopes $\alpha$ and $\beta$, and we consider the triad $(E_{-\alpha-3},E_{-\beta},E_{-\nu^+})$ with dual triad $(E_\alpha,E_{\alpha.\nu^+},E_{\nu^+})$. The Beilinson spectral sequence of the general $V\in M(\bv)$ takes the form
 $$\xymatrix{
E_{-\alpha-3}^{m_1} \ar[r] & E_{-\beta}^{m_2}  \ar[r] & 0  \\
0\ar[r] & 0  \ar[r] & E_{-\nu^+}^{m_3}  \\
}$$
where
$$
m_1 =-\chi(V\te E_\alpha) \qquad m_2 = -\chi(V\te E_{\alpha.\nu^+})\qquad
m_3 = \chi(V\te E_{\nu^+}) = \hom(E_{-\nu^+},V).
$$
Let $K\in D^b(\P^2)$ be the two-term complex $$K:E_{-\alpha-3}^{m_1}\to E_{-\beta}^{m_2}$$ in degrees $-1$ and $0$.  Then $K$ is isomorphic to the mapping cone of the canonical evaluation $E_{-\nu^+}^{m_3}\to V$, so there is a triangle $$E_{-\nu^+}^{m_3}\to V\to K\to \cdot.$$

The complex $K$ can also be arrived at in another way.  If we consider the family of cokernels $$0\to E_{-\alpha-3}^{m_1}\fto{\begin{pmatrix}\scriptstyle\phi_1\\ \scriptstyle\phi_2\end{pmatrix}} E_{-\beta}^{m_2}\oplus E_{-\nu^+}^{m_3} \to V \to 0$$ parameterized by $S = \Hom(E_{-\alpha-3}^{m_1},E_{-\beta}^{m_2}\oplus E_{-\nu^+}^{m_3})$, then the general cokernel $V$ is a general stable sheaf and $K$ is isomorphic to the two-term complex given by $\phi_1$.  Consequently, as $V$ varies in moduli, the map defining the complex $K$ is also general. In \cite[Proposition 6.3]{CHW} it is shown that this implies that the corresponding Kronecker module is stable (see \S\ref{sec-kronecker} for stability of Kronecker modules).  Then if $Kr(\bv)$ is the corresponding moduli space of semistable Kronecker modules, the Kronecker fibration is the dominant rational map $M(\bv)\dashrightarrow Kr(\bv)$ sending $V\in M(\bv)$ to the isomorphism class of the Kronecker module associated to $K$.  The general fiber of the map is positive-dimensional and birational to a Grassmannian.

\begin{remark}
Since $K$ is given by a general map $\phi_1$ and $\sHom(E_{-\alpha-3},E_{-\beta})$ is globally generated (see Theorem \ref{thm-excgg}), we see by a Bertini-type theorem \cite[Proposition 2.6]{HuizengaJAG} that unless $\rk(K) = -1$, $K$ is isomorphic to either a sheaf in degree $-1$ or a sheaf in degree $0$.  If $\rk(K) = -1$, then typically $K$ is not isomorphic to a shift of a sheaf.  To handle things in a uniform fashion it is best to treat $K$ as a complex.
\end{remark}

\subsubsection{The Kronecker fibration when $\chi(V\te E_{\nu^+})\leq 0$}\label{sss-res2}

A similar discussion holds here, so we omit some details.  When $\chi(V\te E_{\nu^+})\leq 0$, we again write $\nu^+ = \alpha.\beta$, but this time we use the triad $(E_{-\nu^+-3},E_{-\alpha-3},E_{-\beta})$ with dual triad $(E_{\nu^+},E_{\nu^+.\beta},E_{\beta})$.  The spectral sequence of a general $V\in M(\bv)$ then looks like 
 $$\xymatrix{
E_{-\nu^+-3}^{m_3} \ar[r] & 0 \ar[r] & 0  \\
0\ar[r] & E_{-\alpha-3}^{m_1}  \ar[r] & E_{-\beta}^{m_2}  \\
}$$
where
$$
m_1 =\chi(V\te E_{\nu^+.\beta}) \qquad m_2 = \chi(V\te E_{\beta})\qquad
m_3 = -\chi(V\te E_{\nu^+}) = \ext^1(E_{-\nu^+-3},V).
$$
Since the spectral sequence converges to a sheaf in degree $0$, we get short exact sequences of sheaves $$0\to E_{-\nu^+-3}^{m_3}\to K\to V\to 0$$ $$0\to E_{-\alpha-3}^{m_1}\to E_{-\beta}^{m_2} \to K\to 0$$ (in particular the two-term complex $E_{-\alpha-3}^{m_1}\to E_{-\beta}^{m_2}$ is isomorphic to a sheaf $K$).  As before, the general $V$ can also be constructed by a resolution $$0\to E_{-\nu^+-3}^{m_3}\oplus E_{-\alpha-3}^{m_1}\fto{\scriptstyle (\phi_1,\phi_2)}E_{-\beta}^{m_2}\to V\to 0,$$ and $K$ is isomorphic to the complex given by $\phi_2$.  Therefore $K$ is general and is actually a stable sheaf.  The corresponding Kronecker module is also stable.  The Kronecker fibration $M(\bv)\dashrightarrow Kr(\bv)$ sends $V$ to the isomorphism class of the Kronecker module corresponding to $K$.  The general fiber is positive dimensional if $\chi(V\te E_{\nu^+})<0$.  When $\chi(V\te E_{\nu^+})=0$, the map contracts the Brill--Noether divisor consisting of those sheaves with $h^0(V\te E_{\nu^+})>0$.

\subsection{Prioritary sheaves}\label{ss-prioritary} A torsion-free coherent sheaf $E$ on $\PP^2$ is called {\em prioritary} if $$\Ext^2(E, E(-1))=0.$$ Compared to semistable sheaves, prioritary sheaves are easier to construct.  Hirschowitz and Laszlo \cite{HirschowitzLaszlo} prove that the stack of prioritary sheaves on $\PP^2$ with Chern character $\bv$ is an irreducible stack whenever nonempty. This stack contains the stack of semistable sheaves as a (possibly empty) open substack. Hence, to prove a general semistable sheaf has an open property, it suffices to construct a prioritary sheaf with that property. By semicontinuity, to show the vanishing of a cohomology group for the general semistable sheaf, it suffices to produce a prioritary sheaf with the required vanishing.

\section{Homomorphisms of Kronecker modules}\label{sec-kronecker}

The Kronecker fibration of $M(\bv)$ allows us to reduce many interesting computations of the cohomology of a tensor product $V\te W$ to computations of homomorphisms between two-term complexes given by exceptional pairs.  By the discussion in \S\ref{sssec-complexes}, these spaces of homomorphisms can be computed by instead studying homomorphisms between Kronecker modules, which we now do in more detail.

\subsection{Kronecker modules} Following \cite{Drezet}, let $N\geq 3$ and let $V$ be a vector space of dimension $N$.  A \emph{Kronecker $V$-module} is a linear map $$e: \CC^b\to \CC^a\te V^*,$$ or a matrix $e\in \Mat_{a\times b}(V^*)$ with entries in $V^*$. The \emph{dimension vector} of $e$ is $\udim(e) = (b,a)$.  If $f:\CC^{b'}\to \CC^{a'}\te V^*$ is another Kronecker $V$-module, then a homomorphism from $f$ to $e$ is a pair of matrices $\beta \in \Mat_{b\times b'}(\CC)$ and $\alpha\in \Mat_{a\times a'}(\CC)$ such that the diagram 
$$\xymatrix{
\CC^{b'}\ar[d]^{\beta} \ar[r]^f & \CC^{a'} \te V^* \ar[d]^{\alpha\te \id}\\
\CC^b \ar[r]^e & \CC^a \te V^*
}$$
commutes.  The category of Kronecker $V$-modules is an abelian category.  We have $\Ext^i(f,e) = 0$ for $i>1$, and  the \emph{Euler characteristic} $$\chi (f,e) := \hom(f,e) - \ext^1(f,e) = b'b+a'a-Nb'a $$ depends only on the dimension vectors of $f$ and $e$.  A Kronecker module can also be viewed as a representation of the $N$-arrowed Kronecker quiver.

\subsection{Semistability} The \emph{slope} of a nonzero Kronecker module $e$ of dimension vector $(b,a)$ is defined to be $\mu(e) = b/a$, interpreted as $\infty$ if $a=0$ and $b>0$.  The module $e$ is \emph{(semi)stable} if every submodule $f\subset e$ has $\mu(f) \leqor \mu(e)$.  There is a moduli space $Kr_N(b,a)$ parameterizing $S$-equivalence classes of semistable Kronecker modules of dimension vector $(b,a)$.  It can be constructed as a GIT quotient of $\Hom(\CC^b,\CC^a\te V^*)$ by the group $(\SL(b)\times \SL(a))/\CC^*$.  The \emph{expected dimension} of the moduli space is $$\edim(Kr_N(b,a))=1-\chi(e,e)=1-b^2-a^2+Nba.$$  If there is a stable module of dimension vector $(b,a)$, then the expected dimension of $Kr_N(b,a)$ is nonnegative. 
\begin{theorem}[\cite{Drezet}]
If $Kr_N(b,a)$ has nonnegative expected dimension, then it is nonempty and irreducible of the expected dimension, and the general module is stable.
\end{theorem}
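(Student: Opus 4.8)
The plan is to realize $Kr_N(b,a)$ as a GIT quotient, reduce the whole statement to the existence of a single \emph{stable} Kronecker module of dimension vector $(b,a)$, and then establish that existence by an induction modeled on Dr\'ezet's mutation construction of exceptional bundles on $\P^2$. For the framework: the representation space $R=\Hom(\CC^b,\CC^a\te V^*)$ is an affine space of dimension $Nab$, hence smooth and irreducible, and $\bar G=(\GL(b)\times\GL(a))/\CC^*$, of dimension $b^2+a^2-1$, acts on it with $Kr_N(b,a)=R/\!\!/\bar G$ for the natural linearization; the stable locus $R^s\subseteq R$ is open. A stable module is a simple object of the abelian category of Kronecker $V$-modules, so $\hom(e,e)=1$ and its $\bar G$-stabilizer is trivial, and since the $N$-Kronecker quiver is hereditary, $\Ext^{\ge 2}(e,e)=0$, so deformations of a stable module are unobstructed with tangent space $\Ext^1(e,e)$. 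Granting $R^s\neq\emptyset$, everything follows formally: $R^s$ is dense in the irreducible $R$ (so the general module is stable), $Kr_N(b,a)$ is irreducible as a quotient of $R$, and at a stable point it is smooth of dimension $\dim R-\dim\bar G=Nab-b^2-a^2+1=\ext^1(e,e)=1-\chi(e,e)=\edim$. Thus the theorem reduces to: if $\chi(e,e)\le 1$, there is a stable Kronecker $V$-module of dimension vector $(b,a)$.

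I would dispose of the imprimitive case next. If $d=\gcd(b,a)\ge 2$, write $(b,a)=d(b_0,a_0)$ with $\gcd(b_0,a_0)=1$; the Euler form gives $\chi(e,e)=d^2(b_0^2+a_0^2-Nb_0a_0)$, so $\chi(e,e)\le 1$ with $d\ge 2$ forces $b_0^2+a_0^2-Nb_0a_0\le 0$, i.e.\ $(b_0,a_0)$ is primitive and still in the admissible range. Granting the primitive case, there is a stable module $e_0$ of dimension vector $(b_0,a_0)$ with $\ext^1(e_0,e_0)=1-\chi(e_0,e_0)\ge 1$, so $e_0$ deforms nontrivially. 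A standard genericity argument — the locus in $R$ of modules admitting a sub-object of slope $>b/a$, or a proper one of slope $=b/a$, is a proper closed subset once a stable module of dimension $(b,a)$ is known to exist, and one produces such a module as a sufficiently general combination of stable modules of slope $b/a$ — then shows the general module of dimension $(b,a)$ is stable.

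The heart of the matter is the primitive case $\gcd(b,a)=1$ with $\chi(e,e)\le 1$, where stability coincides with semistability. I would induct on $b+a$. The simple modules of dimension vectors $(1,0)$ and $(0,1)$ are stable and exceptional, giving the base cases. For the inductive step: when $\chi(e,e)=1$ the dimension vector is obtained from a strictly smaller exceptional one by one of the reflections $(b,a)\mapsto(Nb-a,\,b)$ or $(b,a)\mapsto(a,\,Na-b)$, and one checks that the corresponding mutation of an exceptional module is again exceptional (so $Kr_N(b,a)$ is a reduced point); when $\chi(e,e)\le 0$ one realizes $(b,a)$ directly — as the dimension vector of a general extension $0\to P\to e\to Q\to 0$ with $P,Q$ stable of strictly smaller dimension vectors, $\mu(P)<b/a<\mu(Q)$, and $\Ext^1(Q,P)\neq 0$, reducing the bookkeeping to dimension vectors in the ``fundamental region'' $2/N\le b/a\le N/2$, which are handled by such a construction. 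In every case one must verify directly that the general member has no sub-object of slope $>b/a$.

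The step I expect to be the main obstacle is exactly this last verification: producing the stable module and proving that stability is preserved under the mutations and extensions. This is where the hypothesis $N\ge 3$ is essential — for $N\le 2$ the quiver is of finite or tame representation type and the analogous statement is false — and any correct argument must exploit that $\Hom(E,F)$ is large (the same phenomenon that makes $\sHom$ of consecutive exceptional bundles on $\P^2$ globally generated, cf.\ Theorem \ref{thm-excgg}), which keeps cokernels and extensions of general modules well-behaved. Everything else is formal GIT and deformation theory.
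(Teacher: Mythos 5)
First, note that the paper does not prove this statement at all: it is quoted from Dr\'ezet \cite{Drezet} and used as a black box, so there is no internal proof to compare your attempt against. Judged on its own terms, your reduction is correct as far as it goes: the GIT framework, the observation that a stable module is simple with trivial $\bar G$-stabilizer and unobstructed deformations (the quiver being hereditary), and the dimension count $\dim R-\dim\bar G=Nab-b^2-a^2+1=1-\chi(e,e)$ are all standard and accurate, and they do reduce the theorem to the single assertion that a stable Kronecker module of dimension vector $(b,a)$ exists whenever $\chi(e,e)\le 1$.

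But that assertion is the entire content of the theorem, and your proposal does not establish it; each device you invoke is incomplete or circular as stated. In the imprimitive case, a ``sufficiently general combination of stable modules of slope $b/a$'' is only semistable, never stable, and the claim that the non-stable locus is a proper closed subset ``once a stable module of dimension $(b,a)$ is known to exist'' presupposes exactly what is to be proved. In the primitive case with $\chi(e,e)\le 0$, a general extension $0\to P\to e\to Q\to 0$ with $\mu(P)<b/a<\mu(Q)$ can perfectly well admit destabilizing subobjects other than $P$; ruling these out requires control of homomorphisms from general modules of smaller dimension vectors into $e$ (essentially Schofield's theory of general subrepresentations), which you do not supply. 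Likewise, the reduction to the fundamental region $2/N\le b/a\le N/2$ needs a proof that reflection functors carry general stable modules to general stable modules. You flag this cluster as ``the main obstacle,'' and that self-assessment is accurate: what remains is not a verification but the theorem itself. For the record, Dr\'ezet's own argument does not proceed by bare quiver combinatorics; it exploits the identification of complexes $E^b\to F^a$ for an exceptional pair $(E,F)$ with Kronecker modules and imports existence from the geometry of sheaves on $\P^2$, while purely representation-theoretic proofs (King, Schofield) run through general subrepresentation dimension vectors --- either route would fill your gap, but neither is contained in your sketch.
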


Observe that the expected dimension is at least $1$ if $$b^2+a^2-Nba \leq 0.$$  Rewriting this in terms of the slope $\mu = b/a$, the expected dimension is at least $1$ if and only if $$\mu^2-N\mu+1\leq 0.$$  The two roots of $\mu^2-N\mu +1=0$ are $\psi_N^{-1}$ and $\psi_N$, where $$\psi_N = \frac{N+ \sqrt{N^2-4}}{2},$$ so the moduli space  $Kr_N(b,a)$ is positive dimensional if and only if $$\mu = \frac{b}{a} \in (\psi_N^{-1},\psi_N).$$ (Note that $\psi_N$ is irrational since $N\geq 3$.)  If $\mu$ is outside of this interval and there is a stable module, then the expected dimension must be $0.$   In this case the moduli space $Kr_N(b,a)$ is a single reduced point, corresponding to an \emph{exceptional Kronecker module}.  The dimension vectors for these modules are easy to describe.  We must have $$b^2+a^2-Nba = 1.$$  Up to swapping $a$ and $b$, the nonnegative solutions of this Diophantine equation can be obtained from the solution $(0,1)$ by repeatedly applying the transformation $\tau(b,a) = (a,Na-b)$.  

\begin{example}
For $N=3$, the orbit of $(0,1)$ under powers of $\tau$ is as follows:
$$(0,1) \mapsto ( 1,3) \mapsto (3,8) \mapsto (8,21) \mapsto (21,55) \mapsto \cdots.$$ The corresponding slopes $$\frac{0}{1},\frac{1}{3},\frac{3}{8},\frac{8}{21},\frac{21}{55},\ldots$$ are increasing and converge to $\psi_3^{-1}$.  There is a stable module of slope $\mu$ if and only if $$\mu \in \left\{\frac{0}{1},\frac{1}{3},\frac{3}{8},\frac{8}{21},\frac{21}{55},\ldots\right\} \cup (\psi_3^{-1},\psi_3) \cup \left\{\ldots,\frac{55}{21},\frac{21}{8},\frac{8}{3},\frac{3}{1},\infty\right\}.$$ A similar result holds for arbitrary $N\geq 3$.
\end{example}

Suppose $(b,a)$ is a dimension vector such that $\mu=b/a$ is not in the interval $(\psi_N^{-1},\psi_N)$.  Then as in \cite{Schofield} the general Kronecker module with dimension vector $(b,a)$ can be described as follows.  There are two exceptional Kronecker modules $s_1$ and $s_2$ such that $\mu(s_1)\leq \mu \leq \mu(s_2)$ and such that no exceptional module has slope strictly between $\mu(s_1)$ and $\mu(s_2)$.  Then the module given by a general map $\CC^b\to \CC^a \te V^*$ is isomorphic to a direct sum $s_1^{\oplus m_1}\oplus s^{\oplus m_2}$, with the exponents being easily determined from the dimension vector.

\subsection{Homomorphisms between general  Kronecker modules}

We now study the following problem.  Suppose $f,e$ are general Kronecker modules.  Then we can hope that at most one of $\Hom(f,e)$ or $\Ext^1(f,e)$ is nonzero, and so $\chi(f,e)$ determines both spaces.  The next example shows this is too optimistic.

\begin{example}
Consider $N=3$ and let $f$ and $e$ be general with $\udim f = (1,4)$ and $\udim e = (4,11)$.  Then neither $f$ or $e$ are semistable.  Let $s'',s',s$ be the exceptional modules of vectors $\udim s'' = (0,1)$, $\udim s'=(1,3)$, and $\udim s=(3,8)$.  We have $f = s''\oplus s'$ and $e= s'\oplus s$, so $\Hom(f,e)\neq 0$.  However, $\chi(f,e)=0$, so also $\Ext^1(f,e) \neq 0$.
\end{example}

On the other hand, if one of the modules is semistable, then the Euler characteristic governs $\Hom(f,e)$.

\begin{theorem}\label{thm-kronecker}
Let $f$ and $e$ be Kronecker modules which are general of their dimension vectors.  Suppose that one of the modules is semistable.  Then at most one of the groups $\Ext^i(f,e)$ is nonzero.
\end{theorem}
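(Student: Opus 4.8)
The plan is to use that the category of Kronecker modules is hereditary, so that the only nonvanishing $\Ext$-groups are $\Ext^0$ and $\Ext^1$ and $\chi(f,e)$ depends only on the dimension vectors; it therefore suffices to show that if $\chi(f,e)\ge 0$ then $\Ext^1(f,e)=0$ for the general pair, the case $\chi(f,e)\le 0$ then following by applying this conclusion to the Serre-dual pair. For the Kronecker quiver the Serre functor is $\tau[1]$ with $\tau$ the Auslander--Reiten translate, so for $f$ having no projective summand one has $\Hom(f,e)\cong\Ext^1(e,\tau f)^{*}$, $\Ext^1(f,e)\cong\Hom(e,\tau f)^{*}$, and $\chi(e,\tau f)=-\chi(f,e)$; moreover $\tau$ preserves stability, semistability and genericity, kills projectives, and on dimension vectors acts by the Coxeter transformation, giving $\mu(\tau f)=g(\mu(f))$ with $g(\mu)=\tfrac{(N^2-1)\mu-N}{N\mu-1}$. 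The map $g$ is an increasing M\"obius transformation fixing the endpoints of $J:=(\psi_N^{-1},\psi_N)$ with $g(\mu)>\mu$ on $J$, so $[\mu,g(\mu)]\subset J$ contains no exceptional slope, and $\mu(\tau s)<\mu(s)$ for every non-projective exceptional $s$. Using the explicit description of general modules and peeling off exceptional summands, together with the flip $(f,e)\rightsquigarrow(e,\tau f)$ to move semistability into the second slot (after disposing of the case where $f$ is a power of a projective, where $\Ext^1(f,e)=0$ directly), I would reduce the theorem to the statement: for $e$ \emph{stable} and $f$ general, at most one of $\Hom(f,e)$, $\Ext^1(f,e)$ is nonzero.

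A general $f$ is either stable with $\mu(f)\in J$ or a direct sum $s_1^{p_1}\oplus s_2^{p_2}$ of at most two consecutive exceptional modules with $\mu(s_1)<\mu(s_2)$. In the second case $\Hom(f,e)$ and $\Ext^1(f,e)$ split along the summands, and for a single exceptional $s$ and stable $e$ there is a dichotomy: a nonzero map $s\to e$ forces $\mu(s)\le\mu(e)$ (with equality forcing $s\cong e$), while $\Ext^1(s,e)\neq 0$ forces, via $\Ext^1(s,e)\cong\Hom(e,\tau s)^{*}$ and $\mu(\tau s)<\mu(s)$, that $\mu(e)<\mu(s)$ (unless $s$ is projective, where $\Ext^1(s,e)=0$). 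Since $s_1,s_2$ are consecutive, $\mu(e)$ cannot lie strictly between them when $\mu(e)\notin J$, and when $\mu(e)\in J$ both $\mu(s_j)$ lie on one side of $J$; a short case analysis on the position of $\mu(e)$, using also that consecutive exceptionals form an exceptional pair (so $\Ext^1(s_2,s_1)=0$, settling the boundary case $e\cong s_1$), shows that either every $\Hom(s_j,e)=0$ or every $\Ext^1(s_j,e)=0$.

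The heart of the argument is the case where $f$ is stable with $\mu(f)\in J$. If $\chi(f,e)\ge 0$, I would invoke Schofield's criterion for the vanishing of $\Ext$ between general representations: $\Ext^1(f,e)=0$ for the general pair if and only if $\chi(\gamma,\udim e)\ge 0$ for every dimension vector $\gamma$ of a subrepresentation of the general representation of dimension $\udim f$. As this general representation is stable, each such $\gamma$ has $\mu(\gamma)\le\mu(f)$ (the subrepresentations supported at the sink contribute $\gamma$ with $\chi(\gamma,\udim e)\ge 0$ automatically, and none are supported only at the source since $\mu(f)<N$). One then checks $\chi(\gamma,\udim e)\ge 0\iff\mu(\gamma)\le 1/(N-\mu(e))$ and $\chi(f,e)\ge 0\iff\mu(f)\le 1/(N-\mu(e))$, so $\mu(\gamma)\le\mu(f)\le 1/(N-\mu(e))$ verifies the criterion and $\Ext^1(f,e)=0$. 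If instead $\chi(f,e)\le 0$, I would apply this conclusion to the Serre-dual pair $(e,\tau f)$, which consists of stable modules with slopes in $J$ and has $\chi(e,\tau f)=-\chi(f,e)\ge 0$; thus $\Ext^1(e,\tau f)=0$ and hence $\Hom(f,e)=0$.

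The step I expect to be the main obstacle is making Schofield's criterion precise in exactly the form used and verifying that a general stable Kronecker module has no unexpected subrepresentation dimension vectors, so that indeed every $\gamma$ occurring satisfies $\mu(\gamma)\le\mu(f)$; the slope and Euler-form translations are routine, but the book-keeping of the degenerate configurations --- projective or injective modules, and exceptional $e$ with $\mu(e)\notin J$ or $\mu(e)\ge N$ --- must be carried out by hand, and one must also confirm that $\tau$ really carries general stable modules to general stable modules so that the Serre-duality reductions are legitimate.
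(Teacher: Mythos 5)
Your proposal is correct in outline but takes a genuinely different route from the paper. The paper's proof is a self-contained dimension count: after the same preliminary reduction to stable $f$ and $e$ (via Jordan--H\"older filtrations and the decomposition of a general non-semistable module into two adjacent exceptionals), it introduces the incidence correspondence $\Sigma$ of triples $([\beta:\alpha],(f,e))$ with $e\beta=\alpha f$, stratifies it by the ranks of $\beta$ and $\alpha$, computes the fibers of the first projection explicitly after normalizing $\beta,\alpha$, and bounds $\dim\Sigma$ by maximizing an explicit quadratic $Q(s,r)$ over a polygon; this shows $\pi_2$ is not dominant when $\chi(f,e)\le 0$ and bounds the generic fiber otherwise. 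You instead import two pieces of machinery: Auslander--Reiten duality $\Ext^1(f,e)\cong\Hom(e,\tau f)^*$ together with the action of the Coxeter transformation on slopes, and Schofield's theorem that $\ext(\alpha,\beta)=\max_{\gamma\hookrightarrow\alpha}(-\langle\gamma,\beta\rangle)$ for general representations. Your slope computations (the M\"obius map $g$, the dichotomies for exceptional summands) check out, and the key step --- that every subrepresentation dimension vector of the general stable module has slope at most $\mu(f)$, which is immediate from semistability --- does make Schofield's criterion close the $\chi\ge 0$ case cleanly. What your approach buys is conceptual transparency and a proof that visibly generalizes to other quivers; what it costs is reliance on Schofield's general-representation theorem (which is in his 1992 \emph{General representations of quivers} paper, not the 1991 semi-invariants paper the present article cites) and on the unproved assertion that $\tau$ carries a general stable module to a general stable module, which you correctly flag and which genuinely must be established for the Serre-duality flip in the $\chi(f,e)\le 0$ case to be legitimate. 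Note that you could avoid the $\tau$ detour entirely: Schofield's dual formula $\ext(\alpha,\beta)=\max_{\beta\onto\beta''}(-\langle\alpha,\beta''\rangle)$, combined with $\hom=\chi+\ext$ and the fact that quotients of a semistable module have slope at least $\mu(\beta)$, handles both signs of $\chi(f,e)$ and both positions of the semistable module without ever invoking the AR translate, which would eliminate the main gap you identify.
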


\begin{proof}
Let $f, e$ have dimension vectors $(b',a'),(b,a)$ and slopes $\mu',\mu$.  The result is easy if $\mu'=0$ or $\mu=\infty$, so assume we are not in these cases.  We assume $e$ is semistable; a symmetric argument handles the other possibility.

We perform some reductions to assume that $f$ and $e$ are both stable.    By a straightforward argument with Jordan--H\"older filtrations, we may as well assume $e$ is stable. If $f$ is not stable, then it is a direct sum $f = s_1^{\oplus m_1}\oplus s_2^{\oplus m_2}$ of copies of two ``adjacent'' exceptional modules $s_1$ and $s_2$.  We claim that the numbers $\chi(f,e)$, $\chi(s_1,e)$, $\chi(s_2,e)$ are all either nonpositive or nonnegative. For any module $g$, the sign of the Euler characteristic $\chi(g,e)$ depends only on the slope of $g$, and we have $\chi(g,e) = 0$ if and only if $\mu(g) = 1/(N-\mu)$.  If $\mu\in (\psi_N^{-1},\psi_N)$ then we have $1/(N-\mu)\in (\psi_N^{-1},\psi_N).$  On the other hand, if $(b,a)$ is the vector of an exceptional module, then so is $(a,Na-b)$, and its slope is $1/(N-\mu)$.  Therefore in either case $1/(N-\mu)$ does not lie strictly between $\mu(s_1)$ and $\mu(s_2)$, and the three numbers $\chi(f,e),\chi(s_1,e),\chi(s_2,e)$ must all be either nonpositive or nonnegative.  Thus if $f$ is not stable, the result follows from the result for $s_1$ and $s_2$.

For the rest of the proof we assume $f$ and $e$ are both stable.  If $\mu(f) \geq \mu(e)$ then the conclusion follows from stability, so we assume $\mu(f)< \mu(e)$.  View $f$ and $e$ as $a'\times b'$ and $a\times b$ matrices with entries in $V^*$, respectively.  We consider the incidence correspondence
\begin{align*} \Sigma& \subset \P(\Mat_{b\times b'}(\CC)\times \Mat_{a\times a'}(\CC))\times (\Mat_{a'\times b'}(V^*)\times\Mat_{a\times b}(V^*))\\ \Sigma &= \left\{([\beta:\alpha],(f,e)): {e\beta = \alpha f\atop \textrm{$e,f$ stable}}\right\} \end{align*} with projections 
\begin{align*}\pi_1:\Sigma &\to \P(\Mat_{b\times b'}(\CC)\times \Mat_{a\times a'}(\CC))\\
\pi_2:\Sigma &\to \Mat_{a'\times b'}(V^*) \times \Mat_{a\times b}(V^*)
\end{align*}
We will show that $$\dim \Sigma \leq \max\{0,\chi(f,e)\}+\dim (\Mat_{a'\times b'}(V^*)\times \Mat_{a\times b}(V^*)) -1.$$ Then if $\chi(f,e) \leq 0$, this implies $\pi_2$ is not dominant and so $\Hom(f,e) = 0$ for general $f$ and $e$.  On the other hand if $\chi(f,e) > 0$, then we see that the general fiber of $\pi_2$ has dimension at most $\chi(f,e)-1$.  Therefore $\hom(f,e)  \leq \chi(f,e)$, but $\hom(f,e) \geq \chi(f,e)$ always holds and so $\hom(f,e) = \chi(f,e)$.

We study the dimension of $\Sigma$ by analyzing the first projection.  However, the fibers of the first projection jump over special pairs of matrices.  In particular, as the ranks of $\beta$ and $\alpha$ drop, there are more pairs $(f,e)$ satisfying $e\beta = \alpha f$; consider $\beta=\alpha=0$ for an extreme example.  Thus for nonnegative integers $r$ and $s$ (not both zero) we let $$\Sigma_{r,s}=\{([\beta:\alpha],(f,e))\in \Sigma:\rk\beta = r \textrm{ and } \rk\alpha = s\}.$$ Then $\Sigma$ is covered by the various $\Sigma_{r,s}$, so we turn to estimating the dimension of $\Sigma_{r,s}$.  

First observe that if $\Sigma_{r,s}$ is nonempty, then a point $([\beta:\alpha],(f,e))\in\Sigma_{r,s}$ gives a map $f\to e$ of stable modules whose image has dimension vector $(r,s)$.    By stability, this forces $(r,s)$ to satisfy $$\frac{b'}{a'}\leq \frac{r}{s}\leq \frac{b}{a}.$$ So, in what follows we assume $(r,s)$ satisfies these inequalities.

Suppose $\beta$ and $\alpha$ have rank $r$ and $s$, respectively.  Then the fiber $\pi_1^{-1}([\beta:\alpha])$ is identified with those pairs $(f,e)$ such that $$\xymatrix{
\CC^{b'}\ar[d]^{\beta} \ar[r]^f & \CC^{a'} \te V^* \ar[d]^{\alpha\te \id}\\
\CC^b \ar[r]^e & \CC^a \te V^*
}$$ commutes.  But if we change bases on the spaces $\CC^{b'},\CC^{b},\CC^{a'},\CC^{a}$, then we may assume $\beta$ and $\alpha$ are of the form $$\beta = \begin{pmatrix}I_r & 0 \\ 0 & 0 \end{pmatrix} \qquad \textrm{and} \qquad\alpha = \begin{pmatrix} I_s & 0 \\ 0 & 0\end{pmatrix}.$$ Writing $f = (f_{ij})$ and $e= (e_{ij})$ with\ $f_{ij},e_{ij} \in V^*$, we compute $$e\beta = \begin{pmatrix} e_{11}& \cdots & e_{1r} & 0 &\cdots & 0 \\ \vdots & \ddots & \vdots &\vdots & \ddots & \vdots\\ e_{s1} & \cdots & e_{sr} & 0 & \cdots  & 0\\
e_{s+1,1}& \cdots & e_{s+1,r} & 0 &\cdots & 0 \\ \vdots & \ddots & \vdots &\vdots & \ddots & \vdots \\ e_{a1} & \cdots & e_{ar} & 0 & \cdots  & 0
\end{pmatrix}$$and $$\alpha f = \begin{pmatrix} f_{11}& \cdots & f_{1r} & f_{1,r+1} & \cdots & f_{1,b'}\\ \vdots & \ddots & \vdots&\vdots & \ddots & \vdots \\ f_{s1} & \cdots & f_{sr} & f_{s,r+1} & \cdots & f_{s,b'}\\ 0 & \cdots & 0 & 0 & \cdots & 0\\ \vdots & \ddots & \vdots& \vdots & \ddots & \vdots\\  0 & \cdots & 0 & 0 & \cdots & 0
\end{pmatrix}.
$$Thus the condition that $e\beta = \alpha f$ requires that
\begin{itemize} \item $e_{ij} = f_{ij}$ for $1\leq i \leq s$ and $1\leq j \leq r$, 
\item $e_{ij} = 0$ for $s+1\leq i \leq a$ and $1\leq j \leq r$, and 
\item $f_{ij} = 0$ for $1\leq i \leq s$ and $r+1\leq j\leq b'.$\end{itemize}  Each of these conditions imposes $N$ independent linear conditions on  $\Mat_{a'\times b'}(V^*)\times \Mat_{a\times b}(V^*).$ Since we further require $f$ and $e$ to be stable, we conclude $$\dim \pi_1^{-1}([\beta:\alpha]) \leq N(a'b'+ab-ab'+(a-s)(b'-r)).$$ Letting $U_r\subset \Mat_{b\times b'}(\CC)$ and $V_s\subset \Mat_{a\times a'}(\CC)$ be the subsets of matrices of rank $r$ and $s$, respectively, we have \begin{align*}\dim U_r &= bb' - (b-r)(b'-r)\\ \dim V_s &= aa' - (a-s)(a'-s).\end{align*} Then $\Sigma_{r,s} = \pi_1^{-1}(\P(U_r\times V_s))$, so \begin{align*}\dim \Sigma_{r,s} &\leq aa'+bb'+N(a'b'+ab-ab'+(a-s)(b'-r))\\&\qquad-(b-r)(b'-r)-(a-s)(a'-s)-1\\
&= \chi(f,e)+\dim (\Mat_{a'\times b'}(V^*)\times \Mat_{a\times b}(V^*))-1\\&\qquad +N(a-s)(b'-r)-(b-r)(b'-r)-(a-s)(a'-s).\end{align*} 

We now define and study an auxiliary function $$Q(s,r) := N(a-s)(b'-r)-(b-r)(b'-r)-(a-s)(a'-s)$$ on the domain $$\Omega = \left\{(s,r):0\leq s\leq \min\{a,a'\},0\leq r \leq \min\{b,b'\},\frac{b'}{a'}s\leq r \leq \frac{b}{a}s\right\}\subset \RR^2,$$  with the goal of showing $Q(s,r) \leq  \max\{0,-\chi(f,e)\}.$  Observe that if $\Sigma_{r,s}$ is nonempty then $(s,r)\in \Omega$, so from this it will follow that $$\dim \Sigma_{r,s} \leq \max\{0,\chi(f,e)\}+\dim(\Mat_{a'\times b'}(V^*)\times \Mat_{a\times b}(V^*))-1,$$ as required.  The function $Q(s,r)$ is a quadratic function of $r$ and $s$.  Its Hessian determinant $$\begin{vmatrix} Q_{ss} & Q_{sr} \\ Q_{rs} & Q_{rr}\end{vmatrix} = \begin{vmatrix}-2 & N \\ N & -2\end{vmatrix} = 4 -N^2$$ is negative since $N\geq 3$, so $Q$ has no local maximum and its maximum value on $\Omega$ is attained on the boundary.  The boundary of $\Omega$ is made up of up to four line segments.

\emph{Case 1:} $r=\min\{b,b'\}$.  If $r= b$ then $r\leq \frac{b}{a}s$ gives $s\geq a$.  Since $a\leq \min\{a,a'\}$ we have $s=a$, but $Q(a,b)=0$.  If instead $r=b'$, then $$Q(s,b') = -(a-s)(a'-s) \leq 0$$ since $s\leq \min\{a,a'\}$.

\emph{Case 2:} $s=\min\{a,a'\}$.  If $s=a'$ then $r\geq \frac{b'}{a'}s$ gives $r \geq b'$ and so $r=b'$ since $r\leq \min\{b,b'\}$.  But $Q(a',b')=0$.  If we have $s=a$, then $$Q(a,r) = -(b-r)(b'-r)\leq 0$$ since $r\leq \min\{b,b'\}$.

\emph{Case 3:} $r= \frac{b}{a}s$.  In this case we compute\begin{align*}
a^2Q\left(s,\frac{b}{a}s\right)&=Na(a-s)(b'a-bs)-b(a-s)(b'a-bs)-a^2(a-s)(a'-s)\\
&=(a-s)(Na^2b'-Nabs-abb'+b^2s-a^2a'+a^2s)\\
&= (a-s)(-a\chi(f,e)+s\chi(e,e)).
\end{align*}
Stability gives $\chi(e,e) \leq 1$.  If $\chi(f,e)>0$ then since $s\leq a$ we get $Q(s,\frac{b}{a}s)\leq 0$.  If $\chi(e,e) = 1$, then by a straightforward computation the assumption $\mu(f) < \mu(e)$ gives $\chi(f,e) > 0$, so we are in the previous situation.   Finally, if $\chi(f,e)\leq 0$ and $\chi(e,e)\leq 0$ then $Q(s,\frac{b}{a}s)\leq -\chi(f,e)$.  Therefore $Q(\frac{b}{a}s,s)\leq \max\{0,-\chi(f,e)\}$ in every case.

\emph{Case 4:} $r= \frac{b'}{a'}s$.  This case follows from a formula $$(a')^2Q\left(s,\frac{b'}{a'}s\right) = (a'-s)(-a'\chi(f,e)+s\chi(f,f))$$ and similar reasoning to Case 3.
\end{proof}

\begin{remark}
In the theorem if neither $f$ nor $e$ is semistable then it is still straightforward to compute $\Hom(f,e)$.  In this case there are exceptional modules $s_1,s_2,s_3,s_4$ and decompositions $f = s_1^{\oplus m_1}\oplus s_2^{\oplus m_2}$ and $e= s_3^{m_3}\oplus s_4^{m_4}$.  The terms $\Hom(s_i,s_j)$ can all be computed using the theorem, and so $\Hom(f,e)$ can also be computed.
\end{remark}

\section{Twists by exceptional bundles}\label{sec-exceptional}

In this section we study the cohomology of a tensor product $V\te E$, where $V$ is a general stable bundle and $E$ is an exceptional bundle.  Our goal is to show that the cohomology of $V\te E$ is entirely determined by the Euler characteristic and the slope.  As an application, we show that if $V$ is a general stable bundle and $(E,F,G)$ is any triad of exceptional bundles, then the shape of the corresponding Beilinson spectral sequence for $V$ can be determined.  Along the way, we develop some basic facts about exceptional bundles as well as a criterion for a general vector bundle $\sHom(W,V)$ to be globally generated.

\subsection{Inductive description of exceptional bundles}\label{ss-inductExceptional}  We begin by recalling how to build up exceptional bundles in terms of simpler exceptional bundles.  Let $\beta = \varepsilon((p+1)/2^q)$ be an exceptional slope with $p$ an even integer and $q\geq 1$ an integer.  We consider the exceptional slopes
$$\zeta_0 = \varepsilon\left(\frac{p+4}{2^q}-3\right)\qquad \zeta_2 = \varepsilon\left(\frac{p-2}{2^q}\right) $$$$\alpha = \varepsilon\left(\frac{p\vphantom1}{2^q}\right) \qquad \beta = \varepsilon\left(\frac{p+1}{2^q}\right) \qquad \eta = \varepsilon\left(\frac{p+2}{2^q}\right)$$$$ \quad \omega_0=\varepsilon\left(\frac{p+4}{2^q}\right)\qquad \omega_2=\varepsilon\left(\frac{p-2}{2^q}+3\right),$$ where $p$ is even and $q\geq 1$.  Let $i\in\{0,2\}$ be such that $i\equiv p \pmod 4$, and observe that the exceptional slopes $\alpha,\eta,\zeta_i,\omega_i$ all have smaller order than $\beta$.  Dr\'ezet gives the following exact sequences which express $E_\beta$ in terms of exceptional bundles of smaller order.

\begin{theorem}[\cite{DrezetBeilinson}]\label{kernelSlope}
Let $i\in \{0,2\}$ be such that $i\equiv p \pmod 4$.  There are exact sequences of vector bundles$$0\to E_{\zeta_i}\to E_\alpha\te \Hom(E_\alpha,E_\beta)\to E_\beta\to 0$$ and $$0\to E_\beta\to E_{\eta}\te \Hom(E_{\beta},E_\eta)^*\to E_{\omega_i}\to 0.$$
\end{theorem}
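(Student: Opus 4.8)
The plan is to argue by induction on the order $q$ of the slope $\beta=\varepsilon((p+1)/2^q)$, following Dr\'ezet. Since $p+1$ is odd, $\beta$ has order exactly $q$, and the parent slopes $\alpha=\varepsilon(p/2^q)=\varepsilon((p/2)/2^{q-1})$ and $\eta=\varepsilon((p+2)/2^q)=\varepsilon((p/2+1)/2^{q-1})$ have order $\le q-1$ and satisfy $\beta=\alpha.\eta$; hence $(E_\alpha,E_\beta,E_\eta)$ is a triad of the form $(\alpha',\alpha'.\beta',\beta')$, in particular a full strong exceptional collection. For the base case $q=1$, $\alpha=p/2$ and $\eta=p/2+1$ are integers, $E_\beta\cong T_{\P^2}(p/2-1)$, and the two asserted sequences are the twisted Euler sequence $0\to\OO_{\P^2}(-1)\to\OO_{\P^2}^{\oplus 3}\to T_{\P^2}(-1)\to 0$ and its dual, each tensored with $\OO_{\P^2}(p/2)$; the identifications $E_{\zeta_i}=\OO_{\P^2}(p/2-1)$ and $E_{\omega_i}=\OO_{\P^2}(p/2+1)$ hold for either value of $i$ because $\varepsilon$ is the identity on $\Z$.

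For the inductive step I would recognize both asserted sequences as the canonical evaluation and coevaluation sequences attached to the triad $(E_\alpha,E_\beta,E_\eta)$. The second sequence is precisely the defining sequence of the ``fourth bundle'' $M$ of this triad (in the notation of \S\ref{sec-prelim}); since the triad is strong, $\Ext^{>0}(E_\beta,E_\eta)=0$, so $\hom(E_\beta,E_\eta)=\chi(E_\beta,E_\eta)$, and Dr\'ezet's triad theory guarantees this is an honest short exact sequence with $M$ exceptional. Similarly $\Ext^{>0}(E_\alpha,E_\beta)=0$, and the cone of the canonical evaluation $E_\alpha\te\Hom(E_\alpha,E_\beta)\to E_\beta$ is $L_{E_\alpha}E_\beta[1]$ for the left mutation $L_{E_\alpha}E_\beta$, which is an exceptional object; a Riemann--Roch check that $\chi(E_\alpha,E_\beta)\rk E_\alpha>\rk E_\beta$ shows $L_{E_\alpha}E_\beta$ is a vector bundle in cohomological degree $0$, so the first sequence too is a short exact sequence of bundles. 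It then suffices to identify the new bundles $L_{E_\alpha}E_\beta$ and $M$, which are exceptional and hence of the form $E_\gamma$: from $\ch(L_{E_\alpha}E_\beta)=\chi(E_\alpha,E_\beta)\ch E_\alpha-\ch E_\beta$ and $\ch M=\hom(E_\beta,E_\eta)\ch E_\eta-\ch E_\beta$ one reads off their slopes, and a direct manipulation of the recursion defining $\varepsilon$ shows these equal $\zeta_i$ and $\omega_i$. The case distinction $i\equiv p\pmod 4$ enters exactly here: it records which of $\eta,\alpha$ is the parent of order $q-1$---when $p\equiv 0$ one has $\eta=\alpha.\omega_0$, and when $p\equiv 2$ one has $\alpha=\zeta_2.\eta$---and the two branches of the recursion produce the two formulas for $\zeta_i$ and $\omega_i$.

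The main obstacle is the verification that the relevant mutation and fourth bundle genuinely sit in cohomological degree $0$ (so that the triangles in $D^b(\P^2)$ are short exact sequences of bundles with the asserted shape), together with the arithmetic bookkeeping matching the Chern characters above to the recursively defined slopes and confirming $\Delta<\tfrac12$ for the new bundles. If one does not wish to cite Dr\'ezet's triad theory as a black box, the self-contained version of this step runs through the induction: the inductive hypothesis applied to whichever parent has order $q-1$ gives a two-term resolution of that parent by exceptional bundles of order $\le q-2$, and plugging this into the Beilinson spectral sequence of $E_\beta$ relative to the triad $(E_\alpha,E_\beta,E_\eta)$ collapses the $E_1$-page to exactly two terms, producing the two sequences directly while simultaneously yielding the needed $\Ext$-vanishing and the positivity of the Euler characteristics. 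The genuinely $\P^2$-specific input there is the Markov-type constraint $\rk(E_\alpha)^2+\rk(E_\beta)^2+\rk(E_\eta)^2=3\rk(E_\alpha)\rk(E_\beta)\rk(E_\eta)$ on the ranks in a triad, which forces the resolutions to have no extraneous terms.
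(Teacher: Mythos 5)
The paper does not prove this statement: Theorem \ref{kernelSlope} is quoted verbatim from Dr\'ezet \cite{DrezetBeilinson} and used as an input, so there is no internal proof to compare against. Judged on its own, your strategy is the standard (and correct) one from that reference: induct on the order of $\beta$, realize $(E_\alpha,E_\beta,E_\eta)$ as a triad via $\beta=\alpha.\eta$, identify the two sequences as the left mutation of $E_\beta$ across $E_\alpha$ and the coevaluation defining the fourth bundle $M$, and pin down the new exceptional bundles by their Chern characters. Your explanation of where the case split $i\equiv p\pmod 4$ comes from (namely $\eta=\alpha.\omega_0$ when $p\equiv 0$ and $\alpha=\zeta_2.\eta$ when $p\equiv 2$, according to which parent has order $q-1$) is exactly right and is the structural heart of the bookkeeping.

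Two points need repair. First, the base case of the second sequence is wrong as written: for $q=1$ one has $E_\beta=T_{\P^2}(p/2-1)=\Omega_{\P^2}(p/2+2)$, so the relevant sequence is the dual Euler sequence twisted by $\OO_{\P^2}(p/2+2)$, namely $0\to T_{\P^2}(p/2-1)\to \OO_{\P^2}(p/2+1)^{\oplus 3}\to \OO_{\P^2}(p/2+2)\to 0$; hence $E_{\omega_i}=\OO_{\P^2}(p/2+2)$, which is what the paper's formulas give ($\omega_0=\omega_2=p/2+2$ at $q=1$), not $\OO_{\P^2}(p/2+1)$, and the two sequences are not both obtained by tensoring with $\OO_{\P^2}(p/2)$. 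Second, the inequality $\chi(E_\alpha,E_\beta)\,\rk E_\alpha>\rk E_\beta$ only rules out one degenerate shape; it does not show that the evaluation $E_\alpha\te\Hom(E_\alpha,E_\beta)\to E_\beta$ is surjective, which is the actual content needed for $L_{E_\alpha}E_\beta$ to be a bundle in degree $0$. You correctly flag this as the main obstacle, and the fallback you sketch --- running the induction through the Beilinson spectral sequence of $E_\beta$ for the triad, using the resolution of the order-$(q-1)$ parent --- is the legitimate way to close it; as written, though, that step is an outline rather than a proof.
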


As a consequence, the cohomology of a tensor product of exceptional bundles can be determined.  This theorem was first proved by Dr\'ezet, but we include the proof since we will be generalizing this line of reasoning in Section \S\ref{ss-excTwist}.

\begin{theorem}[\cite{DrezetBeilinson}]\label{thm-exceptionalHom}
Let $E$ and $F$ be exceptional bundles with $\mu(E)\leq \mu(F)$.  Then $\Ext^i(E,F) = 0$ for $i>0$.
\end{theorem}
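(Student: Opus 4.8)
The plan is to prove $\Ext^i(E,F)=0$ for $i>0$ by induction on $\ord(E)+\ord(F)$, using the mutation sequences of Theorem~\ref{kernelSlope} to peel off the exceptional bundle of larger order. First, two formal reductions. Since $E(-3)$ is stable with $\mu(E(-3))=\mu(E)-3<\mu(E)\le\mu(F)$, a nonzero homomorphism $F\to E(-3)$ would have image that is simultaneously a quotient of the stable sheaf $F$ (hence of slope $\ge\mu(F)$) and a subsheaf of the stable sheaf $E(-3)$ (hence of slope $<\mu(F)$), which is impossible; so $\Ext^2(E,F)\cong\Hom(F,E(-3))^*=0$ by Serre duality. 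Next, Riemann--Roch gives
\[
\chi(E,F)=\rk(E)\,\rk(F)\bigl(P(\mu(F)-\mu(E))-\Delta(E)-\Delta(F)\bigr),
\]
which is strictly positive because $\Delta(E),\Delta(F)<\tfrac12$ while $P$ is increasing on $[0,\infty)$ with $P(0)=1$. Hence $\hom(E,F)=\chi(E,F)+\ext^1(E,F)$, and the theorem is equivalent to the assertion that $\hom(E,F)$ attains its expected value $\chi(E,F)$, i.e.\ that $\Ext^1(E,F)=0$.

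For the base case $\ord(E)=\ord(F)=0$, we have $E=\OO_{\P^2}(a)$ and $F=\OO_{\P^2}(b)$ with $a\le b$, so $\Ext^i(E,F)=H^i(\OO_{\P^2}(b-a))=0$ for $i>0$. For the inductive step, suppose first $\ord(F)\ge 1$; write $F=E_\beta$ and apply $\Hom(E,-)$ to the second sequence of Theorem~\ref{kernelSlope},
\[
0\to E_\beta\to E_\eta\otimes\Hom(E_\beta,E_\eta)^*\to E_{\omega_i}\to 0.
\]
The bundles $E_\eta$ and $E_{\omega_i}$ have order strictly less than $\ord(E_\beta)$ and satisfy $\mu(E)\le\mu(E_\beta)<\mu(E_\eta)<\mu(E_{\omega_i})$, so the inductive hypothesis applies to $(E,E_\eta)$ and $(E,E_{\omega_i})$ and kills all their higher Ext. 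The long exact sequence then forces $\Ext^i(E,F)=0$ for $i\ge 2$ immediately and identifies $\Ext^1(E,F)$ with the cokernel of the canonical map
\[
\Hom(E,E_\eta)\otimes\Hom(E_\beta,E_\eta)^*\longrightarrow\Hom(E,E_{\omega_i})
\]
obtained by post-composing with the surjection above. (If instead $\ord(F)=0<\ord(E)$ the argument is symmetric: resolve $E$ as a quotient of $E_{\alpha}^{\oplus m}$ via the first sequence of Theorem~\ref{kernelSlope} and apply $\Hom(-,F)$, obtaining an analogous cokernel built from $E_\alpha,E_{\zeta_i}$. When both orders are positive, either reduction may be used.)

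So the theorem comes down to showing this canonical composition map is surjective, and this is the step I expect to be the main obstacle: the long exact sequence together with additivity of $\chi$ is tautologically consistent with a nonzero cokernel, so one genuinely needs to understand the map. The structural input is that the sequences of Theorem~\ref{kernelSlope} are mutation (``division'') sequences, so $E_{\omega_i}$ is the canonical cokernel of the coevaluation $E_\beta\to E_\eta\otimes\Hom(E_\beta,E_\eta)^*$ and $(E_\eta,E_{\omega_i})$---like $(E_\alpha,E_\beta)$ and $(E_{\zeta_i},E_\alpha)$---is an exceptional pair of smaller order drawn from a triad, which is a strong exceptional collection. I would handle the surjectivity by a case analysis on how close $\mu(E)$ lies to $\mu(F)$. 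When $\mu(E)$ is not too close to $\mu(F)$, applying $\Hom(E,-)$ instead to the first sequence $0\to E_{\zeta_i}\to E_\alpha\otimes\Hom(E_\alpha,E_\beta)\to E_\beta\to 0$ realizes $\Ext^1(E,E_\beta)$ as a quotient of $\Ext^1(E,E_\alpha)^{\oplus m}$: here $\Ext^2(E,E_{\zeta_i})=\Hom(E_{\zeta_i},E(-3))^*=0$ by the stability argument above, because $\mu(E_{\zeta_i})>\mu(E_\beta)-3\ge\mu(E)-3$ (a consequence of $\varepsilon(x+3)=\varepsilon(x)+3$ and the monotonicity of $\varepsilon$), and $\Ext^1(E,E_\alpha)=0$ by the inductive hypothesis whenever $\mu(E)\le\mu(E_\alpha)$. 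The remaining ``boundary'' case, where $\mu(E)$ is squeezed just below $\mu(F)$---handled by reducing $E$ via its second mutation sequence, or by invoking the strong exceptional collection structure directly---is where the surjectivity really must be pushed through; carrying this out, equivalently strengthening the induction to propagate the surjectivity of the evaluation maps attached to the triads, is the technical heart of the argument, with everything else formal.
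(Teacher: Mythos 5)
Your proposal contains a genuine gap, and it is exactly the one you flag yourself. Your main reduction applies $\Hom(E,-)$ to the sequence $0\to E_\beta\to E_\eta\otimes\Hom(E_\beta,E_\eta)^*\to E_{\omega_i}\to 0$, in which the bundle $F=E_\beta$ you are trying to control sits as a \emph{subobject}; this identifies $\Ext^1(E,E_\beta)$ with the cokernel of a composition map, and proving that cokernel vanishes is essentially equivalent to the theorem itself. Nothing in your write-up establishes that surjectivity, and there is no reason to expect it to follow formally from the strong exceptional collection structure. Your fallback --- applying $\Hom(E,-)$ to $0\to E_{\zeta_i}\to E_\alpha\otimes\Hom(E_\alpha,E_\beta)\to E_\beta\to 0$, where $E_\beta$ is a \emph{quotient}, so that $\Ext^1(E,E_\beta)$ is squeezed between $\Ext^1(E,E_\alpha)\otimes\Hom(E_\alpha,E_\beta)$ (zero by induction) and $\Ext^2(E,E_{\zeta_i})$ (zero by Serre duality and stability) --- is the correct move, but you only invoke it when $\mu(E)\le\alpha$ and leave the range $\alpha<\mu(E)<\beta$ unresolved.

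The missing observation that closes this ``boundary case'' is an order comparison, not a surjectivity statement. If $\gamma=\mu(E)$ satisfies $\alpha<\gamma<\beta$ with $\beta=\varepsilon((p+1)/2^q)$ and $\alpha=\varepsilon(p/2^q)$, then $\varepsilon^{-1}(\gamma)$ lies strictly between $p/2^q$ and $(p+1)/2^q$, so $\ord(E)>\ord(F)$. In that situation one mutates $E$ rather than $F$: write $\gamma=\varepsilon((p'+1)/2^{q'})$, use the second mutation sequence $0\to E_\gamma\to E_{\eta'}\otimes\Hom(E_\gamma,E_{\eta'})^*\to E_{\omega'_i}\to 0$ for $E$, and apply $\Hom(-,F)$. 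Since $\ord(F)\le q'$ and $\mu(F)>\gamma$, one has $\mu(F)\ge\eta'$, so $\Ext^1(E_{\eta'},F)=0$ by induction, while $\Ext^2(E_{\omega'_i},F)=0$ by Serre duality and stability because $\mu(F)-\omega'_i>-3$; hence $\Ext^1(E,F)=0$ with no evaluation-map surjectivity ever needed. This is precisely the structure of the paper's proof: it runs two symmetric cases keyed on whether $\ord(E)\le\ord(F)$ or $\ord(E)\ge\ord(F)$, always mutating the bundle of larger order and always arranging that the mutated bundle appears on the side of its exact sequence (quotient if it is the target $F$, subobject if it is the source $E$) that sandwiches $\Ext^1(E,F)$ between an inductively vanishing $\Ext^1$ and a Serre-dual-vanishing $\Ext^2$. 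You should restructure your induction around this dichotomy and drop the cokernel formulation entirely.
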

\begin{proof}
We induct on the orders of $E$ and $F$.  The result is clear if $E$ and $F$ are both line bundles.   The result is also clear if $\mu(E)=\mu(F)$, so suppose $\mu(E)<\mu(F)$.  Then either $\ord(E) \leq \ord(F)$ or $\ord(E)\geq \ord(F)$; we handle each case individually.

\emph{Case 1:} $\ord(E) \leq \ord(F)$.  Write $E = E_\gamma$ and $F= E_\beta$. We may assume $F$ is not a line bundle, so we let $\beta = \varepsilon((p+1)/2^q)$ with $p$ even and $q\geq 1$.  Let $i\in \{0,2\}$ be such that $i\equiv p \pmod 4$, and define the slopes $\zeta_i$ and $\alpha$ as above.  Since $\ord(E)\leq \ord(F)$ and $\gamma < \beta$, we must have $\gamma \leq \alpha$.  Now apply $\Hom(E_\gamma,-)$ to the exact sequence $$0\to E_{\zeta_i}\to E_\alpha \te \Hom(E_\alpha,E_\beta) \to E_\beta\to 0.$$ By our induction hypothesis, $\Ext^i(E_\gamma,E_\alpha) = 0$ for $i>0$.  We also have $\Ext^2(E_\gamma,E_{\zeta_i}) = 0$ by Serre duality and stability since  $\zeta_i - \gamma > (\beta-3) - \gamma >-3$.   Therefore $\Ext^i(E_\gamma,E_\beta) = 0$ for $i>0$.

\emph{Case 2:} $\ord(E)\leq \ord(F)$.  In this case we instead write $E=E_\beta$ and $F=E_\gamma$.  Write $\beta = \varepsilon((p+1)/2^q)$, let $i\in \{0,2\}$ be such that $i\equiv p \pmod 4$.  Applying $\Hom(-,E_\gamma)$ to the sequence $$0\to E_\beta \to E_\eta \te \Hom(E_\beta,E_\eta)^*\to E_{\omega_i}\to 0$$ completes the proof by a similar argument to the previous case.
\end{proof}

The next result is a straightforward consequence of Theorem \ref{thm-exceptionalHom}, stability, and Serre duality.

\begin{corollary}\label{cor-exceptionalTensor}
If $E$ and $F$ are exceptional bundles, then  $E\te F$ has at most one nonzero cohomology group.  It can be determined as follows:
\begin{enumerate}
\item If $\mu(E\te F)\geq 0$, then $h^0(E\te F) = \chi(E\te F)$ and all other cohomology is zero.
\item If $-3<\mu(E\te F)<0$, then $h^1(E\te F) = -\chi(E\te F)$ and all other cohomology is zero.
\item If $\mu(E\te F) \leq -3$, then $h^2(E\te F) = \chi(E\te F)$ and all other cohomology is zero.
\end{enumerate}
\end{corollary}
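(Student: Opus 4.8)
The plan is to deduce the corollary directly from Theorem~\ref{thm-exceptionalHom}, together with the stability of exceptional bundles and Serre duality on $\P^2$. First I record the elementary reductions. Since $E$ is locally free, $\sHom(E^*,F)\cong E\te F$ and hence $H^i(E\te F)\cong\Ext^i(E^*,F)$. The dual $E^*$ and any twist $E(a)$ of an exceptional bundle is again exceptional: the set $\cE$ of exceptional slopes is closed under $\alpha\mapsto-\alpha$ and $\alpha\mapsto\alpha+a$ (as line bundles are exceptional), while $\Ext^1(E^*,E^*)\cong H^1(E\te E^*)\cong\Ext^1(E,E)$ and similarly for $E(a)$, so by uniqueness $E^*=E_{-\mu(E)}$ and $E(a)=E_{\mu(E)+a}$. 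I will also use $\mu(E\te F)=\mu(E)+\mu(F)$ and $\chi(E\te F)=\chi(E^*,F)$.

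If $\mu(E\te F)\geq 0$, then $\mu(E^*)=-\mu(E)\leq\mu(F)$, so Theorem~\ref{thm-exceptionalHom} applied to the exceptional pair $(E^*,F)$ gives $\Ext^i(E^*,F)=0$ for $i>0$; hence $H^i(E\te F)=0$ for $i>0$ and $h^0(E\te F)=\chi(E\te F)$. If $\mu(E\te F)\leq -3$, then by Serre duality $H^i(E\te F)\cong H^{2-i}(E^*\te F^*(-3))^*$, and $E^*$, $F^*(-3)$ are exceptional with $\mu\bigl(E^*\te F^*(-3)\bigr)=-\mu(E\te F)-3\geq 0$; the previous case applied to this pair shows $H^j(E^*\te F^*(-3))=0$ for $j>0$, i.e.\ $H^i(E\te F)=0$ for $i<2$, so $h^2(E\te F)=\chi(E\te F)$.

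The remaining case is $-3<\mu(E\te F)<0$, where I claim $H^0(E\te F)=H^2(E\te F)=0$. For $H^0$: since $\mu(E\te F)<0$ we have $\mu(E^*)>\mu(F)$, and as $E^*$ and $F$ are stable there is no nonzero homomorphism $E^*\to F$, so $H^0(E\te F)=\Hom(E^*,F)=0$. For $H^2$: Serre duality gives $H^2(E\te F)\cong\Hom(F,E^*(-3))^*$, and $E^*(-3)$ is exceptional with $\mu(E^*(-3))=-\mu(E)-3<\mu(F)$ precisely because $\mu(E\te F)>-3$; stability again forces $\Hom(F,E^*(-3))=0$. Hence $h^1(E\te F)=-\chi(E\te F)$.

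There is no serious obstacle in this argument beyond keeping track of which slope inequality is needed where; the one subtlety worth flagging is that in the range $-3<\mu(E\te F)<0$ one cannot invoke Theorem~\ref{thm-exceptionalHom} to kill $H^0$ (its hypothesis on slopes points the wrong way), and instead the stability of exceptional bundles is what does the work, with Serre duality reducing the vanishing of $H^2$ to the same kind of stability statement.
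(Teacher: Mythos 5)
Your proof is correct and follows exactly the route the paper indicates (it gives no written proof but states the corollary is "a straightforward consequence of Theorem \ref{thm-exceptionalHom}, stability, and Serre duality"): Theorem \ref{thm-exceptionalHom} applied to the exceptional pair $(E^*,F)$ handles $\mu(E\te F)\geq 0$, Serre duality reduces $\mu(E\te F)\leq -3$ to that case, and stability of exceptional bundles kills $H^0$ and (via Serre duality) $H^2$ in the intermediate range. Nothing further is needed.
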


The inductive description of exceptional bundles also allows us to study when a sheaf $\sHom(E,F)$ is globally generated.

\begin{proposition}\label{prop-excgg}
The exceptional bundle $E$ is globally generated if and only if $\mu(E)\geq 0$.
\end{proposition}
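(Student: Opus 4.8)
The plan is to treat the two implications separately. The forward direction is immediate: if $E$ is globally generated then there is a surjection $\OO_{\P^2}^{\oplus N}\onto E$, and applying $\bigwedge^{r}$ with $r=\rk E$ gives a surjection $\OO_{\P^2}^{\oplus\binom{N}{r}}\onto \det E\cong \OO_{\P^2}(\ch_1 E)$. A line bundle on $\P^2$ admitting a nonzero section has nonnegative degree, so $\ch_1(E)\geq 0$ and hence $\mu(E)=\ch_1(E)/r\geq 0$.

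For the converse I would induct on $\ord(E)$. When $\ord(E)=0$ the bundle $E=\OO_{\P^2}(d)$ is a line bundle, and $\mu(E)=d\geq 0$ is precisely the condition for $\OO_{\P^2}(d)$ to be globally generated. Now assume $\ord(E)=q\geq 1$ and $\mu(E)\geq 0$, and write $E=E_\beta$ with $\beta=\varepsilon\big((p+1)/2^q\big)$ for $p$ even, as in \S\ref{ss-inductExceptional}; set $\alpha=\varepsilon(p/2^q)$. The crucial point is a parity observation: since $\varepsilon$ is increasing with $\varepsilon(0)=0$ and $\beta=\mu(E_\beta)\geq 0=\varepsilon(0)$, we get $(p+1)/2^q\geq 0$, i.e.\ $p\geq -1$; as $p$ is even this forces $p\geq 0$, so $\alpha=\varepsilon(p/2^q)\geq 0$. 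Moreover $\alpha=\varepsilon\big((p/2)/2^{q-1}\big)$ has order at most $q-1$, so by the inductive hypothesis $E_\alpha$ is globally generated. Finally, invoke the first exact sequence of Theorem \ref{kernelSlope},
$$0\to E_{\zeta_i}\to E_\alpha\te\Hom(E_\alpha,E_\beta)\to E_\beta\to 0,$$
whose exactness already guarantees $\Hom(E_\alpha,E_\beta)\neq 0$ (consistent with $\chi(E_\alpha,E_\beta)=r_\alpha r_\beta\big(P(\beta-\alpha)-\Delta_\alpha-\Delta_\beta\big)>0$, since $\beta>\alpha$ forces $P(\beta-\alpha)>1$ while $\Delta_\alpha,\Delta_\beta<\tfrac12$). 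This exhibits $E_\beta$ as a quotient of a direct sum of copies of the globally generated bundle $E_\alpha$, so $E_\beta$ is globally generated, closing the induction.

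The only subtle step — and the one I expect to be the main obstacle if one is not careful — is the parity observation that $\mu(E_\beta)\geq 0$ forces $\alpha\geq 0$. Without using that $p$ is even one could a priori have $\alpha<0\leq\beta$, in which case $E_\alpha$ need not be globally generated and the structural sequence would be useless; it is exactly the evenness of $p$ in Dr\'ezet's inductive description that rules this out. Everything else is formal bookkeeping: the forward direction is a determinant computation, and the backward direction is a straightforward descending reduction on the order, parallel to the proof of Theorem \ref{thm-exceptionalHom}.
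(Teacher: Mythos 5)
Your proof is correct and follows essentially the same route as the paper: the converse is proved by induction on the order using the surjection $E_\alpha\te\Hom(E_\alpha,E_\beta)\to E_\beta$ from Theorem \ref{kernelSlope}, with the evenness of $p$ ensuring $\alpha\geq 0$. The only (harmless) difference is in the easy direction, where you take determinants while the paper simply invokes stability of $E$ to conclude $\mu(E)\geq 0$ from the existence of a section.
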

\begin{proof}
Since $E$ is stable, if it has a section then $\mu(E)\geq 0$.

Conversely suppose $E=E_\beta$ with $\beta\geq 0$; we induct on the order of $E_\beta$.  If $E_\beta$ is a line bundle the result is clear, so suppose $\beta = \varepsilon((p+1)/2^q)$ with $p$ even and $q\geq 1$.  Putting $\alpha = \varepsilon(p/2^q)$, we have a surjection $$E_\alpha\te \Hom(E_\alpha,E_\beta) \to E_\beta \to 0.$$  Since $\beta> 0$ we have $\alpha\geq 0$, so $E_\alpha$ is globally generated by induction.  Since $E_\beta$ is a quotient of a globally generated bundle it is also globally generated.
\end{proof}

\begin{remark}
By a similar argument, an exceptional bundle $E$ is ample if and only if $\mu(E) \geq 1$.  Indeed, if $\mu(E) \geq 1$ then $E$ is a quotient of a direct sum of copies of an ample exceptional bundle.  Conversely, if $\mu(E) < 1$ then the restriction of $E$ to any line is not ample, so $E$ is not ample.
\end{remark}

\begin{theorem}\label{thm-excgg}
Let $E$ and $F$ be exceptional bundles.  The vector bundle $\sHom(E,F)$ is globally generated if and only if there is an integer $a$ with $\mu(E)\leq a\leq \mu(F)$.  Equivalently, if $E$ and $F$ are not both the same line bundle, then $\sHom(E,F)$ is globally generated if and only if $\mu(F)-\mu(E)>\sqrt{5}-2\approx 0.237.$
\end{theorem}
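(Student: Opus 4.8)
The plan is to first establish the characterization that $\sHom(E,F)=E^\vee\te F$ is globally generated if and only if the interval $[\mu(E),\mu(F)]$ contains an integer, and then deduce the numerical form. I will freely use that $E^\vee\te F$ is unchanged under a simultaneous twist $(E,F)\rightsquigarrow(E(-a),F(-a))$, that the dual of an exceptional bundle is exceptional of the opposite slope, and that a tensor product of globally generated sheaves is globally generated. For the ``if'' direction: given an integer $a$ with $\mu(E)\le a\le\mu(F)$, the twist $(E,F)\rightsquigarrow(E(-a),F(-a))$ reduces us to the case $\mu(E)\le 0\le\mu(F)$, and then $E^\vee$ (of slope $-\mu(E)\ge 0$) and $F$ are both globally generated by Proposition~\ref{prop-excgg}, hence so is $E^\vee\te F$.

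For the ``only if'' direction I would prove the contrapositive by restriction to a line. Suppose $[\mu(E),\mu(F)]$ contains no integer. If $\mu(E)>\mu(F)$ then $\sHom(E,F)$ has negative slope, so it is not globally generated. Otherwise $\mu(E)$ and $\mu(F)$ lie in a common open interval $(n,n+1)$ with $n\in\Z$, so in particular neither $E$ nor $F$ is a line bundle. Restricting to any line $\ell\cong\P^1$ and writing $E|_\ell=\bigoplus_i\OO_\ell(b_i)$ and $F|_\ell=\bigoplus_j\OO_\ell(c_j)$, the non-integrality of $\mu(E)=\tfrac1{\rk E}\sum_i b_i$ forces $\max_i b_i\ge n+1$, and that of $\mu(F)$ forces $\min_j c_j\le n$; hence $\sHom(E,F)|_\ell=E^\vee|_\ell\te F|_\ell$ contains a line-bundle summand of degree $\le n-(n+1)=-1$, so it is not globally generated, and neither is $\sHom(E,F)$.

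To pass to the numerical form, set $x_0=\tfrac{3-\sqrt5}{2}$, so that $\sqrt5-2=1-2x_0$ and $x_0>\tfrac13>1-2x_0$. The key point is the claim that \emph{every exceptional slope $\gamma\in(0,1)$ lies in the open interval $(x_0,1-x_0)$}. For the lower bound: by stability $\Hom(E_\gamma,\OO_{\P^2})=0$ (a nonzero map would have image a rank-one subsheaf of $\OO_{\P^2}$ of slope $\le 0<\gamma$), while by Serre duality and stability $\Ext^2(E_\gamma,\OO_{\P^2})=H^0(E_\gamma(-3))^\vee=0$; therefore $\chi(E_\gamma,\OO_{\P^2})=-\ext^1(E_\gamma,\OO_{\P^2})\le 0$, and since Riemann--Roch gives $\chi(E_\gamma,\OO_{\P^2})=r_\gamma\,(P(-\gamma)-\Delta_\gamma)$ we obtain $P(-\gamma)\le\Delta_\gamma<\tfrac12$; solving $P(-\gamma)<\tfrac12$ forces $\gamma>x_0$. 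The upper bound follows by applying the lower bound to the exceptional bundle $E_\gamma^\vee(1)=E_{1-\gamma}$, whose slope $1-\gamma$ also lies in $(0,1)$.

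Granting the claim, the equivalence of the two formulations (assuming $E,F$ are not both the same line bundle) is a short slope computation: since a given slope supports a unique exceptional bundle, $\mu(E)=\mu(F)$ would force $E=F$, so if $[\mu(E),\mu(F)]$ contains an integer $a$ we must have $\mu(E)<\mu(F)$, and then either $\mu(E)<a$, so $a-\mu(E)>x_0$ (by the claim when $E$ is not a line bundle, by integrality when it is), or $\mu(E)=a$, forcing $E$ to be a line bundle with $a<\mu(F)$ and, symmetrically, $\mu(F)-a>x_0$; either way $\mu(F)-\mu(E)>x_0>\sqrt5-2$. Conversely, if $[\mu(E),\mu(F)]$ contains no integer then either $\mu(E)\ge\mu(F)$, so $\mu(F)-\mu(E)\le 0<\sqrt5-2$, or $\mu(E),\mu(F)$ lie in a common $(n,n+1)$ with neither a line bundle, and the claim gives $\mu(F)-\mu(E)<(1-x_0)-x_0=\sqrt5-2$. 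I expect the main obstacle to be the claim itself --- the uniform lower bound $x_0$ on exceptional slopes in $(0,1)$ --- for which the decisive idea is to extract the bound from the forced vanishings $\Hom(E_\gamma,\OO_{\P^2})=\Ext^2(E_\gamma,\OO_{\P^2})=0$ and $\Delta_\gamma<\tfrac12$ via Riemann--Roch; everything else is routine bookkeeping with slopes and the compatibility of global generation with tensor products and restriction to lines.
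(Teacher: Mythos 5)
Your proposal is correct, but it reaches the statement by a genuinely different route than the paper in all three of its components. For the ``if'' direction the paper inducts on the order of $E$ using Dr\'ezet's mutation sequence $0\to E_\beta\to E_\eta\te\Hom(E_\beta,E_\eta)^*\to E_{\omega_i}\to 0$ to exhibit $\sHom(E,F)$ as a quotient of $\sHom(E_\eta,F)^{\oplus N}$ with $\eta$ of smaller order; your twist to $\mu(E)\leq 0\leq\mu(F)$ followed by applying Proposition \ref{prop-excgg} to $E^*$ and $F$ separately and tensoring is shorter and bypasses the induction entirely. For the ``only if'' direction the paper counts sections: using $\Delta(E),\Delta(F)\geq 3/8$ and Theorem \ref{thm-exceptionalHom} it shows $h^0(\sHom(E,F))=\chi(E,F)<r(E)r(F)$, so there are too few sections to generate; your restriction to a line, where non-integrality of the slopes forces a summand $\OO_\ell(c_j-b_i)$ of negative degree, is more elementary and does not need the identification of $h^0$ with $\chi$. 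Finally, the equivalence of the interval criterion with the numerical bound $\mu(F)-\mu(E)>\sqrt5-2$ is dispatched in the paper with the phrase ``considering the slopes of exceptional bundles''; you supply the underlying fact --- every exceptional slope in $(0,1)$ lies in $\left(\frac{3-\sqrt5}{2},\frac{\sqrt5-1}{2}\right)$ --- with a clean self-contained proof via the forced inequality $\chi(E_\gamma,\OO_{\P^2})\leq 0$, Riemann--Roch, and $\Delta_\gamma<\frac12$ (this is sharp: the slopes $\varepsilon(1/2^q)$ accumulate at $\frac{3-\sqrt5}{2}$ from above, consistent with $I_0=(-x_0,x_0)$). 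What the paper's argument buys is that it stays entirely inside the mutation/Euler-characteristic toolkit already set up in \S\ref{ss-inductExceptional}; what yours buys is independence from Theorem \ref{thm-exceptionalHom} and an explicit justification of the slope-gap claim that the paper leaves implicit.
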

\begin{proof}
By Proposition \ref{prop-excgg} we may assume that neither $E$ nor $F$ is a line bundle. Suppose there is such an integer $a$.  Then $E=E_\beta$ with $\beta = \varepsilon((p+1)/2^q)$ for $p$ even and $q\geq 1$.  Letting $\eta = \varepsilon((p+2)/2^q)$, we have an injection $$0\to E_\beta \to E_\eta \te \Hom(E_\beta,E_\eta)^*$$ and a surjection $$\sHom(E_\eta,F)\te \Hom(E_\beta,E_\eta)\to \sHom(E_\beta,F)\to 0.$$ By our assumption on $\beta$, we have $\eta \leq a$. By induction on the order,  $\sHom(E_\eta,F)$ is globally generated.  Therefore $\sHom(E_\beta,F)$ is a quotient of a globally generated bundle,  so is globally generated.

Conversely suppose that $\sHom(E,F)$ is globally generated.  Clearly $\mu(E)\leq\mu(F)$.  Considering the slopes of exceptional bundles, we see that if $\mu(E)$ and $\mu(F)$ are not the same integer then there is no integer $a$ with $\mu(E)\leq a\leq \mu(F)$ if and only if we have  $\mu(F)-\mu(E)<\sqrt{5}-2$.  Since $E$ and $F$ are not line bundles, both $\Delta(E)$ and $\Delta(F)$ are at least $3/8$.  Then we compute \begin{align*}\chi(E,F) &= r(E)r(F)(P(\mu(F)-\mu(E))-\Delta(E)-\Delta(F))\\
&<r(E)r(F)(P(\sqrt{5}-2)-\frac{3}{4})\\
&\approx 0.63 \cdot r(E)r(F).\\
&<r(E)r(F).\end{align*}
Then by Theorem \ref{thm-exceptionalHom} we have $\Hom(E,F)=\chi(E,F)<r(E)r(F)$,  so $\sHom(E,F)$ does not have enough sections to be globally generated.
\end{proof}

By combining Theorem \ref{thm-excgg} and resolutions by exceptional collections, we can give a criterion for $\sHom(W,V)$ to be globally generated for general stable bundles $W$ and $V$.

\begin{theorem}\label{thm-gghom}
Let $\bv,\bw\in K(\P^2)$ be Chern characters of stable bundles with discriminant greater than $1/2$.  Let $E_{\nu^+}$ be the primary corresponding exceptional bundle to $\bv$ and let $E_{\omega^-}$ be the secondary corresponding exceptional bundle to $\bw$.  Let $V\in M(\bv)$ and $W\in M(\bw)$ be general bundles.  If $\nu^+-\omega^- \leq 2$, then $\sHom(W,V)$ is globally generated.
\end{theorem}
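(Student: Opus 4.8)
The plan is to build a prioritary sheaf $\mathcal{E}$ with Chern character $\sHom(\bw,\bv) = \bv \otimes \bw^*$ (or rather, to work with $\sHom(W,V)$ directly as $W$ and $V$ vary) whose generic form is globally generated, then appeal to the irreducibility of the prioritary stack (\S\ref{ss-prioritary}) and openness of global generation. First I would reduce to the case where $V$ and $W$ are taken to be sufficiently general; by semicontinuity and the prioritary stack argument, it suffices to exhibit a single prioritary pair (or a single prioritary sheaf of the right character) with the desired property. Next, I would invoke the Kronecker resolutions of \S\ref{sss-res1}--\S\ref{sss-res2}: using the primary corresponding exceptional bundle $E_{\nu^+}$ of $\bv$, the general $V$ fits in a triangle involving $E_{-\nu^+}^{m_3}$ and a two-term complex built from $E_{-\alpha-3}, E_{-\beta}$ where $\nu^+ = \alpha.\beta$; dually, using the secondary corresponding exceptional bundle $E_{\omega^-}$ of $\bw$, the Serre-dual description gives $W$ in terms of exceptional bundles whose slopes are controlled by $\omega^-$. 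The hypothesis $\nu^+ - \omega^- \leq 2$ is precisely what is needed so that, after dualizing one side, the relevant pairs of exceptional bundles $(E, F)$ appearing in the two resolutions satisfy $\mu(F) - \mu(E)$ large enough that Theorem~\ref{thm-excgg} applies and each $\sHom(E,F)$ is globally generated.

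The key steps, in order, would be: (1) Write down the Kronecker resolution of the general $V$ and the Serre-dual Kronecker resolution of the general $W$, so that $\sHom(W,V) = V \otimes W^*$ is computed (in the derived category) by a complex whose terms are tensor products $E \otimes G^*$ of exceptional bundles drawn from the two triads, with slopes straddling $\nu^+$ and $\omega^-$ respectively. (2) Observe that $\sHom(W, V)$ is built from these pieces via the mapping-cone/extension structure of the two resolutions — so it is a quotient (or iterated extension of quotients) of direct sums of sheaves of the form $\sHom(E_i, F_j)$ where $E_i$ ranges over the exceptional bundles in the resolution of $W$ (with slopes near $\omega^-$) and $F_j$ over those in the resolution of $V$ (with slopes near $\nu^+$). (3) Check, using $\nu^+ - \omega^- \le 2$ together with the explicit interval structure $I_\alpha$ of exceptional slopes (so that the exceptional bundles in the $V$-resolution have slopes in roughly $[\nu^+-3, \nu^+]$ and those in the $W$-resolution in roughly $[\omega^- - 3, \omega^-]$, hence the differences $\mu(F_j) - \mu(E_i)$ are at least $\nu^+ - \omega^- - \text{(small)} \ge $ something exceeding $\sqrt5 - 2$), that Theorem~\ref{thm-excgg} gives global generation of each $\sHom(E_i, F_j)$. (4) Conclude that $\sHom(W,V)$, being (an iterated extension of) quotients of globally generated bundles, is itself globally generated — using that global generation passes to quotients and that an extension of a globally generated sheaf by a globally generated sheaf, with the connecting $H^1$ controlled, is globally generated.

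The main obstacle will be step (4) combined with the bookkeeping of step (3): a tensor product $V \otimes W^*$ of two two-term complexes is a priori a complex spread over several degrees, so I must verify that the relevant cohomology sheaf is indeed globally generated and that no negative-degree contributions obstruct the argument — concretely, I expect to need vanishing statements like $H^1$ of the "kernel" pieces to ensure sections lift, and these should follow from Theorem~\ref{thm-exceptionalHom} and Corollary~\ref{cor-exceptionalTensor} applied to the tensor products of exceptional bundles with slopes controlled by $\nu^+ - \omega^- \le 2$. The delicate point is the borderline: when $\nu^+ - \omega^- = 2$ exactly, the integer $a$ with $\mu(E_i) \le a \le \mu(F_j)$ in Theorem~\ref{thm-excgg} must be produced in every pair that arises, which forces a careful case analysis according to where $\nu^+$ and $\omega^-$ sit inside their respective intervals $I_{\nu^+}$ and $I_{\omega^-}$ — this is where I would expect to spend the most effort, and where the precise choice of triads in \S\ref{sss-res1}--\S\ref{sss-res2} (primary for $\bv$, secondary for $\bw$) becomes essential.
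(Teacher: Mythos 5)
Your proposal follows the paper's proof in all essentials: resolve the general $V$ by the Kronecker resolution attached to $E_{\nu^+}$ and the general $W^*$ by the one attached to the primary corresponding exceptional bundle of $W^D$ (namely $E_{-\omega^-}$), reduce to global generation of $\sHom(E,F)$ for the four exceptional pairs that occur, and apply Theorem~\ref{thm-excgg}. The obstacle you flag in step (4) does not actually arise: in both cases of \S\ref{sss-res1}--\ref{sss-res2} the resolutions are honest short exact sequences of sheaves whose right-hand terms are direct sums of exceptional bundles, so $\sHom(W,V)=W^*\te V$ is just a quotient of $\bigoplus_{i,j}\sHom(E_i,F_j)$ (tensoring a surjection by a locally free sheaf preserves surjectivity), and no mapping cones, negative-degree terms, or $H^1$-vanishing statements are needed. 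The final case analysis you anticipate also collapses to one observation: $\nu^+\leq\omega^-+2$ produces an integer $a$ with $\beta\leq a\leq\gamma+3$ (where $\nu^+=\alpha.\beta$ and $\omega^-=\gamma.\delta$), which handles the worst of the four pairs and hence, by Theorem~\ref{thm-excgg}, all of them.
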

\begin{proof}
Let $\nu^+ = \alpha.\beta$ be the primary corresponding exceptional slope to $\bv$.  Then following \S\S\ref{sss-res1}--\ref{sss-res2}, $V$ admits one of the resolutions $$0 \to E^{m_1}_{-\alpha-3} \to E^{m_2}_{-\beta} \oplus E^{m_3}_{-\nu^+}\to V\to 0 $$
$$0 \to E^{m_3}_{-\nu^+-3} \oplus E_{-\alpha-3}^{m_1} \to E_{-\beta}^{m_2} \to V\to 0.$$  To see that $\sHom(W,V)$ is globally generated, it is enough to show that $\sHom(W,E_{-\beta})$ and $\sHom(W,E_{-\nu^+})$ are globally generated.

Let $\omega^-= \gamma.\delta$ be the secondary corresponding exceptional slope to $\bw$.  Then $-\omega^- = (-\delta).(-\gamma)$ is the primary corresponding exceptional slope to the Serre dual $W^D$.  Thus $W^*=W^D(3)$ admits one of the resolutions
$$0 \to E^{n_1}_{\delta} \to E^{n_2}_{\gamma+3} \oplus E^{n_3}_{\omega^-+3}\to W^*\to 0 $$
$$0 \to E^{n_3}_{\omega^-} \oplus E_{\delta}^{n_1} \to E_{\gamma+3}^{n_2} \to W^*\to 0.$$
Tensoring by either $E_{-\beta}$ or $E_{-\nu^+}$, we see that it is enough to prove the four bundles $$\sHom(E_\beta,E_{\gamma+3}), \quad \sHom(E_\beta,E_{\omega^-+3}),\quad \sHom(E_{\nu^+},E_{\gamma+3}),\quad \sHom(E_{\nu^+},E_{\omega^-+3})$$ are globally generated. 

By Theorem \ref{thm-excgg}, if $\sHom(E_\beta,E_{\gamma+3})$ is globally generated then so are all the other bundles.  Thus it suffices to show there is an integer $a$ with $\beta \leq a \leq \gamma+3$.  We are assuming $\nu^+ \leq \omega^-+2$.  There is an integer $a$ with $\omega^-+2 < a \leq \gamma+3$, and for this integer we have $\beta\leq a \leq \gamma+3$.
\end{proof}

The previous result was reformulated for tensor products in the introduction in Theorem \ref{thm-ggIntro}.  In our notation here it reads as follows.
 
 \begin{corollary}
 Let $\bv,\bw\in K(\P^2)$ be Chern characters of stable bundles with discriminant greater than $1/2$.  Let $\nu^+$ be the primary corresponding exceptional slope to $\bv$ and let $\omega^+$ be the primary corresponding exceptional slope to $\bw$.  Let $V\in M(\bv)$ and $W\in M(\bw)$ be general bundles.  If $\nu^++\omega^+\leq -1$, then $V\te W$ is globally generated.
 \end{corollary}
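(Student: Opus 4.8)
The plan is to deduce this corollary from Theorem \ref{thm-gghom} by rewriting the tensor product as a $\sHom$ and tracking the corresponding exceptional slopes through Serre duality. First I would note that $V\te W\cong\sHom(W^*,V)$, where $W^* = W^\vee$ denotes the dual bundle. Dualizing is a bijection between stable bundles of character $\bw$ and stable bundles of character $\bw^* = (r(\bw),-\mu(\bw),\Delta(\bw))$, so for general $W\in M(\bw)$ the dual $W^*$ is a general stable bundle in $M(\bw^*)$, and $\Delta(\bw^*)=\Delta(\bw)>1/2$. Hence it suffices to show $\sHom(W^*,V)$ is globally generated for general $V\in M(\bv)$ and $W^*\in M(\bw^*)$, and Theorem \ref{thm-gghom} applies to the pair $(\bv,\bw^*)$ once the relevant corresponding exceptional bundles are identified.

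The key step is to compute the secondary corresponding exceptional slope of $\bw^*$ in terms of $\omega^+$. Using Serre duality in the form $\chi(A)=\chi(A^\vee(-3))$ on $\P^2$, for any exceptional slope $\beta$ one has
\[
\chi(\bw^*\te E_\beta)=\chi\bigl((\bw\te E_\beta^\vee)^\vee\bigr)=\chi\bigl(\bw\te E_\beta^\vee(-3)\bigr)=\chi(\bw\te E_{-\beta-3}),
\]
since $E_\beta^\vee(-3)=E_{-\beta-3}$. By Proposition \ref{prop-EulerExceptional} the sign of $\chi(\bw\te E_\gamma)$ changes exactly at $\gamma\in\{\omega^-,\omega^+\}$; substituting $\gamma=-\beta-3$, the sign of $\chi(\bw^*\te E_\beta)$ changes exactly at $\beta\in\{-\omega^+-3,\,-\omega^--3\}$. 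Since $\omega^+-\omega^-\geq 3>0$ by Remark \ref{rem-corrExcGap}, we have $-\omega^+-3<-\omega^--3$, so by Proposition \ref{prop-EulerExceptional} the secondary (smaller) corresponding exceptional slope to $\bw^*$ equals $-\omega^+-3$ (and the primary one equals $-\omega^--3$).

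Finally I would apply Theorem \ref{thm-gghom} to the pair $(\bv,\bw^*)$: its hypothesis that $\nu^+$ minus the secondary corresponding exceptional slope of the second character be at most $2$ becomes $\nu^+-(-\omega^+-3)\leq 2$, i.e. $\nu^++\omega^+\leq -1$, which is precisely our assumption. The conclusion is that $\sHom(W^*,V)=V\te W$ is globally generated for general $V$ and $W$, as desired. \emph{Main obstacle:} there is essentially no serious difficulty beyond this Serre-duality bookkeeping; the one point to be careful about is the shift by $-3$ (coming from $\omega_{\P^2}=\OO_{\P^2}(-3)$) and the direction of the inequality $-\omega^+-3<-\omega^--3$, which is what ensures we extract the \emph{secondary} corresponding exceptional slope of $\bw^*$ and not the primary one.
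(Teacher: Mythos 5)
Your proposal is correct and follows essentially the same route as the paper: the paper's proof is the one-liner "apply Theorem \ref{thm-gghom} to $\sHom(W^*,V)$; the secondary corresponding exceptional bundle to $W^*$ is $E_{-\omega^+-3}$," and your Serre-duality computation identifying that slope (and hence translating $\nu^+-(-\omega^+-3)\leq 2$ into $\nu^++\omega^+\leq -1$) is exactly the bookkeeping the paper leaves implicit.
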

 \begin{proof}
 Apply Theorem \ref{thm-gghom} to $\sHom(W^*,V)$.  The secondary corresponding orthogonal bundle to $W^*$ is $E_{-\omega^+-3}$.
 \end{proof}

\subsection{Twists by exceptional bundles}\label{ss-excTwist}  In this section we prove the following theorem.

\begin{theorem}\label{thm-twistExceptional}
Let $\bv\in K(\P^2)$ be the Chern character of a stable bundle, and let $V\in M(\bv)$ be general.  If $E$ is an exceptional bundle then $V\te E$ has at most one nonzero cohomology group.  
\end{theorem}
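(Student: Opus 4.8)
The plan is to induct on the order of the exceptional bundle $E = E_\beta$, using the inductive description of exceptional bundles from Theorem~\ref{kernelSlope} together with the Kronecker fibration. The base case is when $E$ is a line bundle, say $E = \OO_{\P^2}(k)$; then $V \te E$ is just a twist of $V$, and the statement that a general stable bundle has at most one nonzero cohomology group is precisely the theorem of G\"ottsche--Hirschowitz. Since cohomology and stability are unchanged by twisting $V$ by a line bundle, we may reduce to showing the claim for $E = E_\beta$ with $\beta = \varepsilon((p+1)/2^q)$ for $p$ even and $q\geq 1$, assuming the result for all exceptional bundles of smaller order.

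For the inductive step, the idea is to exploit the two short exact sequences of Theorem~\ref{kernelSlope},
$$0\to E_{\zeta_i}\to E_\alpha\te \Hom(E_\alpha,E_\beta)\to E_\beta\to 0 \quad\text{and}\quad 0\to E_\beta\to E_{\eta}\te \Hom(E_{\beta},E_\eta)^*\to E_{\omega_i}\to 0,$$
where $\alpha,\eta,\zeta_i,\omega_i$ all have smaller order than $\beta$. Tensoring the first sequence by $V$ gives a long exact sequence relating the cohomology of $V\te E_\beta$ to that of $V\te E_{\zeta_i}$ and $V\te E_\alpha$; tensoring the second gives one relating $V\te E_\beta$ to $V\te E_\eta$ and $V\te E_{\omega_i}$. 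By the inductive hypothesis each of the four bundles $V\te E_{\zeta_i}$, $V\te E_\alpha$, $V\te E_\eta$, $V\te E_{\omega_i}$ has at most one nonzero cohomology group, whose position is governed by the slope $\mu(V) + \mu(E_\bullet)$ and the sign of the corresponding Euler characteristic. The subtlety is that a long exact sequence with controlled end terms does not automatically force the middle term to be concentrated in one degree: there can be a connecting map of unknown rank. To pin this down, one uses the Kronecker fibration of $M(\bv)$: rather than working with an arbitrary general $V$, one works with the canonical two-term complex $K$ and the resolutions of $V$ by exceptional collections described in \S\S\ref{sss-res1}--\ref{sss-res2}. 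The point is that the relevant maps on cohomology are, up to the identifications coming from the Beilinson spectral sequence, induced by a \emph{general} (hence stable) Kronecker module, so their ranks are computed by Theorem~\ref{thm-kronecker}, which says a general semistable Kronecker module has at most one nonzero $\Ext$ group against a general module.

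Concretely, I expect the argument to split into cases according to where the slope $\mu(V\te E_\beta) = \mu(V) + \beta$ sits relative to the slopes of the triads built from $E_\beta$, which determines which of the two exact sequences to use (choosing the one whose auxiliary bundles have Euler characteristics against $V$ with the ``right'' signs so that the connecting map is forced to vanish or to be surjective/injective). In the borderline slope ranges one reduces the computation of $V\te E_\beta$ to an $\Ext$ computation between the complex $K$ associated to $V$ and a complex associated to $E_\beta$ via its own Beilinson data, both of which correspond to Kronecker modules general of their dimension vectors with at least one semistable; Theorem~\ref{thm-kronecker} then delivers the conclusion. The main obstacle is exactly this bookkeeping: matching the cohomology of $V\te E_\beta$ with an $\Ext$ group of Kronecker modules so that the semistability hypothesis of Theorem~\ref{thm-kronecker} is verifiably satisfied, and checking in each slope range that the sign conditions on the Euler characteristics of the auxiliary exceptional twists are the ones needed to kill the ambiguous connecting maps. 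Once the framework is set up, the individual case computations are routine Riemann--Roch estimates using the formula $\chi(V\te E) = r(V)r(E)(P(\mu(V)+\mu(E)) - \Delta(V) - \Delta(E))$ and the known discriminants of exceptional bundles.
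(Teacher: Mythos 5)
Your proposal matches the paper's proof in all essentials: the paper also inducts on the order of $E_\beta$ via the two exact sequences of Theorem~\ref{kernelSlope}, and handles the borderline slopes (where the induction cannot terminate at bundles of smaller order with the right sign of Euler characteristic) by identifying $H^i(V\te E_\beta)$ with $\Ext$ groups between the stable complex $K$ from the Kronecker fibration of $M(\bv)$ and a general complex built from the Beilinson data of $E_\beta$, then invoking Theorem~\ref{thm-kronecker}. The only cosmetic differences are that the paper's case division is governed by the position of $\beta$ relative to the decomposition $\nu^+=\gamma.\delta$ of the corresponding exceptional slope of $\bv$ (with the cases $\beta\geq\delta$ and $\beta\leq\gamma$ dispatched directly from the resolution of $V$ without induction), and that it reproves the line-bundle case rather than citing G\"ottsche--Hirschowitz.
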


Consequently, the cohomology of $V\te E$ is  determined by the slope and the Euler characteristic:

\begin{enumerate}
\item If $\chi(V\te E) = 0$, then $V\te E$ has no cohomology.
\item If $\chi(V\te E) < 0$, then $h^1(V\te E) = -\chi(V\te E)$.
\item If $\chi(V\te E) > 0$ and $\mu(V\te E) \geq 0$, then $h^0(V\te E) = \chi(V\te E)$.
\item If $\chi(V\te E) > 0$ and $\mu(V\te E) \leq -3$, then $h^2(V\te E) = \chi(V\te E)$.
\end{enumerate}

The proof of the theorem will occupy the rest of the section.  It is enough to prove that either $H^0(V\te E) = 0$ or $H^1(V\te E) = 0$.  Indeed, if $V^D$ is the Serre dual and we know $V^D \te E^*$ has either $H^0(V^D\te E^*) = 0$ or $H^1(V^D\te E^*) = 0$, then either $H^1(V \te E)=0$ or $H^2(V\te E) = 0$.  By stability $H^0(V\te E)$ and $H^2(V\te E)$ cannot both be nonzero, so it follows that $V\te E$ has at most one nonzero cohomology group. 

By Corollary \ref{cor-exceptionalTensor} we may assume $M(\bv)$ is positive dimensional. Let $E_{\nu^+}$ be the primary corresponding exceptional bundle to $\bv$ and decompose $\nu^+$ as $\nu^+=\gamma.\delta$.   Then by \S\S\ref{sss-res1}--\ref{sss-res2}, $V$ fits in a triangle of the form
\begin{equation}\label{res}\begin{cases}E_{-\nu^+}^{m_3} \to V\to K \to \cdot & \textrm{if } \chi(V\te E_{\nu^+})> 0
\\ E_{-\nu^+-3}^{m_3}\to K\to V\to \cdot & \textrm{if } \chi(V\te E_{\nu^+}) \leq 0,\end{cases}\end{equation} where $K$ is a complex $$K:E_{-\gamma-3}^{m_1} \to E_{-\delta}^{m_2}$$ sitting in degrees $-1$ and $0$.  In the derived category, $K$ fits in the triangle $$E_{-\gamma-3}^{m_1} \to E_{-\delta}^{m_2} \to K \to \cdot.$$ The complex $K$ corresponds to a general stable Kronecker $\Hom(E_{-\gamma-3},E_{-\delta})^*$-module. 

Let $E_\beta$ be any exceptional bundle.  The argument we use to compute the cohomology of $V\te E_\beta$ depends upon the relative position of $\beta$ and the slopes $\gamma < \nu^+ < \delta.$  Recall from Proposition \ref{prop-EulerExceptional} that the sign of the Euler characteristic $\chi(V\te E_\beta)$ changes from negative to positive as $\beta$ crosses $\nu^+$.

\begin{lemma}\label{lem-betaEexceptional}
The bundle $V\te E_{\nu^+}$ has at most one nonzero cohomology group.  It is either $H^0$ or $H^1$, determined by $\chi(V\te E_{\nu^+})$.
\end{lemma}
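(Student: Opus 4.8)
The plan is to compute $H^i(V\te E_{\nu^+}) = \Ext^i(E_{-\nu^+},V)$ (using $E_{\nu^+}^* = E_{-\nu^+}$) directly from the resolution \eqref{res}, the point being that $E_{-\nu^+}$ is ``right orthogonal'' to the complex $K$. The triple $(E_{-\gamma-3},E_{-\delta},E_{-\nu^+})$ is a full exceptional collection: when $\chi(\bv\te E_{\nu^+}) > 0$ it is the triad used in \S\ref{sss-res1}, and when $\chi(\bv\te E_{\nu^+})\leq 0$ it is obtained from the triad of \S\ref{sss-res2} by the helix mutation $(A,B,C)\mapsto(B,C,A(3))$ on $\P^2$, since $E_{-\nu^+-3}(3) = E_{-\nu^+}$. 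As $E_{-\nu^+}$ is the last object of this collection, $\Ext^\bullet(E_{-\nu^+},E_{-\gamma-3}) = 0$ and $\Ext^\bullet(E_{-\nu^+},E_{-\delta}) = 0$, so applying $\Hom(E_{-\nu^+},-)$ to the triangle $E_{-\gamma-3}^{m_1}\to E_{-\delta}^{m_2}\to K\to\cdot$ yields $\Ext^\bullet(E_{-\nu^+},K) = 0$.

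Given this, I would split into the two cases of \eqref{res}. If $\chi(\bv\te E_{\nu^+})>0$, apply $\Hom(E_{-\nu^+},-)$ to $E_{-\nu^+}^{m_3}\to V\to K\to\cdot$: the vanishing of $\Ext^\bullet(E_{-\nu^+},K)$ gives $\Ext^i(E_{-\nu^+},V)\cong\Ext^i(E_{-\nu^+},E_{-\nu^+}^{m_3})$, which is $\CC^{m_3}$ for $i=0$ and $0$ otherwise, so only $H^0(V\te E_{\nu^+})$ is nonzero and it has dimension $m_3 = \chi(\bv\te E_{\nu^+})$. If $\chi(\bv\te E_{\nu^+})\leq 0$, note that $E_{-\nu^+-3}\cong E_{-\nu^+}\te\OO_{\P^2}(-3)$, so by Serre duality $\Ext^i(E_{-\nu^+},E_{-\nu^+-3})\cong\Ext^{2-i}(E_{-\nu^+},E_{-\nu^+})^*$ is $\CC$ for $i=2$ and $0$ otherwise. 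Applying $\Hom(E_{-\nu^+},-)$ to $E_{-\nu^+-3}^{m_3}\to K\to V\to\cdot$ and using $\Ext^\bullet(E_{-\nu^+},K)=0$ then gives $\Ext^i(E_{-\nu^+},V)\cong\Ext^{i+1}(E_{-\nu^+},E_{-\nu^+-3}^{m_3})$, which is $\CC^{m_3}$ for $i=1$ and $0$ otherwise; hence only $H^1(V\te E_{\nu^+})$ is nonzero, of dimension $m_3 = -\chi(\bv\te E_{\nu^+})$, and when $\chi(\bv\te E_{\nu^+}) = 0$ the bundle $V\te E_{\nu^+}$ has no cohomology. In every case $V\te E_{\nu^+}$ has at most one nonzero cohomology group, $H^0$ or $H^1$, with the sign of $\chi(\bv\te E_{\nu^+})$ deciding which.

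There is essentially no obstacle: the content lies entirely in the resolution \eqref{res} together with the fact that $E_{-\nu^+}$ sits at the right end of an exceptional collection built from the same exceptional bundles as $K$. The only steps needing care are the Serre-duality shift in the case $\chi(\bv\te E_{\nu^+})\leq 0$, which is what pushes the surviving cohomology from degree $0$ into degree $1$, and the identification of $(E_{-\gamma-3},E_{-\delta},E_{-\nu^+})$ as an exceptional collection in that case. This lemma is the base case for the analysis of $V\te E_\beta$ for arbitrary exceptional slopes $\beta$ carried out in the rest of the section.
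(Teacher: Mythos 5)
Your argument is correct and is precisely the argument the paper compresses into its one-line proof (``This follows immediately from (\ref{res}) and orthogonality properties of exceptional bundles''): you apply $\Hom(E_{-\nu^+},-)$ to the triangle \eqref{res}, kill $\Ext^\bullet(E_{-\nu^+},K)$ because $E_{-\nu^+}$ is the last object of the exceptional collection $(E_{-\gamma-3},E_{-\delta},E_{-\nu^+})$, and read off the answer from $\Ext^\bullet(E_{-\nu^+},E_{-\nu^+})$ (with the Serre-duality degree shift in the case $\chi(\bv\te E_{\nu^+})\leq 0$). The only stylistic difference is that the helix-mutation step in the second case can be bypassed, since $(E_{-\gamma-3},E_{-\delta},E_{-\nu^+})$ is already one of the triads listed in the paper and hence an exceptional collection independently of which resolution of $V$ is in play.
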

\begin{proof}
This follows immediately from (\ref{res}) and orthogonality properties of exceptional bundles.
\end{proof}

\begin{lemma}\label{lem-betaGdelta}
Suppose $\beta \geq \delta$.  Then the only nonzero cohomology group of $V\te E_\beta$ is $H^0$.
\end{lemma}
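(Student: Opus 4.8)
The plan is to extract cohomology of $V\te E_\beta$ from the triangle defining $K$ in (\ref{res}) together with the triangle $E_{-\gamma-3}^{m_1}\to E_{-\delta}^{m_2}\to K\to\cdot$, and to use the hypothesis $\beta\geq\delta$ to see that the only exceptional bundles appearing are "compatible" with $E_\beta$ in the sense of Theorem \ref{thm-exceptionalHom}. Concretely, I would tensor the whole picture by $E_\beta$ and aim to show $H^1(V\te E_\beta)=0$; since $\chi(V\te E_\beta)>0$ for $\beta\geq\delta>\nu^+$ by Proposition \ref{prop-EulerExceptional}, and since $\mu(V\te E_\beta)\geq 0$ in this range (this needs a small check, comparing $\mu(\bv)+\beta$ against $0$ using that the orthogonal parabola to $\bv$ forces $\nu^+$ to be large enough, or more simply noting $H^2$ can be ruled out by stability once $\beta$ is large), it then follows that $H^0$ is the only nonzero group, as claimed. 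Actually the cleanest route: show both $H^1(V\te E_\beta)=0$ and $H^2(V\te E_\beta)=0$ directly.

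\emph{Step 1.} Compute the cohomology of $K\te E_\beta$ from the triangle $E_{-\gamma-3}^{m_1}\te E_\beta\to E_{-\delta}^{m_2}\te E_\beta\to K\te E_\beta\to\cdot$. Here $\mu(E_{-\delta}\te E_\beta)=\beta-\delta\geq 0$, so by Corollary \ref{cor-exceptionalTensor} applied to the pair $(E_\delta, E_\beta)$ we get $H^1(E_{-\delta}\te E_\beta)=H^2(E_{-\delta}\te E_\beta)=0$. For the other term, $\mu(E_{-\gamma-3}\te E_\beta)=\beta-\gamma-3$; since $\gamma<\nu^+\leq\beta$ but we only know $\gamma>\beta-3$ is \emph{not} guaranteed, I need $\beta-\gamma-3>-3$, i.e. $\beta>\gamma$, which holds since $\beta\geq\delta>\nu^+>\gamma$. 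Hence $H^2(E_{-\gamma-3}\te E_\beta)=0$ by stability and Serre duality (equivalently Corollary \ref{cor-exceptionalTensor}(2)). The long exact sequence then gives $H^i(K\te E_\beta)=0$ for $i\geq 1$; only $H^0(K\te E_\beta)$ survives (from $H^0$ and $H^1$ of the two exceptional terms).

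\emph{Step 2.} Feed this into the relevant case of (\ref{res}) tensored with $E_\beta$. In the case $\chi(V\te E_{\nu^+})>0$ we have $E_{-\nu^+}^{m_3}\te E_\beta\to V\te E_\beta\to K\te E_\beta\to\cdot$; since $\mu(E_{-\nu^+}\te E_\beta)=\beta-\nu^+>0$, Corollary \ref{cor-exceptionalTensor} gives $H^i(E_{-\nu^+}\te E_\beta)=0$ for $i\geq 1$, so combined with Step 1 we get $H^i(V\te E_\beta)=0$ for $i\geq 1$ and $V\te E_\beta$ has only $H^0$. In the case $\chi(V\te E_{\nu^+})\leq 0$ we have $E_{-\nu^+-3}^{m_3}\te E_\beta\to K\te E_\beta\to V\te E_\beta\to\cdot$; here $\mu(E_{-\nu^+-3}\te E_\beta)=\beta-\nu^+-3>-3$ again since $\beta>\nu^+$, so $H^2(E_{-\nu^+-3}\te E_\beta)=0$, and $H^i(K\te E_\beta)=0$ for $i\geq 1$ from Step 1, so $H^i(V\te E_\beta)=0$ for $i\geq 1$ in this case too. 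Either way the only nonzero cohomology of $V\te E_\beta$ is $H^0$.

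\emph{Main obstacle.} The only delicate point is checking the slope inequalities $\beta>\gamma$ and $\beta>\nu^+$ rigorously from $\beta\geq\delta$ — these should follow immediately from $\gamma<\nu^+<\delta\leq\beta$, so there is essentially no obstacle; the argument is a routine diagram chase once Corollary \ref{cor-exceptionalTensor} is in hand. If one wanted to be careful about the boundary case $\beta=\delta$, note $\mu(E_{-\delta}\te E_\beta)=0$ is still covered by Corollary \ref{cor-exceptionalTensor}(1), and $\beta=\delta>\nu^+>\gamma$ keeps all other inequalities strict, so nothing changes.
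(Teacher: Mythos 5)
Your argument is correct and is essentially identical to the paper's proof: tensor the triangles in (\ref{res}) by $E_\beta$, use Corollary \ref{cor-exceptionalTensor} on $E_{-\delta}\te E_\beta$ and $E_{-\nu^+}\te E_\beta$ together with the slope bounds $\mu(E_{-\gamma-3}\te E_\beta)>-3$ and $\mu(E_{-\nu^+-3}\te E_\beta)>-3$, and finish with $\chi(V\te E_\beta)>0$ from Proposition \ref{prop-EulerExceptional}. The only slight imprecision is the claim in Step 1 that ``only $H^0(K\te E_\beta)$ survives'': when $K$ is a genuine two-term complex, $H^{-1}(K\te E_\beta)$ may also be nonzero, but this is harmless since your argument only uses the vanishing of $H^1$ and $H^2$ of $K\te E_\beta$ and the fact that $V\te E_\beta$ is a sheaf.
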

\begin{proof}
Tensor the triangle (\ref{res}) by $E_\beta$ and compute the cohomology of the terms.  The bundle $E_\beta \te E_{-\gamma-3}$ has slope greater than $-3$, so can only have $H^0$ and $H^1$.  By Corollary \ref{cor-exceptionalTensor}, the bundle $E_\beta\te E_{-\delta}$ has only $H^0$.  Thus $E_\beta\te K$ can only have $H^{-1}$ and $H^0$.  

\emph{Case 1: $\chi(V\te E_{\nu^+})\geq 0$.}  By Corollary \ref{cor-exceptionalTensor}, the bundle $E_\beta\te E_{-\nu^+}$ can only have $H^0$.  Then, since $V$ is a sheaf, this implies $V\te E_\beta$ can only have $H^0$.

\emph{Case 2: $\chi(V\te E_{\nu^+})\leq 0$.}  The bundle $E_\beta\te E_{-\nu^+-3}$ has slope greater than $-3$, so it can only have $H^0$ or $H^1$.  Then it again follows that $V\te E_\beta$ can only have $H^0$.

In either case, by Proposition \ref{prop-EulerExceptional} we have  $\chi(V\te E_\beta)>0$, so in fact $H^0(V\te E_\beta)$ is  nonzero.
\end{proof}

\begin{lemma}\label{lem-betaMiddle}
Suppose $\beta \leq \gamma$.  Then $H^0(V\te E_\beta)=0$. 
\end{lemma}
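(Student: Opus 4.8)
The plan is to show directly that $H^0(V\te E_\beta)=0$ by applying the functor $-\te E_\beta$ to the defining triangle (\ref{res}) and bounding cohomology termwise using Corollary \ref{cor-exceptionalTensor}. The only numerical inputs needed are the inequalities $\beta\le \gamma<\nu^+<\delta$, which are already available. Each exceptional bundle appearing in (\ref{res}) and in the triangle defining $K$ has slope in a known range relative to $\beta$, so every relevant cohomology group can simply be read off from Corollary \ref{cor-exceptionalTensor}.

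Concretely, I would first show $\mathbb{H}^0(E_\beta\te K)=0$. Tensoring $E_\beta$ into the triangle $E_{-\gamma-3}^{m_1}\to E_{-\delta}^{m_2}\to K\to\cdot$ gives a long exact sequence in which $\mathbb{H}^0(E_\beta\te K)$ is squeezed between $H^0(E_\beta\te E_{-\delta}^{m_2})$ and $H^1(E_\beta\te E_{-\gamma-3}^{m_1})$. Since $\mu(E_\beta\te E_{-\delta})=\beta-\delta<0$, Corollary \ref{cor-exceptionalTensor} gives $H^0(E_\beta\te E_{-\delta})=0$; since $\mu(E_\beta\te E_{-\gamma-3})=\beta-\gamma-3\le -3$, the same corollary gives $H^1(E_\beta\te E_{-\gamma-3})=0$. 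Hence $\mathbb{H}^0(E_\beta\te K)=0$. Now tensor (\ref{res}) by $E_\beta$. In the case $\chi(\bv\te E_{\nu^+})>0$, the triangle $E_{-\nu^+}^{m_3}\to V\to K\to\cdot$ yields an exact sequence $H^0(E_\beta\te E_{-\nu^+}^{m_3})\to H^0(V\te E_\beta)\to \mathbb{H}^0(E_\beta\te K)$; the left term vanishes because $\mu(E_\beta\te E_{-\nu^+})=\beta-\nu^+<0$, and the right term vanishes by the previous step, so $H^0(V\te E_\beta)=0$. In the case $\chi(\bv\te E_{\nu^+})\le 0$, the triangle $E_{-\nu^+-3}^{m_3}\to K\to V\to\cdot$ yields $\mathbb{H}^0(E_\beta\te K)\to H^0(V\te E_\beta)\to H^1(E_\beta\te E_{-\nu^+-3}^{m_3})$; here $\mu(E_\beta\te E_{-\nu^+-3})=\beta-\nu^+-3<-3$, so the right-hand term vanishes by Corollary \ref{cor-exceptionalTensor}, and again $H^0(V\te E_\beta)=0$.

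There is no real obstacle: the substantive work is in the construction of the resolution (\ref{res}), which is already in hand, and what remains is bookkeeping with Corollary \ref{cor-exceptionalTensor}. The only subtlety worth flagging is that $K$ is genuinely a two-term complex and need not be a shifted sheaf, so the long exact sequences must be tracked in terms of hypercohomology. If one prefers to stay with honest short exact sequences, the same computation can be run using the sheaf resolutions $0\to E_{-\gamma-3}^{m_1}\to E_{-\delta}^{m_2}\oplus E_{-\nu^+}^{m_3}\to V\to 0$ and $0\to E_{-\nu^+-3}^{m_3}\oplus E_{-\gamma-3}^{m_1}\to E_{-\delta}^{m_2}\to V\to 0$ from \S\S\ref{sss-res1}--\ref{sss-res2}: applying $\Hom(E_\beta^*,-)$ and using $\Hom(E_\beta^*,F)=H^0(E_\beta\te F)$ and $\Ext^1(E_\beta^*,F)=H^1(E_\beta\te F)$, the identical slope inequalities force $\Hom(E_\beta^*,V)=H^0(V\te E_\beta)=0$.
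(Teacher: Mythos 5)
Your proof is correct and follows essentially the same route as the paper: tensor the triangle (\ref{res}) and the triangle defining $K$ by $E_\beta$, and kill the relevant terms using the slope bounds $\mu(E_\beta\te E_{-\gamma-3})\leq -3$, $\mu(E_\beta\te E_{-\delta})<0$, $\mu(E_\beta\te E_{-\nu^+})<0$, and $\mu(E_\beta\te E_{-\nu^+-3})<-3$ via Corollary \ref{cor-exceptionalTensor}. You have simply made the hypercohomology long exact sequences explicit where the paper leaves them implicit.
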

\begin{proof}
Tensor (\ref{res}) by $E_\beta$.  Observe $E_\beta \te E_{-\gamma-3}$ has slope less than $-3$, so only has $H^2$.  Since $E_\beta \te E_{-\delta}$ has negative slope, it can only have $H^1$ and $H^2$.   Therefore $H^0(E_\beta \te K)=0.$  On the other hand clearly $H^0(E_\beta \te E_{-\nu^+})=0$ since the slope is negative, and $H^1(E_{\beta} \te E_{-\nu^+-3})=0$ since the slope is less than $-3$.  We conclude that $H^0(V\te E_\beta)=0$.
\end{proof}

The hardest exceptional bundles $E_\beta$ to handle are the ones closest to the corresponding exceptional slope.  We now turn to these cases.

\begin{lemma}\label{lem-betaSandwichRight}
Suppose $\nu^+ < \beta < \delta$.  Then the only nonzero cohomology of $V\te E_\beta$ is $H^0$.
\end{lemma}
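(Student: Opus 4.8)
The plan is to prove $H^1(V\te E_\beta)=0$ and $H^2(V\te E_\beta)=0$; since $\beta>\nu^+$, Proposition \ref{prop-EulerExceptional} gives $\chi(V\te E_\beta)>0$, so $H^0(V\te E_\beta)\ne 0$ is then automatic. The vanishing of $H^2$ is the easy part. One way: tensor the triangle (\ref{res}) by $E_\beta$ and observe that every exceptional tensor product occurring in it --- $E_\beta\te E_{-\gamma-3}$, $E_\beta\te E_{-\delta}$, $E_\beta\te E_{-\nu^+}$, $E_\beta\te E_{-\nu^+-3}$ --- has slope $>-3$, using $\delta-\gamma<3$, hence has no $H^2$ by Corollary \ref{cor-exceptionalTensor}, so the long exact sequence kills $H^2(V\te E_\beta)$. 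Alternatively, dualize: $H^2(V\te E_\beta)\cong H^0(V^D\te E_\beta^*)^\vee$, and if $-\nu^-=\gamma'.\delta'$ is the decomposition of the primary corresponding exceptional slope of $V^D$, then $\beta>\nu^+\ge\nu^-+3$ (Remark \ref{rem-corrExcGap}) forces $-\beta<\gamma'$, so Lemma \ref{lem-betaMiddle} applied to $V^D$ gives $H^0(V^D\te E_\beta^*)=0$. It remains to prove $H^1(V\te E_\beta)=0$.

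Tensoring (\ref{res}) by $E_\beta$ and using that $E_\beta\te E_{-\nu^+}$ has slope $>0$ (hence only $H^0$) while $E_\beta\te E_{-\nu^+-3}$ has slope in $(-3,0)$ (hence only $H^1$), the long exact sequence identifies $H^1(V\te E_\beta)$ with a subquotient of the hypercohomology $\mathbb{H}^1(E_\beta\te K)$ --- an isomorphism when $\chi(V\te E_{\nu^+})>0$. So it suffices to show $\mathbb{H}^1(E_\beta\te K)=0$, i.e.\ that $\mathbf{R}\Hom(E_\beta^*,K)$ vanishes in degree $1$. Since $K$ is quasi-isomorphic to the complex $E_{-\gamma-3}^{m_1}\to E_{-\delta}^{m_2}$, it lies in the admissible subcategory generated by the first two members of the triad $(E_{-\gamma-3},E_{-\delta},E_{-\nu^+})$; by the orthogonality built into a triad, $\mathbf{R}\Hom(E_{-\nu^+},K)=0$. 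Running the Beilinson spectral sequence of $E_\beta^*$ for this triad and using $\gamma<\nu^+<\beta<\delta<\gamma+3$, all three coefficient cohomology groups lie in degree $1$, so $E_\beta^*$ is quasi-isomorphic to a three-term complex built from $E_{-\gamma-3}$, $E_{-\delta}$, $E_{-\nu^+}$; applying $\mathbf{R}\Hom(-,K)$ annihilates the $E_{-\nu^+}$ term, leaving $\mathbf{R}\Hom(E_\beta^*,K)=\mathbf{R}\Hom(K',K)$, where $K'$ is the left mutation of $E_\beta^*$ past $E_{-\nu^+}$, a two-term complex $E_{-\gamma-3}^{p_1}\to E_{-\delta}^{p_0}$ --- equivalently, via \S\ref{sssec-complexes}, a Kronecker $\Hom(E_{-\gamma-3},E_{-\delta})^*$-module.

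Because mutations preserve rigidity, $\Ext^1(K',K')=0$, so $K'$ is a direct sum $s_1^{\oplus k_1}\oplus s_2^{\oplus k_2}$ of copies of at most two adjacent exceptional Kronecker modules, and these are stable. Since $K$ corresponds to a stable Kronecker module, Theorem \ref{thm-kronecker} applies to each pair $(s_i,K)$, and --- exactly as in the preliminary reduction in the proof of that theorem --- $\chi(s_1,K)$ and $\chi(s_2,K)$ have the same sign, so at most one of $\mathbb{H}^0(E_\beta\te K)=\Hom(K',K)$ and $\mathbb{H}^1(E_\beta\te K)=\Ext^1(K',K)$ is nonzero. A Riemann--Roch computation --- using $\nu^+=\gamma.\delta$, the defining formulas for $m_1,m_2,m_3$, and $\beta<\delta$ --- then shows $\chi(E_\beta\te K)=\chi(\mathbb{H}^\bullet(E_\beta\te K))\ge 0$, which forces the surviving group to be $\mathbb{H}^0$; hence $\mathbb{H}^1(E_\beta\te K)=0$. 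The main obstacle I anticipate is precisely this last package: checking that the mutated module $K'$ has no summand of the wrong slope so that Theorem \ref{thm-kronecker} delivers the dichotomy, together with the sign estimate $\chi(E_\beta\te K)\ge 0$ that selects $\mathbb{H}^1=0$ rather than $\mathbb{H}^0=0$. The case $\chi(V\te E_{\nu^+})\le 0$ runs identically, with $K$ now a sheaf and $E_{-\nu^+-3}$ supplying the auxiliary term of (\ref{res}).
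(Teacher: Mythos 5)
Your skeleton is sound and several of the nontrivial steps are correct: the reduction to showing $H^1$ of $E_\beta\te K$ vanishes, the identification $\Ext^i(E_{-\beta},K)\cong \Ext^i(K',K)$ where $K'$ is the two-term complex obtained from the Beilinson resolution of $E_{-\beta}$ for the triad $(E_{-\gamma-3},E_{-\delta},E_{-\nu^+})$ by discarding the $E_{-\nu^+}$-term (the triangle $E_{-\beta}\to K'\to E_{-\nu^+}^{p_0}\to\cdot$ together with $\mathbf{R}\Hom(E_{-\nu^+},K)=0$ does this), and the rigidity of $K'$. One caveat on the last point: since $\Hom(E_{-\beta},E_{-\nu^+})\neq 0$ in general, $(E_{-\nu^+},E_{-\beta})$ is not an exceptional pair, so ``mutation'' does not make $K'$ exceptional --- indeed $\hom(K',K')=1+p_0\hom(E_{-\beta},E_{-\nu^+})$ --- but $\Ext^1(K',K')\cong\Ext^1(E_{-\beta},K')=0$ does follow from the long exact sequence of the same triangle, so $K'$ is the general, hence semistable-or-split, module of its dimension vector and Theorem \ref{thm-kronecker} applies. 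Up to here your route is a genuine alternative to the paper's: you treat every $\beta\in(\nu^+,\delta)$ by the Kronecker machinery at once, where the paper inducts on the order of $\beta$ via Dr\'ezet's mutation sequences and invokes Kronecker modules only for the base-case slopes $\epsilon_j$.

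The genuine gap is the sign estimate $\chi(E_\beta\te K)\geq 0$, which you flag but do not prove, and which is not a Riemann--Roch computation from the data you list. Since $\chi(E_\beta\te \bk)=\chi(E_\beta\te\bv)-m_3\chi(E_\beta\te E_{-\nu^+})$ and the subtracted term is positive, Proposition \ref{prop-EulerExceptional} does not settle the sign, and the inequality actually fails for positive exponents $(m_1,m_2,m_3)$ that do not come from a stable $\bv$. Concretely, take $\nu^+=\tfrac12$ (so $\gamma=0$, $\delta=1$, $K:\OO(-3)^{m_1}\to\OO(-1)^{m_2}$, $N=6$) and $\beta=\nu^+.(\nu^+.\delta)=\tfrac{17}{29}$: one computes $\chi(E_\beta\te\bk)=6m_1-m_2$, which is negative for $(m_1,m_2)=(1,7)$ and is rescued only because stability of the Kronecker module underlying $K$ forces $m_1/m_2>\psi_6^{-1}=3-2\sqrt{2}$, giving $6m_1-m_2>(17-12\sqrt 2)m_2=\psi_6^{-2}m_2>0$ --- a margin of about $0.03$. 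So the needed input is an inequality of Kronecker slopes ($\mu(k')\leq \psi_N^{-1}<\mu(k)$, using rigidity of $K'$ and $\rk K'>0$ on one side and semistability of $K$ on the other), not the defining formulas for $m_1,m_2,m_3$. This is exactly what the paper's proof is engineered to avoid: the slopes $\epsilon_j=\nu^+.\delta,\ \nu^+.\epsilon_1,\dots$ are chosen so that $\chi(E_{\epsilon_j}\te E_{-\nu^+-3})=0$ (resp.\ $\chi(E_{\epsilon_j},E_{\nu^+})=0$), the correction term disappears, $\Ext^i(K',K)\cong H^i(V\te E_{\epsilon_j})$ on the nose, and the relevant Euler characteristic is just $\chi(V\te E_{\epsilon_j})>0$ from Proposition \ref{prop-EulerExceptional}. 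Your argument can be completed, but the missing inequality is the heart of the proof, not a routine verification.
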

\begin{proof} 
Let us inductively decompose the exceptional bundle $E_\beta$ as in \S\ref{ss-inductExceptional}.  Write $\beta = \alpha.\eta$ and let $\zeta_i$ be as in \S\ref{ss-inductExceptional}, so that we have an exact sequence $$0\to E_{\zeta_i}\to E_\alpha\te \Hom(E_\alpha,E_\beta)\to E_\beta \to 0.$$  The bundle $V\te E_{\zeta_i}$ has no $H^2$ since its slope is greater than $-3$:  $$\mu(V\te E_{\zeta_i}) = \mu(V) + \zeta_i > (-\nu^+-x_{\nu^+})+(\beta-3)>(-\nu^+-x_{\nu^+})+(\nu^++x_{\nu^+}-3)>-3.$$ Therefore it would be sufficient to show that $V\te E_\alpha$ has only $H^0$.

If $\chi(V\te E_{\nu^+})\geq 0$ then this strategy works perfectly: we have $\nu^+\leq \alpha < \beta < \delta$, and since $V\te E_{\nu^+}$ has only $H^0$ by Lemma \ref{lem-betaEexceptional}, we can proceed by induction on the order to show that $V\te E_\alpha$ has only $H^0$.

On the other hand, if $\chi(V\te E_{\nu^+}) < 0$, then we know that $H^1(V\te E_{\nu^+})$ is nonzero, and so we cannot use the case $\alpha = \nu^+$ as a base for our induction.  Instead, if the slope $\beta$ decomposes as $\beta = \alpha.\eta$ with $\alpha = \nu^+$, then it must be one of the slopes
\begin{align*}\epsilon_1 &= \nu^+.\delta\\
\epsilon_2 &= \nu^+.(\nu^+.\delta) = \nu^+.\epsilon_1\\
&\vdots\\
\epsilon_{j+1} &= \nu^+.\epsilon_{j}\\
&\vdots
\end{align*}
We instead show directly that if $j\geq 1$ then $V\te E_{\epsilon_j}$ has only $H^0$ to establish these base cases.

Fix some $j\geq 1$.  There are exponents $p_1$ and $p_2$ such that $E_{\epsilon_j}$ is the kernel of a general map $$0\to E_{\epsilon_j}\to E_{\delta}^{p_2}\to E_{\gamma+3}^{p_1}\to 0.$$  This sequence is essentially the Beilinson spectral sequence for $E_{\epsilon_j}$ with respect to the exceptional collection $(E_{\nu^+},E_\delta,E_{\gamma+3})$; the dual exceptional collection is $(E_{-\nu^+-3},E_{-(\gamma.\nu^+)-3},E_{-\gamma-3})$, and we have $$
\chi(E_{\epsilon_j}\te E_{-\nu^+-3})= \chi(E_{\nu^++3},E_{\epsilon_j})=\chi(E_{\epsilon_j},E_{\nu^+}) = 0
$$
and
$$p_1 = -\chi(E_{\epsilon_j}\te E_{-\gamma-3}) = -\chi(E_{\gamma+3},E_{\epsilon_j}) = -\chi(E_{\epsilon_j},E_\gamma)> 0$$
$$p_2 = -\chi(E_{\epsilon_j}\te E_{-(\gamma.\nu^+)-3}) = -\chi(E_{\gamma.\nu^++3},E_{\epsilon_j})=-\chi(E_{\epsilon_j},E_{\gamma.\nu^+})> 0,$$
 with the inequalities coming from slope considerations and Corollary \ref{cor-exceptionalTensor}.

Dually, we find that $E_{-\epsilon_j}$ is quasi-isomorphic to a general complex $$K':E_{-\gamma-3}^{p_1}\to E_{-\delta}^{p_2}$$ sitting in degrees $-1$ and $0$.  Now we use the triangle (\ref{res}) for $V$ and apply $\Hom(K',-)$.   Observe that $\Ext^i(K',E_{-\nu^+-3})=0$ for all $i$ since the bundles $E_{-\nu^+-3},E_{-\gamma-3},E_{-\delta}$ form a strong exceptional collection.  Therefore $$H^i(V\te E_{\epsilon_j})\cong \Ext^i(K',V) \cong \Ext^i(K',K)$$ for all $i$.  Finally $\Ext^i(K',K)$ can be computed by passing to Kronecker modules as in \S\ref{sssec-complexes}, and Theorem \ref{thm-kronecker} shows there is at most one nonzero group.  From Proposition \ref{prop-EulerExceptional} we have $\chi(V\te E_{\epsilon_j}) > 0$.  Therefore the nonzero group is $H^0$.
\end{proof}

\begin{lemma}\label{lem-betaSandwichLeft}
Suppose $\gamma < \beta < \nu^+$.  Then the only nonzero cohomology group of $V\te E_\beta$ is $H^1$.
\end{lemma}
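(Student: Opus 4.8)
The plan is to mirror the proof of Lemma~\ref{lem-betaSandwichRight}, interchanging the roles of $H^0$ and $H^2$, of the two parents $\gamma$ and $\delta$ of $\nu^+$, and of the kernel and cokernel sequences of \S\ref{ss-inductExceptional}. First I would record that $\chi(V\te E_\beta)<0$: the operation $\gamma.\delta$ is defined, so $3+\gamma-\delta>0$, and combining this with Remark~\ref{rem-corrExcGap} gives $\nu^+-\gamma\le\delta-\gamma<3\le\nu^+-\nu^-$, so $\nu^-<\gamma<\beta<\nu^+$ and Proposition~\ref{prop-EulerExceptional} yields $\chi(V\te E_\beta)<0$. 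It therefore suffices to prove $H^0(V\te E_\beta)=0$ and $H^2(V\te E_\beta)=0$.

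For $H^2(V\te E_\beta)=\Ext^2(E_{-\beta},V)=0$ I would apply $\Ext^\bullet(E_{-\beta},-)$ to the triangle $E_{-\gamma-3}^{m_1}\to E_{-\delta}^{m_2}\to K\to\cdot$ and to (\ref{res}). Using the inequalities above, $E_\beta\te E_{-\gamma-3}$, $E_\beta\te E_{-\delta}$ and $E_\beta\te E_{-\nu^+}$ all have slope in $(-3,0)$ and hence carry only $H^1$ by Corollary~\ref{cor-exceptionalTensor}, while $E_\beta\te E_{-\nu^+-3}$ has slope $<-3$ and carries only $H^2$. Chasing the long exact sequences gives $\Ext^2(E_{-\beta},K)=0$, and then in either case of (\ref{res}) that $\Ext^2(E_{-\beta},V)$ is a quotient of $\Ext^2(E_{-\beta},K)=0$.

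For $H^0(V\te E_\beta)=0$ I would induct on the order of $E_\beta$. Decompose $E_\beta$ as in \S\ref{ss-inductExceptional}, writing $\beta=\alpha.\eta$ with $\alpha$ the left and $\eta$ the right parent, both of smaller order; a dyadic computation using $\gamma<\beta<\nu^+$ gives $\gamma\le\alpha<\beta<\eta\le\nu^+$. Tensoring $0\to E_\beta\to E_\eta\te\Hom(E_\beta,E_\eta)^*\to E_{\omega_i}\to 0$ by $V$ yields an injection $H^0(V\te E_\beta)\hookrightarrow H^0(V\te E_\eta)\te\Hom(E_\beta,E_\eta)^*$, reducing the problem to $H^0(V\te E_\eta)=0$. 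If $\eta<\nu^+$ this follows from the inductive hypothesis (here we use that $V\te E_\eta$ has only $H^1$, so $H^0(V\te E_\eta)=0$). If $\eta=\nu^+$, then $\beta$ is one of the slopes $\epsilon'_1=\gamma.\nu^+$, $\epsilon'_{k+1}=\epsilon'_k.\nu^+$, and when $\chi(V\te E_{\nu^+})\le 0$ Lemma~\ref{lem-betaEexceptional} gives $H^0(V\te E_{\nu^+})=0$, hence $H^0(V\te E_{\epsilon'_k})=0$.

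The hard case, which I expect to be the main obstacle, is $\beta=\epsilon'_k$ with $\chi(V\te E_{\nu^+})>0$, so that $V$ sits in $E_{-\nu^+}^{m_3}\to V\to K\to\cdot$; here I would argue directly as in the base-case analysis of Lemma~\ref{lem-betaSandwichRight}. Since $(E_{\epsilon'_{k-1}},E_{\epsilon'_k},E_{\nu^+})$ is a triad (with $\epsilon'_0:=\gamma$), the pair $(E_{\epsilon'_k},E_{\nu^+})$ is an exceptional pair, so $\chi(E_{\nu^+},E_{\epsilon'_k})=0$; together with the slope bound $\epsilon'_k-\nu^+\in(-3,0)$ and Corollary~\ref{cor-exceptionalTensor} this forces $\Ext^i(E_{-\epsilon'_k},E_{-\nu^+})=H^i(E_{\epsilon'_k}\te E_{-\nu^+})=0$ for all $i$ (the $\Ext^0$ and $\Ext^2$ vanish by slope, and $\Ext^1=-\chi(E_{\nu^+},E_{\epsilon'_k})=0$). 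Consequently the triangle for $V$ gives $\Ext^i(E_{-\epsilon'_k},V)\cong\Ext^i(E_{-\epsilon'_k},K)$ for all $i$, and, since $(E_{-\gamma-3},E_{-\delta},E_{-\nu^+})$ is a full (strong) exceptional collection with $E_{-\nu^+}$ last, this same vanishing shows $E_{-\epsilon'_k}$ is isomorphic in $D^b(\P^2)$ to a complex $E_{-\gamma-3}^{p_1}\to E_{-\delta}^{p_2}$. Thus $E_{-\epsilon'_k}$ corresponds, up to shift, to an exceptional, hence stable, Kronecker $\Hom(E_{-\gamma-3},E_{-\delta})^*$-module, while $K$ corresponds to a stable such module; by \S\ref{sssec-complexes} the groups $\Ext^i(E_{-\epsilon'_k},K)$ are computed by $\Ext$ of these Kronecker modules, so Theorem~\ref{thm-kronecker} shows at most one is nonzero, and as $\chi(V\te E_{\epsilon'_k})<0$ the surviving group is $H^1$. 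Combining this with Step~1 proves the lemma; the points requiring care are the dyadic bookkeeping that identifies the base cases $\epsilon'_k$ and the identification $\Ext^\bullet(E_{-\epsilon'_k},K)\cong\Ext^\bullet$ of Kronecker modules, the slope estimates and the appeal to Theorem~\ref{thm-kronecker} being routine.
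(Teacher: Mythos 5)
Your overall architecture matches the paper's: reduce $H^2$ by slope considerations, run an induction on the order of $E_\beta$ via the coevaluation sequence $0\to E_\beta\to E_\eta\te\Hom(E_\beta,E_\eta)^*\to E_{\omega_i}\to 0$, isolate the base cases $\epsilon'_k$ (which are exactly the paper's $\epsilon_j$), and finish those with Theorem \ref{thm-kronecker}. The dyadic bookkeeping, the $H^2$ vanishing, the sign of $\chi(V\te E_\beta)$, and the identification $\Ext^i(E_{-\epsilon'_k},V)\cong \Ext^i(E_{-\epsilon'_k},K)$ are all fine. But the final step contains a genuine error: the vanishing $\Ext^\bullet(E_{-\epsilon'_k},E_{-\nu^+})=0$ does \emph{not} imply that $E_{-\epsilon'_k}$ lies in the subcategory $\langle E_{-\gamma-3},E_{-\delta}\rangle$. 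For the full exceptional collection $(F_1,F_2,F_3)=(E_{-\gamma-3},E_{-\delta},E_{-\nu^+})$, membership of $X$ in $\langle F_1,F_2\rangle$ is equivalent to $\Ext^\bullet(F_3,X)=0$, i.e.\ to the vanishing of $\Ext^\bullet(E_{-\nu^+},E_{-\epsilon'_k})\cong \Ext^\bullet(E_{\epsilon'_k},E_{\nu^+})$ --- and this group is \emph{nonzero} in degree $0$ (by Theorem \ref{thm-exceptionalHom} it equals $\chi(E_{\epsilon'_k},E_{\nu^+})>0$). What you proved is the vanishing of Ext's in the opposite direction, which places $E_{-\epsilon'_k}$ in the \emph{left} orthogonal ${}^\perp\langle E_{-\nu^+}\rangle=\langle E_{-\gamma},E_{3-\delta}\rangle$, not in the right orthogonal. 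A numerical sanity check already rules out your claimed complex: for $\gamma=0$, $\delta=1$, $\nu^+=\tfrac12$, $\epsilon'_1=\tfrac25$, a complex $\OO(-3)^{p_1}\to\OO(-1)^{p_2}$ has Chern character $p_2\ch\OO(-1)-p_1\ch\OO(-3)$, and solving for $(r,c_1)=(5,-2)=\ch_{\leq 1}E_{-2/5}$ forces $p_1=\tfrac32$. Without a genuine quasi-isomorphism to a complex built from the pair $(E_{-\gamma-3},E_{-\delta})$, the passage to Kronecker $\Hom(E_{-\gamma-3},E_{-\delta})^*$-modules and the appeal to Theorem \ref{thm-kronecker} do not apply, so the base case is not established.

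The repair is exactly one Serre duality away, and is what the paper does: instead of resolving $E_{-\epsilon'_k}$, resolve $E_{\epsilon'_k}$ by the collection $(E_{\delta-3},E_\gamma,E_{\nu^+})$ (here the needed vanishing is $\Ext^\bullet(E_{\nu^+},E_{\epsilon'_k})=0$, which is the direction you actually verified) to get $0\to E_{\delta-3}^{p_2}\to E_\gamma^{p_1}\to E_{\epsilon'_k}\to 0$; dualizing and twisting shows that the \emph{shifted Serre dual} $K':=E_{-\epsilon'_k-3}[1]$ is quasi-isomorphic to a general complex $E_{-\gamma-3}^{p_1}\to E_{-\delta}^{p_2}$. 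Then
$$H^i(V\te E_{\epsilon'_k})\cong \Ext^i(E_{-\epsilon'_k},V)\cong \Ext^{2-i}(V,E_{-\epsilon'_k-3})^*\cong\Ext^{1-i}(V,K')^*\cong \Ext^{1-i}(K,K')^*,$$
the last isomorphism because $\Ext^\bullet(E_{-\nu^+},K')=0$ ($E_{-\nu^+}$ is last in the strong exceptional collection). Now both arguments are complexes over the pair $(E_{-\gamma-3},E_{-\delta})$, $K$ is a general stable Kronecker module, $K'$ is exceptional, and Theorem \ref{thm-kronecker} applies; $\chi(V\te E_{\epsilon'_k})<0$ then forces the surviving group to be $H^1$. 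With this substitution your argument goes through and coincides with the paper's.
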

\begin{proof}
The proof is largely dual to the proof of Lemma \ref{lem-betaSandwichRight}; we sketch the key differences.  Writing $\beta = \alpha.\eta$ and letting $\omega_i$ be as in \S\ref{ss-inductExceptional}, we have an exact sequence $$0\to E_\beta \to E_\eta \te \Hom(E_\beta,E_\eta)^* \to E_{\omega_i}\to 0.$$ Since $\mu(V)> -\nu^+-x_{\nu^+}$, it is clear that $\mu(V\te E_\beta) > -3$ and $H^2(V\te E_{\beta})=0$.  Thus the only issue is to show that $H^0(V\te E_\beta)=0$.  By induction on the order it would be enough to show that $H^0(V\te E_\eta)=0$.  If $\chi(V\te E_{\nu^+})\leq 0$ then this approach easily works. On the other hand, if $\chi(V\te E_{\nu^+})>0$, then we will eventually arrive at a slope that decomposes as $\beta = \alpha .\eta$, where $\eta = \gamma.\delta$.  Such slopes form a sequence given by $\epsilon_1 = \gamma.\nu^+$ and $\epsilon_{j+1} = \epsilon_j .\nu^+$, and we must show $H^1(V\te E_{\epsilon_j}) = 0$.

The bundle $E_{\epsilon_j}$ can be fit as a general cokernel $$0\to E_{\delta-3}^{p_2}\to E_\gamma^{p_1} \to E_{\epsilon_j}\to 0.$$ This resolution can be obtained by computing the Beilinson spectral sequence for $E_{\epsilon_j}$ using the full exceptional collection $(E_{\delta-3},E_\gamma,E_{\nu^+})$ with corresponding dual collection $(E_{-\delta},E_{-(\nu^+.\delta)} ,E_{-\nu^+})$.  The shifted Serre dual $E_{-\epsilon_j-3}[1]$ is then quasi-isomorphic to a general complex $$K':E_{-\gamma-3}^{p_1}\to E_{-\delta}^{p_2}$$ sitting in degrees $-1$ and $0$.  Then $$H^i(V\te E_{\epsilon_j})\cong \Ext^i(E_{-\epsilon_j},V)\cong \Ext^{2-i}(V,E_{-\epsilon_j-3})^*\cong \Ext^{1-i}(V,K')^*.$$  But $\Ext^i(E_{-\nu^+},K)=0$ for all $i$, so $\Ext^{1-i}(V,K')\cong \Ext^{1-i}(K,K')$.  This space can be computed using Kronecker modules and Theorem \ref{thm-kronecker} shows that at most one of the groups $H^i(V\te E_{\epsilon_j})$ is nonzero.  This time $\chi(V\te E_{\epsilon_j})<0$ by Proposition \ref{prop-EulerExceptional}, so the nonzero group is $H^1$.
\end{proof}

This completes the proof of Theorem \ref{thm-twistExceptional}.

\section{Overview of the main theorem}\label{sec-overview}

In this section we fix notation for the proof of the main theorem and outline the strategy of the proof.  The proof will then occupy the rest of the paper.

Let $\bv,\bw\in K(\P^2)$ be stable Chern characters with discriminant larger than $1/2$, and let $V\in M(\bv)$ and $W\in M(\bw)$ be general stable bundles.  We will view the character $\bv$ as fixed and the character $\bw$ as variable.  First we discuss our main way of writing down $V$ and construct some additional Chern characters related to $\bv$.  Then we will analyze different possibilities for the character $\bw$ and compute the cohomology of $V\te W$ depending on the relative positions of $\bv$ and $\bw$. 

\subsection{Resolutions and characters computed from $\bv$} Let $E_{\nu^+}$ be the primary corresponding exceptional bundle to $\bv$.  We summarize the discussion in \S\S\ref{sss-res1}--\ref{sss-res2}.  Decompose $\nu^+$ as $\nu^+ = \alpha.\beta$, where $\alpha = \varepsilon(p/2^q)$ and $\beta = \varepsilon((p+1)/2^q)$.  Then, according to the sign of $\chi(V \te E_{\nu^+})$, we get the following way of decomposing $V$.  
\begin{enumerate}
\item If $\chi(V \te E_{\nu^+}) > 0$, then $V$ fits in a triangle $$E_{-\nu^+}^{m_3} \to V\to K\to \cdot$$ where $K$ is a two-term complex $$K:E_{-\alpha-3}^{m_1}\to E_{-\beta}^{m_2}$$ sitting in degrees $-1$ and $0$.

\item If $\chi(V\te E_{\nu^+})\leq 0$, then $V$ has a resolution $$0\to E_{-\nu^+-3}^{m_3}\to K\to V\to 0,$$ where $K$ is a sheaf with resolution $$0\to E_{-\alpha-3}^{m_1}\to E_{-\beta}^{m_2} \to K \to 0.$$ (We can also interpret $K$ as a two-term complex for a more uniform treatment.)
\end{enumerate}

The complex $K$ corresponds to a general stable Kronecker $\Hom(E_{-\alpha-3},E_{-\beta})^*$-module of dimension vector $(m_1,m_2)$.  

There is also a particularly important character $\bu^+$ which is orthogonal to $\bv$.  It is defined up to scale by requiring it to be orthogonal to all the terms in the above decomposition of $V$.

\begin{definition}\label{def-corrOrth}
We define $\bu^+$, a \emph{primary corresponding orthogonal character to $\bv$}, up to scale, as follows.
 \begin{enumerate}
 \item If $\chi(V\te E_{\nu^+}) > 0$, then $\bu^+$ is an integral character of positive rank that satisfies \begin{align*}\chi(\bv \te \bu^+) &= 0,\\\chi(E_{-\nu^+} \te \bu^+) &= 0.\end{align*}
\item If $\chi(V\te E_{\nu^+}) \leq 0$, then $\bu^+$ is an integral character of positive rank that satisfies 
\begin{align*}
\chi(\bv \te \bu^+) &=0,\\
\chi(E_{-\nu^+-3}\te \bu^+) &= 0.
\end{align*}
 \end{enumerate}
 \end{definition}
 
 \begin{remark}
From the decomposition of $V$ we  also find $\chi(\bu^+ \te K) = 0$.  It follows that the orthogonal parabola to $K$ is the parabola passing from $E_{\nu^+}$ to $\bu^+$.
\end{remark}

\begin{remark}\label{rmk-corrOrth}
The character $\bu^+$ is always stable and $\Delta(\bu^{+})>1/2$ (see \cite[Proposition 3.7]{CHW}).
\end{remark}

\begin{remark}\label{rem-rankslope}
If $\chi(\bv \te E_{\nu^+}) > 0$, then a straightforward computation shows that the inequality $\rk(K)\geq 0$ is equivalent to the inequality $\chi(\bv \te (1,\nu^+,\frac{1}{2}+\frac{1}{2r_{\nu^+}}))\leq 0$.  The point $(\nu^+,\frac{1}{2}+\frac{1}{2r_{\nu^+}})$ lies on $E_{-\nu^+}^\perp$ at the peak of the Dr\'ezet--Le Potier curve over $I_{\nu^+}$. Since $\bu^+$ is defined by intersecting $\bv^{\perp}$ and $E_{-\nu^+}^\perp$,  we see that the inequality $\rk(K) \geq 0$ is equivalent to $\mu(\bu^+) \geq \nu^+$.
\end{remark}

\begin{figure}[t] 
\begin{center}
\setlength{\unitlength}{1in}
\begin{picture}(6.5,2.62)
\put(0,0){\includegraphics[scale=.608,bb=0 0 10.69in 4.31in]{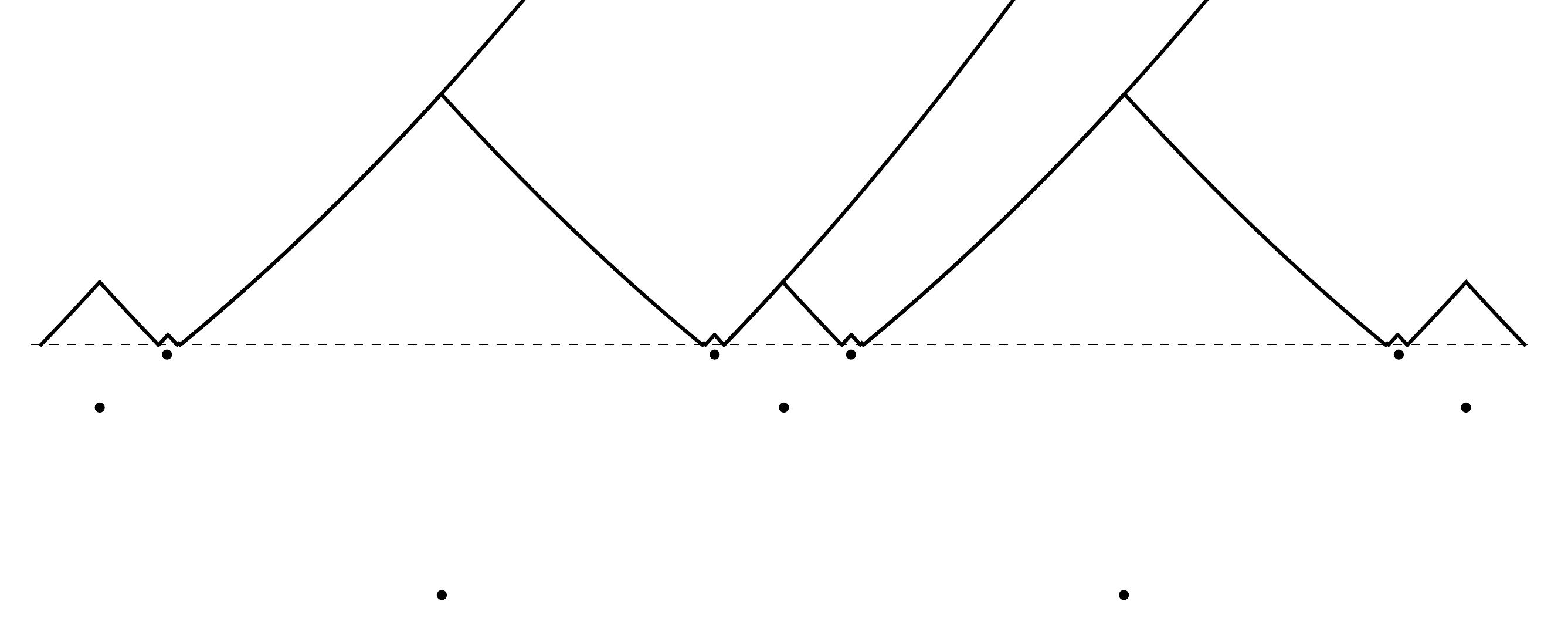}}
\put(1.7,.25){$E_{\nu^+}$}
\put(3.1,.79){$E_{\nu^+.\beta}$}
\put(4.55,.25){$E_{\beta}$}
\put(2.15,2.45){$E_{-\nu^+}^\perp$}
\put(3.45,2.45){$E_{-(\nu^+.\beta)}^\perp$}
\put(5,2.45){$E_{-\beta}^\perp$}

\put(4.45,1.){$\Delta=\frac{1}{2}$}
\put(5.6,2){I}
\put(4.05,2){II}
\put(2.8,2){III}
\put(.7,2){IV}
\end{picture}
\end{center}
\caption{The regions (I)-(IV) in the $(\mu,\Delta)$-plane.  The picture is to scale when $E_{\nu^+} = \OO_{\P^2}$.}\label{fig-4regions}
\end{figure}  

\subsection{Decomposition of the $(\mu,\Delta)$-plane} Let $E_{\omega^+}$ be the primary corresponding exceptional bundle to $\bw$.  The strategy we use to compute the cohomology of $V\te W$ depends on the position of $(\mu(\bw),\Delta(\bw))$ in the $(\mu,\Delta)$-plane. We impose conditions on $\omega^+$ and various Euler characteristics to restrict $(\mu(\bw),\Delta(\bw))$ to several regions; by using Proposition \ref{prop-corrExc}
we can see how an inequality, e.g., $\omega^+ \leq -\beta$, describes a region in the $(\mu,\Delta)$-plane.
We define four regions that cover the portion of the $(\mu,\Delta)$-plane above the Dr\'ezet--Le Potier curve.  
We depict these regions in Figure \ref{fig-4regions}.


\begin{itemize}
\item[(I)] $\omega^+ \leq -\beta$  and  $\chi(W\te E_{-\beta}) \geq 0$.
\item[(II)] $-\beta \leq \omega^+ \leq -(\nu^+.\beta) $  and $\chi(W\te E_{-\beta}) \leq 0$  and $\chi(W\te E_{-(\nu^+.\beta)}) \geq 0$.
\item[(III)]  $-(\nu^+.\beta) \leq \omega^+ \leq -\nu^+$ and $\chi(W\te E_{-(\nu^+.\beta)}) \leq 0$ and $\chi(W\te E_{-\nu^+}) \geq 0$.
\item[(IV)] $-\nu^+ \leq \omega^+$ and if  $-\nu^+=\omega^+$ then $\chi(W\te E_{-\nu^+}) \leq 0$.
\end{itemize}

\begin{warning}Figure \ref{fig-4regions} is slightly misleading in some boundary cases.  Note that the characters in Figure \ref{fig-4regions} lying on the left branch of the Dr\'ezet--Le Potier curve over $I_{\nu^+}$ are in both regions (III) and (IV).
\end{warning}

\subsubsection{Regions (I) and (II)}  When $\bw$ is in region (I) or (II), the tensor product $V\te W$ is quite ``positive.''  Correspondingly, we will show $H^1(V\te W) = 0$.  See Propositions \ref{prop-regionI} and \ref{prop-regionII}.

\subsubsection{Region (III)}
Things are more challenging in region (III), owing to the fact that the orthogonal parabola to $\bv$ crosses region (III).  Therefore, the type of cohomology that $V\te W$ has  depends on the position of $\bw$ within region (III).  The computation of the cohomology of $V\te W$ depends directly on the sign of $\chi(K\te W)$.  We therefore subdivide region (III) into two regions (IIIa) and (IIIb):
\begin{enumerate}
\item[(IIIa)] $\bw$ is in region (III) and $\chi(K\te W) \geq 0$.
\item[(IIIb)] $\bw$ is in region (III) and $\chi(K\te W) \leq 0$.
\end{enumerate}

When $\bw$ is in region (IIIa), we again show that $H^1(V\te W) = 0$ (see Propositions \ref{prop-RIII1} (1) and \ref{prop-RIII2}).  If $\chi(V\te E_{\nu^+}) > 0$ and $\bw$ is in region (IIIb), then we show that $V\te W$ is usually special in Proposition \ref{prop-RIII1} (2).  On the other hand if $\chi(V\te E_{\nu^+}) \leq 0$ then region (IIIb) is not so important; we define a new region (V) to be the union of region (IV) and region (IIIb), and will handle this new region uniformly.

\begin{figure}[p] 
\begin{center}
\setlength{\unitlength}{1in}
\begin{picture}(6.5,7.98)
\put(0,0){\includegraphics[scale=.57,bb=0 0 11.42in 14in]{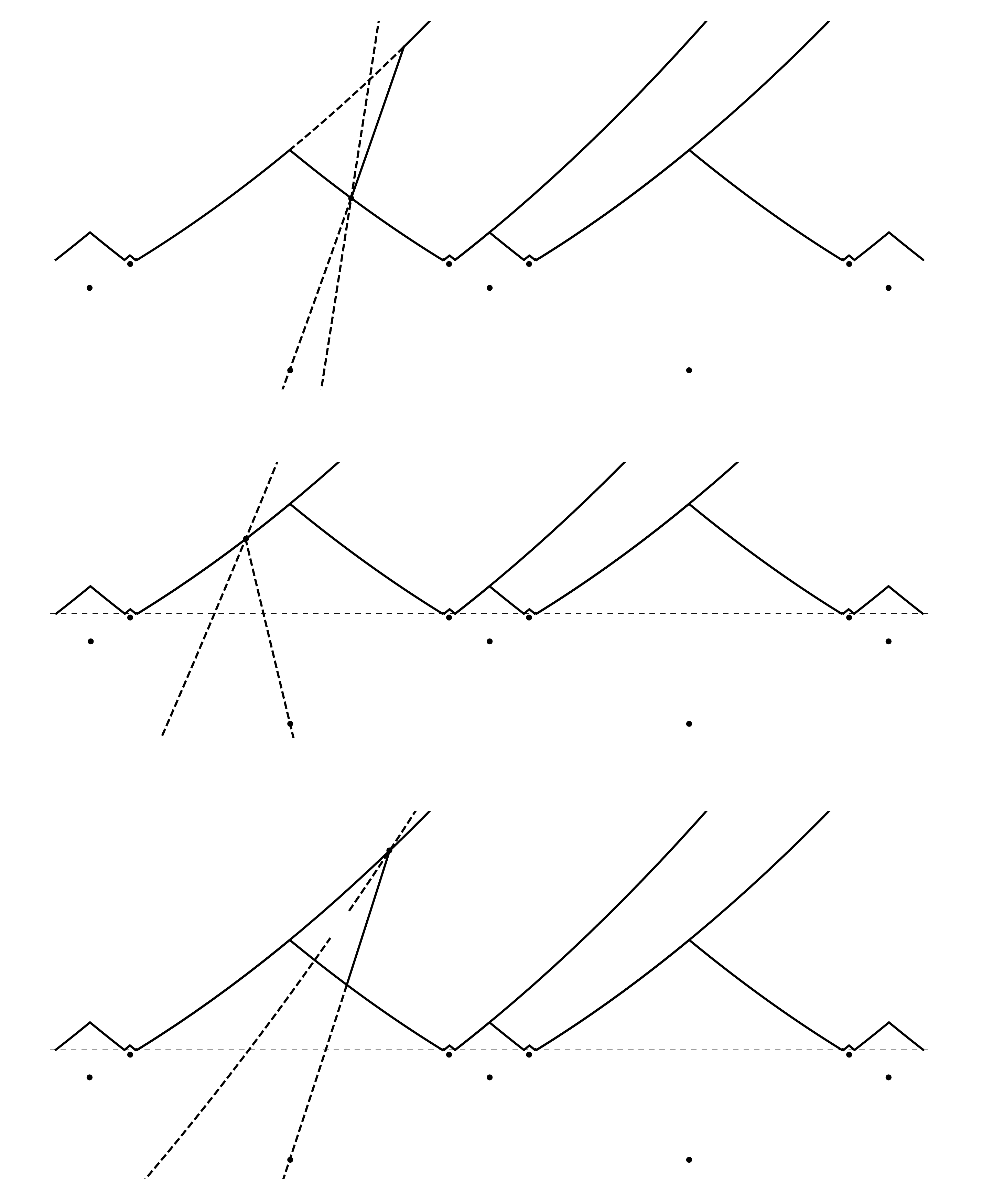}}
\put(.1,2.5){$(3)$}
\put(.35,2.5){$\chi(\bv \te E_{\nu^+})>0$}
\put(.35,2.3){$\rk(K)>0$}

\put(1.65,.3){$E_{\nu^+}$}
\put(4.34,.3){$E_{\beta}$}
\put(3.1,.7){$E_{\nu^+.\beta}$}
\put(1,.4){$\bv^\perp$}
\put(2.6,2.25){$\bu^+$}
\put(2.1,.7){$K^\perp$}
\put(2.85,2.5){$E_{-\nu^+}^\perp$}
\put(3.95,2.5){$E_{-(\nu^+.\beta)}^\perp$}
\put(5.5,2.5){$E_{-\beta}^\perp$}
\put(1,1.8){IV}
\put(5.6,1.8){I}
\put(4.25,1.8){II}
\put(2.8,1.8){IIIa}
\put(2.14,1.8){IIIb}

\put(.1,4.8){$(2)$}
\put(.35,4.8){$\chi(\bv \te E_{\nu^+})>0$}
\put(.35,4.6){$\rk(K)< 0$}
\put(1.65,3.2){$E_{\nu^+}$}
\put(4.34,3.2){$E_{\beta}$}
\put(3.1,3.59){$E_{\nu^+.\beta}$}
\put(1,3.35){$\bv^\perp$}
\put(1.43,4.4){$\bu^+$}
\put(1.85,3.6){$K^\perp$}
\put(2.25,4.8){$E_{-\nu^+}^\perp$}
\put(3.4,4.8){$E_{-(\nu^+.\beta)}^\perp$}
\put(4.9,4.8){$E_{-\beta}^\perp$}
\put(1,4.3){IV}
\put(5.6,4.5){I}
\put(4,4.5){II}
\put(2.8,4.5){IIIa}

\put(.1,7.75){$(1)$ $\chi(\bv\te E_{\nu^+})\leq 0$}
\put(1.65,5.55){$E_{\nu^+}$}
\put(4.34,5.55){$E_{\beta}$}
\put(3.1,5.93){$E_{\nu^+.\beta}$}
\put(2.18,5.5){$\bv^\perp$}
\put(2.1,6.55){$\bu^+$}
\put(2.47,6.95){$K^\perp$}
\put(2.85,7.75){$E_{-\nu^+}^\perp$}
\put(3.95,7.75){$E_{-(\nu^+.\beta)}^\perp$}
\put(5.5,7.75){$E_{-\beta}^\perp$}
\put(2.53,6.55){$E_{-\nu^+-3}^\perp$}
\put(1,7.05){V}
\put(5.6,7.05){I}
\put(4.25,7.05){II}
\put(2.8,7.05){IIIa}
\end{picture}
\end{center}
\caption{The refined regions partitioning the $(\mu,\Delta)$-plane above the Dr\'ezet--Le Potier curve in the three main cases of Theorem \ref{thm-main1}.  Note that dotted lines do not subdivide regions. See Example \ref{ex-refined}.}\label{fig-4Rrefined}
\end{figure}

\begin{example}\label{ex-refined}
In Figure \ref{fig-4Rrefined} we display the refined regions in the three main cases of Theorem \ref{thm-main1}.
\begin{enumerate}
\item $\chi(\bv\te E_{\nu^+})\leq 0$.  We have defined region (V) to be the union of region (IV) and region (IIIb).  The character $\bu^+$ is at the intersection of $\bv^\perp$ and $E_{-\nu^+-3}^\perp$ and lies on the Dr\'ezet--Le Potier curve.
\item $\chi(\bv\te E_{\nu^+}) >0$ and $\rk(K) < 0$.  In this case region (IIIb) has empty interior, so no special cohomology arises.  By Remark \ref{rem-rankslope} we have $\mu(\bu^+)\leq \nu^+$, and $\bu^+$ lies on the Dr\'ezet--Le Potier curve.  Since $\rk(K)< 0$, region (IIIa) lies \emph{above} the parabola $K^\perp$.

\item $\chi(\bv \te E_{\nu^+}) > 0$ and $\rk(K) >0$.   Here $\mu(\bu^+) > \nu^+$ and region (IIIb) has nonempty interior.  For $\bw$ in the interior of region (IIIb), the cohomology of $V\te W$ is special.
\end{enumerate}
 The figures in Figure \ref{fig-4Rrefined} are to scale for the characters $(1)$ $\bv = (1,10,67)$, (2) $\bv = (3,3,26/3)$, and (3) $\bv = (4,1,9/4)$, each with $E_{\nu^+} = \OO_{\P^2}$.
\end{example}

\subsubsection{The orthogonal parabola and region (IV) or (V)}  Up to this point it has not been necessary to assume the character $\bw$ is sufficiently divisible.  For the rest of the argument we use substantially different methods and this hypothesis is crucial.  

In region (IV) or (V) it is most important to compute $H^i(V\te W)$  when $\chi(V\te W) = 0$.  As a starting point, we have that $\bu^+$ lies in region (IIIa), so if $U^+\in M(\bu^+)$ is general, then $V\te U^+$ has no cohomology.  For $\bw$ in this region and orthogonal to $\bv$, we have $\mu(\bw) \geq \mu(\bu^+)$.  In Theorem \ref{thm-orthogonal} we then show $V\te W$ has no cohomology.  Finally, in \S\ref{sec-interpolation} we  explain how to use a handful of tricks and our previous results to show that $V\te W$ is nonspecial for all other characters in the region.  This will complete the proof of Theorem \ref{thm-main1}.

\section{The first three regions}\label{sec-regions}

Here we use the notation and assumptions from \S\ref{sec-overview} and compute the cohomology of the general tensor product $V\te W$ when $\bw$ lies in regions (I), (II), or (IIIa).  We also compute the cohomology when $\chi(V\te E_{\nu^+})\geq 0$ and $\bw$ lies in region (IIIb).  This is the only case where the cohomology of $V\te W$ can be special.

\subsection{Region (I)} Recall that region (I) is defined by the inequalities $$\omega^+ \leq -\beta \quad \textrm{and} \quad \chi(W\te E_{-\beta}) \geq 0.$$ When $\bw$ lies in this region, we use Theorem \ref{thm-twistExceptional} to compute the cohomology of $V\te W$.

\begin{proposition}\label{prop-regionI}
Suppose $\bw$ lies in region (I).  Then $H^i(V\te W) = 0$ for $i>0$.
\end{proposition}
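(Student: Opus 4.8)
The plan is to use the resolution (\ref{res}) of $V$ together with Theorem \ref{thm-twistExceptional} applied to the exceptional bundles appearing in that resolution. Recall that $V$ fits in a triangle built from $K$ and a twisted exceptional bundle, where $K$ itself is a two-term complex $E_{-\alpha-3}^{m_1}\to E_{-\delta}^{m_2}$ (writing $\nu^+ = \alpha.\delta$ as in the overview). Since $W$ is a general bundle in $M(\bw)$, Theorem \ref{thm-twistExceptional} tells us that each of the tensor products $W\te E_{-\alpha-3}$, $W\te E_{-\delta}$, and $W\te E_{-\nu^+}$ (or $W\te E_{-\nu^+-3}$) has at most one nonzero cohomology group, determined by the slope and Euler characteristic. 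So the idea is: tensor the triangle defining $V$ by $W$, and show that every relevant term has vanishing $H^{\geq 1}$ after a suitable shift, so that $H^{\geq 1}(V\te W) = 0$.

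First I would translate the two defining inequalities of region (I) into statements about Euler characteristics of the exceptional bundles in the resolution. The inequality $\omega^+ \leq -\beta$ (here $\delta = \beta$ in the figure's notation) says that the primary corresponding exceptional bundle to $\bw$ has slope at most $-\delta$; by Proposition \ref{prop-EulerExceptional} applied to $\bw$, this forces $\chi(\bw\te E_{-\delta})\geq 0$, $\chi(\bw\te E_{-\alpha-3})> 0$, and likewise positivity of $\chi(\bw\te E_{-\nu^+})$ and $\chi(\bw\te E_{-\nu^+-3})$ — essentially, all the exceptional bundles twisting $W$ in the resolution sit to the left of or at $E_{\omega^+}$, so the corresponding Euler characteristics are nonnegative. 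Combined with slope bounds showing that $\mu(W\te E_{-\alpha-3})$, $\mu(W\te E_{-\delta})$, $\mu(W\te E_{-\nu^+})$ are all $\geq 0$ (using $\mu(\bw)$ large relative to the exceptional slopes involved, which follows again from $\omega^+\leq -\beta$ and $\Delta(\bw)>1/2$), Theorem \ref{thm-twistExceptional} then gives that each of these tensor products has cohomology concentrated in degree $0$.

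Next I would feed this into the long exact sequences. Tensoring $E_{-\alpha-3}^{m_1}\to E_{-\delta}^{m_2}\to K\to\cdot$ by $W$ shows $K\te W$ can only have $H^{-1}$ and $H^0$. Then tensoring (\ref{res}) by $W$: in the case $\chi(V\te E_{\nu^+})>0$ we have $E_{-\nu^+}^{m_3}\te W\to V\te W\to K\te W\to\cdot$, and since $E_{-\nu^+}\te W$ has only $H^0$ and $K\te W$ has only $H^{-1},H^0$, the long exact sequence gives $H^{i}(V\te W)=0$ for $i\geq 1$; in the case $\chi(V\te E_{\nu^+})\leq 0$ one argues identically with $E_{-\nu^+-3}^{m_3}\te W\to K\te W\to V\te W\to\cdot$, using that $E_{-\nu^+-3}\te W$ has slope $> -3$ and nonnegative Euler characteristic, hence only $H^0$. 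Either way $H^i(V\te W)=0$ for $i>0$.

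The main obstacle I anticipate is the bookkeeping in the second step: verifying that the region-(I) inequalities genuinely place every relevant exceptional-bundle Euler characteristic and slope in the right range. In particular one must be careful that $\chi(W\te E_{-\delta})\geq 0$ is exactly one of the hypotheses, while the positivity of $\chi(W\te E_{-\alpha-3})$, $\chi(W\te E_{-\nu^+})$, and $\chi(W\te E_{-\nu^+-3})$ has to be deduced — the cleanest route is Proposition \ref{prop-EulerExceptional} applied to $\bw$ together with the fact (Remark \ref{rem-corrExcGap}-style reasoning) that the slopes $-\alpha-3$, $-\nu^+$, $-\nu^+-3$ all lie outside the open interval $(\omega^-,\omega^+)$ when $\omega^+\leq -\beta$. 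Checking the slope positivity $\mu(W\te E_\bullet)\geq 0$ is then a short computation from $\mu(\bw)\geq -\omega^+ \geq \beta$ and the ordering $\alpha < \nu^+ < \delta = \beta$. Once these elementary positivity facts are in hand, the rest is a formal diagram chase.
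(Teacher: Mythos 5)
Your overall strategy is the paper's: tensor the triangle (\ref{res}) by $W$, control each exceptional term via Theorem \ref{thm-twistExceptional}, and chase the long exact sequences. Your treatment of $W\te E_{-\beta}$ and $W\te E_{-\nu^+}$ is correct. The gap is in the step where you claim that $\chi(\bw\te E_{-\alpha-3})>0$ and $\chi(\bw\te E_{-\nu^+-3})>0$ (and that the corresponding slopes are nonnegative), so that \emph{all} the twists of $W$ have cohomology concentrated in degree $0$. This is false, and your proposed justification --- that $-\alpha-3$ and $-\nu^+-3$ lie outside the interval $(\omega^-,\omega^+)$ --- fails. Region (I) allows $\omega^+=-\beta$, and in that case, since $\beta-\alpha<3$ and $\omega^-\leq\omega^+-3$ (Remark \ref{rem-corrExcGap}), one has
$$\omega^- \;\leq\; -\beta-3 \;<\; -\alpha-3 \;<\; -\beta \;=\; \omega^+,$$
so $-\alpha-3$ lies \emph{strictly inside} $(\omega^-,\omega^+)$ and Proposition \ref{prop-EulerExceptional} forces $\chi(\bw\te E_{-\alpha-3})<0$; then $W\te E_{-\alpha-3}$ has only $H^1$. (Concretely, for $\bv=(4,1,9/4)$ one has $(\alpha,\beta)=(-1,1)$, and $\bw=(8,3/2,5/8)$ lies in region (I) with $\omega^{\pm}=(-3/2,-9/2)$, yet $\chi(\bw\te\OO_{\P^2}(-2))=-2<0$ and $\mu(W\te\OO_{\P^2}(-2))=-1/2<0$.) The same problem occurs for $E_{-\nu^+-3}$.

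The argument is repairable, and the repair is exactly what the paper does: the terms $E_{-\alpha-3}^{m_1}$ and $E_{-\nu^+-3}^{m_3}$ sit in cohomological degree $-1$ of their respective triangles, so in the long exact sequences only the vanishing of $H^2(W\te E_{-\alpha-3})$ and $H^2(W\te E_{-\nu^+-3})$ is ever used; their $H^1$ is harmless (it feeds into $H^0$ of the next term). That $H^2$-vanishing does hold, but for a different reason than the one you give: both slopes are strictly greater than $\omega^-$ (again by Remark \ref{rem-corrExcGap}), and an exceptional twist $W\te E_\epsilon$ with $\epsilon>\omega^-$ never has $H^2$. So you must weaken ``only $H^0$'' to ``no $H^2$'' for the degree $-1$ terms and justify it via the gap $\omega^+-\omega^-\geq 3$ rather than via positivity of the Euler characteristic; with that change the diagram chase you describe goes through verbatim.
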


\begin{proof}
First suppose $\chi(V\te E_{\nu^+}) > 0$.  Then we have the triangle $$E_{-\nu^+}^{m_3} \to V \to K \to \cdot $$ where $K$ fits in a triangle $$E_{-\alpha - 3}^{m_1} \to E_{-\beta}^{m_2} \to K\to \cdot.$$  The assumption that $\bw$ is in region (I) shows that $W\te E_{-\beta}$ and $W\te E_{-\nu^+}$ have only $H^0$ by Theorem \ref{thm-twistExceptional}.  On the other hand $W\te E_{-\alpha-3}$ has no $H^2$ since $-\alpha-3 > \omega^-$ by Remark \ref{rem-corrExcGap}.  Therefore $W\te K$ can only have $H^{-1}$ and $H^0$, and $V\te W$ has only $H^0$.

Next suppose $\chi(V\te E_{\nu^+}) \leq 0$.  Now we have the exact sequences $$0\to E_{-\nu^+-3}^{m_3} \to K \to V \to 0$$$$0\to E_{-\alpha-3}^{m_1} \to E_{-\beta}^{m_2} \to K \to 0.$$  Here $W\te K$ can only have $H^0$, and this time the bundle $W\te E_{-\nu^+-3}$ has no $H^2$ by Remark \ref{rem-corrExcGap}.   Therefore $V\te W$ has only $H^0$.
\end{proof}

\subsection{Region (II)} We defined region (II) by the inequalities 
$$-\beta \leq \omega^+ \leq -(\nu^+.\beta)\quad\textrm{and}\quad\chi(W\te E_{-\beta}) \leq 0\quad\textrm{and}\quad\chi(W\te E_{-(\nu^+.\beta)}) \geq 0.$$ When $\bw$ lies in region (II), we use a Beilinson spectral sequence for $W$ and Theorem \ref{thm-twistExceptional} to compute the cohomology of $V\te W$.

\begin{proposition}\label{prop-regionII}
Suppose $\bw$ lies in region (II).   Then $H^i(V\te W) = 0$ for $i>0$.
\end{proposition}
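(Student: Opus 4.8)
The plan is to mimic the proof of Proposition~\ref{prop-regionI}, but since now $\chi(W\te E_{-\beta})\le 0$ the bundle $W\te E_{-\beta}$ carries an $H^1$, so tensoring a resolution of $V$ with $W$ no longer works. Instead I would first resolve $W$ itself by exceptional bundles, using a Beilinson spectral sequence adapted to the data of $\bv$, and then tensor that short resolution with $V$, controlling every term through Theorem~\ref{thm-twistExceptional} and the lemmas of \S\ref{ss-excTwist}. As in Proposition~\ref{prop-regionI} it suffices to prove $V\te W$ has only $H^0$: a short computation from the region (II) bounds (which force $\mu(\bw)$ to be large, roughly at least $\nu^+.\beta$) together with $\mu(\bv)>-\nu^+-x_{\nu^+}$ gives $\mu(V\te W)>-3$, so $H^2(V\te W)=0$ by Serre duality and stability, and the remaining task is $H^1(V\te W)=0$.

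To resolve $W$, apply the Beilinson spectral sequence with respect to the triad $(E_{\beta-3},E_\alpha,E_{\nu^+})$, whose dual triad in the sense of the spectral sequence is $(E_{-\beta},E_{-(\nu^+.\beta)},E_{-\nu^+})$; the only cohomology groups entering the $E_1$ page are therefore $H^q(W\te E_{-\beta})$, $H^q(W\te E_{-(\nu^+.\beta)})$ and $H^q(W\te E_{-\nu^+})$. The region (II) inequalities $\chi(W\te E_{-\beta})\le 0$, $\chi(W\te E_{-(\nu^+.\beta)})\ge 0$, together with $\chi(W\te E_{-\nu^+})>0$ (valid since $\omega^+<-\nu^+$), Theorem~\ref{thm-twistExceptional}, and a short slope check ruling out the wrong cohomological degree, pin these groups down: $W\te E_{-\beta}$ has only $H^1$, while $W\te E_{-(\nu^+.\beta)}$ and $W\te E_{-\nu^+}$ have only $H^0$. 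The spectral sequence then collapses, presenting $W$ by a resolution
\[
0\to E_\alpha^{k_1}\to E_{\nu^+}^{k_0}\to C\to 0, \qquad 0\to E_{\beta-3}^{k_2}\to C\to W\to 0,
\]
with $k_0=\chi(W\te E_{-\nu^+})$, $k_1=\chi(W\te E_{-(\nu^+.\beta)})$, $k_2=-\chi(W\te E_{-\beta})$, both displayed maps injective (forced by convergence), and, since $\bw$ is general, the maps are general of their type.

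Now tensor both sequences with $V$ and chase. Using Proposition~\ref{prop-EulerExceptional} and Lemma~\ref{lem-betaMiddle}, one checks that $V\te E_\alpha$ and $V\te E_{\beta-3}$ each have only $H^1$ (both slopes are $\le\alpha$, so $H^0=0$, and both lie strictly between $\nu^-$ and $\nu^+$, so the Euler characteristic is negative and $H^2=0$), while $V\te E_{\nu^+}$ has only $H^0$ or only $H^1$ according to the sign of $\chi(\bv\te E_{\nu^+})$, by Lemma~\ref{lem-betaEexceptional}. In particular all three exceptional tensor products have vanishing $H^2$, which propagates through the two long exact sequences to re-establish $H^2(V\te W)=0$. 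When $\chi(\bv\te E_{\nu^+})\ge 0$ we have $H^1(V\te E_{\nu^+})=0$, and since in the long exact sequences every group that could obstruct the vanishing is an $H^2$ of one of these tensor products, we obtain $H^1(V\te C)=0$ and hence $H^1(V\te W)=0$ at once.

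The main obstacle is the case $\chi(\bv\te E_{\nu^+})<0$, where $V\te E_{\nu^+}$ contributes an $H^1$ and the chase only reduces $H^1(V\te W)$ to the cokernel of the map
\[
H^1(V\te E_{\beta-3})^{k_2}\oplus H^1(V\te E_\alpha)^{k_1}\longrightarrow H^1(V\te E_{\nu^+})^{k_0}
\]
induced by the resolution of $W$ (here $\Ext^1(E_{\beta-3},E_\alpha)=0$ by Theorem~\ref{thm-exceptionalHom}, which lets the map $E_{\beta-3}^{k_2}\to C$ lift to a map $E_{\beta-3}^{k_2}\to E_{\nu^+}^{k_0}$). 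One needs this map to be surjective. Its source has strictly larger dimension than its target --- this follows from $\chi(\bv\te\bw)>0$, which holds throughout region (II) because the orthogonal parabola $\bv^\perp$ only crosses region (III) --- so the statement is dimension-plausible, and I would prove it by a genericity argument: $\sHom(E_{\beta-3},E_{\nu^+})$ is globally generated by Theorem~\ref{thm-excgg}, since the interval $[\beta-3,\nu^+]$ has length greater than $2$ and so contains an integer; hence the lifted map $E_{\beta-3}^{k_2}\to E_{\nu^+}^{k_0}$ may be taken general, and a Bertini-type argument (as in the remark following \S\ref{sss-res1}) shows that the induced map on $H^1(V\te-)$ has maximal rank and is therefore surjective. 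Proving this maximal-rank statement is the delicate point, and it is the only place where the genericity of $W$ matters beyond fixing the shape of the spectral sequence.
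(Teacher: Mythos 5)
Your setup and your treatment of the case $\chi(\bv\te E_{\nu^+})\geq 0$ coincide with the paper's: the same Beilinson spectral sequence for $W$ with respect to $(E_{\beta-3},E_\alpha,E_{\nu^+})$, the same two-step resolution of $W$ through the cokernel of $E_\alpha^{k_1}\to E_{\nu^+}^{k_0}$, and the same diagram chase. The gap is in the case $\chi(\bv\te E_{\nu^+})<0$. There you correctly reduce $H^1(V\te W)=0$ to the surjectivity of the induced map $H^1(V\te E_{\beta-3})^{k_2}\oplus H^1(V\te E_\alpha)^{k_1}\to H^1(V\te E_{\nu^+})^{k_0}$, but you propose to get this surjectivity from global generation of $\sHom(E_{\beta-3},E_{\nu^+})$ together with a Bertini-type theorem. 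That does not work: \cite[Proposition 2.6]{HuizengaJAG} and its relatives control the degeneracy loci of a general map of sheaves (injectivity, local freeness of the cokernel), not the rank of the induced map on cohomology of twists. The claim that a general sheaf map induces a maximal-rank map on $H^1(V\te-)$ is precisely the interpolation-type statement the whole paper is devoted to, and it is false in general --- the special cohomology in region (IIIb) (Proposition \ref{prop-RIII1}(2)) arises exactly because a general resolution can fail to induce a maximal-rank map on cohomology, and the introduction's discussion of the Dr\'ezet--Le Potier resolution makes the same point. As written, this step is an unproved assertion of essentially the same difficulty as the proposition itself.

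The paper sidesteps the issue entirely. When $\chi(V\te E_{\nu^+})<0$ it switches to the resolution of $V$ rather than $W$: $0\to E_{-\nu^+-3}^{m_3}\to K\to V\to 0$ with $0\to E_{-\alpha-3}^{m_1}\to E_{-\beta}^{m_2}\to K\to 0$. Since $W\te E_{-\nu^+-3}$ has no $H^2$ by Remark \ref{rem-corrExcGap}, it suffices to show $W\te K$ has only $H^0$. But $K$ is a general stable sheaf with $\chi(K\te E_{\nu^+})=0$, so $E_{\nu^+}$ is still its primary corresponding exceptional bundle; region (II) depends only on $\nu^+$ and not on $\bv$, so the first case applies verbatim with $\bv$ replaced by $\ch K$ and gives the vanishing. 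If you want to complete your argument, this reduction to $K$ is the missing idea.
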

\begin{proof}
In this case, we write down the Beilinson spectral sequence for $W$ with respect to the exceptional collection $(E_{\beta-3},E_\alpha,E_{\nu^+})$.  The dual exceptional collection is $(E_{-\beta},E_{-(\nu^+.\beta)},E_{-\nu^+})$.  We have
 \begin{align*}
 \chi(W\te E_{-\beta}) & \leq 0\\
 \chi(W\te E_{-(\nu^+.\beta)}) & \geq 0\\
 \chi(W\te E_{-\nu^+}) & \geq 0
 \end{align*} since $\bw$ lies in region (II), so by Theorem \ref{thm-twistExceptional} the $E_1$-page of the spectral sequence for $W$ takes the shape
$$\xymatrix{
E_{\beta-3}^{p_3} \ar[r] & 0  \ar[r] & 0  \\
0\ar[r] & E_{\alpha}^{p_1}  \ar[r] & E_{\nu^+}^{p_2}.  \\
}$$
Since the spectral sequence converges to the sheaf $W$ in degree $0$, the bottom map must be injective with some cokernel $K'$: $$0 \to E_\alpha^{p_1} \to E_{\nu^+}^{p_2} \to K' \to 0,$$ and we get a resolution of $W$ of the form $$0\to E_{\beta-3}^{p_3} \to K' \to W\to 0.$$

Now we distinguish two cases according to the sign of $\chi(V\te E_{\nu^+})$.  First suppose $\chi(V\te E_{\nu^+})\geq 0$.  Then $V\te E_\alpha$ has only $H^1$ and $V\te E_{\nu^+}$ has only $H^0$, so $V\te K'$ has only $H^0$. We know $V\te E_{\beta-3}$ has only $H^1$ by Remark \ref{rem-corrExcGap}, so $V\te W$ has only $H^0$.

Next suppose $\chi(V\te E_{\nu^+}) < 0$.  In this case we write down the standard resolution of $V$
$$0\to E_{-\nu^+-3}^{m_3} \to K \to V \to 0$$$$0\to E_{-\alpha-3}^{m_1} \to E_{-\beta}^{m_2} \to K \to 0.$$ We find that $W \te E_{-\nu^+-3}$ has no $H^2$, so it suffices to show that $W\te K$ has only $H^0$.  However, $K\in M(\ch K)$ is a general stable bundle in its moduli space, and $\chi( K \te E_{\nu^+}) = 0$.  It easily follows that $E_{\nu^+}$ is the primary corresponding exceptional bundle for $\ch K$.  The region (II) only depends on $\nu^+$ and not on the character $\bv$, so $\bw$ still lies in region (II) if we replace $\bv$ by $\ch K$.  But then applying the previous paragraph to $K$ we see that $W\te K$ has only $H^0$.
\end{proof}

\subsection{Region (III)}\label{ssec-RIII} 
Finally, suppose $\bw$ lies in region (III), defined by the inequalities
$$-(\nu^+.\beta) \leq \omega^+ \leq -\nu^+\quad \textrm{and}  \quad \chi(W\te E_{-(\nu^+.\beta)}) \leq 0 \quad\textrm{and}\quad \chi(W\te E_{-\nu^+}) \geq 0.$$The Beilinson spectral sequence for $W$ with respect to the exceptional collection $(E_{\beta-3},E_\alpha,E_{\nu^+})$ (with dual collection $(E_{-\beta},E_{-(\nu^+.\beta)},E_{-\nu^+})$) shows that $W$ fits into a triangle $$E_{\nu^+}^{n_3} \to W \to K'\to \cdot$$ where $K'$ fits into a triangle $$E_{\beta-3}^{n_2}\to E_{\alpha}^{n_1} \to K' \to \cdot.$$  The same discussion in \S\ref{sss-res1} that shows the complex $K$ for $V$ is given by a general map also shows that $K'$ is given by a general map; however it need \emph{not} correspond to a stable Kronecker module (the moduli space of semistable modules of dimension vector $(n_2,n_1)$ may be empty).  Our work on Kronecker modules gives us the following key to the computation.

\begin{lemma}\label{lem-derivedTensor}
The derived tensor product $K\te K'$ has only $H^0$ or $H^1$, determined by $\chi(K\te K')$.
\end{lemma}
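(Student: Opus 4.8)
The plan is to identify the hypercohomology $H^\bullet(K\te K')$ with an $\Ext$-group between two two-term complexes built from a \emph{single} exceptional pair, reduce that $\Ext$-group to a computation among Kronecker modules via \S\ref{sssec-complexes}, and then invoke Theorem \ref{thm-kronecker}, using that the Kronecker module attached to $K$ is stable.

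First I would set up the identification. Since $K'$ is a perfect complex, $K\te K' \cong R\sHom((K')^\vee, K)$, where $(K')^\vee := R\sHom(K',\OO_{\PP^2})$; hence
$$H^i(K\te K') = \Ext^i_{\PP^2}(\OO_{\PP^2}, K\te K') = \Ext^i((K')^\vee, K).$$
Now $K = [E_{-\alpha-3}^{m_1}\to E_{-\beta}^{m_2}]$ and $K' = [E_{\beta-3}^{n_2}\to E_\alpha^{n_1}]$, both in degrees $-1$ and $0$. Taking the derived dual of $K'$, twisting by $\OO_{\PP^2}(-3)$, and shifting, and using $E_\alpha^*(-3) = E_{-\alpha-3}$ together with $E_{\beta-3}^*(-3) = E_{-\beta}$, one checks that
$$L := R\sHom(K',\OO_{\PP^2})(-3)[1]$$
is again a two-term complex of the form $[E_{-\alpha-3}^{n_1}\to E_{-\beta}^{n_2}]$ in degrees $-1$ and $0$ — that is, one attached to the \emph{same} exceptional pair $(E_{-\alpha-3},E_{-\beta})$ as $K$. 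Since $(K')^\vee = L(3)[-1]$, this yields $H^i(K\te K') = \Ext^{i+1}(L, K(-3))$, and Serre duality on $\PP^2$ (with dualizing sheaf $\OO_{\PP^2}(-3)$) turns this into
$$H^i(K\te K') \cong \Ext^{1-i}(K, L)^*.$$

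It remains to analyze $\Ext^\bullet(K, L)$. The pair $(E_{-\alpha-3}, E_{-\beta})$ is an exceptional pair in the sense of \S\ref{sssec-complexes}, being the first two members of the triad $(E_{-\alpha-3}, E_{-\beta}, E_{-\nu^+})$ used to construct $K$ in \S\ref{sss-res1}. Hence $\Ext^\bullet(K, L)$ can be computed by passing to the associated Kronecker $\Hom(E_{-\alpha-3}, E_{-\beta})^*$-modules, and in particular vanishes in all degrees except $0$ and $1$. Write $e$ and $e'$ for the Kronecker modules corresponding to $K$ and $L$. By the discussion in \S\ref{sec-overview}, $e$ is general and \emph{stable} of dimension vector $(m_1, m_2)$. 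The complex $K'$ is given by a general map (as noted in \S\ref{ssec-RIII}), and the assignment $K' \mapsto L$ is the composite of the linear isomorphism $\phi \mapsto \phi^\vee$ between spaces of maps and a twist by $\OO_{\PP^2}(-3)$, hence a linear isomorphism of parameter spaces; so $L$ is also given by a general map and $e'$ is general of its dimension vector. Applying Theorem \ref{thm-kronecker} with the semistable module $e$, we conclude that at most one of the groups $\Ext^j(e, e') = \Ext^j(K, L)$ is nonzero, with $j \in \{0, 1\}$. Therefore at most one of $H^i(K\te K') \cong \Ext^{1-i}(K, L)^*$ is nonzero, and it has $i \in \{0, 1\}$. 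Since $\chi(K\te K')$ depends only on the Chern characters and equals $h^0(K\te K') - h^1(K\te K')$, its sign determines which group (if any) is nonzero.

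The main obstacle is the bookkeeping in the first step: one must combine the derived dual, the twist by $\OO_{\PP^2}(-3)$, and the shift in exactly the right way so that $L$ is attached to precisely the pair $(E_{-\alpha-3}, E_{-\beta})$ — this is what legitimizes the reduction to Kronecker modules over a single quiver and places $K$'s module in the semistable slot of Theorem \ref{thm-kronecker}. Once both $K$ and $L$ live over this one exceptional pair, the remaining inputs (stability of $e$, genericity of $e'$, and Theorem \ref{thm-kronecker}) combine immediately.
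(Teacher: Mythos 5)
Your proof is correct and follows essentially the same route as the paper: both identify $H^i(K\te K')$ with $\Ext^{1-i}(K,(K')^*(-3)[1])^*$ via Serre duality, observe that $(K')^*(-3)[1]$ (your $L$) is a general two-term complex over the same exceptional pair $(E_{-\alpha-3},E_{-\beta})$ as $K$, and then pass to Kronecker modules and apply Theorem \ref{thm-kronecker} using the stability of the module attached to $K$. The extra details you supply (the dualization bookkeeping, the genericity of $L$, and the degree bound coming from the Kronecker-module reduction) are all consistent with what the paper leaves implicit.
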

\begin{proof}
Notice that $(K')^*(-3)[1]$ (with the dual being derived) is a general complex of the form $$E_{-\alpha-3}^{n_1}\to E_{-\beta}^{n_2}$$ sitting in degrees $-1$ and $0$.  But we compute $$H^i(K\te K') = \Ext^i((K')^*,K) = \Ext^{2-i}(K,(K')^*(-3))^* = \Ext^{1-i}(K,(K')^*(-3)[1])^*.$$ Since $K$ corresponds to a semistable Kronecker module, this can be computed using Kronecker modules and Theorem \ref{thm-kronecker}.
\end{proof}

\begin{remark}
Since $\chi(E_{\alpha.\beta}\te K) = 0$, we have $\chi(K\te K') = \chi(K \te W)$.  
\end{remark}

Recall that subregions of region (III) were defined by

\begin{enumerate}
\item[(IIIa)] $\bw$ is in region (III) and $\chi(K\te W) \geq 0$;
\item[(IIIb)] $\bw$ is in region (III) and $\chi(K\te W) \leq 0$.
\end{enumerate}
The next result completely computes the cohomology of $V\te W$ if $\chi(V\te E_{\nu^+})> 0$.

\begin{proposition}\label{prop-RIII1}
Suppose $\bw$ is in region (III) and $\chi(V\te E_{\nu^+})> 0$.  
\begin{enumerate}
\item If $\bw$ is in region (IIIa), then $H^i(V\te W)=0$ for $i>0$.
\item If $\bw$ is in region (IIIb), then \begin{align*}h^0(V\te W) &= \chi(\bv\te E_{\nu^+})\chi(\bw\te E_{-\nu^+}) \\ h^1(V\te W) &= -\chi(K\te W) \\ h^2(V\te W) &= 0.\end{align*}
\end{enumerate}
In particular, $V\te W$ is special if $\chi(K\te W) < 0$ and $\chi(\bv\te E_{\nu^+})$ and $\chi(\bw\te E_{-\nu^+})$ are both positive.
\end{proposition}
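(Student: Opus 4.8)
The plan is to tensor the defining triangle of $V$ by $W$ and to extract the cohomology of $V\te W$ from the resulting long exact sequence, with Lemma~\ref{lem-derivedTensor} as the essential input. Since $\chi(V\te E_{\nu^+})>0$, the bundle $V$ fits in the triangle $E_{-\nu^+}^{m_3}\to V\to K\to\cdot$, where $m_3=\chi(\bv\te E_{\nu^+})$ and $K$ is the two-term complex $E_{-\alpha-3}^{m_1}\to E_{-\beta}^{m_2}$, which corresponds to a \emph{stable} Kronecker module. Tensoring by the locally free sheaf $W$ gives a triangle
\begin{equation*}
E_{-\nu^+}^{m_3}\te W\longrightarrow V\te W\longrightarrow K\te W\longrightarrow\cdot.\tag{$\ast$}
\end{equation*}

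I would then compute the hypercohomology of the two outer terms of $(\ast)$. Because $\bw$ lies in region (III), the Beilinson spectral sequence for $W$ with respect to the triad $(E_{\beta-3},E_\alpha,E_{\nu^+})$, whose dual collection is $(E_{-\beta},E_{-(\nu^+.\beta)},E_{-\nu^+})$, puts $W$ in a triangle $E_{\nu^+}^{n_3}\to W\to K'\to\cdot$ with $K'$ built from $E_{\beta-3}$ and $E_\alpha$ and with $n_3=\chi(\bw\te E_{-\nu^+})$, the last equality coming from Theorem~\ref{thm-twistExceptional} applied to $W\te E_{-\nu^+}$. Tensoring this triangle by $E_{-\nu^+}$ and using that $(E_{\beta-3},E_\alpha,E_{\nu^+})$ is an exceptional collection --- so $\Ext^\bullet(E_{\nu^+},E_{\beta-3})=\Ext^\bullet(E_{\nu^+},E_\alpha)=0$, whence $E_{-\nu^+}\te K'$ is acyclic, while $E_{-\nu^+}\te E_{\nu^+}=\sHom(E_{\nu^+},E_{\nu^+})$ has cohomology $\CC$ concentrated in degree $0$ --- shows that $E_{-\nu^+}^{m_3}\te W$ has hypercohomology $\CC^{m_3 n_3}$ in degree $0$ and nothing else. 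For $K\te W$, tensor the triangle for $W$ by $K$: since $(E_{-\alpha-3},E_{-\beta},E_{-\nu^+})$ is an exceptional collection, $\Ext^\bullet(E_{-\nu^+},E_{-\alpha-3})=\Ext^\bullet(E_{-\nu^+},E_{-\beta})=0$, so $E_{\nu^+}\te K$ is acyclic and $K\te W$ has the same hypercohomology as $K\te K'$. By Lemma~\ref{lem-derivedTensor} and the equality $\chi(K\te K')=\chi(K\te W)$, the only possibly nonzero hypercohomology of $K\te W$ is $H^0$ or $H^1$, governed by the sign of $\chi(K\te W)$; in particular $H^2(K\te W)=0$.

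The proposition then follows from the long exact sequence of $(\ast)$. If $\bw$ is in region (IIIa), so $\chi(K\te W)\ge 0$, both outer terms of $(\ast)$ have vanishing hypercohomology in positive degrees, hence $H^i(V\te W)=0$ for $i>0$. If $\bw$ is in region (IIIb), so $\chi(K\te W)\le 0$, then $H^0(K\te W)=0$ and $h^1(K\te W)=-\chi(K\te W)$, and the long exact sequence collapses to
\begin{equation*}
0\to \CC^{m_3 n_3}\to H^0(V\te W)\to 0,\qquad 0\to H^1(V\te W)\to \CC^{-\chi(K\te W)}\to 0,
\end{equation*}
together with $H^2(V\te W)=0$. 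Thus $h^0(V\te W)=m_3 n_3=\chi(\bv\te E_{\nu^+})\chi(\bw\te E_{-\nu^+})$, $h^1(V\te W)=-\chi(K\te W)$, and $h^2(V\te W)=0$; the final ``in particular'' claim is immediate, since both $h^0$ and $h^1$ are positive under the stated hypotheses.

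The substantive step is already in hand: Lemma~\ref{lem-derivedTensor} and Theorem~\ref{thm-kronecker} encode that stability of the Kronecker module attached to $K$ forces $K\te W$ to have cohomology in at most one degree. The remaining work is bookkeeping, and the only delicate points are choosing the triads so that $E_{-\nu^+}\te K'$ and $E_{\nu^+}\te K$ are acyclic --- which is exactly where the hypothesis that $\bw$ lies in region (III) is used --- and matching $m_3,n_3$ with the Euler characteristics $\chi(\bv\te E_{\nu^+})$, $\chi(\bw\te E_{-\nu^+})$ via Riemann--Roch and Theorem~\ref{thm-twistExceptional}.
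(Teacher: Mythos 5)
Your proposal is correct and follows essentially the same route as the paper: tensor the defining triangle of $V$ by $W$, use the region (III) triangle $E_{\nu^+}^{n_3}\to W\to K'\to\cdot$ together with the acyclicity of $E_{\nu^+}\te K$ and $E_{-\nu^+}\te K'$ to identify the outer terms with $E_{\nu^+}^{n_3}\te E_{-\nu^+}^{m_3}$ and $K\te K'$, and invoke Lemma~\ref{lem-derivedTensor}. The paper's proof is just a terser version of the same argument.
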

\begin{proof}
Since $\chi(V\te E_{\nu^+})\geq 0$, we have the triangles $$E_{-\nu^+}^{m_3} \to V\to K \to \cdot$$ $$E_{\nu^+}^{n_3} \to W\to K'\to \cdot. $$We observe that $E_{\nu^+} \te K$ and $E_{-\nu^+}\te K'$ both have no cohomology in any degree.  Therefore the cohomology of $W \te K$ is the same as the cohomology of $K\te K'$ (which is computed by Lemma \ref{lem-derivedTensor}), and the cohomology of $W\te E_{-\nu^+}^{m_3}$ is the same as the cohomology of $E_{\nu^+}^{n_3}\te E_{-\nu^+}^{m_3}$, which just has $$h^0(E_{\nu^+}^{n_3}\te E_{-\nu^+}^{m_3}) = m_3n_3 = \chi(\bv \te E_{\nu^+})\chi(\bw\te E_{-\nu^+}).$$  Tensoring the triangle for $V$ by $W$, the result follows at once.
\end{proof}

On the other hand, when $\chi(V\te E_{\nu^+})\leq 0$ we can compute the cohomology of $V\te W$ if $\bw$ is in region (IIIa).

\begin{proposition}\label{prop-RIII2}
Suppose $\bw$ is in region (IIIa) and $\chi(V\te E_{\nu^+}) \leq 0.$ Then $H^i(V\te W) = 0$ for $i>0$.
\end{proposition}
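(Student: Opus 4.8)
Since $\chi(\bv\te E_{\nu^+})\le 0$, I will use the canonical resolution
$$0\to E_{-\nu^+-3}^{m_3}\to K\to V\to 0,$$
where $K$ is the stable bundle obtained as the cokernel of a general map $E_{-\alpha-3}^{m_1}\to E_{-\beta}^{m_2}$ and corresponds to a stable Kronecker module. Tensoring this short exact sequence by $W$ and passing to the long exact sequence in cohomology, the assertion $H^i(V\te W)=0$ for $i>0$ reduces to the two vanishings $H^1(W\te K)=H^2(W\te K)=0$ and $H^2(W\te E_{-\nu^+-3})=0$.

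\textbf{The terms in $K$.} Because $m_3=-\chi(\bv\te E_{\nu^+})$ and $\chi(E_{-\nu^+-3}\te E_{\nu^+})=1$, one computes $\chi(\bk\te E_{\nu^+})=0$, so $K$ is a general stable bundle whose primary corresponding exceptional bundle is $E_{\nu^+}$; by Theorem \ref{thm-twistExceptional}, $K\te E_{\nu^+}$ has no cohomology in any degree. Tensoring the triangle $E_{\nu^+}^{n_3}\to W\to K'\to\cdot$ of \S\ref{ssec-RIII} by $K$ then yields $H^i(W\te K)\cong H^i(K\te K')$ for all $i$, and since $K$ corresponds to a stable Kronecker module the argument of Lemma \ref{lem-derivedTensor} applies: $K\te K'$ has at most one nonzero cohomology group, governed by $\chi(K\te K')=\chi(K\te W)$. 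In region (IIIa) we have $\chi(K\te W)\ge 0$, so the surviving group is $H^0$ and $H^1(W\te K)=H^2(W\te K)=0$. (This step is essentially Proposition \ref{prop-RIII1}(1), or its analogue, applied to the character $\bk$, which, having $\chi(\bk\te E_{\nu^+})=0$, serves as its own two-term complex.)

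\textbf{The term $E_{-\nu^+-3}$, and the main obstacle.} It remains to show $H^2(W\te E_{-\nu^+-3})=0$, and this is the one place where region (IIIa), not merely region (III), is essential. Since $E_{-\nu^+-3}$ is exceptional, Theorem \ref{thm-twistExceptional} shows $H^2(W\te E_{-\nu^+-3})\ne 0$ would force $\mu(\bw)-\nu^+-3\le -3$ (that is, $\mu(\bw)\le\nu^+$) together with $\chi(W\te E_{-\nu^+-3})>0$. I will rule this out by proving that in region (IIIa) one always has $\mu(\bw)\ge\nu^+$. The key geometric input is that over $(\nu^+-x_{\nu^+},\nu^+]$ the Dr\'ezet--Le Potier curve coincides with the orthogonal parabola $E_{-\nu^+}^\perp$: hence if $\mu(\bw)\le\nu^+$, then stability of $\bw$ (so that $\bw$ lies on or above the curve) forces $\chi(W\te E_{-\nu^+})\le 0$, which combined with the region (III) inequality $\chi(W\te E_{-\nu^+})\ge 0$ puts $\bw$ on $E_{-\nu^+}^\perp$ with $\mu(\bw)\in(\nu^+-x_{\nu^+},\nu^+]$. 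But $\bk^\perp$ passes through the point $(\nu^+,\Delta_{\nu^+})$, which lies strictly below the point $(\nu^+,1-\Delta_{\nu^+})$ of $E_{-\nu^+}^\perp$; using the formula for $\mu(\bk)$ in terms of $\mu(\bv),\nu^+$ and $m_3$ to control the (affine) difference $\bk^\perp-E_{-\nu^+}^\perp$ over this short interval --- equivalently, using that $\bu^+$ lies at $\bk^\perp\cap E_{-\nu^+-3}^\perp$ on the Dr\'ezet--Le Potier curve with $\mu(\bu^+)\ge\nu^+$ --- one sees that $E_{-\nu^+}^\perp$ lies strictly above $\bk^\perp$ there, so $\chi(K\te W)<0$ and $\bw$ is in region (IIIb), a contradiction. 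Therefore $\mu(\bw)\ge\nu^+$, with equality forcing $\bw=\bu^+$, in which case $\bw\in E_{-\nu^+}^\perp$ and a direct Riemann--Roch computation gives $\chi(W\te E_{-\nu^+-3})=0$; in every case $H^2(W\te E_{-\nu^+-3})=0$. Combining the three vanishings finishes the proof. The hard part is precisely this last paragraph --- pinning down $\mu(\bw)$ in region (IIIa) sharply enough, via the interaction of the Dr\'ezet--Le Potier curve, the parabola $\bk^\perp$, and the position of $\bu^+$, to kill the potential $H^2$ from the $E_{-\nu^+-3}^{m_3}$ summand; everything else is formal, using only the resolutions, Theorem \ref{thm-twistExceptional}, and Lemma \ref{lem-derivedTensor}.
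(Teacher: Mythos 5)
Your overall architecture is the same as the paper's: tensor the resolution $0\to E_{-\nu^+-3}^{m_3}\to K\to V\to 0$ by $W$, identify the cohomology of $W\te K$ with that of $K\te K'$ using $\chi(\bk\te E_{\nu^+})=0$, and invoke Lemma \ref{lem-derivedTensor} together with the region (IIIa) inequality $\chi(K\te W)\geq 0$ to get that $W\te K$ has only $H^0$. That part is exactly the paper's argument and is fine. Where you diverge is the $E_{-\nu^+-3}^{m_3}$ term. The paper disposes of it in one line by observing that the region (III) hypotheses give $\chi(W\te E_{-\nu^+-3})\leq 0$, so that $W\te E_{-\nu^+-3}$ has only $H^1$ by Theorem \ref{thm-twistExceptional}; the quick way to see this is via Proposition \ref{prop-EulerExceptional}: since $\omega^-\leq\omega^+-3\leq-\nu^+-3<-(\nu^+.\beta)\leq\omega^+$, the slope $-\nu^+-3$ lies in $[\omega^-,\omega^+)$, forcing $\chi(\bw\te E_{-\nu^+-3})<0$ except in the boundary case $\omega^-=-\nu^+-3$, which is precisely where the (IIIa) hypothesis is needed. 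You instead prove only the weaker statement $H^2(W\te E_{-\nu^+-3})=0$ via the slope bound $\mu(\bw)\geq\nu^+$; that weaker statement does suffice for the long exact sequence, and your diagnosis that the (IIIa) condition (not just (III)) is what excludes the bad characters on the left branch of the Dr\'ezet--Le Potier curve is a genuinely correct and worthwhile observation that the paper glosses over.

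However, as written your slope argument has two loose ends. First, you pass directly from ``$\mu(\bw)\leq\nu^+$'' to ``stability forces $\chi(W\te E_{-\nu^+})\leq 0$,'' which uses that the Dr\'ezet--Le Potier curve equals $E_{-\nu^+}^\perp$ only over $(\nu^+-x_{\nu^+},\nu^+]$; you never rule out $\mu(\bw)\leq\nu^+-x_{\nu^+}$. (It is ruled out, but by the condition $\omega^+\leq-\nu^+$ together with $\chi(W\te E_{-\nu^+})\geq 0$ and stability, and this needs to be said.) Second, the comparison you need is between $E_{-\nu^+}^\perp$ and $\bk^\perp$ over $(\nu^+-x_{\nu^+},\nu^+]$, but the fact you cite about $\bu^+$ concerns $\bk^\perp\cap E_{-\nu^+-3}^\perp$, a different parabola. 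The correct justification is elementary: the two parabolas have the same leading coefficient, so their difference is linear in $\mu$; at $\mu=\nu^+$ it equals $(1-\Delta_{\nu^+})-\Delta_{\nu^+}=1/r_{\nu^+}^2>0$, and at $\mu=\nu^+-x_{\nu^+}$ it is $\tfrac12-\bk^\perp(\nu^+-x_{\nu^+})>0$ since that point lies strictly between the two intersections of $\bk^\perp$ with $\Delta=\tfrac12$. With those two points patched, your proof is correct; but the paper's route through $\chi(W\te E_{-\nu^+-3})\leq 0$ is substantially shorter.
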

\begin{proof}
This time we have triangles 
$$0\to E_{-\nu^+-3}^{m_3}\to K\to V\to 0$$ $$E_{\nu^+}^{n_3} \to W\to K'\to \cdot.$$
Again, $W\te K$ has the same cohomology as $K\te K'$, thus has only $H^0$ by Lemma \ref{lem-derivedTensor}.  Our assumptions on $W$ give $\chi(W\te E_{-(\alpha.\beta)-3}) \leq 0$, so $W\te E_{-(\alpha.\beta)-3}$ has only $H^1$ and $V\te W$ can only have $H^0$.
\end{proof}

\section{Cohomologically orthogonal bundles}\label{sec-orthogonal}

In this section we study the cohomology of a general tensor product $V\te W$ when $\chi(V\te W) = 0$.  Recall we defined $\bu^+$, a primary orthogonal character to $\bv$, in Definition \ref{def-corrOrth}.  We repeat the defining property here for convenience: 
\begin{enumerate} 
 \item If $\chi(V\te E_{\nu^+}) > 0$, then $\bu^+$ is an integral character of positive rank which satisfies \begin{align*}\chi(\bv \te \bu^+) &= 0\\\chi(E_{-\nu^+} \te \bu^+) &= 0.\end{align*}
\item If $\chi(V\te E_{\nu^+}) \leq 0$, then $\bu^+$ is an integral character of positive rank which satisfies 
\begin{align*}
\chi(\bv \te \bu^+) &=0\\
\chi(E_{-\nu^+-3}\te \bu^+) &= 0.
\end{align*}
 \end{enumerate}
Here we introduce another important character which is orthogonal to $\bv$.

\begin{definition}\label{secondary-orth}
We define $\bu^-$, a \emph{secondary corresponding orthogonal character to $\bv$}, to be the \emph{dual} of a primary corresponding orthogonal character \emph{of rank at least $2$} to the Serre dual $\bv^D$.
\end{definition}

\begin{remark}
By Serre duality, $\chi(\bv\te \bu^-) = \chi(\bv^D \te (\bu^-)^*) = 0$.  Thus $\bu^-$ is in fact an orthogonal character to $\bv$.  Since $\bu^-$ has rank at least $2$, the general $U^-\in M(\bu^-)$ is a vector bundle.  Thus the cohomology of $V\te U^-$ can be analyzed using Serre duality.
\end{remark}

We note the following basic fact about the characters $\bu^{\pm}$.  

\begin{lemma}\label{lem-kroneckerCorrExc}
Let $\bv$ be a character with corresponding exceptional bundles $E_{\nu^{\pm}}$ and corresponding orthogonal characters $\bu^{\pm}$.

\begin{enumerate}
\item For the character $\bu^+$, the primary corresponding exceptional bundle is $E_{-\nu^+}$.  

\item For the character $\bu^-$, the secondary corresponding exceptional bundle is $E_{-\nu^--3}$.
\end{enumerate}
\end{lemma}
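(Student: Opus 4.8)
The plan is to prove the two statements by a direct analysis of the orthogonal parabolas in the $(\mu,\Delta)$-plane, exactly as in the proofs of Propositions \ref{prop-EulerExceptional} and \ref{prop-corrExc}. Consider statement (1) first, in the case $\chi(V\te E_{\nu^+})>0$. By Definition \ref{def-corrOrth}, $\bu^+$ satisfies $\chi(\bv\te\bu^+)=0$ and $\chi(E_{-\nu^+}\te\bu^+)=0$, i.e.\ the point $(\mu(\bu^+),\Delta(\bu^+))$ lies on both the orthogonal parabola $\bv^\perp$ and the orthogonal parabola $E_{-\nu^+}^\perp$. By Remark \ref{rmk-corrOrth} we know $\bu^+$ is stable with $\Delta(\bu^+)>1/2$, so it has a well-defined primary corresponding exceptional bundle, say $E_{\mu^+}$. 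I want to show $\mu^+ = -\nu^+$, and by the characterization in Proposition \ref{prop-corrExc}(1) it is enough to check the two sign conditions
\[
\chi\bigl(\bu^+\te(1,-\nu^+-x_{-\nu^+},1/2)\bigr)<0,\qquad
\chi\bigl(\bu^+\te(1,-\nu^+ + x_{-\nu^+},1/2)\bigr)>0.
\]
Because $x_{-\nu^+} = x_{\nu^+}$ (the quantity $x_\alpha$ depends only on $\Delta_\alpha$, which is invariant under dualizing), and since $\chi(\bu^+\te E)=\chi(E^*\te \bu^+)$ is symmetric, each of these is equivalent to a statement $\chi(\bu^+ \te E_{-\nu^+}^{\pm})$-type inequality for the two ``flanking'' rank-one characters through the endpoints of $I_{\nu^+}\times\{1/2\}$. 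Since $\bu^+$ lies on $E_{-\nu^+}^\perp$, i.e.\ on the parabola through $E_{-\nu^+}=(-\nu^+,\frac12-\frac1{2r_{\nu^+}^2})$, and since that parabola separates the two flanking points (one has $\chi$ of one sign, the other of the other sign, relative to $E_{-\nu^+}^\perp$), the two required inequalities follow by the same Intermediate Value Theorem argument used for Proposition \ref{prop-corrExc}, reading off Figure \ref{fig-corrExc}.

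For the case $\chi(V\te E_{\nu^+})\le 0$ of statement (1), the definition of $\bu^+$ instead forces $(\mu(\bu^+),\Delta(\bu^+))$ onto $E_{-\nu^+-3}^\perp$. In this case $\bu^+$ lies on the Dr\'ezet--Le Potier curve (as noted in Example \ref{ex-refined}(1)), and more precisely on the left branch of the DLP curve over $I_{-\nu^+}$, which by the Example in \S\ref{sss-orthogonalParabolas} (see Figure \ref{fig-deltaInterval}) is exactly the arc of $E_{-\nu^+-3}^\perp$. A character sitting on this arc has primary corresponding exceptional bundle $E_{-\nu^+}$: its orthogonal parabola is tangent-like to the DLP curve at that point and meets $\Delta=\frac12$ at the endpoints of $I_{-\nu^+}$, so $-\nu^+$ is the first exceptional slope to the right. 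Again this is an Intermediate Value Theorem check against the flanking characters $(1,-\nu^+\mp x_{\nu^+},1/2)$, using that $\bu^+\in E_{-\nu^+-3}^\perp$ pins down the signs.

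Statement (2) reduces to statement (1) by Serre duality. By Definition \ref{secondary-orth}, $\bu^-$ is the dual of a primary corresponding orthogonal character $\bu'$ (of rank $\ge 2$) to $\bv^D$. By Remark 2.22, the secondary corresponding exceptional bundle to $\bv^D$ has slope $-\nu^--3$ wait—more precisely, $E_{\nu^-}$ is the dual of the \emph{primary} corresponding exceptional bundle to $\bv^D$, so the primary corresponding exceptional bundle to $\bv^D$ is $E_{-\nu^-}$ (recall duality sends $E_\alpha$ to $E_{-\alpha}$). Applying statement (1) to the character $\bv^D$ in place of $\bv$: the primary corresponding exceptional bundle to $\bu' = (\bu^-)^*$ is $E_{-(-\nu^-)} = E_{\nu^-}$. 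Now dualize: if $E_{\mu^+}$ is the primary corresponding exceptional bundle to a stable character $X$, then $E_{-\mu^+}$ is—by the Remark following Definition \ref{def-corrOrth} of corresponding exceptional bundles, or rather by Remark 2.21—the \emph{secondary} corresponding exceptional bundle to $X^D$. Hence the secondary corresponding exceptional bundle to $\bu^- = (\bu')^D$ is $E_{-\nu^-}$... but the claim asserts $E_{-\nu^--3}$. I must be careful: the correct statement is that if $E_\beta$ is primary corresponding to $X$ then $E_{\beta}^* = E_{-\beta}$ is secondary corresponding to $X^*$, and the claim's $E_{-\nu^--3}$ arises because what is secondary corresponding to $\bv$ is $E_{\nu^-}$, whose dual is $E_{-\nu^-}$, and the shift by $-3$ comes from $E_{-\nu^-}^\perp$ versus $E_{-\nu^--3}^\perp$ on the two branches of the DLP curve over $I_{-\nu^-}$ (cf. Figure \ref{fig-deltaInterval}: the right branch over $I_\alpha$ is the parabola of $E_{-\alpha-3}$). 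So the cleanest route is: apply part (1) to $\bv^D$ to identify the primary corresponding exceptional bundle of $(\bu^-)^*$, then dualize using the relation between primary/secondary corresponding exceptional bundles under $X\mapsto X^D$ (Remarks 2.21–2.22), tracking the $-3$ shift carefully.

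**Main obstacle.** The genuinely delicate point is bookkeeping the $-3$ shifts and the interchange of ``primary/secondary'' under Serre duality, together with checking that $x_{-\nu^\pm}=x_{\nu^\pm}$ so that the flanking characters transform correctly. Once that combinatorics is set up, every individual sign verification is a one-line Riemann--Roch computation or a direct appeal to Proposition \ref{prop-corrExc} and the IVT, as in the proofs already in \S\ref{sss-orthogonalParabolas}. I expect the proof in the paper to simply say this is "immediate from the definitions and Figures \ref{fig-deltaInterval}, \ref{fig-corrExc}," and the plan above is the expansion of that sentence.
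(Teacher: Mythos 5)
Your proposal follows essentially the same route as the paper, whose entire proof is the single sentence ``This follows immediately from Remark \ref{rmk-corrOrth} and Proposition \ref{prop-corrExc}'' --- exactly the two ingredients (stability and $\Delta(\bu^{\pm})>1/2$, plus the sign characterization of primary/secondary corresponding exceptional bundles) that you expand upon, and you correctly anticipated the proof would be this terse. Your part (2) wavers before landing on the right bookkeeping, but the final route you settle on (apply part (1) to $\bv^D$, then dualize, with the $-3$ coming from $Y^D = Y^*(-3)$) is correct and consistent with the paper.
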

\begin{proof}
This follows immediately from Remark \ref{rmk-corrOrth} and Proposition \ref{prop-corrExc}.
\end{proof}

Our starting point for cohomological vanishing is the next theorem.  This is a slight generalization of \cite[Theorem 7.1]{CHW}; however, we have essentially already reproved it here in \S\ref{sec-regions}.

\begin{theorem}\label{thm-extremalVanish}
Let $\bv\in K(\P^2)$ be a stable Chern character with $\Delta(\bv) > 1/2$, and let $\bu^\pm$ be the corresponding orthogonal characters.  Let $V\in M(\bv)$ and $U^{\pm}\in M(\bu^\pm)$ be general.  Then $V\te U^{\pm}$ has no cohomology.
\end{theorem}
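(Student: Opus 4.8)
The plan is to reduce the statement to the cases already handled in \S\ref{sec-regions}, using the characterization of $\bu^\pm$ in terms of corresponding exceptional bundles from Lemma \ref{lem-kroneckerCorrExc}. First I would treat $\bu^+$. The defining property of $\bu^+$ is that $\chi(\bv\te\bu^+)=0$ together with the extra orthogonality $\chi(E_{-\nu^+}\te\bu^+)=0$ (or $\chi(E_{-\nu^+-3}\te\bu^+)=0$ in the case $\chi(\bv\te E_{\nu^+})\le 0$). By Remark \ref{rmk-corrOrth}, $\bu^+$ is a stable character with $\Delta(\bu^+)>1/2$, so $M(\bu^+)$ is positive dimensional and the general $U^+\in M(\bu^+)$ is a bundle. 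By Lemma \ref{lem-kroneckerCorrExc}(1), the primary corresponding exceptional bundle to $\bu^+$ is $E_{-\nu^+}$, i.e. $\omega^+=-\nu^+$. This is precisely the situation where $\bw=\bu^+$ sits on the boundary between regions (III) and (IV), and in fact lies in region (IIIa): one checks $\chi(U^+\te E_{-(-\nu^+)})=\chi(U^+\te E_{\nu^+})=\chi(\bu^+\te E_{\nu^+})$, which is $0$ when $\chi(\bv\te E_{\nu^+})>0$ by the definition of $\bu^+$ (here $E_{-\nu^+}$ is primary for $\bu^+$, hence the relevant Euler characteristic against $E_{-\omega^+}=E_{\nu^+}$ is what enters region (III)), and similarly one verifies the sign condition $\chi(K\te U^+)\ge 0$ that defines region (IIIa) from the orthogonality $\chi(\bu^+\te K)=0$ recorded in the remark following Definition \ref{def-corrOrth}.

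Once $\bw=\bu^+$ is placed in region (IIIa), the vanishing $H^i(V\te U^+)=0$ for $i>0$ follows directly from Proposition \ref{prop-RIII1}(1) when $\chi(\bv\te E_{\nu^+})>0$ and from Proposition \ref{prop-RIII2} when $\chi(\bv\te E_{\nu^+})\le 0$. Since $\chi(\bv\te\bu^+)=0$ by construction, the vanishing of $H^1$ (and $H^2$) forces $H^0(V\te U^+)=0$ as well, so $V\te U^+$ has no cohomology. I should be careful to confirm that the inequality in region (III), namely $\chi(W\te E_{-(\nu^+.\beta)})\le 0$, also holds for $W=U^+$; this should come from the fact that $E_{-\nu^+}$ is the \emph{primary} corresponding exceptional bundle to $\bu^+$ together with Proposition \ref{prop-corrExc}, which pins down the sign of the relevant Euler characteristics on either side of the corresponding exceptional slope.

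For $\bu^-$, I would use Serre duality to reduce to the case of $\bu^+$. By Definition \ref{secondary-orth}, $\bu^-$ is the dual of a primary corresponding orthogonal character of rank at least $2$ to $\bv^D$. Thus $(\bu^-)^*$ is a $\bu^+$-type character for $\bv^D$, so by the first part of the argument, if $V'\in M(\bv^D)$ and $U'\in M((\bu^-)^*)$ are general, then $V'\te U'$ has no cohomology. Applying Serre duality, $H^i(V\te U^-)\cong H^{2-i}(V^*\te (U^-)^*(-3))^*$, and since the general $V^*\in M(\bv^D)(\text{up to twist})$ and $(U^-)^*\in M((\bu^-)^*)$, this translates the vanishing for $\bv^D$ into the vanishing $H^i(V\te U^-)=0$ for all $i$. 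Here I use the remark after Definition \ref{secondary-orth} that $\bu^-$ has rank at least $2$, so $U^-$ is genuinely a bundle and Serre duality applies cleanly.

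The main obstacle I anticipate is the bookkeeping verifying that $\bw=\bu^+$ genuinely satisfies \emph{all} the defining inequalities of region (IIIa) — not just $\omega^+=-\nu^+$ but also the two Euler-characteristic sign conditions $\chi(W\te E_{-(\nu^+.\beta)})\le 0$, $\chi(W\te E_{-\nu^+})\ge 0$, and the subregion condition $\chi(K\te W)\ge 0$. All of these should follow mechanically from the two orthogonality relations defining $\bu^+$ (against $\bv$ and against $E_{-\nu^+}$ or $E_{-\nu^+-3}$), together with the remark $\chi(\bu^+\te K)=0$ and the geometry of orthogonal parabolas in Figure \ref{fig-4regions}, but one must handle the two cases $\chi(\bv\te E_{\nu^+})>0$ and $\chi(\bv\te E_{\nu^+})\le 0$ separately since the extra orthogonality defining $\bu^+$ changes. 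Since the excerpt remarks that this theorem has "essentially already [been] reproved here in \S\ref{sec-regions}," I expect the proof to be short once the region membership is pinned down.
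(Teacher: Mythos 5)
Your proposal is correct and follows essentially the same route as the paper: place $\bu^+$ in region (IIIa) via Lemma \ref{lem-kroneckerCorrExc} together with the orthogonality $\chi(K\te\bu^+)=0$, apply Proposition \ref{prop-RIII1}(1) or Proposition \ref{prop-RIII2}, and deduce the $\bu^-$ case by Serre duality. One small slip worth noting: the sign condition from region (III) that you need is $\chi(\bu^+\te E_{-\nu^+})\geq 0$, which holds with equality by the very definition of $\bu^+$, rather than $\chi(\bu^+\te E_{\nu^+})=0$ as written.
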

\begin{proof}
The primary corresponding exceptional bundle to $\bu^+$ is $E_{-\nu^+}$ by Lemma \ref{lem-kroneckerCorrExc}.  We also have $\chi(K\te \bu^+) = 0$ from the decomposition of $V$.  Thus $\bu^+$ lies in region (IIIa).  Either Proposition \ref{prop-RIII1} (1) or Proposition \ref{prop-RIII2} shows $V\te U^+$ has no cohomology.  

The case of $\bu^-$ follows by Serre duality.
\end{proof}

Now we combine the bundles $U^\pm$ to produce additional cohomologically orthogonal bundles.

\begin{theorem}\label{thm-orthogonal}
Suppose $\chi(\bv\te \bw) = 0$ and $\bw$ is sufficiently divisible.  Suppose  either
\begin{enumerate}
\item $\mu(\bw)\geq \mu(\bu^+)$ or
\item $\mu(\bw)\leq \mu(\bu^-)$.
\end{enumerate}
Then the general tensor product $V\te W$ has no cohomology.
 \end{theorem}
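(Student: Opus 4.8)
The plan is to realize $\bw$ (after using the divisibility hypothesis) as the cokernel of a general injection built from the cohomologically orthogonal bundles $U^+$ and $U^-$ of Theorem~\ref{thm-extremalVanish}, and then to read off the vanishing of $H^\bullet(V\te W')$ from the resulting long exact sequence. By Serre duality, case~(2) is equivalent to case~(1) for the Serre dual character $\bv^D$ (under dualization the primary and secondary corresponding orthogonal characters get interchanged), so I would treat only the case $\mu(\bw)\geq\mu(\bu^+)$.

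First I would record that $\mu(\bu^-)<\mu(\bu^+)$, so that $\bu^+$ and $\bu^-$ span the two-plane $\bv^\perp$. Since $\chi(\bv\te\bw)=0$, the character $\bw$ lies in $\bv^\perp$; writing $\bw=x\bu^++y\bu^-$ (uniquely up to scale) and computing slopes, the hypothesis $\mu(\bw)\geq\mu(\bu^+)>\mu(\bu^-)$ forces $x>0$ and $y\leq 0$. Because $\bw$ is sufficiently divisible, I may then write
\[
\bw=a\,\bu^+-b\,\bu^-,\qquad a,b\in\ZZ_{\geq 0},\qquad a\,\rk(\bu^+)-b\,\rk(\bu^-)=\rk(\bw)>0.
\]
If $b=0$ then $\bw$ is a multiple of $\bu^+$ and the claim is immediate from Theorem~\ref{thm-extremalVanish}, so I would assume $b>0$.

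Next I would apply Theorem~\ref{thm-gghom} to the pair $(\bu^+,\bu^-)$: by Lemma~\ref{lem-kroneckerCorrExc} the primary corresponding exceptional bundle to $\bu^+$ is $E_{-\nu^+}$ and the secondary corresponding exceptional bundle to $\bu^-$ is $E_{-\nu^--3}$, and since $\nu^+-\nu^-\geq 3$ by Remark~\ref{rem-corrExcGap} we get $(-\nu^+)-(-\nu^--3)=\nu^--\nu^++3\leq 0\leq 2$. Hence $\sHom(U^-,U^+)$ is globally generated for general $U^\pm\in M(\bu^\pm)$. A Bertini-type theorem as in \cite{HuizengaJAG} then shows that a general homomorphism $\phi\colon (U^-)^{\oplus b}\to (U^+)^{\oplus a}$ is injective with torsion-free cokernel $W'$, and by construction $\ch W'=\bw$; one then checks that $W'$ is prioritary. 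Tensoring the exact sequence $0\to (U^-)^{\oplus b}\to (U^+)^{\oplus a}\to W'\to 0$ by the locally free sheaf $V$ and invoking Theorem~\ref{thm-extremalVanish} (so that $V\te U^\pm$ have no cohomology) gives $H^i(V\te W')=0$ for all $i$. Finally, $W'$ is a prioritary sheaf of character $\bw$ with $h^0(V\te W')=h^2(V\te W')=0$, so by semicontinuity (\S\ref{ss-prioritary}) the general $W\in M(\bw)$ satisfies $h^0(V\te W)=h^2(V\te W)=0$, and then $h^1(V\te W)=0$ because $\chi(\bv\te\bw)=0$.

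The main obstacle I anticipate is verifying that a general $\phi$ has prioritary cokernel (equivalently $\Ext^2(W',W'(-1))=0$), together with making precise exactly how divisible $\bw$ must be in order to solve $\bw=a\bu^+-b\bu^-$ with the required positivity of $a$ and $b$; both are concrete but somewhat fiddly. The conceptual content is carried entirely by Theorems~\ref{thm-extremalVanish} and~\ref{thm-gghom} and Lemma~\ref{lem-kroneckerCorrExc}.
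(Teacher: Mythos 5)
Your proposal follows essentially the same route as the paper's proof: reduce to case (1) by Serre duality, write $\bw$ (after scaling) as $a\,\bu^+-b\,\bu^-$ with $a,b\geq 0$ by the slope argument, use Lemma~\ref{lem-kroneckerCorrExc}, Remark~\ref{rem-corrExcGap} and Theorem~\ref{thm-gghom} to see that $\sHom(U^-,U^+)$ is globally generated, apply the Bertini-type theorem of \cite{HuizengaJAG} to get an injective general $\phi$ with locally free cokernel, and conclude from the long exact sequence together with Theorem~\ref{thm-extremalVanish} and the semicontinuity/irreducibility argument for prioritary sheaves. The one step you explicitly defer, the prioritariness of the cokernel, is indeed the only remaining point of substance, and it is not merely routine: the paper reduces $\Ext^2(W,W(-1))=0$ via the two long exact sequences to the vanishing $\Ext^1(U^-,U^+(-1))=0$, i.e.\ $H^1((U^-)^*\te U^+(-1))=0$, and then verifies this by computing the corresponding exceptional slopes of $U^+(-1)$ and $(U^-)^*$ (namely $-\nu^++1$ and $\nu^-$) and checking that $\ch((U^-)^*)$ lies in region (I) for $\ch(U^+(-1))$, so that Proposition~\ref{prop-regionI} applies. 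You should carry out that computation to close the argument; everything else in your outline matches the paper.
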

 \begin{remark}
If $M(\bw)$ is positive dimensional, then the converse is true: if $\mu(\bu^-)<\mu(\bw)<\mu(\bu^+)$ then the general tensor product $V\te W$ has nontrivial cohomology.  This essentially follows from Proposition \ref{prop-RIII1} (2).
 \end{remark}
 
 \begin{proof}[Proof of Theorem \ref{thm-orthogonal}]
By Serre duality, we may as well focus on the case $\mu(\bw) \geq \mu(\bu^+)$.  Furthermore, we may assume equality does not hold, for if $\mu(\bw) = \mu(\bu^+)$, then $\bw$ is a primary corresponding orthogonal character and Theorem \ref{thm-extremalVanish} would apply.  The characters $\bu^\pm$ form a $\QQ$-basis for $\bv^\perp \te \QQ \subset K(\P^2) \te \QQ$, so we can find integers $m>0$ and $m_+,m_-$ with $$m \bw = m_+ \bu^+ + m_- \bu^-.$$ The coefficients $m_{\pm}$ cannot both be negative since $\bw$ has positive rank.  If they were both positive then $\mu(\bw)$ would be a weighted mean of $\mu(\bu^-)$ and $\mu(\bu^+)$, hence it would lie between them.  Thus $m_+$ and $m_-$ must have different signs.  If $m_+<0$ then we find $\mu(\bu^-)$ is a weighted mean of $\mu(\bw)$ and $\mu(\bu^+)$, which is again a contradiction.  We conclude that $m_+ > 0$ and $m_- < 0$.
 
  Replace $\bw$ with $m \bw$, replace $\bu^+$ with $m_+\bu^+$, and replace $\bu^-$ with $-m_- \bu^-$, so that we now have $$\bw = \bu^+-\bu^-.$$ Let $U^\pm \in M(\bu^\pm)$ be general bundles.  Let $\phi\in \Hom(U^-,U^+)$ be a general homomorphism.  We will show that $\phi$ is injective and the cokernel $W$ given by the sequence $$0\to U^- \to U^+ \to W\to 0$$ is a prioritary sheaf (see \S\ref{ss-prioritary}) of character $\bw$ such that $V\te W$ has no cohomology.  Since the stack of semistable sheaves is an open substack of the irreducible stack of prioritary sheaves, this implies that if $W\in M(\bw)$ is a general semistable sheaf then $V\te W$ has no cohomology.

We prove the sheaf $\sHom(U^-,U^+)$ is globally generated as an application of Theorem \ref{thm-gghom}.  By Lemma \ref{lem-kroneckerCorrExc}, the secondary corresponding exceptional bundle to $U^-$ is $E_{-\nu^--3}$, and the primary corresponding exceptional bundle to $U^+$ is $E_{-\nu^+}$.  Then $$(-\nu^+)- (-\nu^--3)=\nu^- - \nu^+ + 3\leq -3+3=0$$ by Remark \ref{rem-corrExcGap}, and Theorem \ref{thm-gghom} shows $\sHom(U^-,U^+)$ is globally generated.
Then by a Bertini-type theorem \cite[Proposition 2.6]{HuizengaJAG}, the map $\phi$ is injective and the cokernel $W$ is a vector bundle since $r(W) \geq 2$.   Clearly $V\te W$ has no cohomology by Theorem \ref{thm-extremalVanish} since $V\te U^-$ and $V\te U^+$ have no cohomology.  

To show that $W$ is prioritary we need to show that $\Ext^2(W,W(-1)) = 0$.  Applying $\Hom(W,-)$ to the sequence $$0\to U^-(-1)\to U^+(-1) \to W(-1)\to 0,$$ it is enough to show $\Ext^2(W,U^+(-1)) = 0$.  Applying $\Hom(-,U^+(-1))$ to $$0\to U^- \to U^+ \to W\to 0$$ gives an exact sequence $$\Ext^1(U^-,U^+(-1))\to \Ext^2(W,U^+(-1))\to \Ext^2(U^+,U^+(-1)).$$ Since $\Ext^2(U^+,U^+(-1)) =0$ by stability, we are reduced to proving $\Ext^1(U^-,U^+(-1)) = 0$.  For this we need to show $H^1((U^-)^* \te U^+(-1))=0$, which we claim follows from an application of Proposition \ref{prop-regionI}.

The primary corresponding exceptional slope to $U^+(-1)$ is $-\nu^++1$ by Lemma \ref{lem-kroneckerCorrExc}.  The primary corresponding exceptional slope to $(U^-)^D$ is the dual of the secondary corresponding exceptional slope to $U^-$, so is $\nu^- + 3$.  Therefore the primary corresponding exceptional slope to $(U^-)^* = (U^-)^D(3)$ is $\nu^-$.  A straightforward computation shows that if we take characters $\bv = \ch(U^+(-1))$ and $\bw = \ch((U^-)^*)$, then $\bw$ lies in region (I) determined by $\bv$.  Then Proposition \ref{prop-regionI} completes the proof.
\end{proof}

\begin{remark}
We can see how divisible $\bw$ must be for the proof to work.  The characters $\bu^\pm$, which are determined by $\bv$, span a sublattice $$\Lambda = \Z\bu^+ \oplus \Z\bu^- \subset \bv^\perp \cong \Z^2.$$  We can find an integer $m$ such that $m\bv^\perp \subset \Lambda$.  Then any character $\bw\in \bv^\perp$ which is divisible by $m$ will work.   Note that due to the fractal-like nature of the Dr\'ezet--Le Potier curve, the characters $\bu^{\pm}$ depend on $\bv$ in a somewhat complicated way.  Heuristically, if $\bu^{\pm}$ have high rank, then we might expect $m$ to be large.
 \end{remark}

\section{The remaining cases}\label{sec-interpolation}

In this section we collect a few simple tricks for using known computations of the cohomology of general tensor products to deduce computations of  the cohomology of other general tensor products.  Together with the results in \S\S\ref{sec-exceptional}--\ref{sec-orthogonal}, this will allow us to compute the cohomology of $V\te W$ when $\bw$ lies in region (IV) (if $\chi(\bv\te E_{\nu^+})>0$) or (V) (if $\chi(\bv \te E_{\nu^+})\leq 0$), completing the proof of Theorem \ref{thm-main1}.

\subsection{Interpolation tricks}  An \emph{elementary modification} of a sheaf $W$ is a sheaf $W'$ obtained as the kernel $$0 \to W' \to W\to \OO_p\to 0$$ where $\OO_p$ is a skyscraper sheaf.  If $W$ is torsion-free then so is $W'$, and if $W$ is prioritary then so is $W'$.  If $V\te W$ has no sections, then $V\te W'$ has no sections.  See \cite[Lemma 2.7]{CoskunHuizengaBN} for details.

\begin{lemma}\label{lem-modification}
Suppose $W'$ is a prioritary sheaf of invariants $(r',\mu,\Delta')$ and $H^0(V\te W') = 0$.  Let $\bw = (r,\mu,\Delta)$ be a character with the same slope and $\Delta\geq \Delta'$.  Then after replacing $\bw$ by a sufficiently divisible multiple, the general prioritary sheaf $W$ of character $\bw$ has $H^0(V\te W) = 0$.
\end{lemma}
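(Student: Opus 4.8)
The plan is to reduce, via the theory of prioritary sheaves recalled in \S\ref{ss-prioritary}, to exhibiting a single prioritary sheaf of an appropriate multiple of $\bw$ that has no sections after tensoring by $V$. First I would note that the stack of prioritary sheaves of a fixed character is irreducible and the condition $H^0(V\te -) = 0$ is open by semicontinuity, so it suffices to construct one prioritary sheaf $W$ with $\ch W = n\bw$ for a suitable positive integer $n$ and with $H^0(V\te W) = 0$; the general prioritary sheaf of that character then has the same vanishing, and $n$ is the required divisibility.

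To build $W$, I would start from $W'$ and combine the two operations that both preserve prioritariness (hence torsion-freeness) and the vanishing $H^0(V\te -) = 0$, the latter being exactly the fact recorded at the start of this section: taking direct sums, and taking elementary modifications. A direct sum $(W')^{\oplus k}$ keeps the slope and discriminant and multiplies the rank by $k$ (and $\Ext^2((W')^{\oplus k},(W')^{\oplus k}(-1)) \cong \Ext^2(W',W'(-1))^{\oplus k^2} = 0$, so it is prioritary); an elementary modification of a rank-$\rho$ sheaf keeps the slope, keeps the rank, and raises the discriminant by $1/\rho$. So the scheme is: choose $k$ sufficiently divisible that $r \mid kr'$ and that $\ell := kr'(\Delta - \Delta')$ is a nonnegative integer, which is possible since $\Delta \geq \Delta'$ and both discriminants are rational; set $W_0 = (W')^{\oplus k}$, a prioritary sheaf with invariants $(kr', \mu, \Delta')$ and $H^0(V\te W_0) \cong H^0(V\te W')^{\oplus k} = 0$; then perform $\ell$ successive elementary modifications of $W_0$ at general points to obtain a prioritary sheaf $W$ with invariants $(kr', \mu, \Delta)$ and $H^0(V\te W) = 0$. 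Since $\ch W = n\bw$ with $n = kr'/r$, and $n$ can be made as divisible as one likes by enlarging $k$, this is exactly the assertion of the lemma.

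I do not expect a genuine obstacle here: the argument is bookkeeping with slopes and discriminants together with the two ``property is preserved'' facts already in hand. The only point that needs a moment's care is the simultaneous arithmetic constraint on $k$ — that one can pick $k$ making both $kr'/r$ and $kr'(\Delta-\Delta')$ integers — but this is immediate, as there are only finitely many denominators to clear.
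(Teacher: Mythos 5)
Your proposal is correct and follows essentially the same route as the paper: take a direct sum $(W')^{\oplus k}$ to clear denominators, raise the discriminant to $\Delta$ by elementary modifications at general points, and invoke irreducibility of the prioritary stack together with semicontinuity to pass from this one example to the general prioritary sheaf. The paper's version is just your argument with the specific choice $k=q$ where $\Delta-\Delta'=p/q$ and $r\mid r'q$, performing $pr'$ modifications.
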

\begin{proof}
Write $\Delta-\Delta' = \frac{p}{q}$ with $p,q$ positive integers such that $r'q$ is a multiple of $r$.  Starting from the bundle $(W')^{\oplus q}$ of discriminant $\Delta'$, perform $pr'$ elementary modifications to construct a sheaf $W$.  Each modification increases the discriminant by $1/(qr')$, so $W$ has character $(rq,\mu,\Delta)$.  It is prioritary, it is a multiple of $\bw$, and it has the required cohomology vanishing.  
\end{proof}

If we have two stable bundles $W$ and $W'$ with sufficiently close slopes such that $V\te W$ and $V\te W'$ have no sections, then we can combine them to create such bundles with new slopes.

\begin{lemma}\label{lem-aveSlope}
Suppose $W'$ and $W''$ are stable sheaves of invariants $\bw' = (r',\mu',\Delta')$ and $\bw'' = (r'',\mu'',\Delta'')$ and that $H^0(V\te W') = 0$ and $H^0(V\te W'')=0$.  Suppose $0 < \mu'' - \mu' < 2$.  For any slope $\mu$ with $\mu'< \mu < \mu''$, there is a direct sum $W = (W')^{\oplus a} \oplus (W'')^{\oplus b}$ of slope $\mu$.  Then $W$ is prioritary, $H^0(V\te W) = 0$, and $\Delta(W) < \max\{\Delta',\Delta''\}.$
\end{lemma}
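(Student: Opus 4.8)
The plan is to first exhibit the exponents $a,b$, then verify the three assertions one at a time; I expect only the prioritary property to make essential use of a hypothesis, namely $0<\mu''-\mu'<2$.

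First I would produce $a$ and $b$. Setting $\mu(W)=\mu$ for $W=(W')^{\oplus a}\oplus(W'')^{\oplus b}$ and using $\ch_1(W')=r'\mu'$, $\ch_1(W'')=r''\mu''$, one gets the single equation
\[
ar'(\mu-\mu')=br''(\mu''-\mu),\qquad\text{equivalently}\qquad \frac{a}{b}=\frac{r''(\mu''-\mu)}{r'(\mu-\mu')}.
\]
Since $\mu'<\mu<\mu''$ the right side is a positive rational number, so such positive integers $a,b$ exist; fix one pair, giving a sheaf $W$ of slope $\mu$. The vanishing $H^0(V\te W)=0$ is then immediate from $V\te W\cong(V\te W')^{\oplus a}\oplus(V\te W'')^{\oplus b}$ together with the hypotheses $H^0(V\te W')=0$ and $H^0(V\te W'')=0$.

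Next I would check that $W$ is prioritary, i.e.\ $\Ext^2(W,W(-1))=0$. As $W$ is a direct sum of copies of $W'$ and $W''$, the group $\Ext^2(W,W(-1))$ is a direct sum of copies of $\Ext^2(A,B(-1))$ for $A,B\in\{W',W''\}$, so it suffices to show each of these four groups vanishes. By Serre duality $\Ext^2(A,B(-1))\cong\Hom(B,A(-2))^*$, and since stable sheaves on $\P^2$ are slope-semistable, a nonzero homomorphism $B\to A(-2)$ would force $\mu(B)\le\mu(A)-2$ (compare the slope of its image, which is a quotient of $B$ and a torsion-free subsheaf of $A(-2)$). This is impossible for $A=B$; impossible for $\Ext^2(W',W''(-1))\cong\Hom(W'',W'(-2))^*$ because $\mu''>\mu'>\mu'-2$; and impossible for $\Ext^2(W'',W'(-1))\cong\Hom(W',W''(-2))^*$ precisely because $\mu''-\mu'<2$ gives $\mu'>\mu''-2$. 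Hence all four $\Ext^2$ groups vanish and $W$ is prioritary.

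Finally I would treat the discriminant. Writing $\lambda=ar'/(ar'+br'')$, so that $\mu(W)=\lambda\mu'+(1-\lambda)\mu''$, a direct computation from the additivity of $\ch$ and the formula $\Delta=\tfrac12\mu^2-\ch_2/r$ yields
\[
\Delta(W)=\lambda\Delta'+(1-\lambda)\Delta''-\tfrac12\lambda(1-\lambda)(\mu''-\mu')^2.
\]
Since $a,b>0$ forces $\lambda\in(0,1)$ and $\mu'\neq\mu''$, the last term is strictly negative while $\lambda\Delta'+(1-\lambda)\Delta''\le\max\{\Delta',\Delta''\}$, so $\Delta(W)<\max\{\Delta',\Delta''\}$. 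There is no substantial obstacle here; the only points requiring care are recognizing that the hypothesis $\mu''-\mu'<2$ is exactly what makes the cross-term $\Hom(W',W''(-2))$ vanish (so that $W$ is prioritary), and keeping the discriminant estimate strict, which it is because $a,b>0$.
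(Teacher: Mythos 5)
Your proof is correct and follows essentially the same route as the paper: the same direct-sum construction, the same use of stability plus $0<\mu''-\mu'<2$ for prioritariness (which you correctly spell out via Serre duality and the slope comparison), and the same conclusion for the discriminant. The only cosmetic difference is that you verify $\Delta(W)<\max\{\Delta',\Delta''\}$ by an explicit computation of the cross-term, whereas the paper observes that $(\mu',\Delta')$, $(\mu(W),\Delta(W))$, $(\mu'',\Delta'')$ all lie on a common upward-opening orthogonal parabola and invokes convexity; these are the same estimate.
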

\begin{proof}
The slope of a direct sum $(W')^{\oplus a} \oplus (W'')^{\oplus b}$ is a weighted mean of $\mu(W')$ and $\mu(W'')$, and it is easy to arrange the slope to be any rational number $\mu$ between $\mu'$ and $\mu''$.  Prioritariness of $W$ follows from the stability of $W'$ and $W''$ and the assumption $0< \mu''-\mu' < 2$.  The cohomology of $V\te W$ is immediate.  Let $\bu$ be a character orthogonal to both $W'$ and $W''$.  Then the three points $(\mu',\Delta'),(\mu,\Delta(W)),(\mu'',\Delta'')$ all lie on the orthogonal parabola to $\bu$, and $\Delta(W) < \max\{\Delta',\Delta''\}$ follows from convexity.
\end{proof}

If we have bundles $W$ and $W'$ of the same slope such that $V\te W$ and $V\te W'$ are both nonspecial with cohomology in the same degree, then we can combine them to handle intermediate discriminants.

\begin{lemma}\label{lem-aveDisc}
Let $W'$ and $W''$ be stable sheaves of invariants $\bw' = (r',\mu,\Delta')$ and $\bw'' = (r'',\mu,\Delta'')$ with $\Delta'\leq \Delta'$.  Suppose there is an index $j$ such that $H^i(V\te W') = 0$ and $H^i(V\te W'') = 0$ for $i\neq j$.  Let $\bw = (r,\mu,\Delta)$ be a character with $\Delta' \leq \Delta \leq \Delta''$.  Then after replacing $\bw$ by a sufficiently divisible multiple, the general prioritary sheaf $W$ of character $\bw$ has $H^i(V\te W) = 0$ for $i\neq j$.
\end{lemma}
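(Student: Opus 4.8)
The plan is to interpolate in the discriminant by forming a direct sum, following the strategy of Lemma~\ref{lem-aveSlope} but now holding the slope fixed rather than averaging it. First I record the elementary fact that for sheaves $A,B$ of the same slope $\mu$, the direct sum $A\oplus B$ again has slope $\mu$ and $\Delta(A\oplus B)=\frac{r(A)\Delta(A)+r(B)\Delta(B)}{r(A)+r(B)}$, a rank-weighted mean of $\Delta(A)$ and $\Delta(B)$. Since $\Delta,\Delta',\Delta''$ are rational (being discriminants of integral Chern characters) and $\Delta'\le\Delta\le\Delta''$, the ratio $(\Delta''-\Delta):(\Delta-\Delta')$ is a ratio of nonnegative rationals, so I can choose nonnegative integers $a,b$, not both zero, with $ar'(\Delta-\Delta')=br''(\Delta''-\Delta)$. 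After scaling $(a,b)$ by a further common factor I may also assume $r\mid ar'+br''$. (If $\Delta=\Delta'$ or $\Delta=\Delta''$, one of $a,b$ is $0$ and only a single summand is used.)

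Now set $W=(W')^{\oplus a}\oplus(W'')^{\oplus b}$. By the displayed formula, $W$ has slope $\mu$ and discriminant $\Delta$, and its rank $ar'+br''$ is divisible by $r$; hence $\ch W=\frac{ar'+br''}{r}\,\bw$ is a positive integer multiple of $\bw$, and this is the required sufficiently divisible multiple. The sheaf $W$ is prioritary: $W'$ and $W''$ are Gieseker semistable, hence slope-semistable, of the same slope $\mu$, so $\Hom(A,B(-2))=0$ whenever $A,B\in\{W',W''\}$ by slope considerations, and consequently $\Ext^2(W,W(-1))=0$, being a sum of terms $\Ext^2(A,B(-1))\cong\Hom(B,A(-2))^*$ with $A,B\in\{W',W''\}$. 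Finally, $V\te W\cong(V\te W')^{\oplus a}\oplus(V\te W'')^{\oplus b}$, so for $i\ne j$ we get $H^i(V\te W)\cong H^i(V\te W')^{\oplus a}\oplus H^i(V\te W'')^{\oplus b}=0$ by hypothesis.

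To finish, recall from \S\ref{ss-prioritary} that the stack of prioritary sheaves of any fixed character is irreducible whenever it is nonempty. The stack of prioritary sheaves of character $\ch W$ is nonempty since it contains $W$, and, for the fixed bundle $V$, each $h^i(V\te-)$ is upper semicontinuous on it; hence the condition ``$h^i(V\te-)=0$ for all $i\ne j$'' cuts out a nonempty open substack. Therefore the general prioritary sheaf of character $\ch W$, a multiple of $\bw$, satisfies $H^i(V\te W)=0$ for all $i\ne j$, which is the assertion.

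The argument is essentially formal; the only real bookkeeping is in the first paragraph, where rationality of $\Delta$ is used to match the target discriminant exactly and the free scaling parameter is used to make the total rank divisible by $r$. It is worth noting why the elementary-modification trick of Lemma~\ref{lem-modification} is not used here: while modifications preserve the vanishing of $H^0$, they can create new $H^1$, so they do not preserve the condition that cohomology be concentrated in degree $j$ when $j=2$. The direct-sum construction avoids this and works uniformly in $j$.
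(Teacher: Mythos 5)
Your proof is correct and follows essentially the same route as the paper's: the paper likewise takes $W=(W')^{\oplus a}\oplus(W'')^{\oplus b}$ with exponents chosen so the rank-weighted mean of the discriminants equals $\Delta$, gets prioritariness as in Lemma \ref{lem-aveSlope}, and observes the cohomology vanishing is immediate. You merely make explicit the bookkeeping (the exact choice of $a,b$, divisibility of the rank by $r$, and the semicontinuity/irreducibility step) that the paper leaves implicit.
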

\begin{proof}
The discriminant of $W=(W')^{\oplus a}\oplus (W'')^{\oplus b}$ is a weighted mean of $\Delta'$ and $\Delta''$, so by choosing the exponents appropriately we can ensure it has discriminant $\Delta$.  Prioritariness follows as in Lemma \ref{lem-aveSlope}, and the cohomology of $V\te W$ is evident.
\end{proof}

\subsection{Region (IV)} In this section assume $\chi(V\te E_{\nu^+})>0$.  Then region (IV) was defined by the condition that $-\nu^+ \leq \omega^+$, and if $-\nu^+=\omega^+$ then $\chi(W\te E_{-\nu^+}) \leq 0$.  We can further subdivide region (IV) into the following regions.
\begin{enumerate}
\item[(IVa)] $\chi(V\te W) \geq 0$ and $\chi(W\te E_{-\nu^+}) \leq 0$.
\item[(IVb)] $\mu(W) \geq \mu(\bu^+)$ and $\chi(V\te W) \leq 0$.
\item[(IVc)] $\nu^+ - x_{\nu^+} < \mu(W)\leq \mu(\bu^+)$ and $\chi(W\te E_{-\nu^+}) \leq 0$.
\item[(IVd)] $\mu(W)<\nu^+-x_{\nu^+}$.
\end{enumerate}
We compute the cohomology of $V\te W$ in each subregion.

\begin{proposition}\label{prop-restReg}
Suppose $\chi(V\te E_{\nu^+}) > 0$.  Replace $\bw$ by a sufficiently large multiple.
\begin{enumerate}
\item If $\bw$ is in region (IVa) then $H^1(V\te W) = 0$.
\item If $\bw$ is in regions (IVb), (IVc), or (IVd), then $H^0(V\te W) = 0$.
\end{enumerate}
\end{proposition}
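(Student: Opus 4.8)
The plan is to treat the four (overlapping) subregions (IVa)--(IVd) in turn, in each case using the interpolation lemmas of \S\ref{sec-interpolation} to reduce the general character $\bw$ to an extremal character lying on one of the parabolas $\bv^\perp$ or $E_{-\nu^+}^\perp$, and then appealing to results already proved --- above all Theorem \ref{thm-orthogonal}, Theorem \ref{thm-twistExceptional}, and Lemma \ref{lem-derivedTensor}. Throughout we keep the triangle $E_{-\nu^+}^{m_3}\to V\to K\to\cdot$ resolving $V$, with $K$ the Kronecker complex $E_{-\alpha-3}^{m_1}\to E_{-\beta}^{m_2}$ corresponding to a \emph{stable} Kronecker module, and we freely replace $\bw$ by a sufficiently divisible multiple at every step.

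\emph{Regions (IVb) and (IVa).} In (IVb) we have $\mu(\bw)\ge\mu(\bu^+)$ and $\chi(\bv\te\bw)\le 0$, so $\bw$ lies on or above $\bv^\perp$; letting $\bw_0$ be the character of slope $\mu(\bw)$ on $\bv^\perp$ we have $\Delta(\bw)\ge\Delta(\bw_0)$, so by Lemma \ref{lem-modification} it is enough that $H^0(V\te W_0)=0$, which is Theorem \ref{thm-orthogonal}(1). For (IVa), where $\chi(\bv\te\bw)\ge 0$ and $\chi(\bw\te E_{-\nu^+})\le 0$, one first checks that $\mu(V\te W)>-3$, so that $H^2(V\te W)=0$ automatically and it suffices to show $H^0=0$ or $H^1=0$. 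If $\mu(\bw)\ge\mu(\bu^+)$ or $\mu(\bw)\le\mu(\bu^-)$ we reduce, as above, to the character $\bw_0$ of slope $\mu(\bw)$ on $\bv^\perp$ (legal since $\bw$ lies below $\bv^\perp$, so $\Delta(\bw)\le\Delta(\bw_0)$), and Theorem \ref{thm-orthogonal} gives $H^1(V\te W_0)=0$; since an elementary modification $0\to W'\to W\to\cO_p\to 0$ produces a surjection $H^1(V\te W')\onto H^1(V\te W)$, this vanishing propagates downward in discriminant to $\bw$, just as in the proof of Lemma \ref{lem-modification}. For the intermediate slopes $\mu(\bw)\in(\mu(\bu^-),\mu(\bu^+))$ one checks, by comparing the parabolas near their common point $\bu^+$, that $\bw$ in fact lies in region (IVc), so $H^0(V\te W)=0$ by the next paragraph; combined with $H^2(V\te W)=0$ and $\chi(V\te W)\ge 0$ this forces $H^1(V\te W)=0$. (A degenerate range of $\bv$ with $\Delta(\bv)$ close to $\tfrac12$ needs slightly more care here.)

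\emph{Region (IVc).} Here $\nu^+-x_{\nu^+}<\mu(\bw)\le\mu(\bu^+)$ and $\chi(\bw\te E_{-\nu^+})\le 0$. Let $\bw_0$ be the character of slope $\mu(\bw)$ on $E_{-\nu^+}^\perp$; then $\Delta(\bw_0)\le\Delta(\bw)$, so by Lemma \ref{lem-modification} it suffices to prove $H^0(V\te W_0)=0$. Since $\chi(E_{-\nu^+}\te W_0)=0$, the sheaf $E_{-\nu^+}\te W_0$ is acyclic by Theorem \ref{thm-twistExceptional}, so tensoring the triangle for $V$ by $W_0$ gives an inclusion $H^0(V\te W_0)\hookrightarrow H^0(K\te W_0)$. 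Using Proposition \ref{prop-corrExc} one shows that a character on $E_{-\nu^+}^\perp$ of slope in $(\nu^+-x_{\nu^+},\mu(\bu^+)]$ has $E_{-\nu^+}$ as its primary corresponding exceptional bundle and lies in region (III); because $\chi(\bw_0\te E_{-\nu^+})=0$, the term $E_{\nu^+}^{n_3}$ in the region (III) resolution of $W_0$ drops out, so $W_0$ is isomorphic to a two-term complex $K':E_{\beta-3}^{n_2}\to E_\alpha^{n_1}$. By Lemma \ref{lem-derivedTensor}, $K\te W_0\cong K\te K'$ has at most one nonzero cohomology group, determined by $\chi(K\te W_0)$; and since $\bu^+$ lies on both $K^\perp$ and $E_{-\nu^+}^\perp$, convexity of these parabolas together with $\mu(\bw_0)\le\mu(\bu^+)$ --- and Remark \ref{rem-rankslope}, to deal with the sign of $\rk(K)$ --- gives $\chi(K\te W_0)\le 0$. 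Hence the only possibly nonzero group of $K\te W_0$ is $H^1$, so $H^0(K\te W_0)=0$ and $H^0(V\te W_0)=0$.

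\emph{Region (IVd), and the main difficulty.} Here $\mu(\bw)<\nu^+-x_{\nu^+}$. When $\mu(V\te W)=\mu(\bv)+\mu(\bw)<0$, stability of the semistable bundle $V\te W$ already gives $H^0(V\te W)=0$. For the remaining, larger slopes we apply Lemma \ref{lem-aveSlope} repeatedly: starting from a character of region (IVc) of slope just above $\nu^+-x_{\nu^+}$ (handled above) and a character of very negative slope (handled by stability), we write $\bw$ as a weighted combination, passing through intermediate slopes in steps of size less than $2$, so that at each step one of the two summands has already been treated; the induction bottoms out in the stability regime. The most delicate step of the whole proof is region (IVc): one must simultaneously pin down the correct resolution of $W_0$ --- equivalently, locate the corresponding exceptional slopes of a character on $E_{-\nu^+}^\perp$ relative to the triad $(E_{\beta-3},E_\alpha,E_{\nu^+})$ used to define $K$ --- and verify the inequality $\chi(K\te W_0)\le 0$, which is exactly what distinguishes this ``$H^0=0$'' region from the special region (IIIb). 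Both reduce to a careful but elementary analysis of the parabolas $\bv^\perp$, $E_{-\nu^+}^\perp$, and $K^\perp$ near $\bu^+$, and arranging for every inequality to hold --- allowing for the boundary cases on the Dr\'ezet--Le Potier curve --- is where the bulk of the work lies.
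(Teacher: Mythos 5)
Your treatments of regions (IVb) and (IVc) are essentially the paper's (in (IVc) you inline the proof of Proposition \ref{prop-RIII1}(2) with $n_3=0$ rather than citing it, but the content is the same). However, there are two genuine gaps.

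\textbf{Region (IVa).} Your argument propagates the vanishing of $H^1$ \emph{downward} in discriminant by elementary modifications, and this direction does not work. An elementary modification $0\to W'\to W\to\OO_p\to 0$ produces a sheaf $W'$ of \emph{higher} discriminant from a given $W$; the surjection $H^1(V\te W')\onto H^1(V\te W)$ would let you conclude $H^1(V\te W)=0$ only if you knew $H^1(V\te W')=0$ for the \emph{specific} modification $W'$ of your general $W$. What Theorem \ref{thm-orthogonal} gives you is vanishing for the \emph{general} member of the prioritary stack of the character $\bw_0$ on $\bv^\perp$, and upper semicontinuity of $h^1$ goes the wrong way: the special sheaf $W'$ (which is never locally free, hence never general) may have strictly larger $h^1$. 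This is precisely why the paper introduces Lemma \ref{lem-aveDisc}: one needs known $H^1$-vanishing at stable characters of the same slope with discriminants bracketing $\Delta(\bw)$ from \emph{both} sides --- the character in region (IIIa) directly below $\bw$ (Proposition \ref{prop-RIII1}(1)) and the character on $\bv^\perp$ directly above it --- and then takes direct sums. Your proof never uses the lower anchor. (Your fallback for ``intermediate slopes'' is also untenable: $H^0=H^2=0$ together with $\chi\geq 0$ forces $\chi=0$, so it cannot cover characters with $\chi(V\te W)>0$; in fact for $\mu<\mu(\bu^+)$ the locus below $\bv^\perp$ and above $E_{-\nu^+}^\perp$ is empty, so that case is vacuous and the main case carries all the weight.)

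\textbf{Region (IVd).} Your two anchors --- a region (IVc) character of slope just above $\nu^+-x_{\nu^+}$ and a character of slope below $-\mu(\bv)$ handled by stability --- can be arbitrarily far apart (e.g.\ for $\bv=(1,10,67)$ with $\nu^+=0$ the gap exceeds $9$), while Lemma \ref{lem-aveSlope} requires its two inputs to be \emph{stable} sheaves with slopes within $2$ of each other and with vanishing established for \emph{both}. The proposed chaining cannot start, since there are no anchors in the intervening slope range, and the outputs of Lemma \ref{lem-aveSlope} are only prioritary direct sums with slopes confined to the convex hull of the anchors already in hand. The paper's fix is to use exceptional bundles: every $E_\gamma$ with $\gamma<\nu^+$ satisfies $H^0(V\te E_\gamma)=0$ by Theorem \ref{thm-twistExceptional}, and one can always find $\gamma\leq\mu\leq\delta<\nu^+$ with $\delta-\gamma\leq 1$, so Lemma \ref{lem-aveSlope} followed by Lemma \ref{lem-modification} applies at every slope in region (IVd).
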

\begin{proof}
(IVa) In this case the character $\bw$ lies directly above a character $\bw'$ in region (IIIa) and directly below a sufficiently divisible character $\bw''$ on the orthogonal parabola to $\bv$.  Let $W'\in M(\bw')$ and $W''\in M(\bw'')$ be general.  Then $H^1(V\te W')=0$ by Proposition \ref{prop-RIII1} (1) and $H^1(V\te W'')=0$ by Theorem \ref{thm-orthogonal}. After replacing $\bw$ by a sufficiently divisible multiple we can use Lemma \ref{lem-aveDisc} to show $H^1(V\te W) = 0$.

(IVb) Characters in region (IVb) lie above the orthogonal parabola.  Take a sufficiently divisible character $\bw'$ on the orthogonal parabola to $\bv$, and let $W'\in M(\bw')$ be general.  By Theorem \ref{thm-orthogonal} we have $H^0(V\te W') = 0$.  Then replacing $\bw$ by a sufficiently divisible multiple, Lemma \ref{lem-modification} shows $H^0(V\te W) = 0$.
 
 (IVc) In this case $\bw$ lies directly above a character $\bw'$ on the orthogonal parabola to $E_{-\nu^+}$, but this time $\bw'$ is in region (IIIb) so we use Proposition \ref{prop-RIII1} (2).  In the notation of the proposition, we have $n_3=\chi(W'\te E_{-\nu^+})=0$, so $H^0(V\te W') = 0.$  Then after replacing $\bw$ by a sufficiently divisible multiple, Lemma \ref{lem-modification} shows $H^0(V\te W)=0$.

(IVd) If $E_\gamma$ is any exceptional bundle with $\gamma < \nu^+$, then $H^0(V\te E_\gamma) = 0$.  Given any slope $\mu$ with $\mu< \nu^+-x_{\nu^+}$ we can find two exceptional bundles $E_\gamma$ and $E_\delta$ with $\gamma \leq  \mu \leq \delta < \nu^+$ and $\delta - \gamma \leq 1$.  We use Lemma \ref{lem-aveSlope} to find a prioritary sheaf $W'$ of slope $\mu$ with discriminant less than $1/2$ such that $V\te W'$ has no sections.  Then replacing $\bw$ by a sufficiently divisible multiple we can use Lemma \ref{lem-modification} to find a prioritary sheaf $W$ of character $\bw$ such that $H^0(V\te W)=0.$
\end{proof}

\begin{figure}[t] 
\begin{center}
\setlength{\unitlength}{1in}
\begin{picture}(6.5,3.27)
\put(0,0){\includegraphics[scale=.608,bb=0 0 10.69in 5.38in]{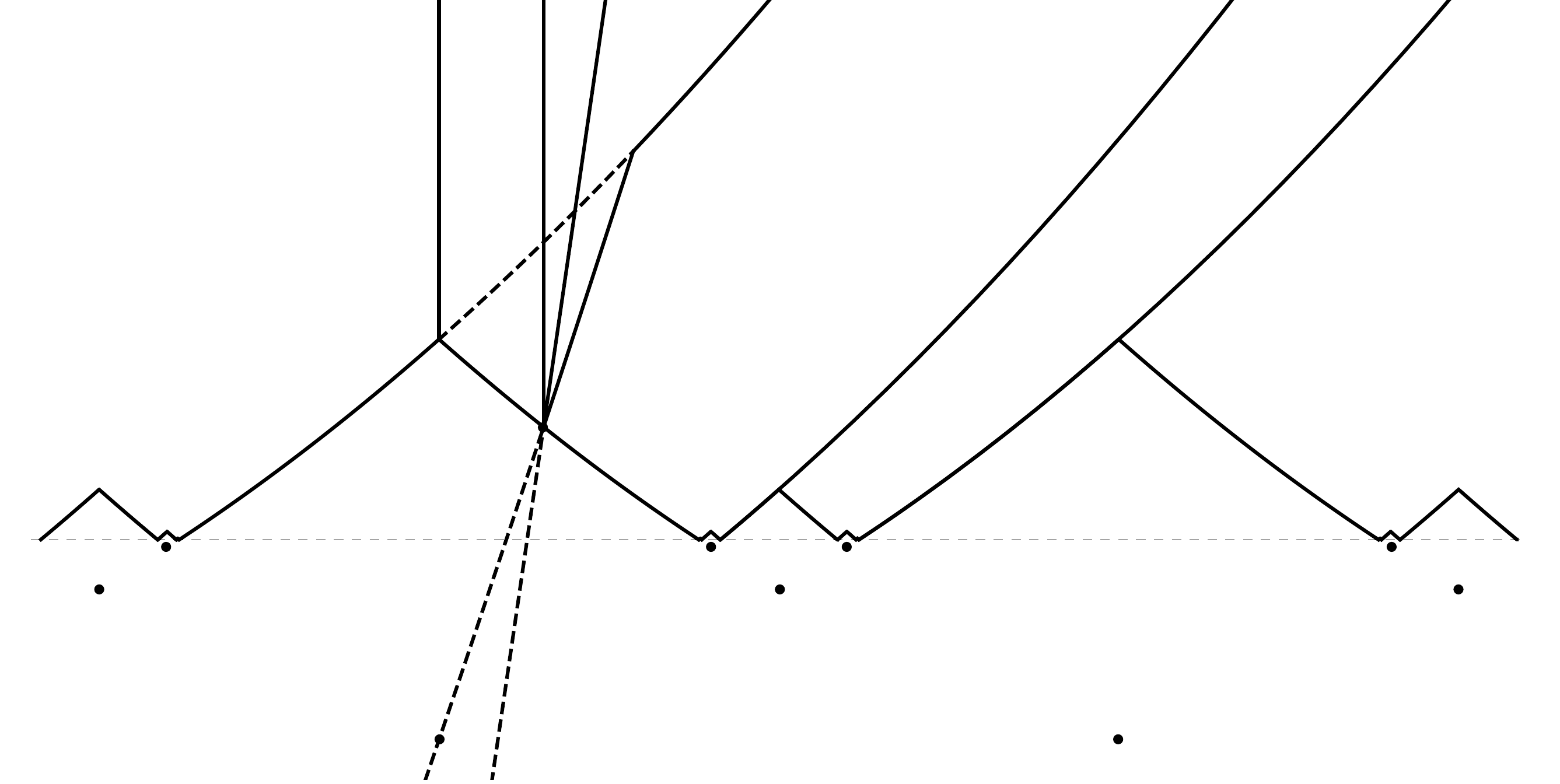}}
\put(1.55,.15){$E_{\nu^+}$}
\put(3.1,.65){$E_{\nu^+.\beta}$}
\put(4.43,.15){$E_{\beta}$}
\put(2.18,.6){$\bv^\perp$}
\put(2.02,1.4){$\bu^+$}
\put(2.47,2){$K^\perp$}
\put(3.2,3.1){$E_{-\nu^+}^\perp$}
\put(4.4,3.1){$E_{-(\nu^+.\beta)}^\perp$}
\put(6,3.1){$E_{-\beta}^\perp$}
\put(5.8,2){I}
\put(4.45,2){II}
\put(3,2){IIIa}
\put(2.65,3){Va}
\put(2.28,3){Vb}
\put(1.96,3){Vc}
\put(.8,2){Vd}
\put(2.5,1.33){$E_{-\nu^+-3}^\perp$}
\end{picture}
\end{center}
\caption{The subdivision of region (V) when  $\chi(\bv\te E_{\nu^+}) \leq 0$.  See \S\ref{ss-IVprime}.}\label{fig-IVprime}
\end{figure}

\subsection{Region (V)}\label{ss-IVprime}  Here we assume $\chi(V\te E_{\nu^+})\leq 0$.  Region (V) was defined to be the union of region (IV) and region (IIIb). We cover it by the following regions.
\begin{enumerate}
\item[(Va)] This is the region above region (IIIa) and below the orthogonal parabola to $\bv$.  It is given by the inequalities $\chi(V\te W) \geq 0$ and either $\chi(K\te W) \leq 0$ or $\chi(E_{-\nu^+}\te W)\leq 0$.
\item[(Vb)] $\mu(W) \geq \mu(\bu^+)$ and $\chi(V\te W)\leq 0$.
\item[(Vc)] $\nu^+ \leq \mu(W)\leq \mu(\bu^+)$.
\item[(Vd)] $\mu(W) \leq \nu^+$.
\end{enumerate}
See Figure \ref{fig-IVprime} for a picture of these regions.

\begin{proposition}
Suppose $\chi(V\te E_{\nu^+}) \leq 0$.  Replace $\bw$ by a sufficiently large multiple.
\begin{enumerate}
\item If $\bw$ is in region (Va) then the general tensor product $V\te W$ has no higher cohomology.
\item If $\bw$ is in regions (Vb), (Vc), or (Vd), then the general tensor product $V\te W$ has no sections.
\end{enumerate}
\end{proposition}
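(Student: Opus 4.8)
The plan is to mirror the structure of the proof of Proposition \ref{prop-restReg} for region (IV), adapting each of the four subregions of region (V) in turn, and to reduce everything to results already in hand: Theorem \ref{thm-orthogonal}, Proposition \ref{prop-RIII2}, Theorem \ref{thm-twistExceptional}, and the interpolation tricks of Lemmas \ref{lem-modification}, \ref{lem-aveSlope}, and \ref{lem-aveDisc}. Throughout I keep $\bv$ fixed with $\chi(\bv\te E_{\nu^+})\leq 0$, so that $V$ has the resolution $0\to E_{-\nu^+-3}^{m_3}\to K\to V\to 0$ with $K$ a sheaf, and I use that $\bu^+$ lies on the Dr\'ezet--Le Potier curve at the intersection of $\bv^\perp$ and $E_{-\nu^+-3}^\perp$ (Example \ref{ex-refined}(1)).

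For part (1), region (Va): by definition $\bw$ lies directly above some character $\bw'$ in region (IIIa) and directly below a sufficiently divisible character $\bw''$ on $\bv^\perp$. For $W'\in M(\bw')$ general, Proposition \ref{prop-RIII2} gives $H^i(V\te W')=0$ for $i>0$; for $W''\in M(\bw'')$ general, Theorem \ref{thm-orthogonal} gives that $V\te W''$ has no cohomology (here I should check $\mu(\bw'')\geq\mu(\bu^+)$, which holds since $\bw''$ lies on $\bv^\perp$ above region (IIIa) and $\bu^+$ is the leftmost such point). Thus $H^i(V\te W')=H^i(V\te W'')=0$ for $i>0$, and after replacing $\bw$ by a sufficiently divisible multiple, Lemma \ref{lem-aveDisc} (with $j=0$) produces the desired vanishing $H^i(V\te W)=0$ for $i>0$. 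For part (2): in region (Vb), $\bw$ lies above $\bv^\perp$, so taking a sufficiently divisible $\bw'$ on $\bv^\perp$ with $\mu(\bw')=\mu(\bw)\geq\mu(\bu^+)$, Theorem \ref{thm-orthogonal} gives $H^0(V\te W')=0$, and Lemma \ref{lem-modification} then gives $H^0(V\te W)=0$ for a sufficiently divisible multiple of $\bw$.

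Regions (Vc) and (Vd) require the analogues of cases (IVc), (IVd) but with $E_{\nu^+}$ replaced by $E_{-\nu^+-3}$ in the relevant resolution. In region (Vd) one has $\mu(W)\leq\nu^+$, so for any exceptional $E_\gamma$ with $\gamma<\nu^+$ one has $H^0(V\te E_\gamma)=0$ by Theorem \ref{thm-twistExceptional} (the slope is negative by Proposition \ref{prop-EulerExceptional} since $\gamma<\nu^+<\nu^-$... more carefully, $\chi(\bv\te E_\gamma)$ and the slope force $H^0=0$); then pick exceptional $E_\gamma,E_\delta$ with $\gamma\leq\mu(W)\leq\delta\leq\nu^+$ and $\delta-\gamma\leq 1$, apply Lemma \ref{lem-aveSlope} to get a prioritary $W'$ of slope $\mu(W)$, discriminant $<1/2$, with $H^0(V\te W')=0$, and finish with Lemma \ref{lem-modification}. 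For region (Vc), with $\nu^+\leq\mu(W)\leq\mu(\bu^+)$: here $\bw$ lies directly above a character $\bw'$ on $\bv^\perp$ (or on the DLP curve) with $\mu(\bw')\geq\mu(\bu^+)$ only if $\mu(W)=\mu(\bu^+)$; otherwise $\bw$ lies above region (IIIa) and one argues as in (Va)/(IVc), using that the relevant point $\bw'$ has $\chi(\bw'\te E_{-\nu^+-3})=0$ so the extra term in the resolution of $V$ contributes nothing. The main obstacle I anticipate is the bookkeeping in region (Vc): one must verify that for the whole slope range $\nu^+\leq\mu(W)\leq\mu(\bu^+)$ there genuinely is a character $\bw'$ of the same slope, smaller or equal discriminant, lying in region (IIIa) (or on $\bv^\perp$), with $H^0(V\te W')=0$ already established; this uses the geometry of Figure \ref{fig-IVprime} and the fact that $\bu^+$ lies on the DLP curve, so that vertically below any point of region (Vc) the DLP curve — hence a valid stable character — is reached before or at region (IIIa). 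Once that picture is confirmed, each case is a routine application of the interpolation lemmas, and combining all four subregions completes the proof of Theorem \ref{thm-main1}.
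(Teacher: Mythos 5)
Your treatment of regions (Va), (Vb), and (Vd) matches the paper, which disposes of them with the single sentence that they are ``handled by methods analogous to Proposition \ref{prop-restReg}''; those parts of your argument are correct. The genuine gap is in region (Vc), and it is precisely the obstacle you flagged and then assumed away. For $\nu^+\leq\mu(\bw)<\mu(\bu^+)$ there is \emph{no} stable character $\bw'$ of the same slope and smaller discriminant lying in region (IIIa) or on $\bv^\perp$: over that slope range both $K^\perp$ and $\bv^\perp$ lie \emph{below} the Dr\'ezet--Le Potier branch $E_{-\nu^+-3}^\perp$. Indeed, all three parabolas meet only at $\bu^+$ (one checks $\chi(K\te \bu^+)=0$ from the resolution of $V$), and at $\mu=\nu^+$ the parabola $K^\perp$ passes through $E_{\nu^+}=(\nu^+,\frac12-\frac{1}{2r_{\nu^+}^2})$, which sits below the peak $(\nu^+,\frac12+\frac{1}{2r_{\nu^+}^2})$ of the curve. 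So region (IIIa) contains no stable characters of slope in $[\nu^+,\mu(\bu^+))$, and neither does $\bv^\perp$. Your fallback---take $\bw'$ on the branch $E_{-\nu^+-3}^\perp$, so that $E_{-\nu^+-3}\te W'$ has no cohomology and hence $H^i(V\te W')\cong H^i(K\te W')$---only relocates the problem: one now needs $H^0(K\te W')=0$, but $K$ is a general stable sheaf with $\chi(K\te E_{\nu^+})=0$, so its primary corresponding exceptional bundle is again $E_{\nu^+}$ and its primary corresponding orthogonal character is again proportional to $\bu^+$; thus $\bw'$ lies in region (Vc) \emph{for $K$}, and the argument is circular. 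Nor is there an analogue of Proposition \ref{prop-RIII1}(2) to quote, since that result assumes $\chi(V\te E_{\nu^+})>0$.

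The paper's actual argument for (Vc) interpolates in slope rather than in discriminant. Both $V\te E_{\nu^+}$ (because $\chi(\bv\te E_{\nu^+})\leq 0$, by Theorem \ref{thm-twistExceptional}) and $V\te U^+$ (by Theorem \ref{thm-extremalVanish}) have no sections, and $0<\mu(\bu^+)-\nu^+<x_{\nu^+}<2$, so Lemma \ref{lem-aveSlope} produces a prioritary sheaf $W'$ of any intermediate slope with $H^0(V\te W')=0$ and $\Delta(W')<\max\{\Delta_{\nu^+},\Delta(\bu^+)\}=\Delta(\bu^+)$. Since the Dr\'ezet--Le Potier function is decreasing on $[\nu^+,\nu^++x_{\nu^+})$ and $\bu^+$ lies on the curve, every stable $\bw$ in region (Vc) satisfies $\Delta(\bw)\geq\delta(\mu(\bw))\geq\Delta(\bu^+)>\Delta(W')$, and Lemma \ref{lem-modification} then gives $H^0(V\te W)=0$ for a sufficiently divisible multiple. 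You should replace your (Vc) argument with this one; the rest of your proof can stand.
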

\begin{proof}
Regions (Va), (Vb), and (Vd) are handled by methods analogous to Proposition \ref{prop-restReg}.

(Vc) Both $V\te E_{\nu^+}$ and $V\te U^+$ have no sections, and any stable bundle with slope between $\nu^+$ and $\mu(U^+)$ has discriminant larger than $\Delta(U^+)$.  So we use Lemmas \ref{lem-aveSlope} and \ref{lem-modification} to construct the required sheaf $W$.
\end{proof}

\subsection{Torsion sheaves}\label{ssec-torsion}

In this subsection we allow $\bv$ to be the Chern character of a one-dimensional semistable sheaf, and we let $M(\bv)$ be the moduli space parameterizing (pure) semistable sheaves of character $\bv$.  We compute the cohomology of a general tensor product $V\te W$, where $W$ is a general stable vector bundle with sufficiently divisible character.  We normalize $\bv = (0,d,\chi)$ in terms of its first Chern class $d$ and Euler characteristic $\chi$.  The general point $V\in M(\bv)$ is a line bundle of Euler characteristic $\chi$ supported on a smooth curve $C$ of degree $d$.

In the $(\mu,\Delta)$-plane, the space $\bv^\perp$ becomes the vertical line $$\mu = -\frac{\chi}{d}.$$  Thus $\bv^\perp$ crosses the Dr\'ezet--Le Potier curve in a unique point.  The primary corresponding exceptional bundle $E_{\nu^+}$ is the exceptional bundle with $-\frac{\chi}{d}\in I_{\nu^+}$.  We consider the Beilinson spectral sequence of a general $V$ as in \S \ref{sss-res1} and \ref{sss-res2}.  The sequence degenerates in the same way as in the positive rank cases; following the proof of \cite[Proposition 5.3]{CHW}, the key to seeing this is to show that there is a resolution of $V$ of one of the following two forms depending on the sign of $\chi(V\te E_{\nu^+})$: $$0\to E_{-\alpha-3}^{m_1}\to E_{-\beta}^{m_2}\oplus E_{-\nu^+}^{m_3} \to V \to 0$$$$0\to E_{-\nu^+-3}^{m_3}\oplus E_{-\alpha-3}^{m_1}\to E_{-\beta}^{m_2}\to V\to 0.$$ The proof of this given in \cite[Proposition 5.3]{CHW} in the higher rank case needs a minor modification, since we cannot use the theory of prioritary sheaves.  Instead, we need to prove the semistability of a general such quotient directly.  But if $\phi\in \Hom(E_{-\alpha-3}^{m_1},E_{-\beta}^{m_2}\oplus E_{-\nu^+}^{m_3})$ is general, then by \cite[Theorem 2.8]{Ottaviani} the locus $\det \phi = 0$ can only be singular at points where $\phi$ drops rank by at least $2$, and this happens in codimension $4$.  Therefore $\det \phi = 0$ is a smooth curve $C$ of degree $d$ and $V$ is a line bundle on $C$.  Thus $V$ is semistable.

Having established the shape of the Beilinson spectral sequence, the results from \S \ref{sec-exceptional}-\S\ref{sec-regions} go through with only minor modification.  In Figure \ref{fig-4Rrefined}, the first two diagrams should be modified by replacing $\bv^\perp$ with a vertical line, but otherwise the diagrams remain accurate.  Note that the vertical line $\bv^\perp$ cannot both pass through the left branch of the Dr\'ezet-Le Potier curve and lie right of $E_{\nu^+}$, so the third case in Figure \ref{fig-4Rrefined} never occurs.

Elementary modifications are particularly useful for studying the cohomology of $V\te W$ when $V$ is torsion:  if $$0\to W' \to W \to \OO_p\to 0$$ is an elementary modification with $p$ not in the support of $V$, then $V\te W$ and $V\te W'$ have the same cohomology.  There is no need to generalize the results of \S\ref{sec-orthogonal} to this case, since elementary modification of the bundle $U^+$ will be the required sheaves.  The methods of \S\ref{sec-interpolation} will then complete the proof of the following theorem.

\begin{theorem}
Let $\bv$ be the Chern character of a one-dimensional semistable sheaf, and let $\bw$ be the Chern character of a stable bundle.  Suppose that $\bw$ is sufficiently divisible.  If $V\in M(\bv)$ and $W\in M(\bw)$ are general sheaves, then $V\te W$ has only one nonzero cohomology group.
\end{theorem}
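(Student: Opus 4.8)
The plan is to reduce the torsion-sheaf statement to the vector bundle case already established in \S\S\ref{sec-exceptional}--\ref{sec-interpolation}, using the structural observations made in \S\ref{ssec-torsion}. The essential point is that a general one-dimensional semistable sheaf $V$ of character $\bv = (0,d,\chi)$ admits one of the two Beilinson resolutions displayed in \S\ref{ssec-torsion}, and these are formally identical to the resolutions used for positive-rank characters in \S\ref{sec-overview}. So once the shape of the spectral sequence is pinned down --- which is done by the Ottaviani determinantal argument already sketched, showing $\det\phi = 0$ is a smooth curve and $V$ a line bundle on it, hence semistable --- every cohomology computation in \S\S\ref{sec-regions}--\ref{sec-interpolation} that is phrased purely in terms of the resolution of $V$ and tensoring by $W$ carries over verbatim.

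Concretely, I would proceed as follows. First, fix the primary corresponding exceptional bundle $E_{\nu^+}$, which in this setting is simply the exceptional bundle with $-\chi/d \in I_{\nu^+}$, and establish (citing \cite[Proposition 5.3]{CHW} with the Ottaviani modification) that the general $V$ has one of the two stated resolutions, with the associated complex $K$ corresponding to a stable Kronecker module exactly as in \S\ref{sss-res1}--\ref{sss-res2}. Second, run through regions (I), (II), and (IIIa): the proofs of Propositions \ref{prop-regionI}, \ref{prop-regionII}, \ref{prop-RIII1}(1), and \ref{prop-RIII2} only use the resolution of $V$, Theorem \ref{thm-twistExceptional} applied to exceptional twists, and Lemma \ref{lem-derivedTensor}; none of these requires $V$ to be a bundle, so they apply unchanged (Theorem \ref{thm-twistExceptional} for torsion $V$ follows the same induction, or one observes $\chi(V\te E) $ again controls a single cohomology group by the same triangle). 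Third, for region (IIIb) note that since $\bv$ has rank $0$, the third case of Figure \ref{fig-4Rrefined} cannot occur: the vertical line $\bv^\perp$ cannot lie to the right of $E_{\nu^+}$ while meeting the left branch of the Dr\'ezet--Le Potier curve over $I_{\nu^+}$, so $\rk(K) \le 0$ always and no special cohomology arises. Fourth, for the cohomologically orthogonal case, rather than re-proving Theorem \ref{thm-orthogonal} I would use that elementary modifications of a bundle $U^+$ (away from $\Supp V$) are cohomologically transparent for $V\te-$: since $V\te U^\pm$ has no cohomology by Theorem \ref{thm-extremalVanish}, and $U^\pm$ here refers to the corresponding orthogonal characters of $\bv$ (which still make sense since $\bv^\perp$ is just a vertical line meeting $\Delta = 1/2$ at two points $\ge 3$ apart), the characters on $\bv^\perp$ in regions (IV)/(V) are reached by combining $U^+$ with elementary modifications and direct sums. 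Finally, apply the interpolation tricks of \S\ref{sec-interpolation} (Lemmas \ref{lem-modification}, \ref{lem-aveSlope}, \ref{lem-aveDisc}), which are stated for prioritary sheaves but whose use here only needs that $V\te(-)$ kills cohomology under modification and direct sum --- and prioritariness of the \emph{bundle} $W$ being constructed is unaffected by $V$ being torsion.

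The main obstacle, and the place I would spend the most care, is the very first step: justifying that the Beilinson spectral sequence of the general torsion sheaf $V$ degenerates in exactly the same way as in the positive-rank case. In the bundle case one invokes the irreducibility of the stack of prioritary sheaves (\S\ref{ss-prioritary}) and semicontinuity to reduce to exhibiting one sheaf with the right resolution; that machinery is unavailable for one-dimensional sheaves, so one must instead argue directly that a general map $\phi\in\Hom(E_{-\alpha-3}^{m_1}, E_{-\beta}^{m_2}\oplus E_{-\nu^+}^{m_3})$ has cokernel a line bundle on a smooth plane curve, hence a stable sheaf of the right character, and that such cokernels sweep out a dense subset of $M(\bv)$. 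The Ottaviani degeneracy-locus result \cite[Theorem 2.8]{Ottaviani} gives smoothness of $\det\phi = 0$, but one still needs a dimension count ensuring these cokernels form a family of the expected dimension $\dim M(\bv)$, and that the map $\phi\mapsto\coker\phi$ is dominant --- this is the genuinely new input and should be stated carefully, with a reference to the corresponding argument in \cite{CHW} for the shape of the resolution. Everything downstream is then a mechanical transcription of the bundle proof with the word ``bundle'' weakened to ``sheaf'' wherever $V$ appears and ``prioritary bundle'' kept for $W$.
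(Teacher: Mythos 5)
Your proposal follows the paper's own argument essentially step for step: establish the two Beilinson resolutions of the general one-dimensional $V$ directly via the Ottaviani smoothness-of-$\det\phi$ argument (since the prioritary-sheaf machinery is unavailable), observe that the rank-zero condition forces $\bv^\perp$ to be a vertical line so the special case (3) of Figure \ref{fig-4Rrefined} never occurs, transport the region-by-region computations of \S\ref{sec-regions} verbatim, and replace the orthogonal-bundle construction of \S\ref{sec-orthogonal} by elementary modifications of $U^+$ away from $\Supp V$ before finishing with the interpolation lemmas of \S\ref{sec-interpolation}. This is correct and is the same route the paper takes, including your identification of the degeneration of the spectral sequence as the one genuinely new input.
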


\bibliographystyle{plain}

\end{document}